\newcommand{\Z}{\mathbb{Z}}
\newcommand{\Q}{\mathbb{Q}}
\newcommand{\F}{\mathbb{F}}
\newcommand{\C}{\mathbb{C}}
\newcommand{\pe}{\mathfrak{p}}
\newcommand{\Pe}{\mathfrak{P}}
\newcommand{\qu}{\mathfrak{q}}
\newcommand{\OO}{\mathcal{O}}
\newcommand{\EE}{\mathcal{E}}
\newcommand{\UU}{\mathcal{U}}
\newcommand{\MM}{\mathcal{M}}
\newcommand{\FF}{\mathcal{F}}
\newcommand{\cS}{\mathfrak{S}}
\newcommand{\II}{\mathcal{I}}
\newcommand{\kkappa}{\langle \kappa \rangle}
\newcommand{\LLL}{L}
\newcommand{\dual}{\vee}
\newcommand{\N}{\mathcal{N}}
\newcommand{\NN}{\mathfrak{N}}
\newcommand{\G}{\mathcal{G}}
\newcommand{\CC}{\mathcal{C}}
\newcommand{\PP}{\mathcal{P}}
\newcommand{\QQ}{\mathcal{Q}}
\newcommand{\SF}[1]{\mathcal{F}^{\langle #1 \rangle}}
\newcommand{\varSF}[1]{\mathcal{F}^{[#1]}}
\newcommand{\rpd}{{\rm pd}^{\star}}
\newcommand{\pd}{{\rm pd}}
\DeclareMathOperator{\Gal}{Gal}
\DeclareMathOperator{\GL}{GL}
\DeclareMathOperator{\SL}{SL}
\DeclareMathOperator{\nr}{Nrd}
\DeclareMathOperator{\cyc}{cyc}
\DeclareMathOperator{\Ker}{Ker}
\DeclareMathOperator{\Image}{Im}
\DeclareMathOperator{\Cok}{Cok}
\DeclareMathOperator{\Ext}{Ext}
\DeclareMathOperator{\Ann}{Ann}
\DeclareMathOperator{\ch}{char}
\DeclareMathOperator{\Irr}{Irr}
\DeclareMathOperator{\aug}{aug}
\DeclareMathOperator{\tor}{tor}
\DeclareMathOperator{\Fitt}{Fitt}
\DeclareMathOperator{\Hom}{Hom}
\DeclareMathOperator{\Tor}{Tor}
\DeclareMathOperator{\Left}{left}
\DeclareMathOperator{\Right}{right}
\DeclareMathOperator{\Spec}{Spec}
\DeclareMathOperator{\cd}{cd}
\DeclareMathOperator{\id}{id}
\DeclareMathOperator{\Ind}{Ind}
\let\oldenumerate\enumerate
\renewcommand{\enumerate}{
   \oldenumerate
   \setlength{\itemsep}{1pt}
   \setlength{\parskip}{0pt}
   \setlength{\parsep}{0pt}
}
\let\olditemize\itemize
\renewcommand{\itemize}{
   \olditemize
   \setlength{\itemsep}{1pt}
   \setlength{\parskip}{0pt}
   \setlength{\parsep}{0pt}
}
\theoremstyle{plain}
\newtheorem{thm}{Theorem}[section]
\newtheorem{lem}[thm]{Lemma}
\newtheorem{prop}[thm]{Proposition}
\newtheorem{cor}[thm]{Corollary}
\newtheorem{ass}[thm]{Assumption}
\theoremstyle{definition}
\newtheorem{defn}[thm]{Definition}
\newtheorem{rem}[thm]{Remark}
\newtheorem{eg}[thm]{Example}
\title{Fitting Invariants in Equivariant Iwasawa Theory}
\author{Takenori Kataoka}
\address{Faculty of Science and Technology, Keio University.
3-14-1 Hiyoshi, Kohoku-ku, Yokohama, Kanagawa 223-8522, Japan}
\email{tkataoka@math.keio.ac.jp}
\keywords{Fitting invariants, Iwasawa modules, Tate sequences}
\subjclass[2010]{11R23, 16E05}
\date{}
\begin{document}

\begin{abstract}
The main conjectures in Iwasawa theory predict the relationship between the Iwasawa modules and the $p$-adic $L$-functions.
Using a certain proved formulation of the main conjecture, Greither and Kurihara described explicitly the (initial) Fitting ideals of the Iwasawa modules for the cyclotomic $\Z_p$-extensions of finite abelian extensions of totally real fields.
In this paper, we generalize the algebraic theory behind their work by developing the theory of ``shifts of Fitting invariants.''
As applications to Iwasawa theory, we obtain a noncommutative version and a two-variable version of the work of Greither and Kurihara.
\end{abstract}

\maketitle

\section{Introduction}\label{sec01}

The main concern of this paper is, in various situations of equivariant Iwasawa theory, how to compute the Fitting ideals (or more generally ``Fitting invariants'') of Iwasawa modules via main conjectures.
We start with reviewing a work of Greither and Kurihara \cite{GK15} which we generalize in this paper.

\subsection*{Review of a result of Greither and Kurihara}
Let $p$ be an odd prime number and $K$ a totally real number field.
We denote by $K^{\cyc}$ the cyclotomic $\Z_p$-extension of $K$.
Let $K'$ be a totally real finite abelian $p$-extension of $K$ which is linearly disjoint with $K^{\cyc}$ over $K$, namely $K' \cap K^{\cyc} = K$ (the assumption that $K'/K$ is a $p$-extension is not essential; see \cite[Note after Theorem 3.3]{GK15}). 
Put $L = K' K^{\cyc}$, which is the cyclotomic $\Z_p$-extension of $K'$.
Then the Galois group $G = \Gal(L/K)$ is the direct product of $\Gal(L/K') \simeq \Z_p$ and the finite abelian group $\Delta = \Gal(L/K^{\cyc})$.
Put $R = \Z_p[[G]]$.
Choosing a topological generator $\gamma$ of $\Gal(L/K')$, we have an isomorphism $R \simeq \Z_p[[T]][\Delta]$ which sends $\gamma$ to $1+T$.
Let $\Sigma$ be a finite set of places of $K$ containing the primes above $p$ and primes which are ramified in the extension $K'/K$.

As an algebraic object, we consider the $\Sigma$-ramified Iwasawa module $X_{\Sigma}(L)$, which is defined as the Galois group of the maximal abelian pro-$p$ extension of $L$ which is unramified outside $\Sigma$.
It is known that $X_{\Sigma}(L)$ is a finitely generated torsion $R$-module.
Here the torsionness of an  $R$-module means that it is torsion as a $\Z_p[[T]]$-module.
On the other hand, as an analytic object, we have the equivariant $p$-adic $L$-function $\Theta_{L/K, \Sigma} \in Q(R)^{\times}$, where $Q(R)$ denotes the ring of fractions of $R$ (see, for instance, \cite[Section 3]{GK15} or \cite[Definition 5.16]{GP15}).
The main conjectures claim that the structure of $X_{\Sigma}(L)$ is closely related to $\Theta_{L/K, \Sigma}$.
In our abelian situation, there are several formulations of main conjectures and they are proved under the hypothesis that the $\mu$-invariant of $X_{\Sigma}(L)$ vanishes (\cite[Theorem 11]{RW02}, \cite[Theorem 5.6]{GP15}).

One of the main objectives in \cite{GK15} is to compute the Fitting ideal of $X_{\Sigma}(L)$ as an $R$-module.
Here we recall the definition of the Fitting ideals (see \cite[Section 3.1]{Nor76} or \cite[Section 20.2]{Eis95}).
For a finitely generated $R$-module $M$, take a presentation 
\[
R^b \overset{h}{\to} R^a \to M \to 0
\]
 with $h$ regarded as an $a \times b$ matrix over $R$.
Define the Fitting ideal of $M$ by
\[
\Fitt_R(M) = (\det(H) \mid \text{$H$ is an $a \times a$ submatrix of $h$})_R,
\]
which does not depend on the choice of the presentation $h$.
If $a > b$, then we have $\Fitt_R(M) = 0$.
To ease the notation, we write $\FF(M) = \Fitt_R(M)$ for a while.
The following observation will be important.
Let $P$ be a finitely generated torsion $R$-module with $\pd_R(P) \leq 1$, where $\pd_R$ denotes the projective dimension over $R$.
Then $\FF(P)$ is principal and invertible as a fractional ideal of $R$ since $P$ fits into an exact sequence of the form $0 \to R^a \to R^a \to P \to 0$ (see also Subsection \ref{subsec41}).

Now, by the philosophy of main conjecture, our object $\FF(X_{\Sigma}(L))$ should be related to the principal ideal $(\Theta_{L/K, \Sigma})$.
But the problem is that the various existing formulations of the main conjectures do not directly tell us the precise description of $\FF(X_{\Sigma}(L))$.

Greither and Kurihara \cite{GK15} overcame such difficulties.
In order to state their result, fix a decomposition $\Delta = \prod_{i=1}^s \Delta^{(i)}$ where $\Delta^{(i)}$ is a cyclic $p$-group.
Then we can construct an exact sequence
\[
\cdots \to \Z_p[\Delta]^{\frac{s(s+1)(s+2)}{6}} \overset{d_3}{\to} \Z_p[\Delta]^{\frac{s(s+1)}{2}} \overset{d_2}{\to} \Z_p[\Delta]^s \overset{d_1}{\to} \Z_p[\Delta] \to \Z_p \to 0
\]
(see Subsection \ref{subseca1}).
Let $B_{\Delta}$ be the cokernel of $d_3$, so we have an exact sequence
\begin{equation}\label{eq01}
0 \to B_{\Delta} \to \Z_p[\Delta]^s \overset{d_1}{\to} \Z_p[\Delta] \to \Z_p \to 0.
\end{equation}
Letting $T$ act trivially, we regard this sequence as a sequence of $R$-modules.

\begin{thm}[{\cite[Theorem 3.3(a)]{GK15}, \cite[Theorem 4.1]{GK17}}]\label{thm01}
Suppose that the $\mu$-invariant of $X_{\Sigma}(L)$ vanishes.
 Then we have
 \[
 \FF(X_{\Sigma}(L)) = \Theta_{L/K, \Sigma} T^{1-s} \FF(B_{\Delta}).
\]
\end{thm}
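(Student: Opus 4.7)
The plan is to combine the equivariant main conjecture for $L/K$ (under the hypothesis $\mu = 0$) with the auxiliary resolution \eqref{eq01} of the trivial $R$-module $\Z_p$, so that the correction factor $T^{1-s}\Fitt_R(B_\Delta)$ arises precisely as a ``Fitting invariant of $\Z_p$ in the derived sense,'' compensating for the failure of $X_\Sigma(L)$ itself to have projective dimension $\leq 1$ over $R$.

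The first step is to invoke the main conjecture in the form of a Tate-type four-term exact sequence
\[
0 \to X_\Sigma(L) \to P_1 \to P_0 \to \Z_p \to 0
\]
of $R$-modules, where $P_0$ and $P_1$ are finitely generated torsion modules of projective dimension $\leq 1$, such a sequence being available under $\mu = 0$ from the Ritter--Weiss or Greither--Popescu formulation. The main conjecture then translates into the identity $\Fitt_R(P_1) \cdot \Fitt_R(P_0)^{-1} = (\Theta_{L/K,\Sigma})$ of invertible fractional ideals of $R$.

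The second step is to splice this sequence against the auxiliary resolution \eqref{eq01}. A mapping-cone construction, based on a lift of the identity $\Z_p \to \Z_p$ to a morphism between the two four-term sequences, produces a new four-term exact sequence
\[
0 \to X_\Sigma(L) \to Q_1 \to Q_0 \to B_\Delta \to 0
\]
in which $Q_0$ and $Q_1$ have projective dimension $\leq 1$ and satisfy $\Fitt_R(Q_1) \cdot \Fitt_R(Q_0)^{-1} = \Theta_{L/K,\Sigma} \cdot \Fitt_R(\Z_p[\Delta]) \cdot \Fitt_R(\Z_p[\Delta]^s)^{-1} = \Theta_{L/K,\Sigma} \cdot T^{1-s}$, the last equality using the elementary computation $\Fitt_R(\Z_p[\Delta]^a) = (T^a)$ for the trivial $T$-action. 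Reading off $\Fitt_R(X_\Sigma(L))$ in terms of $\Fitt_R(B_\Delta)$ from this sequence then yields the desired formula.

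The main obstacle is this final extraction step. Since neither $X_\Sigma(L)$ nor $B_\Delta$ has projective dimension $\leq 1$ over $R$, one cannot simply multiply Fitting ideals along the four-term sequence. What is required is a general algebraic principle---the \emph{shifts of Fitting invariants} developed in this paper---asserting that in a four-term exact sequence whose middle two terms have projective dimension $\leq 1$, the Fitting ideal of the outer left term is determined by the Fitting ideal of the outer right term together with the principal Fitting determinant of the middle two-term complex. Establishing this principle in sufficient generality to also cover the noncommutative and two-variable extensions announced in the abstract is the principal technical contribution of the paper.
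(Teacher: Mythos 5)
You correctly identify the three ingredients---the Ritter--Weiss four-term Tate sequence, the equivariant main conjecture, and the shift theory of Theorem \ref{thm02}---and your first step matches the paper. The gap is in the splice and, more seriously, in the final extraction. The ``principle'' you invoke asserts that in $0 \to N \to P_1 \to P_0 \to M \to 0$ with $P_1, P_0 \in \PP$, the ideal $\FF(N)$ is determined by $\FF(M)$ together with the Fitting determinant $\FF(P_1)\FF(P_0)^{-1}$. That is not what Theorem \ref{thm02} says: it says that $\FF(P_1)^{-1}\FF(P_0)\FF(N) = \varSF{2}(M)$ depends only on the \emph{module} $M$, not merely on the ideal $\FF(M)$, and $\varSF{2}$ is in general a strictly finer invariant than $\FF$. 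Applied to your spliced sequence this would give $\FF(X_\Sigma(L)) = \FF(Q_1)\FF(Q_0)^{-1}\varSF{2}(B_\Delta)$, and to finish you would need the auxiliary identity $\varSF{2}(B_\Delta) = \FF(B_\Delta)$, which holds when $\Delta$ is cyclic (by Lemma \ref{lem4r}) but is neither proved nor expected in general. Separately, the splice construction itself is delicate: the $P_i$ and $C_i = \Z_p[\Delta]^{*}$ are not projective over $R$, only of projective dimension $\leq 1$, so a lift of $\id_{\Z_p}$ to a morphism of four-term sequences need not exist; and even when it does, the mapping cone of a chain map $[P_1\to P_0]\to[C_1\to C_0]$ inducing an isomorphism on $H_0 = \Z_p$ has homology $\Ker$ and $\Cok$ of the induced map $X_\Sigma(L) \to B_\Delta$, not $X_\Sigma(L)$ and $B_\Delta$ themselves.

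The paper avoids the splice entirely by applying Theorem \ref{thm02} separately to the two given sequences, both of which end in $\Z_p$, thereby computing the same quantity $\varSF{2}(\Z_p)$ in two ways. The Tate sequence $0 \to X_\Sigma(L) \to P_1 \to P_2 \to \Z_p \to 0$ gives $\varSF{2}(\Z_p) = \FF(P_1)^{-1}\FF(P_2)\FF(X_\Sigma(L))$, equivalently $\FF(X_\Sigma(L)) = \FF(\Phi_{L/K,\Sigma})\,\varSF{2}(\Z_p)$; the auxiliary resolution $0 \to B_\Delta \to \Z_p[\Delta]^s \to \Z_p[\Delta] \to \Z_p \to 0$ gives $\varSF{2}(\Z_p) = T^{1-s}\FF(B_\Delta)$. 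Inserting the main conjecture $\FF(\Phi_{L/K,\Sigma}) = (\Theta_{L/K,\Sigma})$ and substituting the second expression for $\varSF{2}(\Z_p)$ finishes the proof, with no splice and no claim about $\varSF{2}(B_\Delta)$.
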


Here, we already know that $B_{\Delta}$ has the presentation $d_3$ over $\Z_p[\Delta]$.
So the Fitting ideal of $B_{\Delta}$ over $R$ can be computed, though it is complicated when $s$ is large.
In fact, one of the topics in \cite{GK15} is to compute $\FF(B_{\Delta})$, and in \cite{GKT} a set of explicit generators is obtained.

The main ingredient of the proof of Theorem \ref{thm01} is the existence of an exact sequence
\begin{equation}\label{eq02}
0 \to X_{\Sigma}(L) \to P_1 \to P_2 \to \Z_p \to 0
\end{equation}
of finitely generated torsion $R$-modules with $\pd_R(P_i) \leq 1$ for $i = 1, 2$.
Moreover $P_2$ can be chosen as $P_2 \simeq \Z_p[\Delta]$.
The existence of such a sequence (reproduced in Proposition \ref{prop7a}) is used to formulate the equivariant main conjecture by Ritter and Weiss \cite{RW02}.
Then basically Theorem \ref{thm01} is proved by comparing the sequences (\ref{eq01}) and (\ref{eq02}), though the method is technical and complicated.
Note also that \cite{GK15} uses the character-wise main conjecture of Wiles \cite{Wil90} and does not depend on the result of Ritter and Weiss \cite{RW02}.

\subsection*{Algebraic Results: Shifts of Fitting Invariants}

Keep the above notations.
The first result in this paper clarifies the algebraic theory behind Theorem \ref{thm01} as follows.

\begin{thm}\label{thm02}
Let $M$ be a finitely generated torsion $R$-module.
For any $n \geq 0$, take an exact sequence
\begin{equation}\label{seq:3}
0 \to N \to P_1 \to \dots \to P_n \to M \to 0
\end{equation}
of finitely generated torsion $R$-modules where $\pd_R(P_i) \leq 1$ for $1 \leq i \leq n$.
Then the fractional ideal
 \[
 \varSF{n}(M) = \left(\prod_{i=1}^n \FF(P_i)^{(-1)^i}\right) \FF(N)
 \]
of $R$ is independent of the choice of the sequence (\ref{seq:3}).
\end{thm}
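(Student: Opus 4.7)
The plan is an induction on $n$, interleaved with an auxiliary multiplicativity statement. Let $(W_n)$ denote the well-definedness assertion of Theorem~\ref{thm02} at level $n$, and let $(L_n)$ denote the claim that for every short exact sequence $0 \to A \to B \to C \to 0$ of finitely generated torsion $R$-modules with $\pd_R(C) \leq 1$ one has $\varSF{n}(B) = \varSF{n}(A) \cdot \FF(C)^{(-1)^n}$. I shall establish these in the order $(W_0), (L_0), (W_1), (L_1), \ldots$. The base $(W_0)$ is the tautology $\varSF{0}(M) = \FF(M)$, and $(L_0)$ is the classical multiplicativity of Fitting ideals along an exact sequence whose right-hand term has projective dimension at most $1$: the hypothesis allows one to present $C$ by a square matrix, giving $B$ a block upper triangular presentation whose maximal minors all factor as $\det(\phi_C)$ times a maximal minor of the presentation of $A$.

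For the step $(W_{n-1}),\,(L_{n-1}) \Rightarrow (W_n)$, the key construction is the pullback. Given two length-$n$ resolutions of $M$ with final surjections $P_n \to M$ and $P_n' \to M$, set $M_1 = \Ker(P_n \to M)$ and $M_1' = \Ker(P_n' \to M)$, and form the fibered product $Q = P_n \times_M P_n'$. The two projections produce short exact sequences
\[
0 \to M_1 \to Q \to P_n' \to 0, \qquad 0 \to M_1' \to Q \to P_n \to 0,
\]
in which the right-hand terms have $\pd_R \leq 1$. Applying $(L_{n-1})$ to each (and invoking $(W_{n-1})$ to give meaning to $\varSF{n-1}$ on the torsion modules $Q, M_1, M_1'$) yields two expressions for $\varSF{n-1}(Q)$; equating them and multiplying through by $\FF(P_n)^{(-1)^n}\FF(P_n')^{(-1)^n}$ gives
\[
\FF(P_n)^{(-1)^n}\,\varSF{n-1}(M_1) = \FF(P_n')^{(-1)^n}\,\varSF{n-1}(M_1'),
\]
which is exactly the equality of invariants computed from the two resolutions.

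For the step $(W_n),\,(L_0) \Rightarrow (L_n)$, the key construction is an extension-theoretic modification of the last step of a resolution. Starting from any length-$n$ resolution $0 \to N \to P_1 \to \dots \to P_n \to A \to 0$ of $A$, the hypothesis $\pd_R(C) \leq 1$ forces $\Ext^2_R(C, \Ker(P_n \to A)) = 0$; hence the class of $B$ in $\Ext^1_R(C, A)$ lifts through the surjection $P_n \to A$ to produce a short exact sequence $0 \to P_n \to \tilde P_n \to C \to 0$ together with a surjection $\tilde P_n \twoheadrightarrow B$ whose kernel, by snake-chasing, is identified with $\Ker(P_n \to A)$. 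Then $\tilde P_n$ is automatically torsion of $\pd_R \leq 1$, and $(L_0)$ gives $\FF(\tilde P_n) = \FF(P_n)\,\FF(C)$. Replacing $P_n$ by $\tilde P_n$ in the original resolution produces a length-$n$ resolution of $B$ differing from the original only at its final term, so the two computed invariants satisfy $\varSF{n}(B)_{\mathrm{new}} = \varSF{n}(A)_{\mathrm{orig}} \cdot \FF(C)^{(-1)^n}$; by $(W_n)$ this promotes to the required identity $\varSF{n}(B) = \varSF{n}(A)\cdot \FF(C)^{(-1)^n}$.

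The most delicate point will be orchestrating the interleaved induction so that each step uses only previously established assertions; the extension-theoretic construction of $\tilde P_n$ is the only nonelementary homological input, and once it is in hand the combinatorics of these constructions align precisely with the alternating sign pattern of $\varSF{n}$.
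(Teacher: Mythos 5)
Your argument is correct, and it mirrors the paper's in its essential architecture: both establish well-definedness by a pullback construction and then carry out what amounts to an interleaved induction on the two assertions (well-definedness and multiplicativity across a short exact sequence). In fact the paper packages exactly this interleaving by proving that $\varSF{n}$ equals $(\varSF{n-1})^{[1]}$ and that each $\varSF{n}$ is again a Fitting invariant, after first settling the case $n=1$. The one place where you go a genuinely different route is the step establishing $(L_n)$. The paper's argument runs in the opposite direction: from a chosen surjection $P' \twoheadrightarrow M$ (a resolution of the \emph{larger} module in the sequence $0 \to M' \to M \to P \to 0$) it takes the kernel $P'' = \Ker(P' \to M \to P)$, which stays in $\PP$ by a two-out-of-three argument, yielding a resolution of the \emph{smaller} module $M'$. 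Your construction instead starts from a resolution of the smaller module $A$ and enlarges it to a resolution of $B$ by lifting the extension class $[B] \in \Ext^1_R(C,A)$ through the surjectivity of $\Ext^1_R(C,P_n) \to \Ext^1_R(C,A)$, which holds because $\Ext^2_R(C, -) = 0$. Both work; the paper's kernel construction is more elementary (no appeal to the long exact $\Ext$-sequence), while your extension-lifting has the virtue of cleanly isolating where $\pd_R(C)\le 1$ is used and of making $(L_n)$ depend only on $(W_n)$ and the base multiplicativity $(L_0)$ rather than on $(L_{n-1})$. One small caution at the base: over a non-local $R$, a module $C \in \PP$ need not admit a square free presentation globally, so your description of $(L_0)$ via a block-triangular presentation should be understood after localizing at each prime, as the paper does in its proof of Proposition \ref{prop4v} before citing \cite[Lemma 3]{CG98}.
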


We call $\varSF{n}$ the {\it $n$-th shift} of $\FF$.
The proof of Theorem \ref{thm02} is given by Theorem \ref{thm41new} and Proposition \ref{prop4v}.
In fact, Theorem \ref{thm02} is valid for any commutative noetherian ring $R$ (with the definition of {\it torsion modules} clarified).

We can easily deduce Theorem \ref{thm01} from Theorem \ref{thm02} as follows.
Applying Theorem \ref{thm02} with $n = 2$ to the sequence (\ref{eq02}) yields $\FF(X_{\Sigma}(L)) = \FF(P_1)\FF(P_2)^{-1}\varSF{2}(\Z_p)$.
More impressively, this equality can be written as
\[
\FF(X_{\Sigma}(L)) = \FF(\Phi_{L/K, \Sigma})\varSF{2}(\Z_p)
\]
where $\Phi_{L/K, \Sigma} = [P_1] - [P_2]$ is regarded as an element of the Grothendieck group of the exact category of finitely generated torsion $R$-modules of projective dimension $ \leq 1$. ($\FF$ actually factors through that Grothendieck group: see Remark \ref{rem2a}(3).)
Since $\FF(\Z_p[\Delta]) = (T)$, Theorem \ref{thm02} applied to the sequence (\ref{eq01}) yields $\varSF{2}(\Z_p) = T^{1-s}\FF(B_{\Delta})$.
Finally, Ritter and Weiss \cite[Theorem 11]{RW02} proved the equivariant main conjecture $\FF(\Phi_{L/K, \Sigma}) = (\Theta_{L/K, \Sigma})$ assuming the vanishing of the $\mu$-invariant.
These prove Theorem \ref{thm01}.

In this paper, we further generalize the concept of the shifts of $\FF$ in several directions.
One direction is to consider a noncommutative ring $R$.
In that case, an analogue of the Fitting ideals is not defined in general.
However, for certain rings $R$, such as $R = \Z_p[[T]][\Delta]$ with $\Delta$ a noncommutative finite group, Nickel \cite{Nic10} defined the noncommutative Fitting invariant $\Fitt_R^{\max}$ (we recall the definition in Subsection \ref{subsec42}).
In order to treat the commutative case and the noncommutative case simultaneously, we will introduce an axiomatic Fitting invariant (called simply a {\it Fitting invariant}).
The usual Fitting ideals and the noncommutative Fitting invariants of Nickel are certainly examples of Fitting invariants as we will show in Propositions \ref{prop4v} and \ref{prop4g}, respectively.
Then Theorem \ref{thm41new} asserts that the shifts of any Fitting invariant are well-defined by the same formula as Theorem \ref{thm02}.

Another direction of the generalizations of the theory of shifts is to define the $n$-th shift for a negative integer $n$.
To that end, we will impose Assumption \ref{ass16} and use a duality on a certain category $\CC$ of finitely generated torsion $R$-modules.
We will also have to introduce another slightly different axiomatic Fitting invariant (called a {\it quasi-Fitting invariant}).
A quasi-Fitting invariant is defined only on $\CC$, but satisfies an apparently stronger axiom than a Fitting invariant.
The previous examples of Fitting invariants indeed give rise to examples of quasi-Fitting invariants.
We will also propose a quasi-Fitting invariant for the Iwasawa algebra of a compact $p$-adic Lie group as considered in \cite{CFKSV05}.

For a quasi-Fitting invariant $\FF$, in Theorem \ref{thm47} we will define a well-behaving $n$-th shift $\SF{n}(M)$ for any integer $n$ and any module $M$ in $\CC$.
(The reader can now take a look at the statement of Theorem \ref{thm47} to see the analogy with the definition of $\varSF{n}$.)
Moreover, by Theorem \ref{thm41}, the definition can be extended to any integer $n$ and any finitely generated torsion $R$-module $M$ (not necessarily contained in $\CC$).

As we will see in Subsection \ref{subsec:dual}, one useful application of negative shifts is the simplified proofs (and the generalizations) of the algebraic propositions (\cite[Lemma 5]{BG03}, \cite[Proposition 7.3]{GP15} and \cite[Propositions 5.3.2 and 6.3.2]{Nic10}) claiming the relation between the ``typical'' Fitting invariants of modules in certain four term exact sequences.
From our point of view, their assertions can be written as the simple form $\SF{-2}(M) = \FF(M^*)$ as Propositions \ref{prop4b} and \ref{prop4a}.

\subsection*{Applications to Iwasawa Theory}

In Section \ref{sec5}, we apply our theory of shifts to various situations in Iwasawa theory.
We now sketch the results without explaining the precise notations.
The first application is a non-commutative version of Theorem \ref{thm01}.

\begin{thm}[Theorem \ref{thm74}]\label{thm:13a}
Let $K$ be a totally real number field, $L$ a totally real, Galois extension of $K$ which is a finite extension of $K^{\cyc}$.
Let $\Sigma$ be a finite set of places of $K$ containing all primes above $p$ and primes ramified in $L/K$.
We denote by $\Theta_{L/K, \Sigma}$ the equivariant $p$-adic $L$-function.
Suppose that the $\mu$-invariant of $X_{\Sigma}(L)$ vanishes.
Then for $\FF = \Fitt_R^{\max}$ where $R = \Z_p[[\Gal(L/K)]]$, we have
\[
\FF(X_{\Sigma}(L)) = \Theta_{L/K, \Sigma} \varSF{2}(\Z_p).
\]
\end{thm}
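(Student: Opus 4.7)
The plan is to follow the proof of Theorem \ref{thm01} as sketched in the introduction, substituting Nickel's noncommutative Fitting invariant $\Fitt_R^{\max}$ for the usual Fitting ideal and invoking the noncommutative analogue of the equivariant main conjecture due to Ritter--Weiss \cite{RW02}. The three ingredients are a four term ``Tate sequence'' for $X_{\Sigma}(L)$, the shift formalism of Section~4, and the noncommutative main conjecture.

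I would first produce a Ritter--Weiss type exact sequence
\[
0 \to X_{\Sigma}(L) \to P_1 \to P_2 \to \Z_p \to 0
\]
of finitely generated torsion $R$-modules with $\pd_R(P_i) \leq 1$ for $i = 1, 2$. This is supplied by the cited Proposition \ref{prop7a}, which reproduces the construction of Ritter and Weiss; their construction already works in the noncommutative one-dimensional setting relevant here, so no modification is needed to allow $\Gal(L/K)$ to be nonabelian.

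Next, by Proposition \ref{prop4g} the invariant $\Fitt_R^{\max}$ falls under the axiomatic framework of a Fitting invariant, so Theorem \ref{thm41new} applies. Taking $n = 2$, $M = \Z_p$ and $N = X_{\Sigma}(L)$ in the defining sequence (\ref{seq:3}), we obtain
\[
\FF(X_{\Sigma}(L)) = \FF(P_1)\FF(P_2)^{-1}\varSF{2}(\Z_p).
\]
As in Remark \ref{rem2a}(3), the ratio $\FF(P_1)\FF(P_2)^{-1}$ depends only on the class $[P_1] - [P_2]$ in the Grothendieck group of finitely generated torsion $R$-modules of projective dimension $\leq 1$, and $\varSF{2}(\Z_p)$ is well defined independently of the chosen representatives $P_1, P_2$.

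To conclude, I would invoke the noncommutative equivariant main conjecture of Ritter--Weiss \cite{RW02}: under the assumed vanishing of the $\mu$-invariant, this conjecture (proved in precisely the one-dimensional $p$-adic Lie situation at hand) identifies the class $[P_1] - [P_2]$ with the principal generator $\Theta_{L/K, \Sigma}$ on the level of $\Fitt_R^{\max}$, in the sense that $\FF(P_1)\FF(P_2)^{-1} = (\Theta_{L/K,\Sigma})$. Substituting into the previous display yields the claimed identity. The main obstacle is not algebraic but bookkeeping: one must check that the noncommutative main conjecture, as stated by Ritter and Weiss, can be read off in exactly this Fitting-invariant form for the ring $R = \Z_p[[\Gal(L/K)]]$ of interest, and that $\Fitt_R^{\max}$ genuinely descends to the relevant Grothendieck group in the nonabelian setting. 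Once these matching-of-conventions points are settled, the remainder of the argument is a purely formal application of the shift machinery developed in Section~4.
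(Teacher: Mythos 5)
Your proposal follows the same route as the paper's own proof: Proposition~\ref{prop7a} to produce the four-term sequence $0 \to X_{\Sigma}(L) \to P_1 \to P_2 \to \Z_p \to 0$ with $P_1, P_2 \in \PP$, the shift machinery to write $\FF(X_{\Sigma}(L)) = \FF(P_1)\FF(P_2)^{-1}\varSF{2}(\Z_p)$, and the noncommutative main conjecture to identify $\FF(P_1)\FF(P_2)^{-1}$ with $(\Theta_{L/K,\Sigma})$. The one structural detail you get slightly askew is the attribution of the decisive analytic input. You cite \cite{RW02} for the \emph{proof} of the noncommutative equivariant main conjecture, but Theorem~11 of \cite{RW02} handles only the abelian case (it is what the paper uses to establish the original Theorem~\ref{thm01}); the nonabelian one-dimensional case needed here was proved in \cite{RW11} (and equivalently by Kakde). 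The paper's proof of Theorem~\ref{thm74} relies explicitly on \cite{RW11}. You yourself flag that this matching of conventions is the "main obstacle," and you are right to be cautious: without the RW11 input the argument does not close in the noncommutative setting. A small second point: the paper formally runs the argument through the quasi-Fitting invariant framework and Corollary~\ref{cor4p}, using Remark~\ref{rem4a} (with $\Z_p \in \CC$) to identify $\SF{2}(\Z_p) = \varSF{2}(\Z_p)$, whereas you work directly with Theorem~\ref{thm41new}; this is an equivalent and, if anything, slightly more direct way to get the displayed identity, but it means you should verify $\Z_p$ is $S$-torsion so that the sequence from Proposition~\ref{prop7a} lies in $\MM$ at all, which the paper does inside the proof of Proposition~\ref{prop7a}.
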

Though Theorem \ref{thm74} uses $\SF{2}(\Z_p)$ instead of $\varSF{2}(\Z_p)$, they are equal by Remark \ref{rem4a} and the fact that $\Z_p \in \CC$.
We also note that $\varSF{2}(\Z_p)$ should be computed by constructing an exact sequence of the form (\ref{eq01}), but the construction seems difficult in general and we do not study the problem in this paper.

The following is a result about a finite CM Galois extension of number fields.
See Remark \ref{rem:59a} for the relation with previous works.

\begin{thm}[Theorem \ref{thm5a}]
Let $K$ be a totally real number field, $K'$ a CM, finite Galois extension of $K$ containing $\mu_p$, the group of $p$-th roots of unity.
Let $\Sigma$ be a finite set of places of $K$ containing all primes above $p$ and primes ramified in $K'/K$.
We denote by $\theta_{K'/K, \Sigma}(s)$ the equivariant zeta function.
Put $L = (K')^{\cyc}$ and $G' = \Gal(L/K')$.
Suppose that the $\mu$-invariant of $X_{\Sigma}(L^+)$ vanishes.
Then for $\FF_{R'} = \Fitt^{\max}_{R'}$ where $R' = \Z_p[\Gal(K'/K)]$ and any $n \in \Z$, $r \geq 2$, we have
\[
\SF{n}_{R'}(X_{\Sigma}(L^+)(-r)_{G'}) = \theta_{K'/K, \Sigma}(1-r)^{(-1)^n} \SF{n+2}_{R'}(\Z_p(-r)_{G'}),
\]
where $(-r)$ denotes the Tate twist.
\end{thm}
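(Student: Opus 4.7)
The plan is to apply Theorem~\ref{thm:13a} to the totally real Galois extension $L^+/K$ (note $L^+=(K'^+)^{\cyc}$ is finite over $K^{\cyc}$) and then descend the resulting identity from the Iwasawa algebra $R=\Z_p[[\Gal(L^+/K)]]$ to the finite group ring $R'=\Z_p[\Gal(K'/K)]$ via the Tate-twist-and-$G'$-coinvariants functor $M\mapsto M(-r)_{G'}$. The underlying Ritter--Weiss four-term exact sequence (Proposition~\ref{prop7a} applied to $L^+$)
\[
0\to X_\Sigma(L^+)\to P_1\to P_2\to \Z_p\to 0
\]
consists of torsion $R$-modules with $\pd_R(P_i)\le 1$, and Theorem~\ref{thm:13a} is equivalent to the equivariant main conjecture $\FF_R(P_1)\FF_R(P_2)^{-1}=(\Theta_{L^+/K,\Sigma})$.

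Next, I would view $X_\Sigma(L^+)$ and the above sequence as $\Z_p[[\Gal(L/K)]]$-modules via the surjection $\Gal(L/K)\twoheadrightarrow\Gal(L^+/K)$ and apply $M\mapsto M(-r)_{G'}$, where $G'$ acts on $X_\Sigma(L^+)$ through the restriction isomorphism $G'\simeq\Gal(L^+/K'^+)$ valid for odd $p$. For $r\ge 2$ this functor carries the Ritter--Weiss sequence to an exact sequence of $R'$-modules
\[
0\to X_\Sigma(L^+)(-r)_{G'}\to P_1(-r)_{G'}\to P_2(-r)_{G'}\to \Z_p(-r)_{G'}\to 0
\]
with $\pd_{R'}(P_i(-r)_{G'})\le 1$; both properties follow from the vanishing of $H^0(G',-)$ on each term, which holds because the twist operator $\chi(\gamma_0)^{-r}\gamma_0-1$ (for $\gamma_0$ a topological generator of $G'$) acts as a non-zero-divisor on the modules in question under the $r\ge 2$ and $\mu=0$ hypotheses. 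Simultaneously, the descent of the equivariant main conjecture identifies $\FF_{R'}(P_1(-r)_{G'})\FF_{R'}(P_2(-r)_{G'})^{-1}=(\theta_{K'/K,\Sigma}(1-r))$ by the interpolation property of the equivariant $p$-adic $L$-function (Deligne--Ribet, Cassou-Nogu{\`e}s).

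To conclude, apply the shift formalism (Theorems~\ref{thm41},~\ref{thm47}) over $R'$ to the descended four-term sequence. Extending a length-$m$ resolution to length $n$ by further resolving the left-end term $N$, the well-definedness of $\varSF{n}$ (Theorem~\ref{thm02}) yields the general shift relation $\varSF{n}(M)=\bigl(\prod_{i=1}^m \FF(P_i)^{(-1)^{n-m+i}}\bigr)\varSF{n-m}(N)$ for $n\ge m\ge 0$, which extends to all $n\in\Z$ via Theorem~\ref{thm41}. Applied here with $m=2$, this gives
\[
\SF{n+2}_{R'}(\Z_p(-r)_{G'})=\FF_{R'}(P_1(-r)_{G'})^{(-1)^{n+1}}\FF_{R'}(P_2(-r)_{G'})^{(-1)^n}\SF{n}_{R'}(X_\Sigma(L^+)(-r)_{G'}),
\]
and substituting $\FF_{R'}(P_1(-r)_{G'})=\theta_{K'/K,\Sigma}(1-r)\cdot\FF_{R'}(P_2(-r)_{G'})$ collapses the two middle Fitting factors into $\theta_{K'/K,\Sigma}(1-r)^{-(-1)^n}$, yielding the stated identity upon rearranging. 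The main obstacle will be the descent step: exactness of the twisted four-term sequence together with preservation of $\pd_{R'}\le 1$ after $(-r)_{G'}$ rest essentially on $r\ge 2$ and $\mu=0$, and the identification of the descent of $\Theta_{L^+/K,\Sigma}$ with $\theta_{K'/K,\Sigma}(1-r)$ requires careful character-by-character comparison via the interpolation formula.
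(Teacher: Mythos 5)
Your plan runs parallel to the paper's argument but compresses two separate reductions into a single step. The paper first proves an infinite-level twisted identity (Proposition~\ref{prop58}) by applying the functoriality of shifts (Proposition~\ref{prop51}) to the ring automorphism $t_{-r}$ of $R=\Z_p[[\Gal(L/K)]]$, and then descends to $R'=\Z_p[\Gal(K'/K)]$ via Example~\ref{eg4o} (again Proposition~\ref{prop51}, now for the surjection $R\to R'$, whose hypotheses are verified in Proposition~\ref{prop4n} by showing $H_1(G',M)=0$ for every $M\in\CC_{R,S}$). You propose instead to feed the composite functor $M\mapsto M(-r)_{G'}$ directly into the Ritter--Weiss four-term sequence and only then apply the shift formalism over $R'$. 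This is the same in substance, but by not using the pre-built functoriality machinery you have to re-derive from scratch the exactness of the descended sequence and the preservation of $\pd\le 1$, and your justification (``$\kappa(\gamma_0)^{-r}\gamma_0-1$ is a non-zero-divisor under $r\ge 2$, $\mu=0$'') is heuristic. The paper's Proposition~\ref{prop4n} gives the vanishing of $H_1(G',M)$ uniformly for all $M\in\CC_{R,S}$ by a clean rank comparison; the role of $\mu=0$ and $r\neq0$ is only to ensure that the twisted modules actually land in $\CC_{R,S}$ for the relevant Ore set, a point you would still need to check explicitly.

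The substantive gap --- which you flag yourself --- is the assertion $\FF_{R'}(P_1(-r)_{G'})\,\FF_{R'}(P_2(-r)_{G'})^{-1}=\bigl(\theta_{K'/K,\Sigma}(1-r)\bigr)$, which you attribute to ``the interpolation property'' without argument. This is exactly Proposition~\ref{propzw}, and its proof is short but not automatic: it requires the defining interpolation formula $\aug_{\Gamma}\circ(\kkappa^r)^{\sharp}\circ j_{\phi}(\Theta_{L^+/K,\Sigma})=L_{\Sigma}(1-r,\phi\omega^{-r})$ together with the commutation relation $j_{\chi}\circ t_r=(\kkappa^r)^{\sharp}\circ j_{\chi\omega^r}$, applied with $\phi=\chi\omega^r$. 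Without this computation, your chain of equalities does not close. Finally, note (as in Remark~\ref{rem:59a}) that the displayed identity is only nontrivial --- and in fact only correct --- on the $e_r=\frac{1+(-1)^r j}{2}$-component of $R'$, since both $\theta_{K'/K,\Sigma}(1-r)$ and the twisted modules vanish on the other component; your write-up should record that the equality is to be read there.
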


The following result has been proved by Greither \cite[Theorem 3]{Grei04}.
In this paper we reprove it via a systematic use of the $(-1)$-st shift $\SF{-1}$.

\begin{thm}[Theorem \ref{thm76} and Lemma \ref{lemzv}]
Consider the situation of Theorem \ref{thm:13a}, and assume that $L/K$ is abelian.
Let $\Sigma_p$ denote the set of primes above $p$ and we consider $X_{\Sigma_p}(L)$.
Let $\psi$ be a non-trivial character of $G$ of order prime to $p$ and the subscript $\psi$ denote the $\psi$-part.
Then for $\FF = \Fitt_R$, we have
\[
\FF(X_{\Sigma_p}(L)_{\psi}) = (\Theta_{L/K, \Sigma})_{\psi} \prod_{v \in \Sigma \setminus \Sigma_p} \left( \frac{\N_{I_v}}{\widetilde{\sigma_v}-\NN(v)}, 1 \right)_{\psi}.
\]
Here $\widetilde{\sigma_v}$ is a lift of the Frobenius element, $\NN(v)$ is the order of the residue field at $v$, $\N_{I_v}$ is the norm element of the group ring associated to the inertia group $I_v$.
\end{thm}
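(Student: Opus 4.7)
The strategy is to bootstrap Theorem \ref{thm:13a} to the case of $\Sigma_p$-ramified Iwasawa modules by inserting a local correction at each $v \in \Sigma \setminus \Sigma_p$ and using the shift formalism---particularly the $(-1)$-st shift $\SF{-1}$---to convert a short exact sequence relating $X_\Sigma(L)$ and $X_{\Sigma_p}(L)$ into a Fitting ideal identity.

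For each $v \in \Sigma \setminus \Sigma_p$, I would first produce a local $R$-module $Y_v$ encoding the tame ramification data at $v$ and show that the two Iwasawa modules sit in a short exact sequence
\[
0 \to X_{\Sigma_p}(L) \to X_\Sigma(L) \to \bigoplus_{v \in \Sigma \setminus \Sigma_p} Y_v \to 0.
\]
The content of Lemma \ref{lemzv} is then the explicit identification
\[
Y_v \simeq R/(\widetilde{\sigma_v} - \NN(v),\, \N_{I_v}),
\]
which comes from local class field theory at $v \nmid p$: the tame inertia of the maximal abelian pro-$p$ extension of $L_w$ is $\Z_p(1)$ on which the lifted Frobenius $\widetilde{\sigma_v}$ acts as multiplication by $\NN(v)$, modulo the already-ramified contribution $\N_{I_v}$ from $I_v$.

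Combining Theorem \ref{thm:13a}, the short exact sequence above, and Theorem \ref{thm02} (applied via a projective-dimension $\leq 1$ resolution of $\bigoplus_v Y_v$ spliced into the Ritter--Weiss sequence (\ref{eq02}) for $X_\Sigma(L)$), I obtain the master identity
\[
\FF(X_{\Sigma_p}(L)) = \Theta_{L/K,\Sigma} \, \SF{2}(\Z_p) \, \prod_{v \in \Sigma \setminus \Sigma_p} \SF{-1}(Y_v).
\]
A direct resolution-based computation of $\SF{-1}(Y_v)$ from the presentation above yields
\[
\SF{-1}(Y_v) = \left(\frac{\N_{I_v}}{\widetilde{\sigma_v} - \NN(v)},\, 1\right).
\]
Taking $\psi$-parts then collapses the $\SF{2}(\Z_p)_\psi$ factor to the unit ideal: since $\psi$ is non-trivial of order prime to $p$, one has $(\Z_p)_\psi = 0$, and by the base-change compatibility of the shift formalism (Remark \ref{rem4a}), $\SF{2}(\Z_p)_\psi = \varSF{2}(0) = R_\psi$. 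The stated formula follows.

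The principal technical obstacle is Lemma \ref{lemzv}: confirming both the claimed presentation $Y_v \simeq R/(\widetilde{\sigma_v} - \NN(v), \N_{I_v})$ and the fact that $Y_v$ fits into the asserted short exact sequence comparing $X_{\Sigma_p}(L)$ and $X_\Sigma(L)$. This requires careful local work at $v \nmid p$---separating the tame and wild parts of inertia, handling the fact that $\widetilde{\sigma_v}$ is only well-defined modulo $I_v$ (which is precisely the reason $\N_{I_v}$ appears), and confirming that no spurious error terms intrude from global Poitou--Tate obstructions. Once Lemma \ref{lemzv} is in place, the derivation of the formula is a bookkeeping exercise within the shift formalism developed earlier in the paper.
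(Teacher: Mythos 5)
Your overall strategy is right---split off local corrections at $v \in \Sigma\setminus\Sigma_p$ and use the $(-1)$-st shift---but there are concrete errors. The exact sequence is backwards: since $\Sigma_p\subset\Sigma$, the maximal pro-$p$ extension unramified outside $\Sigma_p$ is a \emph{subfield} of the one unramified outside $\Sigma$, so $X_\Sigma(L)$ \emph{surjects onto} $X_{\Sigma_p}(L)$, and the local data sits as the kernel. The paper's Proposition \ref{prop75} gives $0\to\bigoplus_v \UU_v(L)\to X_\Sigma(L)\to X_{\Sigma_p}(L)\to 0$; your sequence with $X_{\Sigma_p}(L)$ as the subobject is wrong, and consequently you apply $\SF{-1}$ to the wrong term. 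Your identification of the local module is also wrong. Lemma \ref{lem5b} shows $\UU_v(L)\simeq \Ind_{G_v}^G\Z_p(\kappa_v)$, where $\kappa_v$ sends $\sigma_v\mapsto\NN(v)$ and is \emph{trivial} on $I_v$. As a cyclic $R$-module this is $R/(\widetilde{\sigma_v}-\NN(v),\,\tau-1:\tau\in I_v)$, not $R/(\widetilde{\sigma_v}-\NN(v),\,\N_{I_v})$: the inertia acts trivially, so $\tau-1$ kills it while $\N_{I_v}$ acts as multiplication by $|I_v|\ne 0$. The norm element $\N_{I_v}$ enters only in the final Fitting-ideal formula, through the algebraic manipulation inside the $\SF{-1}$ computation of Lemma \ref{lemzv} (the decomposition $\Gal(L/F_v)=\Gal(L/F_v^{\cyc})\times\langle\widetilde{\sigma_v}^{m_v}\rangle$, Example \ref{eg4q}, and the identity $\N_{\Gal(L/F_v^{\cyc})}/(\widetilde{\sigma_v}^{m_v}-1)=\N_{I_v}/(\widetilde{\sigma_v}-1)$), not as a generator of the annihilator ideal.

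Finally, the ``master identity'' you propose before taking $\psi$-parts cannot hold as stated, because $X_\Sigma(L)$ is not in $\PP$ in general---the $\Z_p$ term in the Ritter--Weiss sequence (\ref{eq02}) obstructs. The paper takes $\psi$-parts from the start: since $\psi$ is nontrivial of order prime to $p$, $(-)_\psi$ is exact and $(\Z_p)_\psi=0$, whence $X_\Sigma(L)_\psi\in\PP$ and $\FF(X_\Sigma(L)_\psi)=\FF((\Phi_{L/K,\Sigma})_\psi)$ \emph{directly}, with no $\SF{2}(\Z_p)$ factor appearing. The short exact sequence in $\psi$-parts (with the correct orientation) then yields
\[
\FF(X_{\Sigma_p}(L)_\psi)=\FF(X_\Sigma(L)_\psi)\,\SF{-1}\Bigl(\bigoplus_v\UU_v(L)_\psi\Bigr),
\]
and Lemma \ref{lemzv} computes the last factor. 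To fix your argument you would need to reverse the short exact sequence, replace your presentation of the local module by the induced-module description, and handle the $\psi$-part reduction before invoking the shift formalism rather than after.
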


As the final result, we give a two-variable version of Theorem \ref{thm01}.

\begin{thm}[Theorem \ref{thmzx}]
Let $K$ be an imaginary quadratic field in which $p$ splits into two primes $\pe, \overline{\pe}$.
Let $\LLL$ be an abelian extension of $K$ which is a finite extension of $\widetilde{K}$, the $\Z_p^2$-extension of $K$.
Let $\Sigma$ be a finite set of places of $K$ containing all primes above $p$ and primes ramified in $\LLL/K$.
Put $\Sigma_0 = \Sigma \setminus \{\overline{\pe}\}$ and we shall consider $X_{\Sigma_0}(\LLL)$.
Then for $\FF = \Fitt_R$ where $R = \Z_p[[\Gal(\LLL/K)]]$, we have
\[
\FF(X_{\Sigma_0}(\LLL)) = \FF(\Phi_{L/K, \Sigma_0}) \varSF{2}(\Z_p),
\]
where $\Phi_{L/K, \Sigma_0}$ is the $K$-theoretic invariant defined via the sequence in Proposition \ref{prop71}.
\end{thm}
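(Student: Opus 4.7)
The plan is to mimic the derivation of Theorem \ref{thm01} from Theorem \ref{thm02}, but now in the two-variable setting over $R = \Z_p[[\Gal(\LLL/K)]]$, which is a formal power series ring in two variables over a finite group ring. The proof is essentially formal once the correct four-term sequence is available, and that sequence is exactly what Proposition \ref{prop71} should supply.

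First, I would invoke Proposition \ref{prop71}, which I expect to produce a four-term exact sequence
\[
0 \to X_{\Sigma_0}(\LLL) \to P_1 \to P_2 \to \Z_p \to 0
\]
of finitely generated torsion $R$-modules with $\pd_R(P_i) \leq 1$ for $i = 1, 2$. The removal of $\overline{\pe}$ from $\Sigma$ is essential at this step: since $K$ is imaginary quadratic and $p$ splits as $\pe \overline{\pe}$, the full $\Sigma$-ramified Iwasawa module would fail to be torsion over the two-variable Iwasawa algebra, but dropping one prime above $p$ restores torsionness. Second, I would apply Theorem \ref{thm02} with $n = 2$, $M = \Z_p$, $N = X_{\Sigma_0}(\LLL)$ to this sequence, obtaining
\[
\varSF{2}(\Z_p) = \FF(P_1)^{-1}\FF(P_2)\,\FF(X_{\Sigma_0}(\LLL)).
\]

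Third, by the very definition of the $K$-theoretic class $\Phi_{\LLL/K, \Sigma_0} = [P_1] - [P_2]$ in the Grothendieck group of torsion $R$-modules of projective dimension $\leq 1$, and the fact recorded in Remark \ref{rem2a}(3) that $\FF$ factors through that Grothendieck group, we have $\FF(\Phi_{\LLL/K, \Sigma_0}) = \FF(P_1)\FF(P_2)^{-1}$. Rearranging the displayed identity then yields
\[
\FF(X_{\Sigma_0}(\LLL)) = \FF(\Phi_{\LLL/K, \Sigma_0})\,\varSF{2}(\Z_p),
\]
as desired. One should also verify that the class $\Phi_{\LLL/K, \Sigma_0}$ does not depend on the choice of $(P_1, P_2)$ in Proposition \ref{prop71}, but this follows formally from the Grothendieck-group framing, by the same kind of comparison argument that underlies Theorem \ref{thm02}.

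The main obstacle is conceptually offloaded to Proposition \ref{prop71}: the real work lies in constructing the four-term sequence in the two-variable case. In the one-variable situation one uses the Ritter--Weiss Tate sequence, which is built from carefully chosen global and local cohomological data; transporting the construction to the $\Z_p^2$-setting requires controlling torsion and projective dimension of the relevant local terms at primes in $\Sigma_0$ over the two-variable Iwasawa algebra, and in particular understanding the role of the local term at $\pe$ (which is kept in $\Sigma_0$) versus $\overline{\pe}$ (which is dropped). Once Proposition \ref{prop71} is established, the steps above are purely formal applications of the shift machinery developed in Theorem \ref{thm02}.
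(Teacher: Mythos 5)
Your argument is correct and is exactly the paper's own route: the theorem (stated in Section 5 as Theorem~\ref{thmzx}, with $\SF{n+2}$ in place of $\varSF{2}$, the two agreeing on $\Z_p$ by Remark~\ref{rem4a} since $\Z_p\in\CC_2$) is an immediate application of the shift machinery — Corollary~\ref{cor4p}, or equivalently Theorem~\ref{thm02} for $n=2$ in the commutative case — to the four-term sequence supplied by Proposition~\ref{prop71}, and you identify correctly that the actual arithmetic content lives in Proposition~\ref{prop71}. One small caveat: the well-definedness of the class $\Phi_{\LLL/K,\Sigma_0}\in K_0(\PP)$ does \emph{not} follow ``formally from the Grothendieck-group framing'' as you suggest, because here $\Z_p\notin\PP$ (indeed $\pd_\Lambda(\Z_p)=2$), so the comparison argument behind Theorem~\ref{thm02} controls $\FF(P_1)^{-1}\FF(P_2)\FF(X_{\Sigma_0}(\LLL))$ but not $\FF(P_1)\FF(P_2)^{-1}$ on its own; the theorem sidesteps this by defining $\Phi_{\LLL/K,\Sigma_0}$ via a fixed choice of sequence from Proposition~\ref{prop71}.
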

In our forthcoming paper, we hope to study an expected main conjecture relating $\Phi_{L/K, \Sigma_0}$ with a certain $p$-adic $L$-function.

\section*{Acknowledgments}
First I would like to thank my supervisor  Takeshi Tsuji for his continued support during this research.
I would also like to thank Masato Kurihara for several helpful discussions.
Thanks are also due to Andreas Nickel, Cornelius Greither, and the anonymous referees for their helpful comments, which greatly improved this article.
A part of this research was done during my visit to Ralph Greenberg, to whom I am deeply grateful for his great hospitality and for fruitful discussions.
I would like to express my appreciation to the organizers of the conference Iwasawa 2017, where I had the opportunity to give a talk about the contents of this article.
This work was supported by the Program for Leading Graduate Schools (FMSP) at the University of Tokyo, and partially by JSPS KAKENHI Grant Number 17J04650.

\section{Fitting Invariants}

\subsection{Shift}\label{subsec11}
Let $R$ be a (not necessarily commutative) ring with unity, which is (both left and right) noetherian.
Let $S$ be an Ore set of $R$ consisting of non-zero-divisors.
In other words, $S$ is a set of non-zero-divisors of $R$ satisfying the following conditions:
\begin{itemize}
\item $1 \in S$.
\item If $f, g \in S$, then $fg \in S$.
\item If $f \in S, a \in R$, then $fR \cap aS \neq \emptyset$ and $Rf \cap Sa \neq \emptyset$.
\end{itemize}
Note that the final condition is trivial if $R$ is commutative or, more generally, $S$ is central in $R$.
A typical example to keep in mind is the following.

\begin{eg}\label{ega1}
Let $p$ be a prime number.
Let $G$ be a profinite group containing an open central subgroup $G'$ which is isomorphic to $\Z_p^d$ with $d \geq 0$.
Put $R = \Z_p[[G]], \Lambda = \Z_p[[G']]$ and $S = \Lambda \setminus \{0\}$.
Then $S$ is indeed an Ore set since $G'$ is central.
It is known that $\Lambda$ is isomorphic to the ring $\Z_p[[T_1, \dots, T_d]]$ of formal power series in $d$ variables.
\end{eg}

In this section, a module basically means a left module.
However, every argument is also valid for right modules by symmetry.
An $R$-module $M$ is said to be $S$-torsion if every element of $M$ is annihilated by an element of $S$.
Let $\MM = \MM_{R, S}$ be the category of all finitely generated $S$-torsion left $R$-modules.

For a left $R$-module $M$, we denote by $\pd_R(M)$ the projective dimension of $M$ over $R$, which can be defined by
\[
\pd_R(M) = \sup \{i \in \Z \mid \text{$\Ext^i_R(M, N) \neq 0$ for some left $R$-module $N$} \}.
\]
If $\Ext^i_R(M,N) = 0$ for any $N$ and $i$, which happens only when $M = 0$, then we put $\pd_R(M) = -\infty$.
Hence $\pd_R(M)$ is a nonnegative integer or $\pm \infty$.
Now we define a full subcategory $\PP = \PP_{R, S}$ of $\MM$ by
\[
\PP = \{ P \in \MM \mid \pd_R(P) \leq 1 \}.
\]

\begin{lem}\label{lem2a}
Let $0 \to M' \to M \to M'' \to 0$ be an exact sequence of $R$-modules.

(1) $M$ is in $\MM$ if and only if $M'$ and $M''$ are in $\MM$.

(2) Suppose that $M''$ is in $\PP$.
Then $M$ is in $\PP$ if and only if $M'$ is in $\PP$.
\end{lem}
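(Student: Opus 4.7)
The plan is to treat the two parts separately, reducing each to standard homological or categorical facts, and to make sure the two conditions hidden in ``being in $\MM$'' (finite generation and $S$-torsionness) are each handled cleanly.

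For part (1), I would split ``$M \in \MM$'' into the two subconditions. Finite generation is routine: because $R$ is noetherian, a submodule of a finitely generated module is finitely generated, a quotient of a finitely generated module is finitely generated, and conversely a module sandwiched between two finitely generated ones is finitely generated (lift a finite generating set of $M''$ and adjoin a finite generating set of $M'$). For the $S$-torsion condition, the passage to submodules and quotients is immediate. The converse is the only step that actually uses the structure of $S$: given $m \in M$, pick $s \in S$ annihilating the image of $m$ in $M''$, so that $sm \in M'$; then pick $s' \in S$ annihilating $sm$. By the multiplicativity of $S$ we have $s's \in S$, and $(s's)m = 0$, proving $M$ is $S$-torsion. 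No use of the left or right Ore condition is needed here, only that $S$ is multiplicatively closed.

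For part (2), the natural tool is the long exact sequence of $\Ext$. For any left $R$-module $N$, the short exact sequence $0 \to M' \to M \to M'' \to 0$ gives
\[
\cdots \to \Ext^{i}_R(M'', N) \to \Ext^{i}_R(M, N) \to \Ext^{i}_R(M', N) \to \Ext^{i+1}_R(M'', N) \to \cdots
\]
Since $\pd_R(M'') \leq 1$, both $\Ext^{i}_R(M'', N)$ and $\Ext^{i+1}_R(M'', N)$ vanish for all $i \geq 2$, so the connecting maps force $\Ext^{i}_R(M, N) \cong \Ext^{i}_R(M', N)$ for every $N$ and every $i \geq 2$. In view of the characterization of $\pd_R$ through vanishing of $\Ext$, this yields the equivalence $\pd_R(M) \leq 1 \iff \pd_R(M') \leq 1$. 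Combined with part (1), which already gives $M \in \MM \iff M' \in \MM$ (using $M'' \in \MM$), this produces $M \in \PP \iff M' \in \PP$.

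I do not anticipate any genuine obstacle: the only point requiring slight attention is consistency of conventions when some module is zero (the case $\pd_R = -\infty$), which fits into the definition without trouble, and the fact that the symmetric statement for right modules follows by the same argument since $S$ is a two-sided Ore set. Both parts are essentially formal, which is reassuring because they will be used throughout the paper to freely extend or restrict sequences within $\MM$ and $\PP$.
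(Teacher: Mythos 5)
Your proof is correct and takes essentially the same approach the paper indicates: part (1) by routine arguments from noetherianness and multiplicative closure of $S$, part (2) by the long exact sequence of $\Ext^i_R(-,N)$ combined with the characterization of $\pd_R$ via vanishing of $\Ext$. The paper leaves both parts as one-line sketches (``Clear'' and ``taking the extension groups''), and your writeup simply fills in those details without deviating from the intended route.
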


\begin{proof}
(1) Clear.

(2) This can be proved by taking the extension groups of the given short exact sequence. 
\end{proof}

\begin{lem}\label{lem2b}
(1) For any $f \in S$, the $R$-module $P = R/Rf$ is contained in $\PP$.

(2) For any $M \in \MM$, there are a module $P \in \PP$ and a surjective homomorphism $P \to M$.
\end{lem}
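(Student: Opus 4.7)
My plan is to check the three defining properties of $\PP$ for cyclic modules of the form $R/Rf$ in (1), and then to assemble a surjection onto $M$ from a finite direct sum of such modules in (2).

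For (1), the module $R/Rf$ is cyclic, hence finitely generated. To verify $S$-torsion, I take an arbitrary residue class $\overline{r}$ and apply the left Ore condition $Rf \cap Sr \neq \emptyset$ (with $f \in S$ and $r \in R$) to produce $s \in S$ and $b \in R$ with $sr = bf \in Rf$, so that $s \cdot \overline{r} = 0$. For the projective dimension bound, the key fact is that $f$ is a non-zero-divisor, so right multiplication by $f$ defines an injective left $R$-linear endomorphism of $R$ with image $Rf$; this yields the length-one free resolution
\[
0 \to R \xrightarrow{\;\cdot f\;} R \to R/Rf \to 0,
\]
and hence $\pd_R(R/Rf) \leq 1$.

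For (2), I choose a finite generating set $m_1, \dots, m_n$ of $M$ and, using the $S$-torsion hypothesis, annihilators $f_i \in S$ with $f_i m_i = 0$. Setting $P = \bigoplus_{i=1}^n R/Rf_i$, the assignment $\overline{r_i} \mapsto r_i m_i$ on each summand extends to a well-defined surjection $P \to M$. By (1) each summand $R/Rf_i$ lies in $\PP$, and applying Lemma \ref{lem2a} inductively to the split short exact sequences
\[
0 \to \bigoplus_{j<i} R/Rf_j \to \bigoplus_{j \leq i} R/Rf_j \to R/Rf_i \to 0
\]
shows $P \in \PP$.

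The only nontrivial point is the $S$-torsion verification in (1), where the Ore axiom is used essentially to move $f$ from the right of $Rf$ past the element $r$ to the left. In the commutative or central-$S$ setting this step would be immediate, so the Ore condition is exactly what makes the argument go through in the genuinely noncommutative case.
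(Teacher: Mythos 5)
Your proof is correct and follows essentially the same route as the paper: the same length-one free resolution $0 \to R \xrightarrow{\cdot f} R \to R/Rf \to 0$ in (1), and the same finite direct sum $\bigoplus_i R/Rf_i$ surjecting onto $M$ in (2). You merely spell out the Ore-condition verification of $S$-torsion and the inductive appeal to Lemma~\ref{lem2a} more explicitly than the paper's terse treatment, but this is not a different method.
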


\begin{proof}
(1) $P$ is $S$-torsion by the Ore property of $S$.
Then the exact sequence $0 \to R \overset{\times f}{\to} R \to P \to 0$ of left $R$-modules shows that $P \in \PP$.

(2) Let $x_1, \dots, x_r$ be a set of generators of $M$ and choose $f_i \in S$ such that $f_i x_i = 0$.
Then we obtain a surjective map $P = \bigoplus_{i=1}^r R/Rf_i \to M$ induced by $(a_i)_i \mapsto \sum_{i=1}^r a_i x_i$, and $P \in \PP$ follows from (1).
\end{proof}

As mentioned in Section \ref{sec01}, to treat simultaneously the Fitting ideals in the commutative case and the noncommutative Fitting invariants defined by Nickel, we introduce a kind of axiomatic Fitting invariants as follows.

\begin{defn}
A Fitting invariant is a map $\FF: \MM \to \Omega$ where $\Omega$ is a commutative monoid,  satisfying the following conditions:
\begin{itemize}
\item If $P \in \PP$, then $\FF(P) \in \Omega^{\times}$, the group of invertible elements of $\Omega$.
\item If $0 \to M' \to M \to P \to 0$ is an exact sequence in $\MM$ with $P \in \PP$, then $\FF(M) = \FF(P)\FF(M')$.
\end{itemize}
Here, even though $\MM$ is a category, a ``map'' merely means that $\FF$ defines an element $\FF(M)$ of $\Omega$ for each object $M$ of $\MM$.
Since the second condition implies in particular that isomorphic objects are sent to the same element, $\FF$ also can be regarded as an actual map from the set of the isomorphism classes of objects of $\MM$.
In this paper, we will often similarly abuse the word ``map''.
\end{defn}

\begin{rem}\label{rem2a}
(1) We can see that  $\FF(0)$ is the identity element of $\Omega$. (Here $0$ denotes the zero module.)

(2) By a standard argument, it can be shown that there is a unique universal Fitting invariant $\widetilde{\FF}: \MM \to \widetilde{\Omega}$ in a natural sense.
Namely, $\widetilde{\FF}$ is a Fitting invariant such that, for every Fitting invariant $\FF: \MM \to \Omega$, there is a unique monoid homomorphism $\phi: \widetilde{\Omega} \to \Omega$ such that $\FF = \phi \circ \widetilde{\FF}$.
It seems an interesting question to what extent we can distinguish modules by the values of $\widetilde{\FF}$.
See Example \ref{ega2}.

(3) Consider the Grothendieck group $K_0(\PP)$ of the exact category $\PP$.
By the properties of a Fitting invariant, the restriction $\FF|_{\PP}$ of $\FF$ to $\PP$ induces a group homomorphism $K_0(\PP) \to \Omega^{\times}$.
So $\FF$ cannot distinguish modules in $\PP$ which represent the same element in $K_0(\PP)$.
\end{rem}

The following is the fundamental theorem in this paper.

\begin{thm}\label{thm41new}
Let $\FF: \MM \to \Omega$ be a Fitting invariant.
Then for any $n \geq 0$, the following map $\varSF{n}: \MM \to \Omega$ is well-defined.
For each $M \in \MM$, take an exact sequence
\[
0 \to N \to P_1 \to \dots \to P_n \to M \to 0
\]
in $\MM$ with $P_1, \dots, P_n \in \PP$ and define
 \[
 \varSF{n}(M) = \left(\prod_{i=1}^n \FF(P_i)^{(-1)^i}\right) \FF(N).
 \]
 Moreover, $\varSF{n}: \MM \to \Omega$ is again a Fitting invariant.
\end{thm}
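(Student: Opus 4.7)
The plan is to proceed by induction on $n \geq 0$, establishing simultaneously the well-definedness of $\varSF{n}$ and its status as a Fitting invariant. Existence of a length-$n$ resolution of any $M \in \MM$ is guaranteed by iteratively applying Lemma \ref{lem2b}(2) together with Lemma \ref{lem2a}(1). The base case $n = 0$ is immediate, as the resolution forces $N = M$ and hence $\varSF{0} = \FF$.

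The crux of the induction is a \emph{dimension-shifting lemma}: for any $M \in \MM$ and any short exact sequence $0 \to K \to P \to M \to 0$ with $P \in \PP$, the quantity $\FF(P)^{(-1)^n}\varSF{n-1}(K)$ depends only on $M$. To prove this I would use a Schanuel-type argument. Given a second such sequence $0 \to K' \to P' \to M \to 0$, form the pullback $Q = P \times_M P'$; since $Q$ embeds in $P \oplus P'$ it belongs to $\MM$, and one obtains the two short exact sequences
\[
0 \to K' \to Q \to P \to 0, \qquad 0 \to K \to Q \to P' \to 0,
\]
whose right-hand terms lie in $\PP$. Applying the inductive Fitting-invariant property of $\varSF{n-1}$ to both yields $\varSF{n-1}(Q) = \varSF{n-1}(P)\varSF{n-1}(K') = \varSF{n-1}(P')\varSF{n-1}(K)$; combining this with the identity $\varSF{n-1}(P) = \FF(P)^{(-1)^{n-1}}$ (obtained from the trivial length-$(n-1)$ resolution $0 \to 0 \to \dots \to 0 \to P \to P \to 0$ and $\FF(0) = 1$, Remark \ref{rem2a}(1)), and likewise for $P'$, one rearranges to the desired equality $\FF(P)^{(-1)^n}\varSF{n-1}(K) = \FF(P')^{(-1)^n}\varSF{n-1}(K')$.

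Granting the lemma, the rest is book-keeping. For well-definedness at level $n$, any length-$n$ resolution of $M$ splits as a last step $0 \to K \to P_n \to M \to 0$ together with a length-$(n-1)$ resolution of $K$, giving $\prod_{i=1}^n \FF(P_i)^{(-1)^i}\FF(N) = \FF(P_n)^{(-1)^n}\varSF{n-1}(K)$, which depends only on $M$ by the inductive well-definedness of $\varSF{n-1}$ and the dimension-shifting lemma. For the Fitting-invariant property at level $n$, the case $P \in \PP$ is handled by the trivial resolution, yielding $\varSF{n}(P) = \FF(P)^{(-1)^n} \in \Omega^{\times}$; for an exact sequence $0 \to M' \to M \to P \to 0$ with $P \in \PP$, I would choose a surjection $P_1 \twoheadrightarrow M$ with $P_1 \in \PP$ and let $L$ be the preimage of $M'$. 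Then $0 \to L \to P_1 \to P \to 0$ forces $L \in \PP$ by Lemma \ref{lem2a}(2), and the dimension-shifting lemma applied to $M$ via $0 \to K_1 \to P_1 \to M \to 0$ and to $M'$ via $0 \to K_1 \to L \to M' \to 0$ (where $K_1 = \ker(P_1 \to M)$) yields
\[
\varSF{n}(M)/\varSF{n}(M') = (\FF(P_1)/\FF(L))^{(-1)^n} = \FF(P)^{(-1)^n} = \varSF{n}(P),
\]
where the middle equality uses $\FF(P_1) = \FF(P)\FF(L)$ from the Fitting-invariant axiom applied to $0 \to L \to P_1 \to P \to 0$. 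The main obstacle is that the dimension-shifting lemma at level $n$ requires the Fitting-invariant property of $\varSF{n-1}$ (not merely its well-definedness), which forces the two assertions to be interlocked in a single induction hypothesis rather than proved in sequence.
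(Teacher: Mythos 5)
Your proposal is correct and is, at bottom, the same argument as the paper's: you use the pullback $Q = P \times_M P'$ with the two induced short exact sequences to prove well-definedness, and the preimage/kernel construction $0 \to L \to P_1 \to P \to 0$, $0 \to K_1 \to L \to M' \to 0$ to verify the Fitting-invariant axioms. The only difference is packaging: the paper isolates the case $n=1$ as a standalone result and then recognizes that $\varSF{n} = (\varSF{n-1})^{[1]}$, i.e.\ that the $n$-th shift is the $1$-shift applied to the Fitting invariant $\varSF{n-1}$, while you state the same content as a ``dimension-shifting lemma'' embedded in a unified induction. One very small notational caution: since $\Omega$ is only a commutative monoid, writing $\varSF{n}(M)/\varSF{n}(M')$ is an abuse; what you actually prove (and what is correct) is $\varSF{n}(M) = \FF(P)^{(-1)^n}\,\varSF{n}(M')$ by substituting $\FF(P_1) = \FF(P)\FF(L)$, which never requires inverting $\varSF{n-1}(K_1)$.
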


Before the proof, we note that $\varSF{n}(P) = \FF(P)^{(-1)^n}$ for any $P \in \PP$, as long as the well-definedness of $\varSF{n}(P)$ is established.
This is shown by taking $N = P_1 = \dots = P_{n-1} = 0, P_n = P$ if $n \geq 1$, and by taking $N = P$ if $n =0$.
We also note that we have defined $\varSF{0}(M) = \FF(M)$ for any $M \in \MM$, and it is obviously well-defined.

\begin{proof}
The existence of such an exact sequence follows from Lemmas \ref{lem2a}(1) and \ref{lem2b}(2).
First we show that it is enough to show the assertion for $n = 1$.
The case $n = 0$ is obvious.
Suppose $n \geq 2$, then by induction we may assume that $\varSF{n-1}$ is well-defined and is a Fitting invariant.
By the case $n = 1$, the Fitting invariant $\left( \varSF{n-1}\right)^{[1]}$ is well-defined.
Then for a sequence defining $\varSF{n}(M)$, letting $L$ be the kernel of $P_n \twoheadrightarrow M$, we have
\begin{align*}
(\varSF{n-1})^{[1]}(M) 
& = \varSF{n-1}(P_n)^{-1} \varSF{n-1}(L) \\
& = \FF(P_n)^{(-1)^n} \left(\prod_{i=1}^{n-1} \FF(P_i)^{(-1)^i}\right) \FF(N) \\
& = \left(\prod_{i=1}^n \FF(P_i)^{(-1)^i}\right) \FF(N).
\end{align*}
This shows $\varSF{n} = \left( \varSF{n-1}\right)^{[1]}$.
Thus the assertion for $n = 1$ implies the whole assertion.

Now let us show the assertion for $n = 1$.
Take two exact sequences $0 \to N \to P \to M \to 0$ and $0 \to N' \to P' \to M \to 0$ in $\MM$ with $P, P' \in \PP$.
Then we can construct a commutative diagram with exact rows and columns
\[
\xymatrix{
0 \ar[r] & N \ar[r] \ar@{=}[d] & P \ar[r] & M \ar[r] & 0 \\
0 \ar[r] & N \ar[r] & L \ar@{->>}[u] \ar[r]  & P' \ar[r] \ar@{->>}[u]& 0 \\
& & N' \ar@{=}[r] \ar@{^{(}->}[u] & N' \ar@{^{(}->}[u]& 
}
\]
where $L$ is defined as the pull-back of $P$ and $P'$ over $M$.
We have $L \in \MM$ and
\[
\FF(P)^{-1} \FF(N) = \FF(P)^{-1} \FF(P')^{-1} \FF(L) = \FF(P')^{-1} \FF(N'),
\]
which shows the well-definedness of $\varSF{1}$.

To show that $\varSF{1}$ is a Fitting invariant, let $0 \to M' \to M \to P \to 0$ be an exact sequence in $\MM$ with $P \in \PP$.
Take an exact sequence $0 \to N \to P' \to M \to 0$ in $\MM$ with $P' \in \PP$.
Then we can construct 
\[
\xymatrix{
0 \ar[r] & M' \ar[r] & M \ar[r] & P \ar[r] & 0 \\
0 \ar[r] & P'' \ar[r] \ar@{->>}[u] & P' \ar@{->>}[u] \ar[r]  & P \ar[r] \ar@{=}[u]& 0 \\
& N \ar@{=}[r] \ar@{^{(}->}[u] & N \ar@{^{(}->}[u] & &
}
\]
where $P''$ is defined as the kernel of $P' \to M \to P$.
Since $P'' \in \PP$, we obtain 
\[
\varSF{1}(M) = \FF(P')^{-1}\FF(N) = \FF(P)^{-1}\FF(P'')^{-1}\FF(N) = \varSF{1}(P)\varSF{1}(M').
\]
This completes the proof.
\end{proof}

\subsection{Fitting Ideals over a Commutative Ring}\label{subsec41}
As the most fundamental example, when $R$ is commutative, we show that the (initial) Fitting ideal can be regarded as a Fitting invariant.
Let $R$ be a noetherian commutative ring and $S$ a multiplicative set of $R$ consisting of non-zero-divisors.
Recall that the Fitting ideal $\Fitt_R(M)$ for a finitely generated $R$-module $M$ is defined as in Section \ref{sec01}.
The definition is valid over any noetherian commutative rings.

\begin{prop}\label{prop4v}
Let $\Omega$ be the commutative monoid of finitely generated $R$-submodules of $S^{-1}R$.
Then the map $\FF:\MM \to \Omega$ defined by $\FF(M) = \Fitt_R(M)$ is a Fitting invariant.
\end{prop}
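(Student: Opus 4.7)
The plan is to verify the two axioms defining a Fitting invariant. First I observe that $\FF$ takes values in $\Omega$: for $M \in \MM$, the noetherianness of $R$ guarantees that $\Fitt_R(M)$ is a finitely generated ideal of $R$, and the injection $R \hookrightarrow S^{-1}R$ (valid since $S$ consists of non-zero-divisors) exhibits it as a finitely generated $R$-submodule of $S^{-1}R$.

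For the invertibility axiom, given $P \in \PP$, invertibility of a finitely generated fractional ideal is a local property, and both the Fitting ideal and the condition $\pd_R \leq 1$ are compatible with localization, so I would reduce to the case that $R$ is local. Over a local ring $R_\mathfrak{m}$, finitely generated projective modules are free, so $P_\mathfrak{m}$ admits a free resolution $0 \to R_\mathfrak{m}^a \overset{H}{\to} R_\mathfrak{m}^b \to P_\mathfrak{m} \to 0$. Localizing further at $S_\mathfrak{m}$ and using $S_\mathfrak{m}$-torsionness of $P_\mathfrak{m}$ forces $a=b$, and moreover $\det(H)$ becomes a unit in $S_\mathfrak{m}^{-1} R_\mathfrak{m}$, hence is a non-zero-divisor in $R_\mathfrak{m}$. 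Thus $\Fitt_{R_\mathfrak{m}}(P_\mathfrak{m}) = (\det(H))$ is an invertible principal ideal, and consequently $\Fitt_R(P)$ is invertible in $\Omega$.

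For the multiplicativity axiom on an exact sequence $0 \to M' \to M \to P \to 0$ with $P \in \PP$, I would again localize at each maximal ideal to reduce to $R$ local. Choosing a free resolution $0 \to R^a \overset{H}{\to} R^a \to P \to 0$ and a finite free presentation $R^c \overset{A}{\to} R^b \to M' \to 0$, the horseshoe lemma yields a free presentation
\[
R^{c+a} \overset{\widetilde{B}}{\longrightarrow} R^{b+a} \to M \to 0, \qquad \widetilde{B} = \begin{pmatrix} A & C \\ 0 & H \end{pmatrix},
\]
for some matrix $C$. Laplace expansion of a $(b+a)\times(b+a)$ minor of $\widetilde{B}$ along the bottom $a$ rows shows that any minor omitting one of the $a$ rightmost columns vanishes (the bottom rows could not contribute a nonzero $a \times a$ subminor), while each minor using all $a$ rightmost columns equals $\det(H)$ times a $b \times b$ minor of $A$. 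Summing yields $\Fitt_R(M)=\det(H)\cdot\Fitt_R(M')=\Fitt_R(P)\Fitt_R(M')$ locally, hence globally.

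The main obstacle is the minor-by-minor computation in the final step. The inclusion $\Fitt_R(P)\Fitt_R(M')\subseteq \Fitt_R(M)$ is classical and follows from general properties of Fitting ideals under short exact sequences; the reverse inclusion is the delicate point, and it relies crucially on $\Fitt_R(P)$ being locally principal—a feature granted precisely by the hypothesis $P \in \PP$, which explains why $\PP$ is the natural domain on which to enforce invertibility.
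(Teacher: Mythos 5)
Your proof is correct and takes essentially the same approach as the paper's: reduce to the local case to show invertibility via a quadratic free presentation, and reduce to the local case again to establish multiplicativity on a short exact sequence with $P\in\PP$. The only difference is that the paper cites \cite[Lemma 3]{CG98} for the multiplicativity step, whereas you reprove it directly by Laplace-expanding the block-triangular presentation coming from the horseshoe lemma; that is exactly the content of the cited lemma, so this is a fleshing-out rather than a different route. One small aside to correct: in the invertibility argument, the inference ``$\det(H)$ becomes a unit in $S_\mathfrak{m}^{-1}R_\mathfrak{m}$, hence is a non-zero-divisor in $R_\mathfrak{m}$'' is not justified in general (images of elements of $S$ need not remain non-zero-divisors after localizing at $\mathfrak m$), but the claim is also unnecessary: being a unit of $S_\mathfrak{m}^{-1}R_\mathfrak{m}$ already makes the principal fractional ideal $(\det(H))R_\mathfrak{m}$ invertible in the monoid of finitely generated $R_\mathfrak{m}$-submodules of $S_\mathfrak{m}^{-1}R_\mathfrak{m}$, which is what is needed.
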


\begin{proof}
We first show that $\FF(P) \in \Omega^{\times}$ for $P \in \PP$.
Finitely generated $R$-submodules of $S^{-1}R$ will also be called {\it fractional ideals} of $R$ with respect to $S$.
Therefore $\Omega$ is the monoid of fractional ideals of $R$ with respect to $S$.

At first we suppose that $R$ is a local ring.
Then $P \in \PP$ implies that there is an exact sequence $0 \to R^a \overset{h}{\to} R^a \to P \to 0$.
Since $P$ is $S$-torsion, $h$ is an invertible matrix as a matrix over $S^{-1}R$.
Therefore $\Fitt_{R}(P) = (\det(h))$ is a principal ideal generated by an element which is invertible in $S^{-1}R$.
In particular it is invertible as a fractional ideal of $R$ with respect to $S$.

We consider general $R$.
For any prime ideal $\qu$ of $R$, we take the image $S_{\qu}$ of $S$ under the canonical map $R \to R_{\qu}$ as a fixed multiplicative set of $R_{\qu}$.
Then it can be shown that an ideal $I$ of $R$ is invertible as a fractional ideal of $R$ with respect to $S$ if and only if $IR_{\qu}$ is invertible as a fractional ideal of $R_{\qu}$ with respect to $S_{\qu}$ for any prime ideal $\qu$ of $R$.
For any $\qu$, the local case implies that $\Fitt_{R}(P)R_{\qu} = \Fitt_{R_{\qu}}(P_{\qu})$ is invertible as a fractional ideal of $R_{\qu}$ with respect to $S_{\qu}$.
Consequently $\Fitt_R(P)$ is invertible.

Finally, let us confirm the second condition to be a Fitting invariant.
By localization, we may assume that $R$ is a local ring and hence $P$ has a presentation of the form $0 \to R^a \to R^a \to P \to 0$ as above.
Then the condition follows from, for example, \cite[Lemma 3]{CG98}.
\end{proof}

\begin{rem}\label{rem4i}
Relating to Remark \ref{rem2a}, we consider whether $\FF|_{\PP}: K_0(\PP) \to \Omega^{\times}$ is injective for this Fitting invariant $\FF = \Fitt_R$.
We can illustrate the map using the following commutative diagram
\[
\xymatrix{
K_1(R) \ar[r] \ar@{->>}[d]_{\det} & K_1(S^{-1}R) \ar[r]^-{\partial} \ar@{->>}[d]_{\det} & K_0(\PP) \ar[r] \ar[d]_{\FF|_{\PP}} & K_0(R) \ar[r] & K_0(S^{-1}R) \\
R^{\times} \ar@{^{(}->}[r] & (S^{-1}R)^{\times} \ar[r]  & \Omega^{\times} & & 
}
\] 
where the upper row is the localization exact sequence in $K$-theory (see \cite[Theorem III. 3.2]{Wei13}).
The map $(S^{-1}R)^{\times} \to \Omega^{\times}$ is defined by corresponding the principal fractional ideal generated by the element, which makes the lower sequence also exact.

By diagram chasing, if $\partial$ is surjective and $\det: K_1(S^{-1}R) \to (S^{-1}R)^{\times}$ is an isomorphism, then $\FF|_{\PP}: K_0(\PP) \to \Omega^{\times}$ is injective.
We shall show that these two assumptions hold for the situation in Example \ref{ega1} with $G$ commutative.
The fact that $R$ is a direct product of local rings shows the surjectivity of $\partial$ (see also Lemma \ref{lem4d}).
The ring $S^{-1}R$ is a finite extension of the field $S^{-1}\Lambda = Q(\Lambda)$.
Hence $S^{-1}R$ is an artinian ring and consequently a direct product of local rings.
Then \cite[Corollary 2.2.6]{Ros94} implies that $\det: K_1(S^{-1}R) \to (S^{-1}R)^{\times}$ is an isomorphism.
\end{rem}

\begin{eg}\label{ega2}
Let $R = \Z_p[[T]][\Delta], S = \Z_p[[T]] \setminus \{0\}$ with $\Delta$ a finite cyclic group generated by $\delta$ of order $m$.
Consider the $R$-modules $M = \Z_p \oplus \Z_p = R/(\delta - 1, T) \oplus R/(\delta - 1, T)$ and $N = R/(\delta - 1, T)^2$, which are not isomorphic to each other.
For the Fitting invariant $\Fitt_R$, we have
\[
\Fitt_R(M) = (\delta-1, T)^2 = \Fitt_R(N).
\]
Observe that $N$ fits into an exact sequence $0 \to \Z_p/m\Z_p \oplus \Z_p \to N \to \Z_p \to 0$ of $R$-modules, where the first map sends $(1,0), (0,1)$ to $\delta-1, T \in N$.
This is because, putting $\tau = \delta - 1$, 
\[
N \simeq \Z_p[[T]][\tau]/((\tau+1)^m -1, \tau^2, \tau T, T^2) = \Z_p[\tau, T]/(m\tau, \tau^2, \tau T, T^2).
\]

Suppose $p$ divides $m$.
It is clear that $\Fitt_{\Z_p[[T]]}$ also gives a Fitting invariant for $R$.
Since $\pd_{\Z_p[[T]]}(\Z_p) = 1$, we have
\[
\Fitt_{\Z_p[[T]]}(N) = (m, T)T^2 \neq (T^2) = \Fitt_{\Z_p[[T]]}(M)
\]
and in particular the universal Fitting invariant (Remark \ref{rem2a}(2)) distinguishes $M$ and $N$.
 
Suppose $p$ does not divide $m$.
 Then $\Z_p \in \PP_{R, S}$.
 Therefore the exact sequence $0 \to \Z_p \to N \to \Z_p \to 0$ implies $\FF(N) = \FF(M)$ for any Fitting invariant $\FF$.
 Namely, the universal Fitting invariant does not distinguish $M$ and $N$.
\end{eg}

\subsection{Noncommutative Fitting Invariants}\label{subsec42}
First we briefly review the noncommutative Fitting invariant introduced by Nickel \cite{Nic10} and developed by Nickel and Johnston \cite{JN13}.
There are several slightly different formulations, and we will basically follow the survey article \cite[Section 2]{Nic17}.
Let $\Lambda$ be a commutative noetherian complete local domain and $Q(\Lambda)$ its fraction field (see \cite[Remark 2.15]{Nic17}).
Let $A$ be a separable $Q(\Lambda)$-algebra and $R$ a $\Lambda$-order of $A$.

For any positive integer $a$, the matrix ring $M_a(A)$ is also a separable $Q(\Lambda)$-algebra and 
hence one has the reduced norm map 
\[
\nr: M_a(A) \to Z(M_a(A)) = Z(A),
\]
 where $Z(-)$ denotes the center.
Put 
\[
\II(R) = (\nr(H) \mid H \in M_a(R), a \geq 1 )_{Z(R)},
\]
which is a subring of $Z(A)$ containing $Z(R)$.

Let $M$ be a finitely generated left $R$-module.
For a finite presentation 
$R^b \overset{h}{\to} R^a \to M \to 0$ of $M$, identifying $h$ with an $a \times b$ matrix,
define the Fitting invariant of $h$ by
\[
\Fitt_{R}(h) = (\nr(H) \mid \text{$H$ is an $a \times a$ submatrix of $h$})_{\II(R)},
\]
which is an ideal of $\II(R)$.
Define the maximal Fitting invariant $\Fitt^{\max}_{R}(M)$ of $M$ to be the unique maximal Fitting invariant of a finite presentation of $M$ among all the presentations.
If $P$ is a finitely generated $R$-module which admits a quadratic presentation, i.e., a presentation of the form $R^a \overset{h}{\to} R^a \to P \to 0$,
then it is known that $\Fitt^{\max}_{R}(P) = \Fitt_R(h)$ (\cite[Proposition 2.17]{Nic17}).

Now we get back to the main discussion.
Suppose that an Ore set $S$ of $R$ is given so that $S \subset \Lambda \setminus \{0\}$, which is necessarily central in $R$.
Also we need the following.

\begin{ass}\label{ass35}
Any $P \in \PP$ has a quadratic presentation.
\end{ass}

In the situation of Example \ref{ega1}, Assumption \ref{ass35} holds as we will show in Proposition \ref{prop4c}.
But in general Assumption \ref{ass35} can fail to hold (Example \ref{eg4f}).

We denote by $\overline{S}$ the image of $S$ under the map $\nr: R \to \II(R)$.
Then $\overline{S}$ is a multiplicative subset of $\II(R)$ consisting of non-zero-divisors.

\begin{prop}\label{prop4g}
Suppose that Assumption \ref{ass35} holds.
Let $\Omega$ be the commutative monoid of finitely generated $\II(R)$-submodules of $\overline{S}^{-1}\II(R)$.
Then the map $\FF: \MM \to \Omega$ defined by $\FF(M) = \Fitt^{\max}_R(M)$ is a Fitting invariant.
\end{prop}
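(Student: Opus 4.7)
The plan is to verify the three defining conditions of a Fitting invariant for $\FF := \Fitt^{\max}_R$ into $\Omega$. Membership $\FF(M) \in \Omega$ is immediate: $\Fitt^{\max}_R(M)$ is by construction an ideal of $\II(R)$ generated by the finitely many reduced norms of $a \times a$ submatrices of a finite presentation, hence a finitely generated $\II(R)$-submodule of $\II(R) \subset \overline{S}^{-1}\II(R)$. For invertibility of $\FF(P)$ when $P \in \PP$, Assumption \ref{ass35} supplies a quadratic presentation $R^a \xrightarrow{h} R^a \to P \to 0$, and by \cite[Proposition 2.17]{Nic17}, $\Fitt^{\max}_R(P) = (\nr(h))_{\II(R)}$. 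Since $P$ is $S$-torsion and $S$ is central in $R$, the map $S^{-1}h$ is a surjective endomorphism of $(S^{-1}R)^a$ and hence an isomorphism, so $h$ is invertible in $M_a(S^{-1}R)$; clearing denominators yields $s \in S$ and $h' \in M_a(R)$ with $hh' = sI_a$. Applying the reduced norm gives $\nr(h)\nr(h') = \nr(sI_a) = \nr(s)^a$ (the last equality by decomposing $A$ into simple factors), which lies in $\overline{S}$, so $\nr(h)$ is invertible in $\overline{S}^{-1}\II(R)$ and $(\nr(h)) \in \Omega^{\times}$.

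The substantive condition is multiplicativity on a short exact sequence $0 \to M' \to M \to P \to 0$ with $P \in \PP$. Fix a quadratic presentation $R^c \xrightarrow{h_P} R^c \to P \to 0$ and any presentation $R^b \xrightarrow{h_{M'}} R^a \to M' \to 0$. Lifting $R^c \to P$ through $M \to P$ and applying the horseshoe lemma produces a presentation
\[
R^{b+c} \xrightarrow{h_M \,=\, \begin{pmatrix} h_{M'} & \eta \\ 0 & h_P \end{pmatrix}} R^{a+c} \to M \to 0
\]
for a suitable $\eta \in M_{a \times c}(R)$. Any $(a+c) \times (a+c)$ submatrix of $h_M$ that omits some of the $c$ right-block columns contains a $c \times q$ zero block with $c + q > a+c$; after base change to a splitting field of $A$ this forces the ordinary determinant, and hence the reduced norm, to vanish. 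The remaining submatrices are block upper-triangular with diagonal blocks $G$ (an $a \times a$ submatrix of $h_{M'}$) and $h_P$, and by multiplicativity of reduced norms on block-triangular matrices (again verifiable after base change to a splitting field) their reduced norms equal $\nr(G)\nr(h_P)$. Hence $\Fitt_R(h_M) = \nr(h_P) \cdot \Fitt_R(h_{M'})$, which upon choosing $h_{M'}$ to realize $\Fitt^{\max}_R(M')$ yields the inclusion $\Fitt^{\max}_R(M) \supseteq \Fitt^{\max}_R(P) \cdot \Fitt^{\max}_R(M')$.

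The main obstacle is the reverse inclusion: every presentation of $M$ must produce an ideal contained in $\Fitt^{\max}_R(P) \cdot \Fitt^{\max}_R(M')$. My plan is to adapt the standard approach of Nickel's noncommutative Fitting theory (cf.\ \cite[Section 3]{Nic10} and \cite[Section 3]{JN13}): given an arbitrary presentation $R^{b'} \to R^{a'} \to M \to 0$, lift the free module $R^c$ from the quadratic presentation of $P$ through $R^{a'} \to M$ and pull back along $M' \hookrightarrow M$ to construct a compatible presentation of $M'$ whose Fitting invariant equals $\Fitt_R(\text{presentation of } M)$ divided by $\nr(h_P)$. Elementary row and column operations, which preserve Fitting invariants, then bring the situation into the block-triangular form of the second paragraph, and the independence of $(\nr(h_P))$ from the choice of quadratic presentation \cite[Proposition 2.17]{Nic17} ensures that this reduction is well-defined. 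Combining the two inclusions yields the equality $\Fitt^{\max}_R(M) = \Fitt^{\max}_R(P) \cdot \Fitt^{\max}_R(M')$, completing the verification.
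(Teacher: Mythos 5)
Your first two paragraphs are correct and match the paper in substance. The invertibility argument for $P \in \PP$ is identical to the paper's (the paper also clears denominators to get $hh' = f1_a$ with $f \in S$ and applies $\nr$). Your direct proof of the inclusion $\Fitt^{\max}_R(M) \supseteq \Fitt^{\max}_R(P)\Fitt^{\max}_R(M')$ via the horseshoe lemma and the block analysis of $a\times a$ minors is sound and more self-contained than the paper, which simply cites \cite[Proposition 3.5.3]{Nic10} for this direction.

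The gap is in the third paragraph. Your plan for the reverse inclusion omits the step that actually makes it work and that uses Assumption \ref{ass35} in an essential way. Starting from a presentation $R^{b}\xrightarrow{h}R^{a}\to M\to 0$ that achieves the maximum, the paper sets $N=\Ker(R^{a}\to M\to P)$, so that $0\to N\to R^{a}\to P\to 0$ is exact and $R^b \to N \to M'\to 0$ presents $M'$. The crucial point is that $N$ need not be free; it is only \emph{stably free}, and proving even that requires Schanuel's lemma compared against the quadratic presentation $0\to R^c\to R^c\to P\to 0$ furnished by Assumption \ref{ass35}. One then enlarges all three free modules in the presentation by a free summand (which leaves $\Fitt_R(h)$ unchanged) so that $N$ becomes honestly free of rank $a$. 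Only after this does one obtain a factorization $h = h'h''$ with $h'\colon R^a\to R^a$ a quadratic presentation of $P$ and $h''$ a presentation of $M'$, whence $\Fitt_R(h) = \nr(h')\Fitt_R(h'') \subseteq \FF(P)\FF(M')$ by multiplicativity of $\nr$ for genuine matrix products. Your description in terms of ``elementary row and column operations'' producing a ``block-triangular form'' is not the right mechanism: elementary operations cannot in general split off a free complement from a stably free submodule, and the computation the paper needs is a Cauchy–Binet-type identity for a matrix factorization, not a block-triangular determinant. Without the stable-freeness argument, the lift/pullback construction you sketch produces a module $N'$ to which you cannot attach a matrix $h'$, and the chain breaks.
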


\begin{proof}
For any $P \in \PP$, there is a presentation $0 \to R^a \overset{h}{\to} R^a \to P \to 0$ by Assumption \ref{ass35}.
Since $h \in M_a(R) \cap \GL_a(S^{-1}R)$, there are $f \in S$ and $h' \in M_a(R) \cap \GL_a(S^{-1}R)$ such that $hh' = h'h = f1_a$.
Then we have $\nr(h)\nr(h') = \nr(f)^a \in \overline{S}$.
Therefore $\FF(P) = (\nr(h))_{\II(R)}$ is invertible in $\Omega$.

Let us confirm the second condition of a Fitting invariant.
Let $0 \to M' \to M \to P \to 0$ be an exact sequence in $\MM$ with $P \in \PP$.
The inclusion $\FF(M) \supset \FF(P)\FF(M')$ is already known (\cite[Proposition 3.5.3]{Nic10}).
In order to show the other inclusion, take a presentation $R^b \overset{h}{\to} R^a \to M \to 0$ such that $\Fitt_R(h) = \Fitt^{\max}_R(M)$.
Construct a commutative diagram with exact rows and columns
 \[
\xymatrix{
0 \ar[r] & M' \ar[r] & M \ar[r] & P \ar[r] & 0 \\
0 \ar[r] & N \ar[r]_{h'} \ar@{->>}[u] & R^a \ar@{->>}[u] \ar[r]  & P \ar[r] \ar@{=}[u]& 0 \\
& R^b \ar@{=}[r] \ar[u]_{h''} & R^b \ar[u]_{h} & &
}
\]
where $N$ is defined as the kernel of $R^a \to M \to P$.
Since $P$ admits a quadratic presentation, $N$ must be a stably free module.
Hence adding a free module to $N, R^a, R^b$ allows us to assume that $N$ is a free $R$-module of rank $a$.
We identify $h, h', h''$ with the representation matrix with respect to a fixed isomorphism $N \simeq R^a$.
Then
\[
\FF(M) = \Fitt_R(h) = \Fitt_R(h'h'') = \nr(h') \Fitt_R(h'') \subset \FF(P)\FF(M').
\]
This completes the proof.
\end{proof}

Let us discuss the validity of Assumption \ref{ass35}.

\begin{lem}\label{lem4d}
Recall the localization exact sequence
\[
K_1(R) \to K_1(S^{-1}R) \overset{\partial}{\to} K_0(\PP) \to K_0(R) \to K_0(S^{-1}R).
\]
Then Assumption \ref{ass35} is equivalent to the surjectivity of the map $\partial$.
\end{lem}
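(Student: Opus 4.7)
The plan is to interpret the boundary map $\partial$ concretely and then prove each implication by direct module manipulation. Recall that every class in $K_1(S^{-1}R)$ admits a representative $h \in M_a(R) \cap \GL_a(S^{-1}R)$ after clearing denominators using the Ore condition, and for such $h$ one has $\partial([h]) = [\Cok(h: R^a \to R^a)] \in K_0(\PP)$; the module $\Cok(h)$ visibly admits the quadratic presentation $0 \to R^a \overset{h}{\to} R^a \to \Cok(h) \to 0$.

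For the direction $(\Rightarrow)$, suppose Assumption \ref{ass35} holds. Given $P \in \PP$, a quadratic presentation $0 \to R^a \overset{h}{\to} R^a \to P \to 0$ displays $P$ as $\Cok(h)$; since $P$ is $S$-torsion, $h$ becomes invertible in $\GL_a(S^{-1}R)$, so $[P] = \partial([h])$ in $K_0(\PP)$. Since the classes $[P]$ with $P \in \PP$ generate $K_0(\PP)$, the map $\partial$ is surjective.

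For the direction $(\Leftarrow)$, I would use exactness to rephrase surjectivity of $\partial$ as the vanishing of the subsequent map $K_0(\PP) \to K_0(R)$. Fix $P \in \PP$; since $\pd_R(P) \leq 1$, there is a projective resolution $0 \to Q_1 \to Q_0 \to P \to 0$ with $Q_0, Q_1$ finitely generated projective. Choosing $Q'$ with $Q_0 \oplus Q' \cong R^a$ free, and taking the direct sum with the trivial sequence $0 \to Q' = Q' \to 0 \to 0$, we may assume $Q_0 = R^a$. The image of $[P]$ in $K_0(R)$ is then $[R^a] - [Q_1]$, which vanishes by hypothesis; hence $Q_1$ is stably free, so $Q_1 \oplus R^b \cong R^{a+b}$ for some $b \geq 0$. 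Taking a further direct sum with $0 \to R^b = R^b \to 0 \to 0$ and applying this isomorphism at the leftmost term yields a quadratic presentation $0 \to R^{a+b} \to R^{a+b} \to P \to 0$.

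The only step requiring care is this last replacement: after the stable-freeness isomorphism, the composite map $R^{a+b} \cong Q_1 \oplus R^b \hookrightarrow R^{a+b}$ must remain injective with cokernel $P$. This is automatic, since we are merely relabeling the source by an isomorphism, and the inclusion $Q_1 \oplus R^b \hookrightarrow R^a \oplus R^b = R^{a+b}$ is injective with the same cokernel $P$ as in the augmented resolution. No new ideas beyond this bookkeeping are needed.
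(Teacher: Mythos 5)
Your proof is correct. The forward direction reproduces the paper's argument verbatim: a quadratic presentation $0 \to R^a \overset{h}{\to} R^a \to P \to 0$ with $P$ $S$-torsion gives $h \in \GL_a(S^{-1}R)$ and $[P]=\partial([h])$. For the converse, the paper merely writes ``the converse can be shown similarly''; your argument supplies the natural content behind that remark: by exactness, surjectivity of $\partial$ is equivalent to the vanishing of $K_0(\PP)\to K_0(R)$, and then for $P\in\PP$ with resolution $0\to Q_1\to R^a\to P\to 0$ the vanishing forces $Q_1$ to be stably free, whence adding a split free sequence and applying the stable-freeness isomorphism on the left yields a quadratic presentation. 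This is exactly the standard way to pass from the $K_0(R)$-level vanishing to an actual module-level presentation, and the final ``bookkeeping'' step you flag (precomposing with an isomorphism preserves injectivity and the cokernel) is indeed harmless. No gaps.
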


\begin{proof}
If Assumption \ref{ass35} holds, then for any $P \in \PP$, we have a presentation $0 \to R^a \overset{h}{\to} R^a \to P \to 0$.
Since $P$ is $S$-torsion, $h$ is invertible as a matrix over $S^{-1}R$.
Therefore $[P] \in K_0(\PP)$ is the image of $[h] \in K_1(S^{-1}R)$ under $\partial$ and hence $\partial$ is surjective.
The converse can be shown similarly.
\end{proof}

\begin{prop}\label{prop4c}
Consider $R = \Z_p[[G]] \supset \Lambda = \Z_p[[G']], S = \Lambda \setminus \{0\}$ as in Example \ref{ega1}.
Then $A = S^{-1}R = Q(R)$ is actually a separable algebra and Assumption \ref{ass35} holds.
\end{prop}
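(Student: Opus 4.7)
The plan is to verify the two assertions separately: first that $A = S^{-1}R$ is a separable $Q(\Lambda)$-algebra, and then that Assumption \ref{ass35} holds in this setting.

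For separability, I would begin by noting that since $G'$ is an open central subgroup of $G$, the quotient $H := G/G'$ is a finite group. Picking a set-theoretic section $H \hookrightarrow G$ exhibits $R = \Z_p[[G]]$ as a free $\Lambda$-module of rank $|H|$, so $A = Q(\Lambda) \otimes_\Lambda R$ is a $Q(\Lambda)$-algebra of dimension $|H|$. The multiplication on $A$ is that of a crossed product $Q(\Lambda) \ast H$ coming from the group extension $1 \to G' \to G \to H \to 1$ (the 2-cocycle is induced by any chosen section). Since $\mathrm{char}\,Q(\Lambda) = 0$, the integer $|H|$ is invertible in $Q(\Lambda)$, and the crossed-product analogue of Maschke's theorem shows that $A$ is semisimple. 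Being finite-dimensional semisimple over the perfect field $Q(\Lambda)$, $A$ is automatically separable.

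For Assumption \ref{ass35}, by Lemma \ref{lem4d} it is enough to show that the connecting map $\partial : K_1(A) \to K_0(\PP)$ in the localization sequence is surjective, or equivalently (by exactness) that the scalar-extension map $K_0(R) \to K_0(A)$ is injective. Here I would use that $R$ is complete semilocal: $\Lambda$ is complete local Noetherian, and $R$ is finite over $\Lambda$, so the Jacobson radical $\mathrm{Jac}(R)$ contains $\mathfrak{m}_\Lambda R$ and $R$ is $\mathrm{Jac}(R)$-adically complete with finite residue ring $R/\mathrm{Jac}(R)$. Consequently $R$ is semiperfect, idempotents lift from $R/\mathrm{Jac}(R)$, and $K_0(R)$ is free abelian on the classes $[P_1], \dots, [P_s]$ of the indecomposable projective $R$-modules.

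The main obstacle is the final step of showing that the images $[P_j \otimes_R A] \in K_0(A)$ are $\Z$-linearly independent. The cleanest route is to fix a maximal $\Lambda$-order $R'$ in $A$ containing $R$, which exists because $A$ is separable: the hereditary order $R'$ has indecomposable projectives in bijection with the simple modules of $A$, so $K_0(R') \to K_0(A)$ is injective, and one then checks that $K_0(R) \to K_0(R')$ is likewise injective using that $R$ and $R'$ have the same rational span $A$. An alternative direct approach is to compute the decomposition of each $P_j \otimes_R A$ into simple $A$-modules via the Wedderburn decomposition of $A$ and compare it with the decomposition of $R/\mathrm{Jac}(R)$ into matrix algebras; in the commutative case this is transparent (cf.\ Remark \ref{rem4i}), and the noncommutative case is essentially the vanishing of the locally free class group of the semilocal $\Lambda$-order $R$, a classical fact also used implicitly in \cite{Nic10, JN13}.
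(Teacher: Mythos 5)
Your separability argument is correct but takes a genuinely different route from the paper. You observe that $R$ is a twisted group algebra (crossed product) $\Lambda^\tau[H]$ with $H = G/G'$ finite and $G'$ central, so $A = Q(\Lambda)^\tau[H]$, and then invoke the crossed-product version of Maschke's theorem together with $\ch Q(\Lambda) = 0$ to get semisimplicity, hence separability over the perfect field $Q(\Lambda)$. The paper instead shows directly that $Q(R)$ has no nonzero nilpotent left ideals by descending to the finite layers $\Q_p[G/U]$ for $U \subseteq G'$ open, following \cite{RW04}. Both work; yours is arguably slicker for this specific setup.

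For Assumption \ref{ass35}, your reduction via Lemma \ref{lem4d} to the injectivity of $K_0(R) \to K_0(A)$ is exactly the paper's strategy, and the semiperfect/complete-semilocal structure of $R$ is also a correct and relevant observation. The gap is in the final step, in both of your proposed routes. Your ``cleanest route'' relies on the theory of maximal orders and hereditary orders over $\Lambda$, but $\Lambda = \Z_p[[T_1, \dots, T_d]]$ is a regular local ring of Krull dimension $d+1 \geq 2$ as soon as $d \geq 1$, and is not a Dedekind domain. A maximal $\Lambda$-order is then \emph{not} hereditary (already $\Lambda$ itself, or $M_n(\Lambda)$, has global dimension $d+1$), so the claims that $R'$ is hereditary, that its indecomposable projectives biject with simple $A$-modules, and that $K_0(R') \to K_0(A)$ is injective for this reason, all fail. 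Similarly, the ``vanishing of the locally free class group of the semilocal $\Lambda$-order $R$'' is a classical fact for $\Z_p$-orders (or orders over complete DVRs) in semisimple $\Q_p$-algebras, i.e.\ the case $d=0$; it is not a statement in the standard toolkit for $\Lambda$-orders over a regular local base of higher dimension. The paper circumvents precisely this difficulty: it first handles the abelian case directly ($R$ is then a product of local rings), and for general $G$ it reduces to the finite group ring $\Z_p[\Delta]$ via the injection $K_0(\Z_p[[G]]) \hookrightarrow K_0(\Z_p[\Delta])$ (\cite[Proposition 6.20]{CR90}), then to $\Q_p[\Delta]$ via $K_0(\Z_p[\Delta]) \hookrightarrow K_0(\Q_p[\Delta])$ (\cite[Theorem 32.1]{CR90}), and finally uses Artin induction, i.e.\ the injectivity of $K_0(\Q_p[\Delta]) \to \prod_{\Delta_0} K_0(\Q_p[\Delta_0])$ over cyclic subgroups $\Delta_0$, together with the already-settled abelian case applied to the abelian preimages $G_0 \subseteq G$ of the $\Delta_0$. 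You should replace the maximal-order step with this Artin-induction reduction (or an equivalent argument) to make the proof go through for $d \geq 1$.
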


\begin{proof}
The assertion that $A$ is a separable algebra is proved in \cite[Proposition 5(1)]{RW04} when $d=1$, and the general case can be proved by exactly the same method as follows.
Since $Q(R)$ is a finite dimensional $Q(\Lambda)$-algebra, it is enough to show that $Q(R)$ does not contain any nilpotent left ideals other than zero.
Suppose that $I$ is a nilpotent left ideal of $Q(R)$.
Then $I \cap R$ is also a nilpotent left ideal of $R$.
For each open subgroup $U$ of $G'$, the image $(I \cap R)\Z_p[G/U]$ is a nilpotent left ideal and hence zero, since $\Q_p[G/U]$ does not contain any nilpotent left ideals other than zero.
Therefore $I \cap R$ itself is zero and $I = 0$, as desired.

Now the proof of Assumption \ref{ass35} is a direct generalization of \cite[Lemma 6.2]{Nic10} and \cite[Lemma 13]{RW04}, where $d = 1$.
If $G$ is abelian, then $R$ is a direct product of local rings and Assumption \ref{ass35} follows immediately, as mentioned in Remark \ref{rem4i}.
For general $G$, put $\Delta = G/G'$.
For each cyclic subgroup $\Delta_0$ of $\Delta$, the inverse image $G_0$ of $\Delta_0$ under the map $G \twoheadrightarrow \Delta$ is abelian, so we obtain the commutative diagram
\[
\xymatrix{
K_0(\PP_{\Z_p[[G_0]], S}) \ar[r]_-{0} & K_0(\Z_p[[G_0]]) \ar@{^{(}->}[r] & K_0(\Z_p[\Delta_0]) \ar@{^{(}->}[r] & K_0(\Q_p[\Delta_0]) \\
K_0(\PP_{\Z_p[[G]], S}) \ar[r] \ar[u] & K_0(\Z_p[[G]]) \ar@{^{(}->}[r] \ar[u] & K_0(\Z_p[\Delta]) \ar@{^{(}->}[r] \ar[u] & K_0(\Q_p[\Delta]) \ar[u]
}
\]
Here the $0$ comes from the abelian case with Lemma \ref{lem4d} and the injectivities come from \cite[Proposition 6.20, Theorem 32.1]{CR90}.
Moreover, the map $K_0(\Q_p[\Delta]) \to \prod_{\Delta_0} K_0(\Q_p[\Delta_0])$ is injective, where $\Delta_0$ runs over cyclic subgroups of $\Delta$.
Therefore the map $K_0(\PP_{\Z_p[[G]], S}) \to K_0(\Z_p[[G]])$ is also $0$, which completes the proof.
\end{proof}

\begin{eg}\label{eg4f}
Assumption \ref{ass35} fails for the ring
\[
R = \left\{ \begin{pmatrix} a & b \\ c & d \end{pmatrix} \in M_2(\Z_p) \mid c \in p\Z_p \right\}
\]
and $S = \Z_p \setminus \{0\}$.
More generally, Assumption \ref{ass35} fails if $R$ is a hereditary, non-maximal order over a complete discrete valuation ring.
This fact follows from \cite[Theorem 26.28]{CR90}; the author thanks Andreas Nickel for pointing this out.
\end{eg}

\begin{rem}\label{rem4j}
Suppose that Assumption \ref{ass35} holds.
Similar to Remark \ref{rem4i}, we have the commutative diagram with exact rows
\[
\xymatrix{
K_1(R) \ar[r] \ar[d]_{\nr} & K_1(S^{-1}R) \ar@{->>}[r]^-{\partial} \ar[d]_{\nr} & K_0(\PP) \ar[d]_{\FF|_{\PP}} \\
\II(R)^{\times} \ar@{^{(}->}[r] & (\overline{S}^{-1} \II(R))^{\times} \ar[r]  & \Omega^{\times}
}
\] 
as mentioned in \cite[Remark 2.18]{Nic17}.
Here the surjectivity of $\partial$ comes from Assumption \ref{ass35} and Lemma \ref{lem4d}.
The injectivity of $\FF|_{\PP}$ appears to be a difficult problem.
\end{rem}

\section{Quasi-Fitting Invariants}\label{sec21}

\subsection{Duality}\label{subsec21}
In this subsection, we will exhibit a duality (Proposition \ref{prop23}) on a certain category $\CC$.
Let $R, S$ be as in Subsection \ref{subsec11}, namely, $R$ is a noetherian ring and $S$ is an Ore set of $R$ consisting of non-zero-divisors.
It becomes more important to distinguish whether a module is a left module or a right module in this section than in the previous section, so we will sometimes clarify it.
But by symmetry, every argument about left modules is applicable to right modules, and vice versa.

\begin{defn}
For a left (resp. right) $R$-module $M$ and $i \in \Z$, we put $E^i_R(M) = \Ext^i_R(M, R)$, which is a right (resp. left) $R$-module.
We formally put $E^i_R(M) = 0$ if $i < 0$.
Moreover, put $M^+ = E^0_R(M) = \Hom_R(M,R)$ and $M^* = E^1_R(M)$.
\end{defn}

The functors $E^i_R$ have already played important roles in Iwasawa theory (see \cite{Jan89, Ven02}, for instance).
However, the author did not find a suitable reference for our aim, so we will develop a self-contained theory.

Since $R$ is noetherian, if $M$ is finitely generated then the module $E^i_R(M)$ is also finitely generated for any $i$.
As a variant of $\pd_R(M)$, whose definition is recalled in Subsection \ref{subsec11}, we put
\[
\rpd_R(M) = \sup \{i \in \Z \mid E^i_R(M) \neq 0 \}.
\]
Then clearly $\pd_R(M) \geq \rpd_R(M)$.
Under Assumption \ref{ass16} below, we can show that $\rpd_R(M)=-\infty$ if and only if $M = 0$ (see Remark \ref{remzz}(1)).

\begin{lem}\label{lem11}
Let $M$ be a finitely generated $R$-module.
If $\pd_R(M) < \infty$, then $\pd_R(M) = \rpd_R(M)$.
\end{lem}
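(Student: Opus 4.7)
The plan is to prove the nontrivial inequality $\pd_R(M) \leq \rpd_R(M)$ by induction on $d = \pd_R(M)$, showing that $E^d_R(M) \neq 0$ whenever $d$ is a nonnegative integer (the case $M = 0$ being trivial). For the base case $d = 0$, the module $M$ is a nonzero finitely generated projective, hence a direct summand of some $R^n$. If $M^+ = \Hom_R(M,R)$ were zero, then $\Hom_R(M, M)$, which embeds into $\Hom_R(M, R^n) = (M^+)^n$, would vanish, forcing $\id_M = 0$ and contradicting $M \neq 0$. Hence $E^0_R(M) \neq 0$.

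For the inductive step, I would take a surjection $P_0 \twoheadrightarrow M$ from a finitely generated projective module $P_0$ and let $N$ be its kernel, yielding the short exact sequence $0 \to N \to P_0 \to M \to 0$. A standard dimension-shifting argument gives $\pd_R(N) = d - 1$. When $d \geq 2$, the long exact sequence for $\Ext^*_R(-, R)$ provides an isomorphism $E^d_R(M) \cong E^{d-1}_R(N)$, since the flanking $\Ext$-groups $E^{d-1}_R(P_0)$ and $E^d_R(P_0)$ both vanish, and the inductive hypothesis finishes the case immediately.

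The main obstacle, and the only delicate case, is $d = 1$. Here $N$ is itself finitely generated projective, and the long exact sequence yields only the right-exact piece $P_0^+ \to N^+ \to E^1_R(M) \to 0$, so nonvanishing of $E^1_R(M)$ is not automatic. My plan is to argue by contradiction: if $E^1_R(M) = 0$, the surjection $P_0^+ \twoheadrightarrow N^+$ splits because $N^+$ is a finitely generated projective right $R$-module (from a decomposition $N \oplus N' \cong R^n$, one obtains $N^+ \oplus (N')^+ \cong R^n$ after dualizing). Applying $\Hom_R(-, R)$ to the split surjection and invoking the canonical reflexivity isomorphism $P \cong P^{++}$ for finitely generated projectives, one produces a retraction of the inclusion $N \hookrightarrow P_0$. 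This splits the original short exact sequence, so $M$ becomes a direct summand of $P_0$ and is therefore projective, contradicting $\pd_R(M) = 1$. The inductive machine then completes the proof.
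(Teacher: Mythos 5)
Your proof is correct, but the route is genuinely different from the paper's. The paper gives a direct, non-inductive argument: taking a projective resolution $0 \to F_d \to \cdots \to F_0 \to M \to 0$ of minimal length $d = \pd_R(M)$, it observes that $E^d_R(M) = 0$ forces $\Ext^d_R(M, F_d) = 0$, and then uses the top-degree cokernel description $\Ext^d_R(M, F_d) \simeq \Cok(\Hom_R(F_{d-1}, F_d) \to \Hom_R(F_d, F_d))$ to produce a splitting of $F_d \hookrightarrow F_{d-1}$, shortening the resolution and giving a contradiction in a single stroke for all $d$. You instead induct on $d$: you establish $d = 0$ via reflexivity considerations and $d = 1$ by the dualize-then-split-then-dualize-back argument, and you reduce $d \geq 2$ to $d - 1$ by pure dimension shifting on the short exact sequence $0 \to N \to P_0 \to M \to 0$. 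Your version confines the delicate splitting argument to the bottom step $d = 1$ and handles everything above it by the cleanest possible device (the isomorphism $E^d_R(M) \cong E^{d-1}_R(N)$); the paper's version avoids induction altogether and handles all $d$ uniformly with one cokernel identity. Both are valid over an arbitrary noetherian ring; neither uses commutativity or the Ore set. One small remark: your $d = 1$ splitting step and your $d = 0$ base case are in spirit exactly the paper's argument specialized to low degree (evaluate $\Ext^d(M, F_d)$ and derive a splitting), so the two proofs are really the same idea packaged differently — the paper does the splitting once at the top of the resolution, you do it once at the bottom and climb up by dimension shifting.
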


\begin{proof}
The assertion is trivial if $\pd_R(M) = -\infty$.
Putting $d = \pd_R(M) \neq \pm \infty$, we shall show that $E^d(M) \neq 0$.
Suppose to the contrary $E^d(M) = 0$.
Take an exact sequence
\[
0 \to F_d \to F_{d-1} \to \dots \to F_0 \to M \to 0
\]
of finitely generated $R$-modules with $F_0, \dots, F_d$ projective.
The assumption $E^d(M) = 0$ implies that $\Ext^d_R(M, F_d) = 0$.
On the other hand, the definition of $\Ext$ implies 
\[
\Ext^d_R(M, F_d) \simeq \Cok(\Hom_R(F_{d-1}, F_d) \to \Hom_R(F_d, F_d)).
\]
Therefore we obtain a splitting of the injective map $F_d \to F_{d-1}$ and we can construct a projective resolution of $M$ of length $d-1$, which contradicts $d = \pd_R(M)$.
This proves the lemma.
\end{proof}

The following assumption will be crucial.

\begin{ass}\label{ass16}
There is an integer $d(R)$ such that $\rpd_R(M) \leq d(R)$ for any finitely generated (left or right) $R$-module $M$. 
\end{ass}

The next Proposition \ref{prop18} will show that the existence of a certain subring $\Lambda$ of $R$ is sufficient for Assumption \ref{ass16} to hold.
In fact, the existence of $\Lambda$ holds in all example in this paper.
However, since the choice of such a ring $\Lambda$ is non-canonical, the statement of Assumption \ref{ass16} has the advantage that it depends only on the ring structure of $R$.

\begin{prop}\label{prop18}
Suppose that $R$ contains a subring $\Lambda$ such that 
\begin{itemize}
\item $R$ is finitely generated and projective as a left $\Lambda$-module and as a right $\Lambda$-module.
\item There are isomorphisms $\Hom_\Lambda(R, \Lambda) \simeq R$ as $(R, \Lambda)$-bimodules and as $(\Lambda, R)$-bimodules.
\item The global dimension of $\Lambda$ is finite, namely, there is an integer $d(\Lambda)$ such that $\pd_{\Lambda}(M) \leq d(\Lambda)$ for any finitely generated (left or right) module $M$ over $\Lambda$.
\end{itemize}
Then the following are true.

(1) For any left (resp. right) $R$-module $M$ and $i \geq 0$, we have 
\[
E^i_R(M) \simeq E^i_{\Lambda}(M)
\]
 as right (resp. left) $\Lambda$-modules.

(2) $R$ satisfies Assumption \ref{ass16}.
\end{prop}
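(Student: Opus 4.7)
The plan is to deduce part (1) from the standard change-of-rings adjunction together with the given bimodule isomorphism, and then to obtain part (2) as an immediate consequence. I will only write out the case of left $R$-modules; the right-module case is the mirror argument using the second bimodule isomorphism in the hypothesis.

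First I would establish the case $i = 0$. Using the bimodule isomorphism $\Hom_\Lambda(R, \Lambda) \simeq R$ as $(R, \Lambda)$-bimodules (where $\Hom_\Lambda$ is taken with respect to the left $\Lambda$-structure), the standard restriction--coinduction adjunction gives, for every left $R$-module $N$, a natural isomorphism of right $\Lambda$-modules
\[
\Hom_R(N, R) \simeq \Hom_R\bigl(N, \Hom_\Lambda(R, \Lambda)\bigr) \simeq \Hom_\Lambda(N, \Lambda),
\]
where the second isomorphism is the adjunction and the right $\Lambda$-structures on the two outer terms come from the right $\Lambda$-action on $R$ (by restriction) and on $\Lambda$, respectively.

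To pass to higher $\Ext$, I would take a projective resolution $P_\bullet \to M$ by finitely generated projective left $R$-modules. The first hypothesis on $\Lambda \subset R$ implies that every projective left $R$-module is also projective as a left $\Lambda$-module (being a direct summand of some $R^n$, and $R$ is projective over $\Lambda$), so $P_\bullet \to M$ is simultaneously a projective resolution over $\Lambda$. Applying the natural isomorphism above termwise yields an isomorphism of complexes of right $\Lambda$-modules computing $E^i_R(M)$ and $E^i_\Lambda(M)$, and taking cohomology gives part (1). Part (2) is then immediate: since $R$ is finitely generated over $\Lambda$, any finitely generated $R$-module $M$ is finitely generated over $\Lambda$, so by Lemma \ref{lem11} applied to $\Lambda$, $E^i_\Lambda(M) = 0$ for $i > d(\Lambda)$, and hence $E^i_R(M) = 0$ for $i > d(\Lambda)$ by part (1). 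Thus $d(R) := d(\Lambda)$ works.

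The main point requiring care is the verification that the adjunction respects the right $\Lambda$-module structures on both sides, and that the natural $R$-bimodule structure on $\Hom_\Lambda(R, \Lambda)$ (left $R$ from right $R$-multiplication on the source $R$, right $\Lambda$ from left $\Lambda$-multiplication on the target $\Lambda$) is exactly the one for which the given bimodule isomorphism holds. This is purely bookkeeping once the conventions are fixed, and aside from it the argument is formal. The only place the hypotheses are genuinely used is (a) the transfer of projectivity via the first bullet and (b) the identification $\Hom_\Lambda(R, \Lambda) \simeq R$ via the second bullet; the finite global dimension hypothesis enters only in part (2).
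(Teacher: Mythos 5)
Your argument is correct and follows essentially the same route as the paper: establish $\Hom_R(-,R)\simeq\Hom_\Lambda(-,\Lambda)$ via the bimodule isomorphism $\Hom_\Lambda(R,\Lambda)\simeq R$ and the tensor--Hom (restriction/coinduction) adjunction, transfer projectivity of a resolution from $R$ to $\Lambda$ to pass to higher $\Ext$, then set $d(R)=d(\Lambda)$. The only cosmetic difference is that in part (2) you invoke Lemma \ref{lem11}, whereas the obvious inequality $\rpd_\Lambda(M)\le\pd_\Lambda(M)$ already suffices.
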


\begin{proof}
(1) For any (left or right) $R$-module $M$, we have functorial isomorphisms
\[
\Hom_R(M, R) \simeq \Hom_R(M, \Hom_{\Lambda}(R, \Lambda)) \simeq \Hom_{\Lambda}(R \otimes_R M, \Lambda) \simeq \Hom_{\Lambda}(M, \Lambda)
\]
depending on the choice of an isomorphism $\Hom_{\Lambda}(R, \Lambda) \simeq R$.
Since a projective $R$-module is projective over $\Lambda$ by the assumption, the claim $\Ext^i_R(M, R) \simeq \Ext^i_{\Lambda}(M, \Lambda)$ follows from the construction of the Ext functor using a projective resolution of $M$ as an $R$-module.

(2) Putting $d(R) = d(\Lambda)$, we can immediately show Assumption \ref{ass16} by (1).
\end{proof}

Using Proposition \ref{prop18}, we can verify Assumption \ref{ass16} in the situation of Example \ref{ega1}.
More generally, the following is true. (It is essentially \cite[Corollary 2.4]{Jan89}.)

\begin{prop}\label{prop17}
Let $G$ be a compact $p$-adic Lie group and put $R = \Z_p[[G]]$.
Then $R$ satisfies Assumption \ref{ass16}.
\end{prop}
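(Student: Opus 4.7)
The plan is to apply Proposition \ref{prop18} with $\Lambda = \Z_p[[H]]$, where $H$ is an open normal subgroup of $G$ that is a uniform pro-$p$ group. The existence of such an $H$ is a standard consequence of Lazard's theory of $p$-adic analytic groups (see, for instance, Dixon--du Sautoy--Mann--Segal's \emph{Analytic pro-$p$ groups}).

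I then verify the three hypotheses of Proposition \ref{prop18}. Since $[G:H] = n < \infty$, a choice of coset representatives $g_1, \dots, g_n$ of $H$ in $G$ with $g_1 = 1$ yields a decomposition $R = \bigoplus_{i=1}^n \Lambda g_i$; normality of $H$ implies the compatibility $g_i \Lambda = \Lambda g_i$, so that simultaneously $R = \bigoplus_{i=1}^n g_i \Lambda$, and consequently $R$ is free of rank $n$ as both a left and a right $\Lambda$-module. The projection $t: R \to \Lambda$ onto the summand $\Lambda g_1 = \Lambda$ is then $\Lambda$-bilinear, and the assignment $r \mapsto [x \mapsto t(xr)]$ defines an $(R, \Lambda)$-bimodule homomorphism $R \to \Hom_\Lambda(R,\Lambda)$ whose inverse sends a $\Lambda$-linear form $\varphi$ to $\sum_i \varphi(g_i^{-1}) g_i$; this encodes the fact that the extension $\Lambda \subset R$ is a free Frobenius extension (in the familiar crossed-product sense). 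The $(\Lambda, R)$-bimodule version is obtained by a symmetric argument. Finally, for the global dimension condition, I invoke Brumer's classical theorem: for a uniform pro-$p$ group $H$ of dimension $d$, the Iwasawa algebra $\Z_p[[H]]$ has finite global dimension equal to $d+1$; this supplies the required uniform bound $d(\Lambda) = d+1$ on projective dimensions of finitely generated $\Lambda$-modules.

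With all three conditions of Proposition \ref{prop18} established, part (2) of that proposition delivers Assumption \ref{ass16} for $R$ with $d(R) = d+1$. The only point requiring care is the $\Lambda$-bilinearity of the trace $t$, for which the normality of $H$ is essential; the Frobenius isomorphism and the Brumer bound are then standard inputs, so the main work is really in choosing $H$ correctly and invoking the right references.
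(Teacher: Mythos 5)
Your proof is correct and takes essentially the same route as the paper: both apply Proposition \ref{prop18} with $\Lambda = \Z_p[[H]]$ for an open $p$-torsion-free pro-$p$ subgroup $H$ of $G$, and both obtain the finite global dimension of $\Lambda$ from Brumer's theorem combined with the finiteness of $\cd_p(H)$ for such $H$. The only cosmetic difference is that you spell out the Frobenius-extension verification of the first two hypotheses of Proposition \ref{prop18} directly, whereas the paper delegates that to \cite[Lemma 2.3]{Jan89}, and you take $H$ uniform and normal while the paper only asks that $G'$ be open, pro-$p$, and $p$-torsion-free.
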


\begin{proof}
Note that $R$ is known to be noetherian \cite[V.2.2.4]{Laz65}.
 Take an open subgroup $G'$ of $G$ such that $G'$ is a pro-$p$ group without $p$-torsion.
Putting $\Lambda = \Z_p[[G']]$, we check the conditions in Proposition \ref{prop18}.
The first two conditions can be easily shown (see also \cite[Lemma 2.3]{Jan89}).
For the third condition, \cite[Theorem 4.1]{Bru66} (or \cite[Corollary (5.2.13)]{NSW08}) shows $\pd_{\Lambda}(M) \leq 1 + \cd_p(G')$ for any $\Lambda$-module $M$, where $\cd_p$ denotes the $p$-cohomological dimension.
Moreover, it is known that, since $G'$ is $p$-torsion free, $\cd_p(G')$ coincides with the dimension of $G'$ as a $p$-adic Lie group, which is finite (\cite[V.2.2.8]{Laz65} and \cite{Ser65}).
Hence we can apply Proposition \ref{prop18} to confirm Assumption \ref{ass16}.
\end{proof}

Recall that we are given an Ore set $S$ and in Subsection \ref{subsec11} we defined the categories $\PP = \PP_{R, S} \subset \MM = \MM_{R, S}$.
Define full subcategories $\CC = \CC_{R, S}$ and $\QQ = \QQ_{R, S}$ of $\MM$ by
\[
\CC = \{ M \in \MM \mid \rpd_R(M) \leq 1 \}
\]
and
\[
\QQ = \{ M \in \MM \mid \pd_R(M) < \infty \}.
\]
When necessary, we write $\CC^{\Left}$ (resp. $\CC^{\Right}$) and so on to stress that the modules are left (resp. right) modules.

Note that Assumption \ref{ass16} implies that $\rpd_R(M) < \infty$ for any $M \in \MM$.
Lemma \ref{lem11} shows that $\PP = \CC \cap \QQ$, meaning that a module is contained in $\PP$ if and only if it is contained both in $\CC$ and in $\QQ$.

Proposition \ref{prop23} will show that $(-)^*$ gives a duality between $\CC^{\Left}$ and $\CC^{\Right}$ under Assumption \ref{ass16}.
Before going to the general case, we give some special cases (Example \ref{eg14}).

\begin{lem}\label{lem12}
Suppose that $R$ contains a subring $\Lambda$ as in Proposition \ref{prop18}.
Then $\CC = \{ M \in \MM \mid \pd_{\Lambda}(M) \leq 1 \}$.
\end{lem}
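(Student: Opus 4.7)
The plan is to reduce both conditions $\rpd_R(M) \leq 1$ and $\pd_\Lambda(M) \leq 1$ to the single condition $\rpd_\Lambda(M) \leq 1$, using Proposition \ref{prop18}(1) for one reduction and Lemma \ref{lem11} for the other.

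First I would observe that any $M \in \MM$ is finitely generated over $\Lambda$. Indeed, since $R$ is finitely generated as a (left, say) $\Lambda$-module by the first hypothesis on $\Lambda$ in Proposition \ref{prop18}, and $M$ is finitely generated over $R$, the composite shows $M$ is finitely generated over $\Lambda$. So the invariants $\pd_\Lambda(M)$ and $\rpd_\Lambda(M)$ make sense in a well-behaved manner.

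Next, by Proposition \ref{prop18}(1), we have natural isomorphisms $E^i_R(M) \simeq E^i_\Lambda(M)$ of $\Lambda$-modules for every $i$. Hence
\[
\rpd_R(M) = \sup\{ i \in \Z \mid E^i_R(M) \neq 0 \} = \sup\{ i \in \Z \mid E^i_\Lambda(M) \neq 0 \} = \rpd_\Lambda(M).
\]
On the other hand, the third condition on $\Lambda$ guarantees $\pd_\Lambda(M) \leq d(\Lambda) < \infty$, so Lemma \ref{lem11} applied over $\Lambda$ gives $\pd_\Lambda(M) = \rpd_\Lambda(M)$.

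Combining these two equalities yields $\rpd_R(M) = \pd_\Lambda(M)$, so the conditions $\rpd_R(M) \leq 1$ and $\pd_\Lambda(M) \leq 1$ are equivalent, which is exactly the claim. No real obstacle is expected; the only subtlety is to make sure Lemma \ref{lem11} applies, which requires the finite global dimension hypothesis on $\Lambda$ to place us in the $\pd_\Lambda(M) < \infty$ regime.
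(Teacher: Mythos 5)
Your proof is correct and takes essentially the same approach as the paper: apply Lemma \ref{lem11} over $\Lambda$ (using finite global dimension of $\Lambda$ to ensure $\pd_\Lambda(M) < \infty$) to get $\pd_\Lambda(M) = \rpd_\Lambda(M)$, and Proposition \ref{prop18}(1) to get $\rpd_\Lambda(M) = \rpd_R(M)$. The extra remark that $M$ is finitely generated over $\Lambda$ is a reasonable piece of bookkeeping that the paper leaves implicit.
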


\begin{proof}
For any $M \in \MM$, by Lemma \ref{lem11} applied to $\Lambda$ instead of $R$, we have $\pd_{\Lambda}(M) = \rpd_{\Lambda}(M)$.
Moreover, Proposition \ref{prop18}(1) shows $\rpd_{\Lambda}(M) = \rpd_{R}(M)$.
These prove the lemma.
\end{proof}

\begin{eg}\label{eg14}
Consider $R = \Z_p[[G]], \Lambda = \Z_p[[G']]$ with $G'$ isomorphic to $\Z_p^d$, and $S = \Lambda \setminus \{0\}$ as in Example \ref{ega1}.
By Proposition \ref{prop17}, Assumption \ref{ass16} holds.
Note that the categories $\MM, \CC, \PP, \QQ$ do not depend on the choice of $G'$.
By Lemma \ref{lem12}, we have $\CC = \{M \in \MM \mid \pd_{\Lambda}(M) \leq 1 \}$.

Suppose $d = 0$, namely $G$ is a finite group and $R = \Z_p[G], \Lambda = \Z_p$.
Then we have $\CC = \MM$, which is the category of all $R$-modules of finite order.
For $M \in \CC$, Proposition \ref{prop18}(1) shows that $M^* \simeq \Ext^1_{\Z_p}(M, \Z_p)$.
Using the exact sequence $0 \to \Z_p \to \Q_p \to \Q_p/\Z_p \to 0$, we have 
\[
M^* \simeq \Hom_{\Z_p}(M, \Q_p/\Z_p),
\]
which is nothing other than the Pontryagin dual.

Suppose $d = 1$. 
It is known that $\pd_{\Lambda}(M) \leq 1$ if and only if $M$ does not contain a nonzero finite $\Lambda$-submodule (\cite[Proposition (5.3.19)(i)]{NSW08}).
In this case, the duality $(-)^*$ on $\CC$ is nothing other than the Iwasawa adjoint.
If $M \in \CC$ is finitely generated and free over $\Z_p$ (namely the $\mu$-invariant of $M$ vanishes), then we have another description
\[
M^* \simeq \Hom_{\Z_p}(M, \Z_p).
\]
\end{eg}

We come back to the general case.

\begin{lem}\label{lem21}
Let $0 \to M' \to M \to M'' \to 0$ be an exact sequence of $R$-modules.

(1) If two of $M, M', M''$ are in $\QQ$, then the other is also in $\QQ$.

(2) Suppose that $M''$ is in $\CC$.
Then $M$ is in $\CC$ if and only if $M'$ is in $\CC$.
\end{lem}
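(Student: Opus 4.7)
The plan is to reduce both parts to the long exact sequence of $\Ext$ groups attached to the given short exact sequence, combined with Lemma \ref{lem2a}(1), which already takes care of the $\MM$-membership conditions (finite generation and $S$-torsion).

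For (1), I would fix an arbitrary $R$-module $N$ and feed the short exact sequence into the long exact $\Ext$ sequence
\[
\cdots \to \Ext^{i-1}_R(M',N) \to \Ext^i_R(M'',N) \to \Ext^i_R(M,N) \to \Ext^i_R(M',N) \to \Ext^{i+1}_R(M'',N) \to \cdots.
\]
From this, the standard three inequalities
\[
\pd_R(M) \leq \max\{\pd_R(M'),\pd_R(M'')\}, \quad \pd_R(M') \leq \max\{\pd_R(M),\pd_R(M'')-1\},\quad \pd_R(M'') \leq \max\{\pd_R(M),\pd_R(M')+1\}
\]
follow by looking at large $i$. Each one supplies exactly the case needed: whichever two of $M,M',M''$ are assumed to lie in $\QQ$, the appropriate inequality immediately places the third one in $\QQ$ as well, after using Lemma \ref{lem2a}(1) to ensure the third module still belongs to $\MM$.

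For (2), note first that since $M'' \in \CC \subset \MM$, Lemma \ref{lem2a}(1) gives $M \in \MM \iff M' \in \MM$, so only the bound on $\rpd_R$ remains to be checked. Specializing the above long exact sequence to $N = R$ yields
\[
\cdots \to E^i_R(M'') \to E^i_R(M) \to E^i_R(M') \to E^{i+1}_R(M'') \to \cdots.
\]
The hypothesis $\rpd_R(M'') \leq 1$ forces $E^i_R(M'') = 0$ for all $i \geq 2$, so the outer terms vanish in this range and one obtains natural isomorphisms $E^i_R(M) \simeq E^i_R(M')$ for every $i \geq 2$. Consequently $E^{\bullet}_R(M)$ is concentrated in degrees $\leq 1$ if and only if $E^{\bullet}_R(M')$ is, which is exactly the equivalence $M \in \CC \iff M' \in \CC$.

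I do not anticipate any real obstacle here: this lemma is essentially a bookkeeping statement that repackages the long exact sequence of $\Ext$ as a closure property of $\QQ$ and $\CC$ under extensions and quotients. The only mild point is to remember that in (2) the asymmetry between $M'$ and $M''$ (only $M''$ is assumed in $\CC$) is precisely what makes the connecting maps into $E^{i+1}_R(M'')$ vanish for $i \geq 2$, so no assumption on $M$ or $M'$ is needed to get the isomorphism $E^i_R(M) \simeq E^i_R(M')$ in those degrees.
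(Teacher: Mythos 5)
Your proof is correct and takes exactly the same route as the paper, which proves the lemma by "taking the extension groups of the given short exact sequence" (the same phrasing used for Lemma \ref{lem2a}(2)); you have simply spelled out in full the long exact $\Ext$ sequence argument that the paper leaves implicit. In particular, your observation for part (2) that $E^i_R(M'')=0$ for $i\geq 2$ forces $E^i_R(M)\simeq E^i_R(M')$ in that range is precisely the intended reasoning.
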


\begin{proof}
These assertions are shown similarly as in Lemma \ref{lem2a}.
\end{proof}

\begin{lem}\label{lem22}
(1) For any $f \in S$, we have an isomorphism $(R/Rf)^* \simeq R/fR$.

(2) For any $M \in \MM^{\Left}$, we have $M^* \in \MM^{\Right}$.

(3) For any $P \in \PP^{\Left}$, we have $P^* \in \PP^{\Right}$.
\end{lem}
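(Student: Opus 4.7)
The plan is to prove the three parts in order, with the core insight being a preliminary vanishing $\Hom_R(M,R) = 0$ for every $M \in \MM$ that drives both (2) and (3).

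For (1), I will apply $\Hom_R(-,R)$ to the short exact sequence of left $R$-modules
\[
0 \to R \xrightarrow{\cdot f} R \to R/Rf \to 0,
\]
where $\cdot f$ denotes right multiplication by $f$ (an $R$-linear map on the left). Using the identification $\Hom_R(R,R) \simeq R$ as right $R$-modules via $\phi \mapsto \phi(1)$, the induced map $\Hom_R(R,R) \to \Hom_R(R,R)$ becomes left multiplication by $f$ on $R$. Since $f \in S$ is a non-zero-divisor, this map is injective, so $(R/Rf)^+ = 0$, and its cokernel yields $(R/Rf)^* \simeq R/fR$ as a right $R$-module.

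The key observation for (2) is that $M^+ = 0$ for any $M \in \MM^{\Left}$: if $m_1, \ldots, m_k$ generate $M$ and $s_i \in S$ satisfy $s_i m_i = 0$, then any $\phi \in \Hom_R(M,R)$ satisfies $s_i \phi(m_i) = \phi(s_i m_i) = 0$, and since $s_i$ is a non-zero-divisor in $R$ this forces $\phi(m_i) = 0$, whence $\phi = 0$. With this in hand, I will use Lemma \ref{lem2b} to fit $M$ into $0 \to N \to P \to M \to 0$ with $P = \bigoplus_{i=1}^r R/Rf_i \in \PP$ and $N \in \MM$ (by Lemma \ref{lem2a}(1)). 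Applying $\Hom_R(-,R)$ gives
\[
0 \to M^+ \to P^+ \to N^+ \to M^* \to P^*,
\]
and the vanishing observation kills the first three terms, producing an injection $M^* \hookrightarrow P^*$. By (1), $P^* \simeq \bigoplus_{i=1}^r R/f_iR$, which is finitely generated and right $S$-torsion (each generator is right-killed by the corresponding $f_i$), so $M^*$ inherits both properties (finite generation from noetherianity of $R$, $S$-torsion from the inclusion).

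For (3), let $P \in \PP^{\Left}$. Since $\pd_R(P) \leq 1$ and $R$ is noetherian with $P$ finitely generated, there exists a resolution $0 \to Q_1 \to Q_0 \to P \to 0$ by finitely generated projective left $R$-modules. Applying $\Hom_R(-,R)$ yields
\[
0 \to P^+ \to Q_0^+ \to Q_1^+ \to P^* \to 0
\]
(higher Ext vanishes on projectives). The preliminary observation gives $P^+ = 0$, so $P^*$ is the cokernel of a map between finitely generated projective right $R$-modules, whence $\pd_R(P^*) \leq 1$; combined with $P^* \in \MM^{\Right}$ from (2), we conclude $P^* \in \PP^{\Right}$. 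The main (conceptual) obstacle is really just noticing $M^+ = 0$ for $M \in \MM$; once that is in hand, the rest is routine bookkeeping with the Ext long exact sequence.
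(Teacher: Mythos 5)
Your proof is correct and follows essentially the same route as the paper. The one difference is presentational: you isolate the vanishing $M^+ = 0$ for all $M \in \MM$ as an explicit preliminary observation driving (2) and (3), whereas the paper uses this fact implicitly (e.g., writing $N^+ = 0$ in (2) and silently dropping the $P^+$ term in (3)); making it explicit is a clean way to organize the argument but does not change the substance.
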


\begin{proof}
(1) The exact sequence $0 \to R \overset{\times f}{\to} R \to R/Rf \to 0$ of left $R$-modules and $R^+ \simeq R$ induce an exact sequence $0 \to R \overset{f \times}{\to} R \to (R/Rf)^* \to 0$ of right $R$-modules.
Thus $(R/Rf)^* \simeq R/fR$.

(2) Construct a surjective homomorphism $P = \bigoplus_{i=1}^r R/Rf_i \to M$ as in the proof of Lemma \ref{lem2b}(2).
Since $P$ is $S$-torsion, the kernel $N$ of $P \to M$ is also $S$-torsion and thus $N^+ = 0$.
Hence we obtain an injective map $M^* \to P^*$.
Since $P^* \simeq \bigoplus_{i=1}^r R/f_i R$ by (1), we have $M^* \in \MM^{\Right}$.  

(3) $P^* \in \MM^{\Right}$ follows from (2).
Take an exact sequence $0 \to F_1 \to F_0 \to P \to 0$ of left $R$-modules with $F_1, F_0$ finitely generated and projective.
It induces an exact sequence $0 \to F_0^+ \to F_1^+ \to P^* \to 0$ of right $R$-modules.
Since $F_0^+$ and $F_1^+$ are projective, we conclude that $P^* \in \PP^{\Right}$.
\end{proof}

{\it In the rest of this subsection, we assume Assumption \ref{ass16}}.

\begin{lem}\label{lem26}
Suppose that a finitely generated $R$-module $M$ fits into an exact sequence
\[
0 \to M \to F_0 \to F_1 \to \cdots
\]
with $F_j$ a finitely generated free $R$-module.
Then $\rpd_R(M) \leq 0$.
\end{lem}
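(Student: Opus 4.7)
The plan is to prove $E^i_R(M) = 0$ for all $i \geq 1$ by dimension shifting, using the free modules $F_j$ and Assumption \ref{ass16} to guarantee eventual vanishing.

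First I would break the given long exact sequence into short exact pieces. Set $K_0 = M$ and for $n \geq 1$ let $K_n = \ker(F_n \to F_{n+1}) = \Image(F_{n-1} \to F_n)$. Exactness of the original sequence yields short exact sequences
\[
0 \to K_n \to F_n \to K_{n+1} \to 0
\]
for every $n \geq 0$. Since $R$ is noetherian and each $F_n$ is finitely generated, each $K_n$ is a finitely generated $R$-module, so Assumption \ref{ass16} applies and gives $\rpd_R(K_n) \leq d(R)$.

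Next I would apply the long exact sequence of $\Ext^\bullet_R(-, R)$ to each short exact sequence. Because $F_n$ is free, $\Ext^i_R(F_n, R) = 0$ for all $i \geq 1$, so the connecting map gives isomorphisms
\[
\Ext^i_R(K_n, R) \xrightarrow{\ \sim\ } \Ext^{i+1}_R(K_{n+1}, R) \quad \text{for all } i \geq 1.
\]
Iterating, I obtain for every $i \geq 1$ and every $n \geq 0$ an isomorphism $E^i_R(M) = \Ext^i_R(M, R) \simeq \Ext^{i+n}_R(K_n, R)$.

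Finally, for any fixed $i \geq 1$ choose $n$ with $i + n > d(R)$. Then by Assumption \ref{ass16} we have $\Ext^{i+n}_R(K_n, R) = 0$, hence $E^i_R(M) = 0$. This is exactly $\rpd_R(M) \leq 0$, as desired. There is no real obstacle: the only thing to keep track of is that the dimension shift begins at $i \geq 1$ (so the freeness of $F_n$ is enough to kill all the intermediate Ext groups), and that each $K_n$ remains in the scope of Assumption \ref{ass16}, which is immediate from noetherianness.
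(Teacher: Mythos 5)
Your proof is correct and takes essentially the same route as the paper: break the resolution into short exact sequences, use freeness of the $F_n$ to obtain the dimension-shift isomorphisms $E^i_R(M) \simeq E^{i+n}_R(K_n)$ for $i \geq 1$, and then invoke Assumption \ref{ass16} to force vanishing. The paper just fixes $n = d(R)$ once and for all rather than choosing $n$ separately for each $i$, which is an immaterial difference.
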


\begin{proof}
Let $d(R)$ be the integer as in Assumption \ref{ass16} and let $L$ be the image of $F_{d(R)-1} \to F_{d(R)}$.
Then $E^i(M) \simeq E^{i+d(R)}(L) = 0$ for $i > 0$.
\end{proof}

\begin{prop}\label{prop23}
For any $M \in \CC^{\Left}$, we have $M^* \in \CC^{\Right}$. 
The functors $(-)^*: \CC^{\Left} \to \CC^{\Right}$ and $(-)^*: \CC^{\Right} \to \CC^{\Left}$ are contravariant equivalences of categories such that $(-)^{**} \simeq \id$.
\end{prop}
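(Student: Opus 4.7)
I plan to establish the proposition in three steps: (i) $P^{**}\simeq P$ naturally for $P\in\PP$; (ii) $M^*\in\CC^{\Right}$ for every $M\in\CC^{\Left}$; (iii) the biduality map $\eta_M\colon M\to M^{**}$ is an isomorphism on all of $\CC$. Step (i) is a direct computation: take a finite projective resolution $0\to F_1\to F_0\to P\to 0$ and apply $(-)^+$. Since $P$ is $S$-torsion, $P^+=0$, so one obtains the projective resolution $0\to F_0^+\to F_1^+\to P^*\to 0$ of the right module $P^*$. Applying $(-)^+$ once more, and using $(F_i^+)^+\simeq F_i$ together with $(P^*)^+=0$, recovers the original resolution; thus $P^{**}\simeq P$ via the natural map.

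For step (ii), take an exact sequence $0\to N\to P\to M\to 0$ with $P\in\PP$ (Lemma \ref{lem2b}(2)), so $N\in\CC$ by Lemma \ref{lem21}(2). Since all of $M,N,P$ lie in $\MM\cap\CC$, the $(-)^+$ terms and the $E^{\geq 2}$ terms in the long Ext sequence vanish, so the sequence collapses to a short exact $0\to M^*\to P^*\to N^*\to 0$ in $\MM^{\Right}$. Dualizing once more, and invoking step (i) together with $E^{\geq 2}(P^*)=0$, the long Ext sequence yields $E^i(M^*)\simeq E^{i+1}(N^*)$ for every $i\geq 2$. Iterating with successive $\PP$-resolutions of $N$, whose kernels $N_k$ again lie in $\CC$, produces $E^i(M^*)\simeq E^{i+k}(N_{k-1}^*)$ for every $k\geq 1$, and Assumption \ref{ass16} forces this to vanish once $k>d(R)$. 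Hence $M^*\in\CC^{\Right}$. The same collapse shows that $(-)^*$ is an exact contravariant functor between $\CC^{\Left}$ and $\CC^{\Right}$.

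In step (iii), dualizing the sequence $0\to N\to P\to M\to 0$ twice therefore gives another short exact sequence, and naturality of $\eta$ produces the commutative diagram with exact rows
\[
\begin{array}{ccccccccc}
0 & \to & N & \to & P & \to & M & \to & 0 \\
  &     & \eta_N\!\downarrow & & \eta_P\!\downarrow & & \eta_M\!\downarrow & & \\
0 & \to & N^{**} & \to & P & \to & M^{**} & \to & 0,
\end{array}
\]
in which $\eta_P$ is an isomorphism by step (i). The snake lemma yields $\Ker\eta_N=0$, $\Cok\eta_M=0$, and $\Ker\eta_M\simeq\Cok\eta_N$. So $\eta_M$ is surjective for every $M\in\CC$; applying this surjectivity to $N\in\CC$ gives $\Cok\eta_N=0$, and consequently $\Ker\eta_M=0$. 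Thus $\eta_M$ is an isomorphism for all $M\in\CC$, which combined with step (ii) shows that $(-)^*$ defines a contravariant equivalence $\CC^{\Left}\simeq\CC^{\Right}$ with $(-)^{**}\simeq\id$.

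The main obstacle is the bootstrap in step (iii): the snake lemma gives surjectivity of $\eta_M$ immediately, but injectivity appears only after re-applying the just-proved surjectivity statement to the subobject $N$. Step (ii) likewise relies on Assumption \ref{ass16} in an essential way, since without a uniform bound on $\rpd_R$ the iterative peeling of successive kernels $N_k$ would not terminate.
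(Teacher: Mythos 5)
Your proof is correct and takes a genuinely different route from the paper's. The paper works with a full (possibly infinite) free resolution $\cdots \to F_1 \to F_0 \to M \to 0$: setting $N$ to be the first syzygy, it uses $\rpd_R(N)\leq 0$ to dualize the tail of the resolution into $0\to N^+\to F_1^+\to F_2^+\to\cdots$, invokes Lemma \ref{lem26} to get $\rpd_R(N^+)\leq 0$ (yielding $M^*\in\CC$), and then dualizes once more to recover $\cdots\to F_1^{++}\to F_0^{++}\to M^{**}\to 0$, concluding $M^{**}\simeq M$ by comparison with the original resolution. You instead work with a single two-term $\PP$-resolution $0\to N\to P\to M\to 0$ throughout. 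For step (ii) your iterative degree-shift $E^i(M^*)\simeq E^{i+k}(N_{k-1}^*)$, terminated by the uniform bound $d(R)$ of Assumption \ref{ass16}, plays the role of the paper's Lemma \ref{lem26}; both uses of Assumption \ref{ass16} are essential and roughly parallel. For step (iii) your snake-lemma bootstrap is genuinely different and arguably slicker than the paper's explicit chain-level comparison, giving surjectivity of $\eta_M$ for all $M\in\CC$ and then re-feeding that to $N$ to obtain injectivity. One point both arguments leave somewhat implicit, but yours relies on more heavily, is that there actually is a \emph{natural} biduality transformation $\eta\colon\id\to(-)^{**}$: since $(-)^{**}=E^1\circ E^1$ is not the ordinary $\Hom$-double-dual, $\eta_M$ has to be constructed (for instance from the diagram $0\to N\to P\to M\to 0$, $0\to N^{++}\to P^{++}\to M^{**}\to 0$ using the ordinary $\Hom$-biduality of $N$ and $P$, then checking independence of the presentation and naturality in $M$). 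This is routine but not free, and your step (iii) uses it as a given. Also, a small typo: the bottom row of your diagram should read $P^{**}$ rather than $P$, though this is harmless since $\eta_P$ is an isomorphism by step (i).
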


\begin{proof}
Take an exact sequence 
\[
\dots \to F_2 \to F_1 \to F_0 \to M \to 0
\]
with $F_j$ a finitely generated free $R$-module.
Let $N$ be the image of the map $F_1 \to F_0$.
Then the exact sequence $0 \to N \to F_0 \to M \to 0$ and $M \in \CC$ imply that $\rpd_R(N) \leq 0$.
Therefore the exact sequence $\dots \to F_2 \to F_1 \to N \to 0$ induces an exact sequence 
\[
0 \to N^+ \to F_1^+ \to F_2^+ \to \cdots.
\]
Now Lemma \ref{lem26} implies that $\rpd_R(N^+) \leq 0$.
Since $0 \to F_0^+ \to N^+ \to M^* \to 0$ is exact, we obtain $\rpd_R(M^*) \leq  1$.
Combining with $M^* \in \MM^{\Right}$ from Lemma \ref{lem22}(2), we get $M^* \in \CC^{\Right}$.
Moreover, $0 \to N^{++} \to F_0^{++} \to M^{**} \to 0$ is exact.

Now Lemma \ref{lem26} implies that, if we divide the sequence $0 \to N^+ \to F_1^+ \to F_2^+ \to \cdots$ into short exact sequences, every term satisfies the vanishing of $E^1$.
Therefore we obtain an exact sequence
\[
\dots \to F_2^{++} \to F_1^{++} \to N^{++} \to 0.
\]
Consequently we have an exact sequence
\[
\dots \to F_2^{++} \to F_1^{++} \to F_0^{++} \to M^{**} \to 0.
\]
Finally $F_j^{++} \simeq F_j$ yields $M^{**} \simeq M$.
\end{proof}

\begin{rem}\label{remzz}
(1) In the proof, we essentially showed that, if $N$ is a finitely generated $R$-module with $\rpd_R(N) \leq 0$, then $N^{++} \simeq N$.
In particular, if $\rpd_R(N) = -\infty$, then $N^+=0$ shows $N = 0$, as already mentioned before Lemma \ref{lem11}.

(2) When $R$ is Auslander regular, then Proposition \ref{prop23} is a special case of \cite[Proposition 3.9]{Ven02}.
In this paper, the situation that $R$ is not Auslander regular is important, so we cannot directly apply that result.
However, the author thinks that a similar proof as in \cite{Ven02} is possible.
\end{rem}

Using this duality on $\CC$, we can prove the following lemmas.

\begin{lem}\label{lem24}
Let $0 \to M' \to M \to M'' \to 0$ be an exact sequence in $\CC$.
If two of $M', M, M''$ are in $\PP$, then the other is also in $\PP$.
\end{lem}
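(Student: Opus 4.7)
The plan is to reduce the statement to the two-out-of-three property already established in Lemma \ref{lem21}(1) via the identification $\PP = \CC \cap \QQ$, which is the content of the remark made just after the definition of $\QQ$ (and relies on Lemma \ref{lem11}). Recall that $\PP \subset \QQ$ is trivial from the definitions, $\PP \subset \CC$ follows from $\pd_R \geq \rpd_R$, and conversely a module in $\CC \cap \QQ$ has $\pd_R = \rpd_R \leq 1$ by Lemma \ref{lem11}, so indeed lies in $\PP$.

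Given this, I would argue as follows. Suppose two of $M', M, M''$ lie in $\PP$. Since $\PP \subset \QQ$, these two modules lie in $\QQ$, and Lemma \ref{lem21}(1) applied to the short exact sequence $0 \to M' \to M \to M'' \to 0$ yields that the third module also lies in $\QQ$. On the other hand, the hypothesis that the short exact sequence takes place in $\CC$ ensures that the third module already lies in $\CC$. Combining these two memberships via $\PP = \CC \cap \QQ$ gives that the third module is in $\PP$, as required.

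There is no genuine obstacle: the substantive work has already been done in Lemma \ref{lem11} (which gives $\PP = \CC \cap \QQ$) and in Lemma \ref{lem21}(1) (the two-out-of-three property for $\QQ$, itself coming from the long exact Ext sequence). The present lemma is essentially a packaging of these two facts into a form convenient for the subsequent development of the theory of shifts restricted to the category $\CC$.
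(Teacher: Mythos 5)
Your proof is correct, and it takes a genuinely different route from the paper's. The paper first reduces, via Lemma~\ref{lem2a}(2), to the one case $M', M \in \PP$ (inferring $M'' \in \PP$), and then handles that case by dualizing: it takes the exact sequence $0 \to (M'')^* \to M^* \to (M')^* \to 0$, applies Lemma~\ref{lem22}(3) to see $M^*, (M')^* \in \PP$, concludes $(M'')^* \in \PP$ by Lemma~\ref{lem2a}(2) again, and finishes by biduality $M'' \simeq (M'')^{**}$ from Proposition~\ref{prop23}. Your argument instead goes through the identification $\PP = \CC\cap\QQ$ together with the two-out-of-three property for $\QQ$ from Lemma~\ref{lem21}(1), and is noticeably more economical: it avoids the duality machinery entirely, handles all three cases symmetrically in one stroke, and makes transparent exactly where the hypothesis that the sequence lies in $\CC$ enters (it supplies the $\CC$-membership of the third term, while Lemma~\ref{lem21}(1) supplies the $\QQ$-membership). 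The paper's approach is somewhat heavier but is perhaps a natural thing to reach for given that the author is about to use the duality functor repeatedly in what follows. Both proofs operate under the blanket Assumption~\ref{ass16} in force for this subsection, though it is worth noting that your route invokes that assumption less essentially than the paper's, since Proposition~\ref{prop23} is where the assumption does real work.
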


\begin{proof}
By Lemma \ref{lem2a}(2), we only have to consider the case where $M', M \in \PP$.
The given sequence induces an exact sequence $0 \to (M'')^* \to M^* \to (M')^* \to 0$.
We have $M^*, (M')^* \in \PP$ by Lemma \ref{lem22}(3), so $(M'')^* \in \PP$ by Lemma \ref{lem2a}(2).
Finally Lemma \ref{lem22}(3) and Proposition \ref{prop23} show $M'' \simeq (M'')^{**} \in \PP$.
\end{proof}

\begin{lem}\label{lem25}
Let $M$ be a module in $\CC$.

(1) There is an exact sequence $0 \to N \to P \to M \to 0$ in $\CC$ with $P \in \PP$.

(2) There is an exact sequence $0 \to M \to P \to N \to 0$ in $\CC$ with $P \in \PP$.
\end{lem}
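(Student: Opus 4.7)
The plan is to deduce (1) directly from the constructions of Section~\ref{subsec11}, and then to derive (2) from (1) by dualizing via Proposition~\ref{prop23}.

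For part (1), I would apply Lemma~\ref{lem2b}(2) to produce a module $P \in \PP$ together with a surjection $P \twoheadrightarrow M$. Let $N$ denote the kernel. Then Lemma~\ref{lem2a}(1) places $N$ in $\MM$, and since $P \in \PP \subset \CC$ and $M \in \CC$ by hypothesis, Lemma~\ref{lem21}(2) forces $N \in \CC$ as well, giving the desired short exact sequence in $\CC$.

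For part (2), I would first apply part (1) to the right $R$-module $M^{*}$, which lies in $\CC^{\Right}$ by Proposition~\ref{prop23}. This produces a short exact sequence $0 \to N' \to P' \to M^{*} \to 0$ in $\CC^{\Right}$ with $P' \in \PP^{\Right}$. Applying the functor $E^{\bullet}_{R}(-,R)$ yields a long exact sequence in which the three $\Hom_{R}(-,R)$ terms all vanish: any $R$-linear map $f$ from an $S$-torsion module to $R$ satisfies $s\,f(x) = f(sx) = 0$ for suitable $s \in S$, and since $s$ is a non-zero-divisor this forces $f = 0$. Moreover $E^{2}_{R}(M^{*},R) = 0$ because $M^{*} \in \CC$ means $\rpd_{R}(M^{*}) \leq 1$. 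The long exact sequence therefore collapses to
\[
0 \to (M^{*})^{*} \to (P')^{*} \to (N')^{*} \to 0.
\]

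To conclude, Proposition~\ref{prop23} identifies $(M^{*})^{*} \simeq M$ and places $(N')^{*}$ in $\CC^{\Left}$, while Lemma~\ref{lem22}(3) gives $(P')^{*} \in \PP^{\Left}$. Setting $P := (P')^{*}$ and $N := (N')^{*}$ furnishes the required exact sequence. I expect no serious obstacle; the one point requiring care is tracking the vanishings that truncate the long exact sequence, but both the vanishing of $\Hom_{R}(-,R)$ on $S$-torsion modules and the vanishing of $E^{2}_{R}(M^{*},R)$ for $M^{*} \in \CC$ are immediate from the definitions.
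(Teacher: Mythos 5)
Your proof is correct and follows the same strategy as the paper: part (1) combines Lemma~\ref{lem2b}(2) with Lemma~\ref{lem21}(2), and part (2) applies part (1) to $M^*$ and dualizes via Proposition~\ref{prop23}. The paper states the dualization step tersely; your careful tracking of the long exact sequence (vanishing of $\Hom_R(-,R)$ on $S$-torsion modules and of $E^2_R(M^*)$ from $\rpd_R(M^*) \leq 1$) is exactly the justification that is implicit there.
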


\begin{proof}
(1) This follows from Lemma \ref{lem2b}(2) and Lemma \ref{lem21}(2).

(2) Construct an exact sequence of (1) with $M^*$ instead of $M$, and take the dual.
\end{proof}

Recall that $Z(R)$ denotes the center of the ring $R$.
We denote by $\Ann_{Z(R)}(M)$ the set of elements of $Z(R)$ which annihilate $M$.
We record here a lemma.

\begin{lem}
For $M \in \CC$, we have $\Ann_{Z(R)}(M^*) = \Ann_{Z(R)}(M)$. 
\end{lem}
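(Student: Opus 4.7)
The plan is to exploit the contravariant equivalence $(-)^*$ on $\CC$ established in Proposition \ref{prop23}, together with the naturality of multiplication by a central element. For $z \in Z(R)$, multiplication by $z$ on a left $R$-module $M$ is an $R$-module endomorphism because centrality of $z$ ensures $r(zm) = z(rm)$ for all $r \in R$ and $m \in M$. Applying the additive contravariant functor $(-)^* = \Ext^1_R(-, R)$ produces an endomorphism of $M^*$ which I claim is again multiplication by $z$. Indeed, multiplication by a central element is a natural transformation from the identity functor on $R$-modules to itself, hence is preserved by any additive functor; equivalently, the $Z(R)$-module structure on $\Ext^1_R(M, R)$ induced from the first argument coincides with that induced from the second.

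Granting this identification, the inclusion $\Ann_{Z(R)}(M) \subseteq \Ann_{Z(R)}(M^*)$ is immediate: if $z$ annihilates $M$, then multiplication by $z$ on $M$ is the zero map, so its image under $(-)^*$, namely multiplication by $z$ on $M^*$, is also zero.

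For the reverse inclusion, I apply the same principle with $M$ replaced by $M^*$, which lies in $\CC$ by Proposition \ref{prop23}. This yields $\Ann_{Z(R)}(M^*) \subseteq \Ann_{Z(R)}(M^{**})$. Invoking the isomorphism $M^{**} \simeq M$ from Proposition \ref{prop23}, we obtain $\Ann_{Z(R)}(M^{**}) = \Ann_{Z(R)}(M)$, and the two inclusions combine to give the desired equality.

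The only subtle point is the identification of the induced endomorphism on $M^*$ with multiplication by $z$; this is really a formal consequence of functoriality, but worth making explicit. Concretely, one can verify it by choosing a finite projective resolution $F_\bullet \to M$ and noting that multiplication by $z$ on $M$ lifts to multiplication by $z$ on each $F_i$ (again by centrality), so the induced map on $\Hom_R(F_i, R)$ is postcomposition with multiplication by $z$ on $R$, which by centrality equals precomposition with multiplication by $z$, i.e.\ the right $R$-module action of $z$ on $M^* = \Ext^1_R(M, R)$. No step in the argument presents a real obstacle; the lemma is essentially a formal corollary of Proposition \ref{prop23}.
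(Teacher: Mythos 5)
Your argument is correct and is essentially the same as the paper's: the paper also reduces to one inclusion via the duality $M^{**}\simeq M$, and its explicit calculation (lifting multiplication by $a$ to the free module $F$ and comparing $\phi a$ with $a\phi$ on $N$) is precisely the hands-on version of your naturality claim, with centrality entering at the same step ($a\phi(x)=\phi(x)a$). Two small nits: modules in $\CC$ generally have infinite projective dimension over $R$ (only $\PP=\CC\cap\QQ$ has $\pd_R<\infty$), so drop the word ``finite'' before ``projective resolution''---the verification goes through on any resolution; and in the last sentence the induced map is postcomposition with \emph{left} multiplication by $z$ on $R$, which by centrality equals postcomposition with \emph{right} multiplication by $z$, i.e.\ the right $R$-action on $\Hom_R(F_i,R)$ (not ``precomposition'').
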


\begin{proof}
 By the duality, it is enough to show one inclusion.
 We take any $a \in \Ann_{Z(R)}(M)$ and we will show that $a$ annihilates $M^*$.
 Let $0 \to N \to F \to M \to 0$ be an exact sequence of finitely generated $R$-modules with $F$ free, which induces an exact sequence $0 \to F^+ \to N^+ \to M^* \to 0$.
 We regard $N, F^+$ as submodules of $F, N^+$, respectively.
For any $\phi \in N^+$, let us show that $\phi a \in F^+$.
Choose a basis $e_1, \dots, e_r$ of $F$.
Since $M$ is annihilated by $a$, we have $a e_i \in N$ for $1 \leq i \leq r$.
Let $\phi': F \to R$ be the unique $R$-homomorphism which sends $e_i$ to $\phi(a e_i)$.
Since $a$ is central, $\phi'$ satisfies $\phi'(x) = a\phi(x)$ for any $x \in N$.
In fact, if $x = \sum_{i=1}^r b_i e_i$ for $b_i \in R$, then
\[
\phi'(x) = \phi' \left(\sum_i b_ie_i \right) = \sum_i b_i \phi(ae_i) = \phi \left(\sum_i b_i a e_i \right) = \phi(ax) = a \phi(x).
\]
Then for any $x \in N$, we have
\[
(\phi a)(x) = \phi(x)a = a \phi(x) = \phi'(x). 
\]
This shows $\phi a = \phi' \in F^+$, which completes the proof.
\end{proof}

\subsection{Shift}
Let $R, S$ be as in Subsection \ref{subsec21} satisfying Assumption \ref{ass16}.
We define a kind of axiomatic Fitting invariants which is slightly different from Subsection \ref{subsec11}.

\begin{defn}
A quasi-Fitting invariant is a map $\FF: \CC \to \Omega$ where $\Omega$ is a commutative monoid, satisfying the following conditions:
\begin{itemize}
\item If $P \in \PP$, then $\FF(P) \in \Omega^{\times}$.
\item If $0 \to M' \to M \to P \to 0$ is an exact sequence in $\CC$ with $P \in \PP$, then $\FF(M) = \FF(P)\FF(M')$.
\item If $0 \to P \to M \to M' \to 0$ is an exact sequence in $\CC$ with $P \in \PP$, then $\FF(M) = \FF(P)\FF(M')$.
\end{itemize}
\end{defn}

While a Fitting invariant is defined on $\MM$, a quasi-Fitting invariant is defined only on $\CC$.
On the other hand, the third condition of a quasi-Fitting invariant is additional.
Note also that similar remarks as in Remark \ref{rem2a} are valid.

Here we explain the reasons why we introduce quasi-Fitting invariants.
First of all, the axioms of a quasi-Fitting invariant $\FF$ is exactly what we need to construct $n$-th shift $\SF{n}$ for any $n \in \Z$.
It should be remarked that, if we only consider $\FF = \Fitt_R$ or $\FF = \Fitt_R^{\max}$ as in Subsection \ref{subsec41} or \ref{subsec42}, then the restriction of $\FF$ to $\CC$ is a quasi-Fitting invariant as we will show in Proposition \ref{prop35}.
Hence in that case, essentially we do not have to introduce quasi-Fitting invariants. 
However, we are also interested in Iwasawa theory for $p$-adic Lie extension \cite{CFKSV05}.
In that case, we will construct a quasi-Fitting invariant in Subsection \ref{subsec43}, but the construction is only valid for modules in $\CC$.
Thus we have to deal with quasi-Fitting invariants which are not induced by Fitting invariants.

\begin{prop}\label{prop35}
Suppose that $S$ is central in $R$.
Then the third condition of a quasi-Fitting invariant is implied by the others.
In particular, for a Fitting invariant $\FF: \MM \to \Omega$, the restriction $\FF|_{\CC}: \CC \to \Omega$ is a quasi-Fitting invariant.
\end{prop}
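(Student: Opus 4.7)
My plan is to deduce the third quasi-Fitting axiom from the first two via a pushout construction, using the centrality of $S$ at the critical step. Given a sequence $0 \to P \to M \to M' \to 0$ in $\CC$ with $P \in \PP$, I would first apply Lemma \ref{lem25}(2) to $P$ to produce an embedding $0 \to P \to Q \to N \to 0$ in $\CC$ with $Q \in \PP$; since $P, Q \in \PP$, Lemma \ref{lem24} then forces $N \in \PP$ as well. Letting $\bar{M}$ be the pushout of $M \leftarrow P \to Q$ yields two short exact sequences in $\CC$:
\begin{equation*}
(I)\colon 0 \to M \to \bar{M} \to N \to 0, \qquad (II)\colon 0 \to Q \to \bar{M} \to M' \to 0.
\end{equation*}

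The second axiom applied to $(I)$ gives $\FF(\bar{M}) = \FF(N)\FF(M)$, and applied to $0 \to P \to Q \to N \to 0$ it gives $\FF(Q) = \FF(N)\FF(P)$. Substituting yields $\FF(M) = \FF(Q)^{-1}\FF(P)\FF(\bar{M})$, so the target identity $\FF(M) = \FF(P)\FF(M')$ becomes equivalent to $\FF(\bar{M}) = \FF(Q)\FF(M')$ -- namely, the third axiom applied to the new sequence $(II)$.

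To break this apparent circularity I would exploit the duality $(-)^*\colon \CC^{\Left} \to \CC^{\Right}$ of Proposition \ref{prop23}. Dualizing $(II)$ produces an exact sequence $0 \to (M')^* \to \bar{M}^* \to Q^* \to 0$ in $\CC^{\Right}$, in which $Q^* \in \PP^{\Right}$ by Lemma \ref{lem22}(3) now sits on the right, so the right-module analogue of the second axiom applies directly. Under centrality of $S$, every lemma of Sections 2 and 3 about left modules has an exact parallel for right modules, and in the typical examples the Fitting invariant built from right-module presentations takes the same values on dualized modules as $\FF$ does on the originals (this is automatic for the commutative Fitting ideal, and for Nickel's invariant follows because reduced norms are invariant under transposition). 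Transferring the resulting equality back via $(-)^{**} \simeq \mathrm{id}$ would give $\FF(\bar{M}) = \FF(Q)\FF(M')$, completing the proof.

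The main obstacle is precisely this left-right transfer step. For the Fitting invariants furnished by Propositions \ref{prop4v} and \ref{prop4g}, the required compatibility can be checked directly at the level of presentations: a quadratic presentation $0 \to R^a \to R^a \to P \to 0$ of a module $P \in \PP$ dualizes, when $S$ is central, to a quadratic presentation of $P^*$ with the same determinant (respectively, reduced norm). For a purely abstract $\FF$ satisfying only axioms (i) and (ii), a cleaner route is to show $\FF|_\CC = \varSF{2}|_\CC$ and then invoke that $\varSF{2}$ automatically verifies the third axiom via a pullback argument that parallels the proof of Theorem \ref{thm41new}; the identification of $\FF$ with $\varSF{2}$ on $\CC$ is again the place where centrality of $S$ enters essentially.
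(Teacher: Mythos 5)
Your pushout construction parallels the paper's: you form the pushout $\bar{M}$ along an embedding $P \hookrightarrow Q$ and correctly reduce the target identity $\FF(M) = \FF(P)\FF(M')$ to the third axiom for the new sequence $(II)\colon 0 \to Q \to \bar{M} \to M' \to 0$. But both of your proposed ways to break the resulting circularity fail, and the genuine use of centrality is missing from your argument.

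First, the duality route is itself circular. For an abstract quasi-Fitting invariant $\FF$ on $\CC^{\Left}$, the only natural companion invariant on $\CC^{\Right}$ is $\FF^*(N) := \FF(N^*)$. Applying the ``right-module second axiom'' for $\FF^*$ to $0 \to (M')^* \to \bar{M}^* \to Q^* \to 0$ is \emph{literally equivalent}, via $(-)^{**} \simeq \id$, to applying the left-module third axiom for $\FF$ to $(II)$ -- you have just renamed the assertion, not established it. Your suggested fix -- that ``in the typical examples'' one can check left-right compatibility -- addresses only Propositions \ref{prop4v} and \ref{prop4g}, not the abstract statement; and your fallback claim $\FF|_{\CC} = \varSF{2}|_{\CC}$ is false in general: it amounts to a two-periodicity $\SF{0} = \SF{2}$ on $\CC$, which holds only under extra hypotheses (Lemma \ref{lem4r}), not for an arbitrary Fitting invariant.

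What the paper does instead, and where centrality actually enters, is to choose $Q$ \emph{of a very specific form}. Since $S$ is central, some $f \in S$ annihilates $M$, hence $P$; following the proof of Lemma \ref{lem25}(2), one can take $Q = P' \simeq (R/Rf)^r$, a free module over the \emph{ring} $R/Rf$ (again using that $f$ is central). Then $f$ annihilates the pushout $\bar{M}$ as well, so $(II)$ is a sequence of $R/Rf$-modules. Dualizing, $(P')^*$ becomes the \emph{quotient} term and, by Lemma \ref{lem22}(1), it is $R/fR$-free, so the dual sequence splits; dualizing back, $(II)$ itself splits, $\bar{M} \simeq P' \oplus M'$, and now the third axiom for $(II)$ is a triviality via the second axiom applied to a split sequence. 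Your argument uses an abstract $Q$ produced by Lemma \ref{lem25}(2), for which there is no reason for $(II)$ to split, and you never use centrality of $S$ in a load-bearing way. The gap is that you need the concrete choice $Q \simeq (R/Rf)^r$ to make the dual-and-split trick work.
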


\begin{proof}
Let $0 \to P \to M \to M' \to 0$ be an exact sequence in $\CC$ with $P \in \PP$.
Since $S$ is central, there is an element $f \in S$ which annihilates $M$ and thus $P$.
By Lemma \ref{lem24} and observing the proof of Lemma \ref{lem25}(2), we can construct an exact sequence $0 \to P \to P' \to P'' \to 0$ in $\PP$ with $P' \simeq (R/Rf)^r$ for some $r$.
Now construct a diagram with exact rows and columns
\[
\xymatrix{
0 \ar[r] & P \ar[r] \ar@{^{(}->}[d] & M \ar[r] \ar@{^{(}->}[d] & M' \ar[r] \ar@{=}[d] & 0 \\
0 \ar[r] & P' \ar[r] \ar@{->>}[d] & N \ar[r] \ar@{->>}[d] & M' \ar[r] & 0 \\
& P'' \ar@{=}[r] & P'' & & 
}
\] 
Here $N$ is defined as the push-out of $M$ and $P'$ over $P$, which is in $\CC$ by Lemma \ref{lem21}(2).

Now we show that the middle row splits.
Since $f$ is central, $P'$ is annihilated by $f$ as is $M$, so by the construction, $N$ is also annihilated by $f$.
Therefore the middle row can be seen as a sequence of $R/Rf$-modules, where $R/Rf$ is a ring as $f$ is central.
Then taking the dual, $0 \to (M')^* \to N^* \to (P')^* \to 0$ is an exact sequence of $R/fR$-modules with $(P')^*$ free by Lemma \ref{lem22}(1).
Hence it splits and taking the dual we obtain a splitting of the original sequence $0 \to P' \to N \to M' \to 0$, as claimed.

Therefore, by the second condition of a quasi-Fitting invariant and $P' \in \PP$, we have $\FF(N) = \FF(P') \FF(M')$.
Consequently we can compute
\[
\FF(M) = \FF(P'')^{-1}\FF(N) = \FF(P'')^{-1}\FF(P')\FF(M') = \FF(P)\FF(M'),
\]
as desired.
\end{proof}

Therefore Subsections \ref{subsec41} and \ref{subsec42} give examples of quasi-Fitting invariants.
The author is not sure about the validity of Proposition \ref{prop35} without the hypothesis that $S$ is central in $R$.

\begin{rem}
In the situation of Subsection \ref{subsec41}, the validity of the third condition of a quasi-Fitting invariant is a generalization of \cite[Proposition 2.3]{GK15}, which was a key proposition to prove Theorem \ref{thm01}.
The original proof was rather technical and, in particular, cannot be directly generalized to the noncommutative situations.

\end{rem}

We obtain the shifts of a quasi-Fitting invariant as follows.
Compare it with Theorem \ref{thm41new}.

\begin{thm}\label{thm47}
Let $\FF: \CC \to \Omega$ be a quasi-Fitting invariant.
Then for any $n \in \Z$, the following map $\SF{n}: \CC \to \Omega$ is well-defined and, moreover, $\SF{n}$ is again a quasi-Fitting invariant.

(1) Suppose $n \geq 0$.
For each $M \in \CC$, take an exact sequence
\[
0 \to N \to P_1 \to \dots \to P_n \to M \to 0
\]
in $\CC$ with $P_1, \dots, P_n \in \PP$ and define
 \[
 \SF{n}(M) = \left(\prod_{i=1}^n \FF(P_i)^{(-1)^i}\right) \FF(N).
 \]
 
(2) Suppose $n < 0$.
For each $M \in \CC$, take an exact sequence
\[
0 \to M \to P_{-n} \to \dots \to P_1 \to N \to 0
\]
in $\CC$ with $P_1, \dots, P_{-n} \in \PP$ and define
 \[
 \SF{n}(M) = \left(\prod_{i=1}^{-n} \FF(P_i)^{(-1)^i}\right) \FF(N).
 \]
\end{thm}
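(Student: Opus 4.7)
The plan is to mimic the proof of Theorem \ref{thm41new} for the positive case $n \geq 0$, and to run a dual argument using pushouts in place of pullbacks for the negative case $n < 0$; the latter is exactly where the third axiom of a quasi-Fitting invariant (which is absent in the setting of Theorem \ref{thm41new}) becomes indispensable.

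For $n \geq 0$, the existence of a resolution of the required form follows by iterating Lemma \ref{lem25}(1), using Lemma \ref{lem21}(2) to remain inside $\CC$ at each step. Well-definedness and quasi-Fitting invariance are established by induction on $n$, reducing to the case $n = 1$ via the identity $\SF{n} = \left(\SF{n-1}\right)^{[1]}$, verified by setting $L = \Ker(P_n \twoheadrightarrow M)$ and splicing the given resolution at $L$. For $n = 1$, given two surjections $P \twoheadrightarrow M$ and $P' \twoheadrightarrow M$ from $\PP$ with kernels $N$ and $N'$, the pullback $L = P \times_M P'$ lies in $\CC$ by Lemma \ref{lem21}(2), and applying the second axiom of $\FF$ to the two exact sequences $0 \to N \to L \to P' \to 0$ and $0 \to N' \to L \to P \to 0$ gives $\FF(L) = \FF(P')\FF(N) = \FF(P)\FF(N')$, whence the shift is independent of the chosen sequence.

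For $n < 0$, the dual existence claim follows by iterating Lemma \ref{lem25}(2), and the well-definedness again reduces to the base case $n = -1$ by the analogous identity $\SF{n} = \left(\SF{n+1}\right)^{[-1]}$. Given two injections $M \hookrightarrow P$ and $M \hookrightarrow P'$ with $P, P' \in \PP$ and cokernels $N$ and $N'$, form the pushout $L = P \sqcup_M P'$; then $L \in \CC$ by Lemma \ref{lem21}(2), and the pushout diagram produces short exact sequences $0 \to P \to L \to N' \to 0$ and $0 \to P' \to L \to N \to 0$. Since the left-hand terms both lie in $\PP$, the third axiom of the quasi-Fitting invariant applies and yields $\FF(L) = \FF(P)\FF(N') = \FF(P')\FF(N)$, giving $\FF(P)^{-1}\FF(N) = \FF(P')^{-1}\FF(N')$, as desired.

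Finally, to see that each $\SF{n}$ is itself a quasi-Fitting invariant, the value on $P \in \PP$ is $\FF(P)^{(-1)^n}$, obtained from a trivial choice of resolution, and hence invertible. The second and third axioms for $\SF{n}$ are verified by splicing a given short exact sequence $0 \to M' \to M \to P \to 0$ or $0 \to P \to M \to M' \to 0$ with a resolution computing $\SF{n}(M)$ or $\SF{n}(M')$, and then applying the definition of the shift, exactly as in the proof of Theorem \ref{thm41new}. The main obstacle is the negative-shift construction, where the pushout diagram together with the crucial use of the third axiom of a quasi-Fitting invariant constitutes the essential new input beyond Theorem \ref{thm41new}.
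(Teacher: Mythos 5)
Your proposal is correct. For part (1) it agrees with the paper's argument (reduction to $n=1$, well-definedness by a pullback square, verification of the second axiom exactly as in Theorem~\ref{thm41new} and the third axiom by the additional pullback diagram). For part (2) you carry out the direct pushout argument, verifying well-definedness at the base case $n=-1$ by forming $L = P \sqcup_M P'$, checking $L \in \CC$ via Lemma~\ref{lem21}(2), and invoking the third axiom of a quasi-Fitting invariant on the two short exact sequences $0 \to P \to L \to N' \to 0$ and $0 \to P' \to L \to N \to 0$. The paper acknowledges this route in a single sentence (``we can prove (2) in a similar way'') but opts to spell out a different one instead: it deduces (2) formally from (1) via the duality of Proposition~\ref{prop23}, defining $\FF^*(M) := \FF(M^*)$ on $\CC^{\Right}$, checking the axioms for $\FF^*$, and observing that $(\FF^*)^{\langle -n\rangle}(M^*) = \SF{n}(M)$. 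The paper's duality argument is shorter and avoids re-running the pullback/pushout casework, at the cost of relying on $(-)^* $; your direct argument is self-contained and makes fully transparent exactly where the third axiom, the ingredient absent from Theorem~\ref{thm41new}, is indispensable.

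One small imprecision worth flagging: you say the second and third axioms for $\SF{n}$ are verified ``exactly as in the proof of Theorem~\ref{thm41new}.'' Theorem~\ref{thm41new} only proves the analogue of the second axiom, since Fitting invariants carry no third axiom. Verifying the third axiom for $\SF{1}$ (and its analogue for $\SF{-1}$) requires a new diagram: the pullback of $P'$ and $M$ over $M'$ in the positive case, and the dual pushout in the negative case, as given in the paper's proof of (1). The idea is parallel but not identical to Theorem~\ref{thm41new}, so ``analogously to'' would be more accurate than ``exactly as.''
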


\begin{proof}
(1) This is proved similarly as in Theorem \ref{thm41new}.
Namely, we can reduce to the case where $n = 1$, and show that $\SF{1}$ is well-defined and satisfies the first and the second conditions to be a quasi-Fitting invariant.
 To show the third condition, let $0 \to P \to M \to M' \to 0$ be an exact sequence in $\CC$ with $P \in \PP$.
 Take an exact sequence $0 \to N \to P' \to M' \to 0$ in $\CC$ with $P' \in \PP$.
Then we can construct 
\[
\xymatrix{
0 \ar[r] & P \ar[r] & M \ar[r] & M' \ar[r] & 0 \\
0 \ar[r] & P \ar[r] \ar@{=}[u] & P'' \ar@{->>}[u] \ar[r]  & P' \ar[r] \ar@{->>}[u]& 0 \\
& & N \ar@{=}[r] \ar@{^{(}->}[u] & N \ar@{^{(}->}[u] &
}
\]
where $P''$ is the pull-back of $M$ and $P$ over $M'$.
Since $P'' \in \PP$, we obtain 
\[
\SF{1}(M) = \FF(P'')^{-1}\FF(N) = \FF(P)^{-1}\FF(P')^{-1}\FF(N) = \SF{1}(P)\SF{1}(M').
\]

(2) We can prove (2) in a similar way.
Alternatively, using the duality, we can deduce (2) formally from (1) as follows.
If $\FF: \CC^{\Left} \to \Omega$, then a quasi-Fitting invariant $\FF^*: \CC^{\Right} \to \Omega$ can be defined by $\FF^*(M) = \FF(M^*)$.
The axioms are easily checked.
By (1), the quasi-Fitting invariants $(\FF^*)^{\langle -n \rangle}: \CC^{\Right} \to \Omega$ are well-defined for $n < 0$.
Then for $M \in \CC^{\Left}$, it is easy to see that $(\FF^*)^{\langle -n \rangle}(M^*)$ is nothing but $\SF{n}(M)$.
Therefore $\SF{n}(M)$ is well-defined and $\SF{n}$ is a quasi-Fitting invariant.
\end{proof}

The maps $\SF{n}: \CC \to \Omega$ are characterized by the following properties:
\begin{itemize}
\item $\SF{0}$ coincides with $\FF: \CC \to \Omega$.
\item If $0 \to N \to P \to M \to 0$ is an exact sequence in $\CC$ with $P \in \PP$, then we have $\SF{n}(M) = \FF(P)^{(-1)^n} \SF{n-1}(N)$ for any $n \in \Z$.
\item $\SF{n}$ is a quasi-Fitting invariant for any $n \in \Z$.
\end{itemize}

Next we extend the shifts to modules in $\MM$.

\begin{thm}\label{thm41}
Let $\FF: \CC \to \Omega$ be a quasi-Fitting invariant.
Then there is a unique family $\{ \SF{n}: \MM \to \Omega \}_{n \in \Z}$ of maps satisfying the following:
\begin{itemize}
\item $\SF{0}$ is an extension of $\FF: \CC \to \Omega$.
\item If $P \in \QQ$, then $\SF{n}(P) \in \Omega^{\times}$ for any $n \in \Z$.
\item If $0 \to N \to P \to M \to 0$ is an exact sequence in $\MM$ with $P \in \QQ$, then we have $\SF{n}(M) = \SF{n}(P) \SF{n-1}(N)$ for any $n \in \Z$.
\item If $0 \to M' \to M \to P \to 0$ is an exact sequence in $\MM$ with $P \in \QQ$, then $\SF{n}(M) = \SF{n}(P) \SF{n}(M')$ for any $n \in \Z$.
\item If $0 \to P \to M \to M' \to 0$ is an exact sequence in $\MM$ with $P \in \QQ$, then $\SF{n}(M) = \SF{n}(P) \SF{n}(M')$ for any $n \in \Z$.
\end{itemize}
\end{thm}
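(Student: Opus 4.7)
The approach is to extend $\SF{n}$ from $\CC$ (as constructed in Theorem \ref{thm47}) to all of $\MM$ by resolving any $M\in\MM$ far enough by modules in $\PP$ so that the remaining kernel lies in $\CC$. Once the resolution is long enough, the third axiom forces the value of $\SF{n}(M)$, so uniqueness will follow essentially for free; the real content is existence and well-definedness.

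First I would build, for each $M\in\MM$, an exact sequence
\[
0\to N\to P_k\to P_{k-1}\to\cdots\to P_1\to M\to 0
\]
in $\MM$ with each $P_i\in\PP$ and with $N\in\CC$. The $P_i$'s (and the fact that kernels remain in $\MM$) are produced by iterating Lemma \ref{lem2b}(2) together with Lemma \ref{lem2a}(1). The crucial point is a dimension-shifting argument: whenever $P\in\PP$ (so $E^i_R(P)=0$ for $i\geq 2$), the kernel in $0\to N\to P\to M\to 0$ satisfies $E^i_R(N)\simeq E^{i+1}_R(M)$ for all $i\geq 2$. Iterating gives $E^i_R(N)\simeq E^{i+k}_R(M)$ for $i\geq 2$, which vanishes as soon as $k\geq d(R)-1$ by Assumption \ref{ass16}; hence $\rpd_R(N)\leq 1$ and $N\in\CC$. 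I then define
\[
\SF{n}(M):=\Bigl(\prod_{j=1}^{k}\FF(P_j)^{(-1)^{n-j+1}}\Bigr)\cdot\SF{n-k}(N),
\]
where $\SF{n-k}(N)$ is furnished by Theorem \ref{thm47}. This formula is the one forced by iterating the third axiom together with $\SF{m}(P)=\FF(P)^{(-1)^m}$ on $\PP$.

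The main obstacle is well-definedness: the expression must be independent of both the choice of resolution and the length $k$. Independence of $k$ is easy, since padding a resolution by a trivial step $0\to Q\to Q\to 0$ with $Q\in\PP$ contributes a factor $\FF(Q)^{(-1)^{n-k}}\FF(Q)^{(-1)^{n-k-1}}=1$; so I may compare resolutions of equal length. This is then handled by the same pull-back construction used in the proofs of Theorems \ref{thm41new} and \ref{thm47}: given two admissible resolutions, form the fibre product at the first stage to obtain a common resolution dominating both, iterate, and reduce at the bottom to the well-definedness of $\SF{n-k}$ on $\CC$, which is Theorem \ref{thm47}. The main bookkeeping task is keeping track of which of the categories $\PP,\CC,\QQ,\MM$ each intermediate kernel lies in; this is where I expect the proof to be most delicate.

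Finally I verify the five listed properties. The first holds by taking $k=0$ and $N=M$ for $M\in\CC$, recovering $\SF{0}(M)=\FF(M)$. For $P\in\QQ$, finiteness of $\pd_R(P)$ together with Lemma \ref{lem2b}(2) and the standard projective-dimension inequality inductively produces a resolution of $P$ entirely inside $\PP$; the formula then expresses $\SF{n}(P)$ as a finite product of elements of $\Omega^{\times}$, hence invertible. The three exact-sequence axioms are proved by a horseshoe-type construction producing compatible $\PP$-resolutions of the three members of a given short exact sequence in $\MM$, whereupon the desired identity reduces to the corresponding axiom of the quasi-Fitting invariant $\SF{n-k}$ on $\CC$. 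Uniqueness is then automatic: for any family satisfying the stated axioms, the third axiom iteratively reduces its value on $M\in\MM$ to its value on $N\in\CC$, and on $\CC$ the remaining properties force agreement with Theorem \ref{thm47}.
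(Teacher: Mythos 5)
Your proposal is correct and follows essentially the same route as the paper: the paper's proof proceeds by induction on $\rpd_R$ via Lemma \ref{lem44} (extending $\SF{n}$ from $\CC_d$ to $\CC_{d+1}$ one step at a time, using a pull-back for well-definedness and a horseshoe construction for the exact-sequence axioms), and your plan simply unrolls that induction into a single long $\PP$-resolution landing in $\CC$, with the same dimension-shifting, pull-back, and horseshoe ingredients. One small imprecision: to compare resolutions of different lengths you should extend the shorter one at the $N$-end by further steps $0\to N'\to Q\to N\to 0$ with $Q\in\PP$, $N'\in\CC$ (Lemma \ref{lem25}(1)) and invoke the defining recursion of $\SF{n-k}$ on $\CC$, rather than splicing in a ``trivial step $0\to Q\to Q\to 0$''; the conclusion is the same.
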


It is easy to see that $\SF{n}(P) = \SF{0}(P)^{(-1)^n}$ for $P \in \QQ$.
The next corollary immediately follows.

\begin{cor}\label{cor4p}
These maps $\SF{n}$ satisfy the following.
If 
\[
0 \to N \to P_1 \to \dots \to P_d \to M \to 0
\]
 is an exact sequence in $\MM$ with $P_1, \dots, P_d \in \QQ$, then 
 \[
 \SF{n}(M) = \left(\prod_{i=1}^d \SF{n-d+i}(P_i)\right) \SF{n-d}(N)
 \]
  for any $n \in \Z$.
\end{cor}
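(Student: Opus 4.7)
The plan is to reduce the corollary to an iterated application of the third bulleted axiom of Theorem \ref{thm41} by splitting the given long exact sequence into short exact sequences. Specifically, letting $K_0 = N$, $K_d = M$, and $K_i = \Image(P_i \to P_{i+1})$ for $1 \leq i \leq d-1$, the original long exact sequence decomposes into a chain of short exact sequences
\[
0 \to K_{i-1} \to P_i \to K_i \to 0 \qquad (1 \leq i \leq d).
\]
First I would verify that each $K_i$ lies in $\MM$. Starting from $K_0 = N \in \MM$ (which holds since the long exact sequence sits entirely in $\MM$ by hypothesis, or equivalently by Lemma \ref{lem2a}(1) applied to $0 \to N \to P_1 \to \Image(P_1\to P_2) \to 0$), one proves by induction on $i$ using Lemma \ref{lem2a}(1) that $K_i \in \MM$ for every $i$.

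Next I would apply the third axiom of Theorem \ref{thm41} to each short exact sequence $0 \to K_{i-1} \to P_i \to K_i \to 0$, using that $P_i \in \QQ$, to get
\[
\SF{m}(K_i) = \SF{m}(P_i)\,\SF{m-1}(K_{i-1})
\]
for every $m \in \Z$. I would then run a finite downward induction on $i$ starting from $K_d = M$: applying the identity with $m = n$ at $i = d$, with $m = n-1$ at $i = d-1$, and in general with $m = n-d+i$ at step $i$, one obtains
\[
\SF{n}(M) = \SF{n}(P_d)\,\SF{n-1}(P_{d-1}) \cdots \SF{n-d+1}(P_1)\,\SF{n-d}(K_0),
\]
which upon relabeling $K_0 = N$ and reindexing the product is exactly the asserted formula
\[
\SF{n}(M) = \left(\prod_{i=1}^d \SF{n-d+i}(P_i)\right) \SF{n-d}(N).
\]

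There is essentially no obstacle beyond bookkeeping here; the corollary is a purely formal consequence of Theorem \ref{thm41}. The only point requiring any care is keeping the index shifts straight, since each passage through a module $P_i$ in the telescoped product decreases the shift index by one, so the factor attached to $P_i$ must end up as $\SF{n-d+i}(P_i)$ rather than $\SF{n-i+1}(P_i)$ or another superficially similar expression. I would therefore present the inductive step with explicit indexing of both the module $K_i$ and the shift level $m = n-d+i$ to make the matching transparent.
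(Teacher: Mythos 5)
Your proposal is correct and is precisely the argument the paper intends: the paper declares the corollary to ``immediately follow'' from Theorem \ref{thm41}, and the intended route is exactly your splitting of the long exact sequence into short exact sequences $0 \to K_{i-1} \to P_i \to K_i \to 0$ followed by a telescoped application of the third axiom, with the shift index dropping by one at each stage. Your index bookkeeping ($m = n-d+i$ at step $i$, terminating in $\SF{n-d}(N)$) matches the asserted formula, so there is nothing to add.
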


\begin{rem}
Using the interpretation of $\FF|_{\PP}$ as a group homomorphism $K_0(\PP) \to \Omega^{\times}$ and the resolution theorem $K_0(\PP) \simeq K_0(\QQ)$ in $K$-theory (see \cite[Theorem 3.1.13]{Ros94}), the maps $\SF{0}|_{\PP}$ can be extended to the additive map $\SF{0}|_{\QQ}: K_0(\QQ) \to \Omega^{\times}$ immediately.
This $\SF{0}|_{\QQ}$ can be regarded as a generalization of MacRae's invariant (see \cite[Section 3.6]{Nor76}).
\end{rem}

Now we begin the proof of Theorem \ref{thm41}.
By Theorem \ref{thm47}, we have already defined $\SF{n}(M)$ for $M \in \CC$ and $n \in \Z$.
For $d \geq 1$, let $\CC_d$ (resp. $\PP_d$) be the subcategory of $\MM$ consists of module $M$ (resp. $P$) with $\rpd_R(M) \leq d$ (resp. $\pd_R(P) \leq d$).
Note that $\CC = \CC_1$ (resp. $\PP = \PP_1$) and $\MM = \bigcup_{d} \CC_d$ by Assumption \ref{ass16} (resp. $\QQ = \bigcup_d \PP_d$).
Moreover, $\PP_d = \CC_d \cap \QQ$ by Lemma \ref{lem11}.
Then Theorem \ref{thm41} follows from the following.

\begin{lem}\label{lem44}
Let $d \geq 1$.
Suppose that we are given a family $\{ \SF{n}: \CC_d \to \Omega \}_{n \in \Z}$ of maps satisfying the following.
\begin{itemize}
\item If $P \in \PP_d$, then $\SF{n}(P) \in \Omega^{\times}$ for any $n \in \Z$.
\item If $0 \to N \to P \to M \to 0$ is an exact sequence in $\CC_d$ with $P \in \PP_d$, then we have $\SF{n}(M) = \SF{n}(P) \SF{n-1}(N)$ for any $n \in \Z$.
\item If $0 \to M' \to M \to P \to 0$ is an exact sequence in $\CC_d$ with $P \in \PP_d$, then $\SF{n}(M) = \SF{n}(P) \SF{n}(M')$ for any $n \in \Z$.
\item If $0 \to P \to M \to M' \to 0$ is an exact sequence in $\CC_d$ with $P \in \PP_d$, then $\SF{n}(M) = \SF{n}(P) \SF{n}(M')$ for any $n \in \Z$.
\end{itemize}
Then there is a unique family $\{ \SF{n}: \CC_{d+1} \to \Omega \}_{n \in \Z}$ which is an extension of $\{ \SF{n}: \CC_d \to \Omega \}_{n \in \Z}$ and satisfies the analogous properties replacing $d$ by $d+1$.
\end{lem}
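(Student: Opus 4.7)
The plan is to induct on $d$, extending the family one level at a time. For $M \in \CC_{d+1}$, Lemma~\ref{lem2b}(2) supplies a surjection $P \twoheadrightarrow M$ with $P \in \PP$; the long exact sequence of $\Ext^*_R(-, R)$ applied to $0 \to N \to P \to M \to 0$, combined with $E^i_R(P) = 0$ for $i \geq 2$, gives $\rpd_R(N) \leq d$, so $N := \ker(P \to M)$ lies in $\CC_d$. I therefore set
\[
\SF{n}(M) := \SF{n}(P) \cdot \SF{n-1}(N),
\]
with the right-hand side determined by the inductive data. Well-definedness follows from the diagram chase of Theorem~\ref{thm41new}: given two such resolutions $0 \to N_i \to P_i \to M \to 0$, the pullback $L := P_1 \times_M P_2$ lies in $\CC_d$, and applying the inductive cokernel-in-$\PP$ axiom to both resulting sequences $0 \to N_j \to L \to P_i \to 0$ produces $\SF{n}(P_1) \SF{n-1}(N_1) = \SF{n}(P_2) \SF{n-1}(N_2)$ after rearranging via $\SF{n-1}(P_i) = \SF{n}(P_i)^{-1}$.

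I then verify the four axioms on $\CC_{d+1}$ in the order: invertibility on $\PP_{d+1}$, submodule-in-$\PP$ (axiom iv), cokernel-in-$\PP$ (axiom iii), and the shift axiom (axiom ii). Invertibility is immediate from the $\pd$-analogue of the above LES computation, since the resolution kernel of $P \in \PP_{d+1}$ then lies in $\PP_d$. For axiom (iv), namely $0 \to P \to M \to M' \to 0$ with $P \in \PP_{d+1}$, I resolve $M$ by $Q \in \PP$ with kernel $N \in \CC_d$, set $K := \ker(Q \to M') \in \CC_d$ (fitting in $0 \to N \to K \to P \to 0$ and $0 \to K \to Q \to M' \to 0$), further resolve $P$ by $Q_P \in \PP$ with kernel $N_P \in \PP_d$, and form $L := K \times_P Q_P \in \CC_d$; the inductive cokernel and submodule axioms applied to the two SESs through $L$ give $\SF{n-1}(N) = \SF{n}(P) \SF{n-1}(K)$, yielding $\SF{n}(M) = \SF{n}(P) \SF{n}(M')$. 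Axiom (iii) is handled analogously by resolving $M'$ with $Q' \in \PP$ (kernel $N' \in \CC_d$) and forming $L' := K \times_{M'} Q' \in \CC_d$, then applying the inductive cokernel axiom to both resulting SESs through $L'$. Axiom (ii) for $0 \to N \to P \to M \to 0$ with $P \in \PP_{d+1}$ follows once (iv) is in hand: resolving $P$ and composing yields $0 \to N_P \to N'' \to N \to 0$ with $N_P \in \PP_d$ and $N'' \in \CC_d$, and axiom (iv) on $\CC_{d+1}$ (just proved) gives $\SF{n-1}(N'') = \SF{n-1}(N_P) \SF{n-1}(N)$, which combines with the definitions of $\SF{n}(M)$ and $\SF{n}(P)$ to produce the shift identity. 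Uniqueness is immediate: the shift axiom at level $d+1$ forces $\SF{n}(M) = \SF{n}(P) \SF{n-1}(N)$ for the chosen defining SES, with the right-hand side already fixed by the inductive data.

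The main technical obstacle is the careful bookkeeping of the filtration $\CC_d \subset \CC_{d+1}$: the short exact sequences produced by the natural pullback constructions typically have some term lying in $\CC_{d+1} \setminus \CC_d$, so secondary resolutions in $\PP$ together with additional pullbacks are needed to bring every relevant diagram back into $\CC_d$, where the inductive hypothesis applies. This also determines the order of verification, since the shift axiom at level $d+1$ relies on the submodule-in-$\PP$ axiom at the same level.
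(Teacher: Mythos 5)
Your proof is correct, but it organizes the verification step quite differently from the paper, and the paper's organization is noticeably leaner. Both proofs agree on the definition ($\SF{n}(M) = \SF{n}(P)\SF{n-1}(N)$ for some $0 \to N \to P \to M \to 0$ with $P \in \PP$, $N \in \CC_d$), on well-definedness via the pullback $L$ of two such resolutions, and on invertibility over $\PP_{d+1}$. The divergence is in checking the three exactness axioms at level $d+1$. The paper handles all three at once with a single horseshoe-type $3\times 3$ diagram: given any $0 \to M' \to M \to M'' \to 0$ in $\CC_{d+1}$ with one term in $\PP_{d+1}$, it takes surjections $P' \twoheadrightarrow M'$, $P'' \twoheadrightarrow M$ with $P', P'' \in \PP$, sets $P = P' \oplus P''$, and observes that the bottom row of kernels $0 \to N' \to N \to N'' \to 0$ is automatically in $\CC_d$, with whichever of $N, N', N''$ corresponds to the $\PP_{d+1}$-term landing in $\PP_d$ --- so the level-$d$ axioms apply immediately and the three cases differ only in bookkeeping. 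You instead build a separate diagram for each axiom, each requiring one or two secondary resolutions of $P$ (or $M'$) in $\PP$ followed by a pullback, and you must verify the axioms in the fixed order (iv), (iii), (ii) so that axiom (ii) at level $d+1$ can invoke axiom (iv) at level $d+1$. That ordering dependency, and the extra resolutions/pullbacks, are avoided in the paper precisely because the split middle row $P = P' \oplus P''$ forces the kernel row into $\CC_d$ without further work. In short: your argument is sound and the LES/torsion/projective-dimension checks are all in order, but the extra machinery you introduce (and the constraint you identify as the ``main technical obstacle'') is not intrinsic to the problem --- the $3\times 3$ construction dissolves it.
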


\begin{proof}
For $M \in \CC_{d+1}$, take an exact sequence $0 \to N \to P \to M \to 0$ in $\MM$ with $P \in \PP_d, N \in \CC_d$ and define $\SF{n}(M) = \SF{n}(P) \SF{n-1}(N)$.
In order to show the well-definedness, take another sequence $0 \to N' \to P' \to M \to 0$
and construct a commutative diagram with exact rows and columns
\[
\xymatrix{
0 \ar[r] & N \ar[r] \ar@{=}[d] & P \ar[r] & M \ar[r] & 0 \\
0 \ar[r] & N \ar[r] & L \ar@{->>}[u] \ar[r]  & P' \ar[r] \ar@{->>}[u]& 0 \\
& & N' \ar@{=}[r] \ar@{^{(}->}[u] & N' \ar@{^{(}->}[u]& 
}
\]
where $L$ is defined as the pull-back of $P$ and $P'$ over $M$.
Then $N, P' \in \CC_d$ implies $L \in \CC_d$.
Note that, by the second property, we have $\SF{n-1}(P) = \SF{n}(P)^{-1}$ and $\SF{n-1}(P') = \SF{n}(P')^{-1}$
Thus, by the third property,
\[
\SF{n}(P)\SF{n-1}(N) = \SF{n}(P) \SF{n}(P') \SF{n-1}(L) = \SF{n}(P') \SF{n-1}(N'),
\]
which proves the well-definedness.
This $\SF{n}:\CC_{d+1} \to \Omega$ is clearly an extension of $\SF{n}: \CC_d \to \Omega$.
It is easy to see that if $P \in \PP_{d+1}$ then $\SF{n}(P) \in \Omega^{\times}$.

In order to show the other properties on $\SF{n}$ in Lemma \ref{lem44}, 
let $0 \to M' \to M \to M'' \to 0$ be an exact sequence in $\CC_{d+1}$ such that at least one of $M, M', M''$ is contained in $\PP_{d+1}$.
We claim that there is a commutative diagram with exact rows and columns
\[
\xymatrix{
0 \ar[r] & M' \ar[r] & M \ar[r] & M'' \ar[r] & 0 \\
0 \ar[r] & P' \ar[r] \ar@{->>}[u] & P \ar@{->>}[u] \ar[r]  & P'' \ar[r] \ar@{->>}[u]& 0 \\
0 \ar[r] & N' \ar[r] \ar@{^{(}->}[u] & N \ar@{^{(}->}[u] \ar[r] & N'' \ar[r] \ar@{^{(}->}[u] & 0 
}
\]
where the middle row is in $\PP_d$ and the lower row is in $\CC_d$.
To construct such a diagram, first take surjective homomorphisms $P' \to M'$ and $P'' \to M$ with $P', P'' \in \PP$ using Lemma \ref{lem2b}(2).
Putting $P = P' \oplus P''$, we obtain a trivial exact sequence $0 \to P' \to P \to P'' \to 0$ from which there is a surjective map to the exact sequence $0 \to M' \to M \to M'' \to 0$.
Letting $0 \to N' \to N \to N'' \to 0$ be the kernel of that map, we obtain the desired diagram.

If $M \in \PP_{d+1}$, then $N \in \PP_d$ and we have
\begin{align*}
\SF{n}(M'') & = \SF{n}(P'') \SF{n-1}(N'') \\
&= \SF{n}(P) \SF{n-1}(P') \SF{n-1}(N) \SF{n-2}(N') \\
& = \SF{n}(M) \SF{n-1}(M').
\end{align*}
The other conditions can be confirmed similarly.
\end{proof}

\begin{eg}\label{egzy}
Suppose that $R$ is a regular local ring, so $\CC = \PP$ and $\MM = \QQ$.
Then for the Fitting invariant $\FF = \Fitt_R$, we have $\SF{n}(M) = \ch_R(M)^{(-1)^n}$ for any $M \in \MM$.
Here $\ch_R$ denotes the characteristic ideal.
\end{eg}

\begin{proof}
We may assume that $n = 0$.
Since $M$ admits a $\PP$-resolution of finite length and both $\SF{0}$ and $\ch_R$ are additive with respect to exact sequences, it is enough to show that $\SF{0}(P) = \Fitt_R(P) = \ch_R(P)$ for every $P \in \PP$.
Let $P \to \bigoplus_i R/(f_i)$ be a pseudo-isomorphism of $R$-modules where $f_i$ are nonzero elements of $R$.
For every prime ideal $\qu$ of $R$ of height one, localization at $\qu$ induces an isomorphism $P_{\qu} \simeq \bigoplus_i R_{\qu}/(f_i)$, and thus 
\[
\Fitt_R(P)R_{\qu} = \Fitt_{R_{\qu}}(P_{\qu}) = (\prod_i f_i)R_{\qu} = \ch_R(P)R_{\qu}.
\]
This proves $\Fitt_R(P) = \ch_R(P)$.
\end{proof}

\begin{rem}\label{rem4a}
Suppose that $S$ is central in $R$.
By Proposition \ref{prop35}, a Fitting invariant $\FF$ induces a quasi-Fitting invariant.
Thus we have $\SF{n}: \MM \to \Omega$ for $n \in \Z$ by Theorem \ref{thm41} and also $\varSF{n}: \MM \to \Omega$ for $n \geq 0$ by Theorem \ref{thm41new}.
For $n \geq 0$, the definitions imply $\SF{n}|_{\CC} = \varSF{n}|_{\CC}$.
Moreover, if $M \in \CC_d$, then it can be seen that $\SF{n}(M) = \varSF{n}(M)$ for $n \geq d-1$ using an exact sequence
\[
0 \to N \to P_1 \dots \to P_n \to M \to 0
\]
in $\MM$ with $P_i \in \PP$ and $N \in \CC$.
But we have to mind that in general $\SF{n} \neq \varSF{n}$ for $n \geq 0$.
As a counter-example, when $R = \Z_p[[T]]$, $\FF = \Fitt_R$, and $M = \F_p$, Example \ref{egzy} shows 
\[
\SF{0}(\F_p) = \ch_R(\F_p) = (1) \neq (p, T) = \FF(\F_p) = \varSF{0}(\F_p).
\]
\end{rem}

Let us mention the following property of shifts in a special case.
See Remark \ref{rem4s} for an application.

\begin{lem}\label{lem4r}
 Suppose that $R$ is commutative and contains a subring $\Lambda$ such that
 \begin{itemize}
\item $R = \Lambda[\Delta]$ for a finite cyclic group $\Delta$ generated by $\delta$.
\item The global dimension of $\Lambda$ is finite.
\end{itemize}
Let $\FF: \CC \to \Omega$ be a quasi-Fitting invariant and $n \in \Z$.
Then we have $\SF{n} = \SF{n+2}$ on $\MM$.
\end{lem}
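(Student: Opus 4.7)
The strategy is to exhibit, for each $M \in \MM$, a four-term exact sequence
\[
0 \to M \to P \xrightarrow{\alpha} P \xrightarrow{\pi} M \to 0
\]
in $\MM$ with $P \in \QQ$. Once this is available, Corollary \ref{cor4p} applied with $d=2$ gives $\SF{n}(M) = \SF{n-1}(P)\, \SF{n}(P)\, \SF{n-2}(M)$. Because $P \in \QQ$ forces $\SF{k}(P) = \SF{0}(P)^{(-1)^k}$ for every $k$, the two middle factors are mutually inverse in $\Omega^\times$, and the corollary collapses to $\SF{n}(M) = \SF{n-2}(M)$. Shifting $n$ by $2$ yields the asserted $\SF{n+2} = \SF{n}$.

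For the construction I will take $P := R \otimes_\Lambda M$ (with $R$ acting through the first tensor factor), together with the $R$-linear maps $\pi(r \otimes m) := rm$ and $\alpha(r \otimes m) := r\delta \otimes m - r \otimes \delta m$. The motivation is the standard two-periodic $R$-projective resolution $\dots \to R \xrightarrow{N} R \xrightarrow{\delta-1} R \xrightarrow{\epsilon} \Lambda \to 0$ coming from the cyclicity of $\Delta$; our sequence is an ``$M$-twisted'' incarnation. Since $R$ is free over $\Lambda$, tensoring a finite $\Lambda$-projective resolution of $M$ (which exists by finite global dimension of $\Lambda$) with $R$ gives a finite $R$-projective resolution of $P$, so $P \in \QQ$; and in the standard setting where $S \subseteq \Lambda$, any single element of $S$ annihilating $M$ also annihilates $P$, so $P \in \MM$ as well.

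The main task is to verify exactness of the proposed sequence. Using the $\Lambda$-basis $1, \delta, \dots, \delta^{m-1}$ of $R$ (where $m = |\Delta|$) and writing elements of $P$ as tuples $(m_0, \dots, m_{m-1})$, one computes $\alpha(m_0, \dots, m_{m-1}) = (m_{m-1} - \delta m_0,\, m_0 - \delta m_1,\, \dots,\, m_{m-2} - \delta m_{m-1})$ and $\pi(m_0, \dots, m_{m-1}) = \sum_i \delta^i m_i$. A direct chase identifies $\ker \alpha$ with $\{(\delta^{m-1}x, \dots, \delta x, x) : x \in M\}$, and tracking the cyclic $R$-action on this kernel shows it is isomorphic to $M$ as an $R$-module. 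Surjectivity of $\pi$ is obvious, and $\pi \circ \alpha = 0$ follows by telescoping.

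The step I expect to be the main obstacle is exactness at the second $P$, namely the inclusion $\ker \pi \subseteq \Image \alpha$. Given a tuple $(m_0, \dots, m_{m-1})$ satisfying $\sum_i \delta^i m_i = 0$, my plan is to set $n_{m-1} := 0$ and then define $n_{j-1} := \delta n_j + m_j$ recursively for $j = m-1, \dots, 1$; all coordinates of $\alpha(n_0, \dots, n_{m-1})$ except the $0$-th then match $(m_0, \dots, m_{m-1})$ by construction, and the remaining identity $n_{m-1} - \delta n_0 = m_0$ unwinds precisely to the hypothesis $\sum_i \delta^i m_i = 0$. This exhibits the required preimage and completes the proof.
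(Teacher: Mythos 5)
Your proof is correct and follows essentially the same route as the paper's: in both, the cyclicity of $\Delta$ yields a four-term exact sequence $0 \to M \to P \to P \to M \to 0$ with $P \cong R \otimes_\Lambda M$ of finite projective dimension, and Corollary \ref{cor4p} then gives $\SF{n} = \SF{n+2}$. Where you define $\alpha$ and $\pi$ directly on the induced module and verify exactness by a coordinate computation, the paper obtains the same sequence (up to the twist $\rho \otimes m \leftrightarrow \rho \otimes \rho m$ of the module structure) by tensoring the two-periodic resolution $0 \to \Lambda \to R \overset{\delta-1}{\to} R \to \Lambda \to 0$ with $M$ over $\Lambda$.
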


Before the proof of Lemma \ref{lem4r}, we make some observations.
For $R$-modules $M$ and $N$, we define an $R$-module structure on the $\Lambda$-module $M \otimes_{\Lambda} N$ by
 \[
 \rho (m \otimes n) = \rho m \otimes \rho n
 \]
for $\rho \in \Delta, m \in M, n \in N$.
We denote by $M^0$ the $R$-module which is isomorphic to $M$ as $\Lambda$-module but $\Delta$ acts on $M^0$ trivially.
Then we have an isomorphism $R \otimes_{\Lambda} M^0 \simeq R \otimes_{\Lambda} M$ of $R$-modules given by 
\[
\sum_{\rho \in \Delta} \rho \otimes m_{\rho} \leftrightarrow \sum_{\rho \in \Delta} \rho \otimes \rho m_{\rho}.
\]

\begin{proof}[Proof of Lemma \ref{lem4r}]
 The assumptions in Proposition \ref{prop18} are satisfied.
Consider the exact sequence
\[
0 \to \Lambda \to R \overset{\delta - 1}{\to} R \to \Lambda \to 0
\]
of $R$-modules ($\Delta$ acts trivially on $\Lambda$).
For $M \in \MM$, since $\Lambda \otimes_{\Lambda} M \simeq M$ as $R$-modules, we obtain an exact sequence
\[
0 \to M \to R \otimes_{\Lambda} M \to R \otimes_{\Lambda} M \to M \to 0
\]
of $R$-modules.
Since $\pd_{\Lambda}(M) < \infty$, we can take a projective resolution of finite length of $M$ as a $\Lambda$-module.
The resolution can be regarded as a resolution of $M^0$ as an $R$-module.
Since $R \otimes_{\Lambda} \Lambda \simeq R$ and $R \otimes_{\Lambda} M^0 \simeq R \otimes_{\Lambda} M$ as $R$-modules, we obtain a projective resolution of finite length of $R \otimes_{\Lambda} M$ as an $R$-module.
Therefore $R \otimes_{\Lambda} M \in \QQ$.
Consequently the above sequence and Corollary \ref{cor4p} show $\SF{n}(M) = \SF{n+2}(M)$ for $n \in \Z$ as desired.
\end{proof}

\subsection{Modules over a Compact $p$-adic Lie Group}\label{subsec43}
So far we were basically considering the setting of Example \ref{ega1}.
But motivated by the work of \cite{CFKSV05}, it is also interesting to treat more general Iwasawa algebras of compact $p$-adic Lie groups.
In this subsection, we construct a quasi-Fitting invariant over such a noncommutative ring.

Let $G$ be a compact $p$-adic Lie group containing a normal closed subgroup $H$ such that the quotient group $\Gamma = G/H$ is isomorphic to $\Z_p$.
A typical example (as studied in \cite{CFKSV05}) is the case where $G = \GL_2(\Z_p)$ and $H$ is the unique normal closed subgroup containing $\SL_2(\Z_p)$ such that $G/H \simeq \Z_p$.
More generally, for a finite group $\Delta$, we can treat $G = \GL_2(\Z_p) \times \Delta$ and similarly defined $H$.
Such situations naturally occur when we consider the extension of a number field obtained by adjoining the coordinates of $p$-power torsion points of a fixed elliptic curve without complex multiplication, with $H$ corresponding to the cyclotomic $\Z_p$-extension.

We return to the general $G$ with $\Gamma = G/H \simeq \Z_p$.
Put $R = \Z_p[[G]]$, which satisfies Assumption \ref{ass16} by Proposition \ref{prop17}.
In this situation, \cite[Section 2]{CFKSV05} defined the canonical Ore set
\[
\cS = \{ f \in R \mid \text{$R/Rf$ is finitely generated over $\Z_p[[H]]$} \}.
\]
This Ore set satisfies the property that a finitely generated $R$-module is $\cS$-torsion if and only if it is finitely generated over $\Z_p[[H]]$ (\cite[Proposition 2.3]{CFKSV05}).
Also \cite[section 3]{CFKSV05} defined $\cS^* = \bigcup_{n \geq 0} p^n \cS$, which is again an Ore set of $R$.
Then a finitely generated $R$-module $M$ is $\cS^*$-torsion if and only if $M/M(p)$ is $\cS$-torsion where $M(p)$ denotes the submodule of $M$ consists of all elements of finite order.
We use $S = \cS^*$ rather than $\cS$ as the fixed Ore set.

At first we assume that $G$ is one-dimensional, namely $H$ is a finite group.
In this case, $S$-torsion means $\Z_p[[G']]$-torsion for any central open subgroup $G'$ of $G$ isomorphic to $\Z_p$.
Put $\CC_G = \CC_{\Z_p[[G]], S}$ and $\PP_G = \PP_{\Z_p[[G]], S}$.
Then by Propositions \ref{prop4g} and \ref{prop35}, we constructed a quasi-Fitting invariant $\FF_G = \Fitt^{\max}_{\Z_p[[G]]}: \CC_G \to \Omega_G$.

We go back to the general case.
Put $\CC = \CC_{R, S}$ and $\PP = \PP_{R, S}$.
As in the proof of Proposition \ref{prop17}, take any open normal subgroup $G'$ of $G$ such that $G'$ is pro-$p$ and $p$-torsion free.
Put $\Lambda = \Z_p[[G']]$.
Let $\UU$ be the set of open subgroups $U$ of $H \cap G'$ which is normal in $G$.
Note that $\UU$ is a basis of open neighborhoods of the identity element of $H$.

For each $U \in \UU$, since $G/U$ is one-dimensional, we can consider the categories $\CC_{G/U}$ and $\PP_{G/U}$ and the quasi-Fitting invariant $\FF_{G/U}: \CC_{G/U} \to \Omega_{G/U}$.
We will construct a quasi-Fitting invariant $\FF$ for $R$ taking the inverse limit of $\FF_{G/U}$ as follows.

\begin{lem}\label{lem4h}
Let $U \in \UU$.

(1) For each $M \in \CC$, we have $H_1(U, M) = 0$ and $M_U \in \CC_{G/U}$.

(2) For each $P \in \PP$, we have $P_U \in \PP_{G/U}$.
\end{lem}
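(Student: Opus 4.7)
The plan is to resolve $M \in \CC$ over $\Lambda = \Z_p[[G']]$ via Lemma~\ref{lem12}, compute $H_*(U, M)$ using flatness of $\Lambda$ over $\Z_p[[U]]$, and force the vanishing of $H_1$ by exploiting the $S$-torsion of $M$.

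For part (1), Lemma~\ref{lem12} applied with $\Lambda = \Z_p[[G']]$ yields a short exact sequence $0 \to F_1 \to F_0 \to M \to 0$ with $F_0, F_1$ finitely generated projective $\Lambda$-modules. The standard fact that $\Lambda$ is flat as a $\Z_p[[U]]$-module for any closed subgroup $U$ of the pro-$p$, $p$-torsion-free group $G'$, combined with normality of $U$ in $G'$ and the identification $\Z_p \otimes_{\Z_p[[U]]} \Lambda \simeq \Z_p[[G'/U]]$, collapses the change-of-rings spectral sequence and gives natural isomorphisms $H_i(U, N) \simeq \Tor^\Lambda_i(\Z_p[[G'/U]], N)$ for $\Lambda$-modules $N$. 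Applied to the above resolution, this produces the four-term exact sequence
\[
0 \to H_1(U, M) \to F_{1, U} \to F_{0, U} \to M_U \to 0.
\]

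The main step is forcing $H_1(U, M) = 0$. Normality of $U$ in $G$ endows $H_1(U, M)$ with a $\Z_p[[G/U]]$-module structure (via $R/I_U R \simeq \Z_p[[G/U]]$), and any element $s \in S$ annihilating $M$ (which exists by the Ore property since $M$ is finitely generated $S$-torsion) has image $\bar s \in \Z_p[[G/U]]$ annihilating $H_1(U, M)$. The definition of $\cS^*$ passes cleanly under $U$-coinvariants to show $\bar s$ lies in the canonical Ore set of $\Z_p[[G/U]]$, so $\bar s$ is a non-zero-divisor there. Applying Cayley--Hamilton to multiplication by $\bar s$ on the finite free $\Z_p[[G'/U]]$-module $\Z_p[[G/U]]$, one obtains a reduced norm $N(\bar s) \in \Z_p[[G'/U]]$ that also annihilates $H_1(U, M)$ and remains a non-zero-divisor. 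Since $H_1(U, M)$ embeds into the projective $\Z_p[[G'/U]]$-module $F_{1, U}$, which has no $N(\bar s)$-torsion, it vanishes.

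Hence we obtain a short exact sequence $0 \to F_{1, U} \to F_{0, U} \to M_U \to 0$ of $\Z_p[[G'/U]]$-modules, giving $\pd_{\Z_p[[G'/U]]}(M_U) \leq 1$ and therefore $\rpd_{\Z_p[[G/U]]}(M_U) \leq 1$ via Lemma~\ref{lem12} applied to the pair $(\Z_p[[G/U]], \Z_p[[Z]])$ for a sufficiently deep open central pro-$\Z_p$ subgroup $Z$ of $G/U$ lying inside $G'/U$. Membership in $\MM_{G/U}$ (i.e., $\cS^*_{G/U}$-torsion) follows by taking $U$-coinvariants of the statement ``$M/M(p)$ is finitely generated over $\Z_p[[H]]$'', completing $M_U \in \CC_{G/U}$. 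For part (2), apply the same flatness machinery to a projective resolution $0 \to Q_1 \to Q_0 \to P \to 0$ of $P$ over $R$: using flatness of $R$ over $\Z_p[[U]]$ and $H_1(U, P) = 0$ from part (1) (as $P \in \PP \subset \CC$), the $U$-coinvariants form a short exact sequence $0 \to Q_{1, U} \to Q_{0, U} \to P_U \to 0$ of projective $\Z_p[[G/U]]$-modules, giving $\pd_{\Z_p[[G/U]]}(P_U) \leq 1$ and hence $P_U \in \PP_{G/U}$. The main obstacle is the Cayley--Hamilton norm descent ensuring $N(\bar s)$ remains a non-zero-divisor in $\Z_p[[G'/U]]$; otherwise the argument is routine homological algebra.
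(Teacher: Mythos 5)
Your proposal follows the paper's structure up through the four-term exact sequence
\[
0 \to H_1(U,M) \to (F_1)_U \to (F_0)_U \to M_U \to 0
\]
obtained from a finite $\Lambda$-free resolution of $M$ (your spectral-sequence/change-of-rings language is an equivalent packaging of the paper's direct coinvariants computation). It diverges at the decisive step, the vanishing of $H_1(U,M)$, and there the argument has a genuine gap. The paper proves $H_1(U,M)=0$ by a rank argument using the duality of Proposition~\ref{prop23}: since $M^*\in\CC$ has the $\Lambda$-resolution $0 \to F_0^+ \to F_1^+ \to M^* \to 0$ with the ranks $a,b$ of $F_0,F_1$ interchanged, and since both $M_U$ and $(M^*)_U$ are $\Z_p[[G'/U]]$-torsion, one concludes $a=b$, so $H_1(U,M)$ has rank zero and must vanish as a submodule of the free module $(F_1)_U$.

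Your replacement argument hinges on the claim that an element $s\in S=\cS^*$ with $sM=0$ has image $\bar s$ annihilating $H_1(U,M)$. This is where it breaks. For \emph{central} $s$, multiplication by $s$ is an $R$-module endomorphism of $M$, so by functoriality of $\Tor_1^{\Z_p[[U]]}(\Z_p,-)$ it induces the zero endomorphism of $H_1(U,M)$, which agrees with $\bar s$-multiplication — that is the standard argument. But $\cS^*$ is not central in $\Z_p[[G]]$; for non-central $s$, ``multiplication by $s$'' is not a map of $R$-modules $M\to M$ at all, and there is no functoriality to invoke. Nothing about $\Ann_R(M)$ being a two-sided ideal rescues this: the action of $\bar s$ on $H_1(U,M)=\Tor_1^R(\Z_p[[G/U]],M)$ comes from the first variable, not from $M$, and I see no reason it should vanish. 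This is precisely the obstruction the paper's duality trick is designed to sidestep. Two smaller remarks: (i) the existence of a single $s\in\cS^*$ with $sM=0$ for a finitely generated $\cS^*$-torsion left $R$-module is also not a formal consequence of the Ore condition (for a cyclic module $Rx$ one readily gets $s\in S$ with $sx=0$, but not $sRx=0$), so this step should be justified rather than attributed to ``the Ore property''; (ii) conversely, the Cayley–Hamilton descent that you flagged as the ``main obstacle'' is actually fine once one is granted a non-zero-divisor $\bar s\in\Z_p[[G/U]]$ annihilating $H_1(U,M)$ — the constant coefficient of the characteristic polynomial of right multiplication by $\bar s$ on the finite free left $\Z_p[[G'/U]]$-module $\Z_p[[G/U]]$ does the job. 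So the real difficulty is in front of the Cayley–Hamilton step, not inside it.
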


\begin{proof}
(1) First we show that $M_U \in \MM_{G/U}$, namely $M_U$ is a torsion module over $\Z_p[[G/U]]$.
Since $M$ is $\cS^*$-torsion, the exact sequence $0 \to M(p) \to M \to M/M(p) \to 0$ with $M/M(p)$ finitely generated over $\Z_p[[H]]$ yields an exact sequence
\[
M(p)_U \to M_U \to (M/M(p))_U \to 0
\]
with $(M/M(p))_U$ finitely generated over $\Z_p[H/U]$.
This proves $M_U \in \MM_{G/U}$.

Lemma \ref{lem12} implies that $\pd_{\Lambda}(M) \leq 1$.
Since $\Lambda$ is a local ring, we have a presentation $0 \to F_1 \to F_0 \to M \to 0$ over $\Lambda$ where $F_1, F_0$ are finitely generated and free.
Let $b, a$ be the ranks of $F_1, F_0$, respectively.
We have an exact sequence
\[
0 \to H_1(U, M) \to (F_1)_U \to (F_0)_U \to M_U \to 0,
\]
which shows $b \geq a$.
On the other hand, the dual $M^* \in \CC_G$ has a presentation $0 \to (F_0)^+ \to (F_1)^+ \to M^* \to 0$.
Repeating the above argument, we obtain $a \geq b$ and consequently $a = b$.
Now the above exact sequence shows that $H_1(U, M)$ is a torsion module over $\Z_p[[G'/U]]$.
But $(F_1)_U$ is torsion-free, so we obtain $H_1(U, M) = 0$ and $\pd_{\Lambda_U} (M_U) \leq 1$, from which $M_U \in \CC_{G/U}$ follows using Lemma \ref{lem12}.

(2) Choose an exact sequence $0 \to F_1 \to F_0 \to P \to 0$ over $\Z_p[[G]]$ with $F_1, F_0$ finitely generated and projective.
By (1) we obtain an exact sequence $0 \to (F_1)_U \to (F_0)_U \to P_U \to 0$ with $(F_1)_U, (F_0)_U$ finitely generated and projective over $\Z_p[[G/U]]$.
This proves $P_U \in \PP_{G/U}$.
\end{proof}

If $U, U' \in \UU$ with $U' \subset U$, then we have the natural surjective map $\Z_p[[G/U']] \to \Z_p[[G/U]]$, which induces the natural surjective map $Q(\Z_p[[G/U']]) \to Q(\Z_p[[G/U]])$.
Since both sides are semisimple algebras, this surjective map induces a commutative diagram
\[
\xymatrix{
\GL_n(Q(\Z_p[[G/U']])) \ar@{->>}[r] \ar[d]_{\nr_{U'}} & \GL_n(Q(\Z_p[[G/U]])) \ar[d]_{\nr_{U}} \\
Z(Q(\Z_p[[G/U']]))^{\times} \ar@{->>}[r] & Z(Q(\Z_p[[G/U]]))^{\times}.
}
\]
Therefore we obtain a canonical map $\Omega_{G/U'} \to \Omega_{G/U}$.

\begin{prop}\label{prop3j}
Put $\Omega = \varprojlim_{U \in \UU} \Omega_{G/U}$ and define $\FF: \CC \to \Omega$ by
\[
\FF(M) = \left( \FF_{G/U}(M_U) \right)_U.
\]
This map $\FF$ is well-defined and is a quasi-Fitting invariant.
\end{prop}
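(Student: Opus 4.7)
The plan is to verify two things: (i) the tuple $(\FF_{G/U}(M_U))_U$ lies in $\Omega = \varprojlim_U \Omega_{G/U}$, i.e., is compatible under the transition maps, and (ii) the resulting $\FF: \CC \to \Omega$ satisfies the three axioms of a quasi-Fitting invariant. The fact that $\FF_{G/U}(M_U)$ is defined at all uses $M_U \in \CC_{G/U}$, which is precisely Lemma \ref{lem4h}(1).

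For (ii), I would check each axiom componentwise at every $U \in \UU$, exploiting that each $\FF_{G/U}$ is already a quasi-Fitting invariant on $\CC_{G/U}$ by Propositions \ref{prop4g} and \ref{prop35}. The invertibility $\FF(P) \in \Omega^\times$ for $P \in \PP$ follows from Lemma \ref{lem4h}(2), which gives $P_U \in \PP_{G/U}$. For a short exact sequence $0 \to M' \to M \to P \to 0$ in $\CC$ with $P \in \PP$, Lemma \ref{lem4h}(1) applied to $P$ yields $H_1(U, P) = 0$, so $U$-coinvariants preserve exactness and give an analogous sequence in $\CC_{G/U}$ with $P_U \in \PP_{G/U}$. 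The identity $\FF(M) = \FF(P)\FF(M')$ then transfers componentwise to an identity in $\Omega$. The dual axiom is handled analogously.

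Point (i), compatibility under the transitions $\pi_{U, U'}: \Omega_{G/U'} \to \Omega_{G/U}$ for $U' \subset U$, is the main step. I would first treat $P \in \PP$: Proposition \ref{prop4c}, extended to the Iwasawa algebra $R = \Z_p[[G]]$, yields a quadratic presentation $0 \to F_1 \xrightarrow{h} F_0 \to P \to 0$ over $R$. Since $H_1(U, P) = 0$ by Lemma \ref{lem4h}(1), taking $U$-coinvariants yields a quadratic presentation $h_U$ of $P_U$ over $\Z_p[[G/U]]$, so $\FF_{G/U}(P_U) = (\nr(h_U))$ by the characterization of $\Fitt^{\max}$ on quadratic presentations. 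Since the reduced norm commutes with the surjection of semisimple quotients used to define $\pi_{U, U'}$ in the paragraph preceding the proposition, we obtain $\pi_{U, U'}(\FF_{G/U'}(P_{U'})) = \FF_{G/U}(P_U)$.

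For general $M \in \CC$, I would use Lemma \ref{lem25}(1) to produce a single $R$-level resolution $0 \to N \to P \to M \to 0$ with $P \in \PP$ and $N \in \CC$, and reduce at both $U'$ and $U$ to obtain compatible short exact sequences in $\CC_{G/U'}$ and $\CC_{G/U}$. Since compatibility is already known for $P$, applying the quasi-Fitting axiom at each level gives $\FF_{G/U}(M_U)\FF_{G/U}(N_U) = \FF_{G/U}(P_U)$ and similarly at level $U'$, so comparing under $\pi_{U, U'}$ would transfer compatibility to $M$. The principal obstacle is that $\FF_{G/U}(N_U)$ need not be invertible in the monoid $\Omega_{G/U}$, so one cannot cancel naively; the argument must invoke that the very same $R$-level resolution controls both $M_{U'} \to M_U$ and $N_{U'} \to N_U$ simultaneously, and then use a base-change compatibility lemma for maximal Fitting invariants under the quotient $\Z_p[[G/U']] \twoheadrightarrow \Z_p[[G/U]]$ of noncommutative orders (in the spirit of Nickel's work) to conclude directly that $\pi_{U,U'}(\FF_{G/U'}(M_{U'})) = \FF_{G/U}(M_U)$.
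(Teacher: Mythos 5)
Your verification of the quasi-Fitting axioms (part (ii)) by passing to coinvariants is fine and matches the paper. The genuine difficulty, as you correctly sense, is part (i), and there your argument has two real gaps.

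First, from the resolution $0 \to N \to P \to M \to 0$ with $P \in \PP$, $N, M \in \CC$, the quasi-Fitting axioms do not give $\FF_{G/U}(M_U)\FF_{G/U}(N_U) = \FF_{G/U}(P_U)$: the two exact-sequence axioms require $\PP$-objects at the \emph{end} or \emph{beginning} of the sequence, not in the middle. Such a ``three-term additivity'' with $P$ in the middle is precisely the content of the shift machinery (indeed it is equivalent to the statement $\SF{-1}(N) = \FF(P)^{-1}\FF(M)$), which presupposes the quasi-Fitting invariant already exists --- so you would be assuming what you set out to construct. Second, your closing sentence invokes ``a base-change compatibility lemma for maximal Fitting invariants under the quotient $\Z_p[[G/U']] \twoheadrightarrow \Z_p[[G/U]]$'' to conclude; but that compatibility \emph{is} exactly the statement $\pi_{U,U'}(\FF_{G/U'}(M_{U'})) = \FF_{G/U}(M_U)$ you are trying to prove, so the argument is circular. (A smaller point: your appeal to a quadratic presentation of $P$ over $R = \Z_p[[G]]$ via an ``extension of Proposition \ref{prop4c}'' is also not available here, since $G$ is now a general compact $p$-adic Lie group; the paper only establishes Assumption \ref{ass35} over the one-dimensional quotients $\Z_p[[G/U]]$.)

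The paper's route is more hands-on and avoids these issues entirely by working directly with the presentation-level definition of $\Fitt^{\max}$. To prove $\varpi(\FF_{G/U'}(M_{U'})) = \FF_{G/U}(M_U)$ one shows both inclusions separately. For $\subset$: take a finite presentation $h$ of $M_{U'}$ realizing $\FF_{G/U'}(M_{U'}) = \Fitt(h)$; reducing mod $U/U'$ yields a presentation $\overline{h}$ of $M_U$, and $\varpi(\Fitt(h)) = \Fitt(\overline{h}) \subset \Fitt^{\max}(M_U)$ by maximality. For $\supset$: take a presentation $\overline{h}$ of $M_U$ realizing the max and \emph{lift} it to a presentation $h$ of $M_{U'}$. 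The lift exists and remains exact thanks to Nakayama's lemma and the vanishing $H_1(U/U', M_{U'}) = 0$ (Lemma \ref{lem4h}(1)), which makes the natural map $N \to \overline{N}$ between the syzygies surjective. Then $\varpi(\Fitt(h)) = \Fitt(\overline{h}) = \Fitt^{\max}(M_U)$, while $\Fitt(h) \subset \Fitt^{\max}(M_{U'})$, giving the reverse inclusion. This two-sided lift/reduce argument is the essential content you are missing; nothing about quadratic presentations, $\CC$-resolutions, or circular base-change appeals is needed.
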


\begin{proof}
For $U, U' \in \UU$ with $U' \subset U$, let $\varpi: \Omega_{G/U'} \to \Omega_{G/U}$ be the natural map.
We have to show that $\varpi(\FF_{G/U'}(M_{U'})) = \FF_{G/U}(M_U)$ for $M \in \CC$.

Take a presentation $\Z_p[[G/U']]^b \overset{h}{\to} \Z_p[[G/U']]^a \overset{q}{\to} M_{U'} \to 0$ over $\Z_p[[G/U']]$ such that $\FF_{G/U'}(M_{U'}) = \Fitt_{\Z_p[[G/U']]}(h)$.
It induces an exact sequence $\Z_p[[G/U]]^b \overset{\overline{h}}{\to} \Z_p[[G/U]]^a \overset{\overline{q}}{\to} M_U \to 0$ over $\Z_p[[G/U]]$.
Therefore 
\[
\varpi(\FF_{G/U'}(M_{U'})) = \varpi(\Fitt_{\Z_p[[G/U']]}(h)) = \Fitt_{\Z_p[[G/U]]}(\overline{h}) \subset \FF_{G/U}(M_U).
\]

In order to show the other inclusion, take a presentation $\Z_p[[G/U]]^b \overset{\overline{h}}{\to} \Z_p[[G/U]]^a \overset{\overline{q}}{\to} M_U \to 0$ over $\Z_p[[G/U]]$ such that $\FF_{G/U}(M_U) = \Fitt_{\Z_p[[G/U]]}(\overline{h})$.
We shall show that this sequence can be lifted to $\Z_p[[G/U']]^b \overset{h}{\to} \Z_p[[G/U']]^a \overset{q}{\to} M_{U'} \to 0$ over $\Z_p[[G/U']]$, which would prove
\[
\varpi(\FF_{G/U'}(M_{U'})) \supset \varpi(\Fitt_{\Z_p[[G/U']]}(h)) = \Fitt_{\Z_p[[G/U]]}(\overline{h}) = \FF_{G/U}(M_{U}),
\]
as desired.
In order to construct a lift, first take a lift $q: \Z_p[[G/U']]^a \to M_{U'}$ of $\overline{q}$, whose existence if guaranteed by the freeness of $\Z_p[[G/U']]^a$.
Then $q$ is surjective by Nakayama's lemma.
Putting $\overline{N} = \Ker(\overline{q})$ and $N = \Ker(q)$, we have a commutative diagram with exact rows
\[
\xymatrix{
0 \ar[r] & \overline{N} \ar[r] & \Z_p[[G/U]]^a \ar[r]^-{\overline{q}} & M_U \ar[r] & 0 \\
0 \ar[r] & N \ar[r] \ar[u] & \Z_p[[G/U']]^a \ar[r]^-{q} \ar@{->>}[u] & M_{U'} \ar[r] \ar@{->>}[u] & 0 
}
\]
Since $H_1(U/U', M_{U'}) = 0$ by Lemma \ref{lem4h}(1), the lower sequence yields an exact sequence $0 \to N_{U/U'} \to \Z_p[[G/U]]^a \to M_{U} \to 0$.
Therefore the left vertical map $N \to \overline{N}$ is surjective.
Next, take a lift $\Z_p[[G/U']]^b \to N$ of $\Z_p[[G/U]]^b \to \overline{N}$, which is again surjective by Nakayama's lemma.
Composing with $N \to \Z_p[[G/U']]^a$ gives a lift $h$ of $\overline{h}$, as desired.

We show that $\FF$ is a quasi-Fitting invariant.
If $P \in \PP$, then $P_U \in \PP_{G/U}$ for $U \in \UU$ implies $\FF(P) \in \Omega^{\times}$.
Let $0 \to M' \to M \to P \to 0$ be an exact sequence in $\CC$ with $P \in \PP$.
For any $U \in \UU$, by Lemma \ref{lem4h}(1), it yields an exact sequence $0 \to (M')_U \to M_U \to P_U \to 0$.
Therefore $\FF_{G/U}(M_U) = \FF_{G/U}(P_U) \FF_{G/U}((M')_U)$ since $\FF_{G/U}$ is a quasi-Fitting invariant.
Thus $\FF(M) = \FF(P) \FF(M')$.
The other condition can be confirmed similarly.
\end{proof}

\section{Properties}\label{sec04}

\subsection{Functoriality}\label{subsec44}
In this subsection, we observe a functoriality of shifts.
As an application, in Subsection \ref{subsec57}, we will deduce a result about a finite Galois extension of number fields from a result about an infinite Galois extension, which is a typical method in Iwasawa theory.
As a technical remark, such a reduction theory does not work well on $\MM$, and it is important to treat only modules contained in $\CC$.
In other words, the category $\CC$ is fine not only for the duality theory but also for the reduction theory.

First we show an abstract proposition.
Let $R,S$ and $R',S'$ be as in Subsection \ref{subsec21} satisfying Assumption \ref{ass16}.
Let $\pi: R \to R'$ be a ring homomorphism.
We often denote $\otimes_R$ just by $\otimes$.

\begin{prop}\label{prop51}
Suppose that for any $M \in \CC_{R, S}$, we have $R' \otimes_R M \in \CC_{R', S'}$ and $\Tor_1^R(R', M) = 0$.
Let $\FF_R: \CC_{R, S} \to \Omega_R$ and $\FF_{R'}: \CC_{R', S'} \to \Omega_{R'}$ be quasi-Fitting invariants for $R,S$ and $R', S'$, respectively.
Let $\varpi: \Omega_R \to \Omega_{R'}$ be a monoid homomorphism such that $\varpi(\FF_R(M)) = \FF_{R'}(R' \otimes_R M)$ for $M \in \CC_{R, S}$.
Then we have $\varpi(\SF{n}_R(M)) = \SF{n}_{R'}(R' \otimes_R M)$ for $M \in \CC_{R, S}$ and $n \in \Z$.
\end{prop}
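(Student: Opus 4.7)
The plan is to argue by induction on $|n|$, using the recursive characterization of $\SF{n}$ stated just after Theorem \ref{thm47}: for any short exact sequence $0 \to N \to P \to M \to 0$ in $\CC_{R,S}$ with $P \in \PP_{R,S}$, one has $\SF{n}_R(M) = \FF_R(P)^{(-1)^n}\, \SF{n-1}_R(N)$. The base case $n = 0$ is exactly the compatibility $\varpi \circ \FF_R = \FF_{R'} \circ (R' \otimes_R -)$ given on $\CC_{R,S}$.

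The essential preparatory step is to verify that $R' \otimes_R (-)$ carries $\PP_{R,S}$ into $\PP_{R',S'}$. Given $P \in \PP_{R,S}$, choose a length-one projective resolution $0 \to F_1 \to F_0 \to P \to 0$ over $R$; since $P \in \CC_{R,S}$, the hypothesis yields $\Tor_1^R(R', P) = 0$, so
\[
0 \to R' \otimes_R F_1 \to R' \otimes_R F_0 \to R' \otimes_R P \to 0
\]
is exact with the two left terms projective over $R'$. Hence $R' \otimes_R P \in \QQ_{R',S'}$, and combining with $R' \otimes_R P \in \CC_{R',S'}$ yields $R' \otimes_R P \in \PP_{R',S'}$ via $\PP = \CC \cap \QQ$ (Lemma \ref{lem11}).

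For the inductive step with $n > 0$, take an exact sequence $0 \to N \to P \to M \to 0$ in $\CC_{R,S}$ with $P \in \PP_{R,S}$, provided by Lemma \ref{lem25}(1). Applying $R' \otimes_R -$ and invoking $\Tor_1^R(R', M) = 0$ produces an exact sequence $0 \to R' \otimes_R N \to R' \otimes_R P \to R' \otimes_R M \to 0$ in $\CC_{R',S'}$ with $R' \otimes_R P \in \PP_{R',S'}$. The recursion then gives
\[
\varpi(\SF{n}_R(M)) = \varpi(\FF_R(P))^{(-1)^n}\, \varpi(\SF{n-1}_R(N)) = \FF_{R'}(R' \otimes_R P)^{(-1)^n}\, \SF{n-1}_{R'}(R' \otimes_R N) = \SF{n}_{R'}(R' \otimes_R M),
\]
by the compatibility hypothesis on $\varpi$ and the inductive hypothesis at $n-1$; note that any monoid homomorphism sends units to units, which is what lets the exponent $(-1)^n$ survive applying $\varpi$. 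The case $n < 0$ is handled symmetrically using an exact sequence $0 \to M \to P \to N \to 0$ from Lemma \ref{lem25}(2), where the Tor-vanishing at $N \in \CC_{R,S}$ again ensures that base change preserves exactness.

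No serious obstruction is anticipated: once one observes that the Tor-vanishing hypothesis simultaneously makes $R' \otimes_R -$ exact on short exact sequences in $\CC_{R,S}$ and sends $\PP_{R,S}$ into $\PP_{R',S'}$, the inductive reduction through the defining recursion of $\SF{n}$ is automatic. The only mildly subtle point to keep in mind is the need to use sequences of the type $0 \to M \to P \to N \to 0$ (rather than only $0 \to N \to P \to M \to 0$) to capture the negative shifts, which is exactly the reason the third axiom of a quasi-Fitting invariant was built into the definition.
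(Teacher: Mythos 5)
Your proof is correct and takes essentially the same approach as the paper's: establish the base case $n=0$ from the compatibility hypothesis, verify that the $\Tor$-vanishing hypothesis makes $R' \otimes_R -$ carry $\PP_{R,S}$ into $\PP_{R',S'}$ and preserve short exact sequences in $\CC_{R,S}$, and then run the recursion $\SF{n}(M) = \FF(P)^{(-1)^n}\SF{n-1}(N)$ through it. The only cosmetic difference is that you split into separate inductions for $n>0$ (using Lemma \ref{lem25}(1)) and $n<0$ (using Lemma \ref{lem25}(2)), whereas the paper phrases the whole thing as a single equivalence between the assertions at indices $n$ and $n+1$; this unified phrasing implicitly needs both halves of Lemma \ref{lem25} anyway, so there is no real difference in content.
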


\begin{proof}
The case $n = 0$ is trivial.
First we show that if $P \in \PP_{R, S}$ then $R' \otimes P \in \PP_{R', S'}$.
$R' \otimes P \in \CC_{R', S'}$ is already assumed.
Take an exact sequence $0 \to F_1 \to F_0 \to P \to 0$ of $R$-modules with $F_0, F_1$ finitely generated and projective.
Then $\Tor_1^R(R', P) = 0$ implies that $0 \to R' \otimes F_1 \to R' \otimes F_0 \to R' \otimes P \to 0$ is also exact.
Therefore $\pd_{R'}(R' \otimes P) \leq 1$.

Consider an exact sequence $0 \to N \to P \to M \to 0$ in $\CC_{R, S}$ with $P \in \PP_{R, S}$.
We have
\[
\varpi(\SF{n+1}_R(M)) = \varpi(\FF_R(P)^{(-1)^{n+1}} \SF{n}_R(N)) = \FF_{R'}(R' \otimes P)^{(-1)^{n+1}} \varpi(\SF{n}_R(N)).
\]
On the other hand, since $0 \to R' \otimes N \to R' \otimes P \to R' \otimes M \to 0$ is exact and $R' \otimes P \in \PP_{R', S'}$,
\[
\SF{n+1}_{R'}(R' \otimes M) = \FF_{R'}(R' \otimes P)^{(-1)^{n+1}} \SF{n}_{R'}(R' \otimes N).
\]
Hence the assertion for $n$ is equivalent to that for $n+1$, which proves the proposition.
\end{proof}

We apply this proposition to Iwasawa algebras.
Application to $\FF$ in Proposition \ref{prop3j} is immediate.
Namely, for any $U \in \UU$, letting $\varpi: \Omega \to \Omega_{G/U}$ be the natural map, we obtain $\varpi(\SF{n}(M)) = \SF{n}_{G/U}(M_U)$ for $M \in \CC$ and $n \in \Z$.

As another application, let $R = \Z_p[[G]], \Lambda = \Z_p[[G']]$ be as in Example \ref{ega1} with $d \geq 1$, but $S$ is chosen as follows.
Put $R' = \Z_p[G/G']$ and let $\pi: R \to R'$ be the natural surjective map.
Put $S' = \Z_p \setminus \{0\}$ and let $S$ be the inverse image of $S'$ under the augmentation map $\Lambda \to \Z_p$.

\begin{prop}\label{prop4n}
For any $M \in \CC_{R, S}$, we have $R' \otimes_R M = M_{G'} \in \CC_{R', S'}$ and $\Tor_1^R(R', M) = H_1(G', M) = 0$.
\end{prop}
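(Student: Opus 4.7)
The plan is to reduce everything to a short projective resolution of $M$ over $\Lambda$. Since $G'$ is central in $G$ and isomorphic to $\Z_p^d$, the augmentation ideal $I = \ker(\Lambda \to \Z_p)$ is generated by a central regular sequence $T_1, \dots, T_d$ in $R$ (where $T_i = \gamma_i - 1$ for topological generators $\gamma_i$ of $G'$), and $R$ is free (hence flat) as a $\Lambda$-module on a set of coset representatives of $G/G'$. Applying $R \otimes_\Lambda -$ to the Koszul resolution of $\Z_p$ over $\Lambda$ yields a free resolution of $R \otimes_\Lambda \Z_p = R/IR = R'$ over $R$. Consequently there are canonical identifications
\[
R' \otimes_R M \;\cong\; \Z_p \otimes_\Lambda M \;=\; M_{G'}, \qquad \Tor_i^R(R', M) \;\cong\; \Tor_i^\Lambda(\Z_p, M) \;=\; H_i(G', M),
\]
so it suffices to prove $H_1(G', M) = 0$ and $M_{G'} \in \CC_{R', S'}$.

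By Lemma \ref{lem12}, the hypothesis $M \in \CC_{R, S}$ gives $\pd_\Lambda(M) \leq 1$, so there exists a short exact sequence $0 \to F_1 \to F_0 \to M \to 0$ with $F_i$ finitely generated free $\Lambda$-modules of ranks $a_i$. Since $M$ is $S$-torsion, applying $S^{-1}$ yields an isomorphism $(S^{-1}\Lambda)^{a_1} \xrightarrow{\sim} (S^{-1}\Lambda)^{a_0}$ over the nonzero ring $S^{-1}\Lambda$, forcing $a_0 = a_1$. Applying $\Z_p \otimes_\Lambda -$ to the resolution then produces the exact sequence
\[
0 \to H_1(G', M) \to \Z_p^{a_1} \to \Z_p^{a_0} \to M_{G'} \to 0.
\]

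The next step is to check that $M_{G'}$ is finite as a $\Z_p$-module. It is finitely generated over $\Lambda/I = \Z_p$, so it is enough to verify that every element is $\Z_p$-torsion. For any $m \in M$, by $S$-torsionness there exists $f \in S$ with $fm = 0$; reducing modulo $I$, the class $\overline{m} \in M_{G'}$ is annihilated by $f(0) \in \Z_p \setminus \{0\}$, the image of $f$ under the augmentation. Hence $M_{G'}$ is finite. A map $\Z_p^{a_1} \to \Z_p^{a_0}$ between free $\Z_p$-modules of the same rank with finite cokernel is necessarily injective, so the displayed exact sequence forces $H_1(G', M) = 0$. Finally, $M_{G'}$ being a finite $R'$-module lies in $\CC_{R', S'}$ by the $d = 0$ case of Example \ref{eg14} applied to the finite group $G/G'$, completing the proof. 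The only non-routine point is the change-of-rings identification in the first paragraph; once that is in place the argument is purely linear algebra over $\Lambda$ and $\Z_p$.
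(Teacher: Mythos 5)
Your proof is correct, and it takes a genuinely different route from the paper's. The paper fixes a $\Z_p$-basis $g_1,\dots,g_d$ of $G'$ and reduces the $\Lambda$-presentation $0 \to \Lambda^a \to \Lambda^a \to M \to 0$ one coordinate at a time: at step $i$ the snake lemma for multiplication by $g_i - 1$ produces a four-term sequence whose leading term $(M_{G'_{i-1}})^{\langle g_i\rangle}$ is shown to vanish because the cokernel $M_{G'_i}$ is $S_i$-torsion (hence $\Lambda_i$-torsion, hence the middle map is injective). You instead reduce all $d$ coordinates simultaneously: you make the change-of-rings $\Tor_i^R(R', M) \simeq \Tor_i^\Lambda(\Z_p, M)$ explicit via the Koszul resolution (the paper uses this identification but leaves it tacit), and then deduce injectivity of $\Z_p^{a_1} \to \Z_p^{a_0}$ directly from the finiteness of $M_{G'}$, which you get from the fact that $S$ is the augmentation preimage of $\Z_p\setminus\{0\}$. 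Both arguments hinge on exactly that last observation; the difference is that the paper's induction also establishes the intermediate fact that each $M_{G'_i}$ admits a two-term $\Lambda_i$-presentation (a by-product the proposition does not actually ask for), while your one-shot argument is shorter and makes the $\Tor$-identification transparent. Either version is a complete proof.
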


\begin{proof}
Choose a $\Z_p$-basis $g_1, \dots, g_d$ of $G'$ and put $G'_i = \langle g_1, \dots, g_i \rangle \subset G'$ for $0 \leq i \leq d$.
Moreover put $R_i = \Z_p[[G/G'_i]]$ and let $S_i$ be the inverse image of $S'$ under the augmentation map $\Z_p[[G'/G'_i]] \to \Z_p$.

Since $M \in \CC_{R, S}$, we have an exact sequence $0 \to \Z_p[[G']]^a \to \Z_p[[G']]^a \to M \to 0$ over $\Z_p[[G']]$.
Let us show inductively that the reduction $0 \to \Z_p[[G'/G'_i]]^a \to \Z_p[[G'/G'_i]]^a \to M_{G'_i} \to 0$ is also exact.
The case $i=0$ is trivial.
Suppose $i \geq 1$ and the assertion holds for $i-1$.
By applying the snake lemma to the exact sequence $0 \to \Z_p[[G'/G'_{i-1}]]^a \to \Z_p[[G'/G'_{i-1}]]^a \to M_{G'_{i-1}} \to 0$, we obtain
\[
0 \to (M_{G'_{i-1}})^{\langle g_i \rangle} \to \Z_p[[G'/G'_i]]^a \to \Z_p[[G'/G'_i]]^a \to M_{G'_i} \to 0.
\]
Observe that the module $M_{G'_i}$ is $S_i$-torsion since the image of $S$ under the map $R \to R_i$ is $S_i$.
Therefore we have $(M_{G'_{i-1}})^{\langle g_i \rangle} = 0$, which makes the induction step work.
Now the exactness of  $0 \to \Z_p^a \to \Z_p^a \to M_{G'} \to 0$ shows the assertions.
\end{proof}

\begin{eg}
Suppose that $G$ is commutative.
By Proposition \ref{prop35}, we know that $\FF_R = \Fitt_R: \CC_{R, S} \to \Omega_{R}$ and $\FF_{R'} = \Fitt_{R'}: \CC_{R', S'} \to \Omega_{R'}$ are quasi-Fitting invariants.
The natural surjective map $S^{-1}R \to (S')^{-1}R'$ induces a natural homomorphism $\varpi: \Omega_{R} \to \Omega_{R'}$.
By the fundamental property of Fitting ideals concerning base change, Proposition \ref{prop51} can be applied and we obtain $\varpi(\SF{n}_R(M)) = \SF{n}_{R'}(M_{G'})$ for $M \in \CC_{R, S}$ and $n \in \Z$.
\end{eg}

\begin{eg}\label{eg4o}
To treat the case where $G$ is noncommutative, assume that $d = 1$ and that $G$ contains a finite subgroup $H$ such that $\Gamma = G/H$ is isomorphic to $\Z_p$.
Consider the noncommutative Fitting invariant $\FF_R = \Fitt^{\max}_R: \CC_{R, S} \to \Omega_{R}$ and $\FF_{R'} = \Fitt^{\max}_{R'}: \CC_{R', S'} \to \Omega_{R'}$ as in Proposition \ref{prop4g}.

Here we recall a result of Nickel.
In the proof of \cite[Theorem 6.4]{Nic10}, the commutativity of the diagram
\[
\xymatrix@C=50pt{
M_a(R) \ar[r]^-{\nr_{Q(R)}} \ar@{->>}[d]_{\pi} & \II(R) \ar[d]^{\pi}\\
M_a(R') \ar[r]_-{\nr_{Q(R')}} & \II(R')
}
\] 
is shown for any $a \geq 1$.
Here, the left vertical arrow is the natural reduction map and the right vertical arrow $\pi$, which is a ring homomorphism, is defined by $\pi(x) = \sum_{\chi \in \Irr(G/G')} \aug_{\Gamma} \circ j_{\chi}(x) e_{\chi}$ using the notations in \cite[Theorem 6.4]{Nic10}.
The well-definedness of $\pi: \II(R) \to \II(R')$ also follows by the argument there.

Let $\overline{S}$ (resp. $\overline{S'}$) be the image of $S$ (resp. $S'$) under $\nr_{Q(R)}: R \to \II(R)$ (resp. $\nr_{Q(R')}: R' \to \II(R')$).
Then by the commutativity of the above diagram, we have $\pi(\overline{S}) \subset \overline{S'}$.
Therefore we can define a natural homomorphism $\varpi: \Omega_{R} \to \Omega_{R'}$.

We shall show that $\varpi(\FF_R(M)) = \FF_{R'}(M_{G'})$ for $M \in \CC_{R, S}$.
The inclusion $\varpi(\FF_R(M)) \subset \FF_{R'}(M_{G'})$ is already proved in \cite[Theorem 6.4]{Nic10} (this is true for any $M \in \MM_{R, S}$).
In order to show the other inclusion, take a presentation $(R')^b \overset{\overline{h}}{\to} (R')^a \to M_{G'} \to 0$ such that $\FF'(M_{G'}) = \Fitt_{R'}(\overline{h})$.
Using $H_1(G', M) = 0$ by Proposition \ref{prop4n}, a similar argument as in the proof of Proposition \ref{prop3j} shows that this sequence can be lifted to an exact sequence $R^b \overset{h}{\to} R^a \to M \to 0$.
Then by the above commutative diagram, we have
\[
\varpi(\FF_R(M)) \supset \varpi(\Fitt_{R}(h)) = \Fitt_{R'}(\overline{h}) = \FF_{R'}(M_{G'}),
\]
as desired.

Consequently, Proposition \ref{prop51} can be applied to this situation and we obtain $\varpi(\SF{n}(M)) = \FF'^{\langle n \rangle}(M_{G'})$ for $M \in \CC_{R, S}$ and $n \in \Z$.
\end{eg}

Before closing this subsection, we mention a compatibility between duality and reduction as follows.
It is a generalization of \cite[Lemma 5.18]{GP12}.
Recall that we are studying the case $R = \Z_p[[G]], \Lambda = \Z_p[[G']]$ and $S$ as chosen before Proposition \ref{prop4n}.

\begin{prop}\label{prop4s}
 For $M \in \CC_{R, S}$, we have a canonical isomorphism $(M^*)_{G'} \simeq (M_{G'})^*$, namely, $E^1_R(M)_{G'} \simeq E^1_{R'}(M_{G'})$.
\end{prop}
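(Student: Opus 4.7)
The plan is to realize both $(M^*)_{G'}$ and $(M_{G'})^*$ as cokernels of the same matrix, by exploiting a single $\Lambda$-free presentation of $M$ over $\Lambda = \Z_p[[G']]$. Since $M \in \CC_{R, S}$, Lemma~\ref{lem12} gives $\pd_{\Lambda}(M) \leq 1$, and the argument in the proof of Lemma~\ref{lem4h}(1) provides a presentation
\[
0 \to \Lambda^a \overset{h}{\to} \Lambda^a \to M \to 0
\]
with matching ranks. By Proposition~\ref{prop18}(1) we have $M^* \simeq E^1_\Lambda(M)$, and since $M$ is $S$-torsion with $S \subset \Lambda$ consisting of non-zero-divisors, $\Hom_\Lambda(M, \Lambda) = 0$. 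Applying $\Hom_\Lambda(-, \Lambda)$ to the presentation therefore yields a companion presentation
\[
0 \to \Lambda^a \overset{h^{\vee}}{\to} \Lambda^a \to M^* \to 0,
\]
with $h^\vee$ the transpose of $h$.

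Next I would reduce both presentations modulo the augmentation ideal $I_{G'} \subset \Lambda$, i.e., tensor with $\Z_p$ over $\Lambda$. Proposition~\ref{prop4n} gives $H_1(G', M) = 0$, and since $M^* \in \CC_{R, S}$ by Proposition~\ref{prop23}, it likewise gives $H_1(G', M^*) = 0$. Hence both reductions remain exact on the left, producing
\[
0 \to \Z_p^a \overset{\overline{h}}{\to} \Z_p^a \to M_{G'} \to 0, \qquad 0 \to \Z_p^a \overset{\overline{h}^{\vee}}{\to} \Z_p^a \to (M^*)_{G'} \to 0.
\]
Since $M_{G'}$ is finite (it is $S'$-torsion and finitely generated over $\Z_p$), $\Hom_{\Z_p}(M_{G'}, \Z_p) = 0$; dualizing the first sequence over $\Z_p$ and applying Proposition~\ref{prop18}(1) to $R' \supset \Z_p$ (which gives $(M_{G'})^* \simeq E^1_{\Z_p}(M_{G'})$) yields
\[
0 \to \Z_p^a \overset{\overline{h}^{\vee}}{\to} \Z_p^a \to (M_{G'})^* \to 0.
\]
Comparing the two presentations that share the common cokernel map $\overline{h}^\vee$ produces a canonical $\Z_p$-isomorphism $(M^*)_{G'} \simeq (M_{G'})^*$.

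The main obstacle I anticipate is upgrading this $\Z_p$-isomorphism to an $R'$-module isomorphism, since the $R$-action on $M$ and $M^*$ is external to the $\Lambda$-resolution used. To handle this, I would interpret the isomorphism as a composition of natural transformations --- the base-change map for Ext along $\Lambda \to \Z_p$, together with the two invocations of Proposition~\ref{prop18}(1) --- each of which is automatically $R'$-linear because $G'$ is central in $G$, so the $R$-action commutes with every $\Lambda$-module operation used. Alternatively, one could argue intrinsically via the change-of-rings spectral sequence $\Ext^p_{R'}(\Tor^R_q(R', M), R') \Rightarrow \Ext^{p+q}_R(M, R')$, which degenerates thanks to the vanishing of higher Tors (from Proposition~\ref{prop4n} by induction on $d$), combined with the long exact Ext sequence coming from the Koszul resolution of $R'$ over $R$.
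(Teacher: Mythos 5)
Your route differs from the paper's in a substantive way: you take a $\Lambda$-free presentation $0 \to \Lambda^a \to \Lambda^a \to M \to 0$ of length one, which is very explicit but loses the $R$-module structure at the chain level, whereas the paper takes a (possibly infinite) $R$-free resolution $\cdots \to F_1 \to F_0 \to M \to 0$, sets $N = \mathrm{Im}(F_1\to F_0)$, and gets both $(N^+)_{G'}$ and $(N_{G'})^+$ as kernels of the same map in the degreewise-identified sequences built from the $F_j$; since $(F^+)_{G'}\simeq(F_{G'})^+$ is canonical and $R'$-linear for free $R$-modules, every isomorphism in the paper's argument is automatically $R'$-linear.

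You correctly identify the real difficulty with your route --- establishing $R'$-linearity --- but the proposed fixes do not close it. The ``naturality'' fix is not an argument yet: you would need to (i) actually construct a natural transformation realizing the comparison (a base-change map for $\Ext^1_\Lambda(-,\Lambda)$ along the non-flat ring map $\Lambda\to\Z_p$ is not a standard canonical map and must be built), (ii) check it is an isomorphism under the given $\Tor$-vanishing, and (iii) verify that it coincides with the explicit $\Z_p$-isomorphism you produced by comparing the two $\Z_p$-presentations --- otherwise $R'$-linearity of the abstract map says nothing about yours. (The underlying fact that does make this work is that ``dualize a matrix, then reduce mod the augmentation ideal'' equals ``reduce, then dualize,'' applied both to $h$ and to the chain maps lifting multiplication by $r\in R$; but you would need to spell this out.) The spectral-sequence fix is also incomplete: $\Ext^p_{R'}(\Tor^R_q(R',M),R')\Rightarrow\Ext^{p+q}_R(M,R')$ collapses to $\Ext^1_{R'}(M_{G'},R')\simeq\Ext^1_R(M,R')$, but $(M^*)_{G'}=\Ext^1_R(M,R)\otimes_R R'$, and the comparison $\Ext^1_R(M,R)\otimes_R R'\simeq\Ext^1_R(M,R')$ is a separate nontrivial step (it is where the Koszul resolution would enter) that you only gesture at. So as written there is a genuine gap; the approach is salvageable, but the paper's choice to resolve over $R$ rather than $\Lambda$ is precisely what makes the $R'$-linearity come for free.
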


\begin{proof}
 Take an exact sequence
 \[
 \cdots \to F_2 \to F_1 \to F_0 \to M \to 0
 \]
 with $F_j$ a finitely generated free $R$-module and let $N$ be the image of $F_1 \to F_0$.
On the one hand, as in the proof of Proposition \ref{prop23},we obtain an exact sequence
\[
0 \to N^+ \to F_1^+ \to F_2^+ \to \cdots.
\]
If we divide this sequence into short exact sequences, every term is free as a $\Lambda$-module by Lemma \ref{lem26}.
Therefore this sequence induces an exact sequence
\[
0 \to (N^+)_{G'} \to (F_1^+)_{G'} \to (F_2^+)_{G'} \to \cdots .
\]
On the other hand, each term of the long exact sequence
 \[
 \cdots \to (F_2)_{G'} \to (F_1)_{G'} \to N_{G'} \to 0
 \]
satisfies $\rpd_{R'} \leq 0$.
Hence it induces an exact sequence
 \[
0 \to (N_{G'})^+ \to ((F_1)_{G'})^+ \to ((F_2)_{G'})^+ \to \cdots. 
 \]
Since there is a canonical isomorphism $(F^+)_{G'} \simeq (F_{G'})^+$ for a finitely generated free $R$-module $F$, we conclude $(N^+)_{G'} \simeq (N_{G'})^+$.
Finally, the exact sequences $0 \to ((F_0)^+)_{G'} \to (N^+)_{G'} \to (M^*)_{G'} \to 0$ and $0 \to ((F_0)_{G'})^+ \to (N_{G'})^+ \to (M_{G'})^* \to 0$ given by Proposition \ref{prop4n} imply the assertion.
\end{proof}

\subsection{Dual and Shift}\label{subsec:dual}
In this subsection we observe that, if $\FF$ is a ``typical'' quasi-Fitting invariant, then the formula 
\[
\SF{n-2}(M) = \SF{-n}(M^*)
\]
holds for $M \in \CC$ and $n \in \Z$.
Here we have to be careful about whether the module is a left module or a right module.
In order to state such a formula, $\FF$ must be defined on both $\CC^{\Right}$ and $\CC^{\Left}$ and the target monoid $\Omega$ is the same.

\subsubsection{Setup of Subsection \ref{subsec41}}
Consider the situation of Subsection \ref{subsec41}.
Recall that we have the quasi-Fitting invariant $\FF = \Fitt_R: \CC \to \Omega$.
Theorem \ref{thm47} allows us to define the shifts $\SF{n}: \CC \to \Omega$ for $n \in \Z$ (we also constructed $\SF{n}: \MM \to \Omega$ but we do not consider them in this subsection).

\begin{lem}\label{lem45}
For $P \in \PP$ we have $\FF(P^*) = \FF(P)$.
\end{lem}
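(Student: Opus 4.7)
The plan is to imitate the strategy used in the proof of Proposition \ref{prop4v}: localize, exploit a quadratic presentation of $P$, then dualize. Both $\Fitt_R$ and $\Ext^1_R(-, R)$ commute with localization at any prime $\qu$ of $R$ (the latter because $P$ is finitely presented over the noetherian ring $R$), so $\Fitt_R(P)_\qu = \Fitt_{R_\qu}(P_\qu)$ and similarly $(P^*)_\qu \simeq (P_\qu)^*$, where the right-hand dual is computed over $R_\qu$. Therefore, since equality of fractional ideals can be checked locally, we may assume $R$ is local from the outset.

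In the local case, the argument given in the proof of Proposition \ref{prop4v} produces a quadratic presentation $0 \to R^a \xrightarrow{h} R^a \to P \to 0$. Since $P$ is $S$-torsion and the elements of $S$ are non-zero-divisors in $R$, any $R$-homomorphism $\phi \colon P \to R$ satisfies $s \phi(x) = 0$ for some $s \in S$ (depending on $x$), and hence $\phi(x) = 0$. Thus $P^+ = \Hom_R(P, R) = 0$. Applying $\Hom_R(-, R)$ to the quadratic presentation and using $\Ext^1_R(R^a, R) = 0$ then yields the exact sequence
\[
0 \to R^a \xrightarrow{\,h^{\mathrm{t}}\,} R^a \to P^* \to 0,
\]
where $h^{\mathrm{t}}$ denotes the transpose of the matrix $h$.

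From the definition, $\Fitt_R(P) = (\det h)$ and $\Fitt_R(P^*) = (\det h^{\mathrm{t}}) = (\det h)$, which gives the desired equality. The only points that require verification are the vanishing $P^+ = 0$ and the compatibility of $\Ext^1$ with localization at a prime; both are standard, so no substantive obstacle is expected.
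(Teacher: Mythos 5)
Your proof is correct and follows essentially the same approach as the paper's: reduce to the local case via compatibility of $\Fitt$ and $E^1$ with localization, use the quadratic presentation $0 \to R^a \xrightarrow{h} R^a \to P \to 0$, dualize, and compare determinants of $h$ and $h^T$. The only difference is cosmetic — you spell out why $P^+ = 0$ (which the paper leaves implicit) and state the localization reduction up front rather than at the end.
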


\begin{proof}
At first we suppose that $R$ is a local ring.
In that case, we have a presentation $0 \to R^a \overset{h}{\to} R^a \to P \to 0$ over $R$.
Then $0 \to R^a \overset{h^T}{\to} R^a \to P^* \to 0$ is exact, where $h^T$ is the transpose of $h$.
Therefore we have 
\[
\Fitt_{R}(P) = (\det(h)) = (\det(h^T)) = \Fitt_{R} (P^*),
\]
as desired.
Next we consider general $R$.
For any $\qu \in \Spec(R)$, we have $\pd_{R_{\qu}}(P_{\qu}) \leq 1$ and $E^1_R(P) \otimes R_{\qu} \simeq E^1_{R_{\qu}}(P_{\qu})$.
Therefore the local case implies
\[
\Fitt_R(P) R_{\qu} = \Fitt_{R_{\qu}} (P_{\qu}) = \Fitt_{R_{\qu}} (E^1_{R_{\qu}}(P_{\qu})) = \Fitt_{R_{\qu}} (E^1_R(P) \otimes R_{\qu}) = \Fitt_R(E^1_{R}(P)) R_{\qu}.
\]
Thus we have $\Fitt_R(P) = \Fitt_R(E^1_{R}(P))$ as claimed.
\end{proof}

\begin{prop}\label{prop4b}
For $M \in \CC$, we have $\SF{-2}(M) = \FF(M^*)$.
\end{prop}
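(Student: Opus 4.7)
The plan is to unfold $\SF{-2}(M)$ via Theorem~\ref{thm47}(2), dualise via Proposition~\ref{prop23}, and match the resulting expression against $\FF(M^*)$ using Lemma~\ref{lem45} and the shift recursion from Theorem~\ref{thm47}. By two applications of Lemma~\ref{lem25}(2) I choose an exact sequence
\[
0 \to M \to P_2 \to P_1 \to N \to 0
\]
in $\CC$ with $P_1, P_2 \in \PP$ and $N \in \CC$, so that Theorem~\ref{thm47}(2) gives $\SF{-2}(M) = \FF(P_1)^{-1}\FF(P_2)\FF(N)$.

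By Proposition~\ref{prop23} (dualisation is exact on $\CC$ because $E^0$ and $E^{\geq 2}$ vanish there) together with Lemma~\ref{lem22}(3) (which yields $P_1^*, P_2^* \in \PP$), the dual sequence
\[
0 \to N^* \to P_1^* \to P_2^* \to M^* \to 0
\]
is again exact in $\CC$. Splitting it at $K := \Image(P_1^* \to P_2^*)$ yields two short exact sequences, each carrying a module from $\PP$ in the middle. Applying the characterisation from Theorem~\ref{thm47} to $0 \to K \to P_2^* \to M^* \to 0$ with $n = 0$ gives $\FF(M^*) = \FF(P_2^*)\,\SF{-1}(K)$, and to $0 \to N^* \to P_1^* \to K \to 0$ with $n = -1$ gives $\SF{-1}(K) = \FF(P_1^*)^{-1}\,\SF{-2}(N^*)$. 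Substituting $\FF(P_i^*) = \FF(P_i)$ via Lemma~\ref{lem45}, I obtain
\[
\FF(M^*) = \FF(P_2)\FF(P_1)^{-1}\SF{-2}(N^*).
\]
Comparing with the formula for $\SF{-2}(M)$, the proposition for $M$ becomes equivalent to the identity $\SF{-2}(N^*) = \FF(N^{**}) = \FF(N)$, i.e.\ the proposition applied to $N^*$.

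The base case $M \in \PP$ is immediate: $M^* \in \PP$ by Lemma~\ref{lem22}(3), and both sides equal $\FF(M)$ via Lemma~\ref{lem45} and $\SF{-2}(P) = \FF(P)^{(-1)^{-2}} = \FF(P)$. The main obstacle is closing the reduction for $M \in \CC \setminus \PP$, where $\pd_R(M) = \infty$ (since $\QQ \cap \CC = \PP$), so no finite descent into $\PP$ is possible. I expect to circumvent this by reformulating the identity as $(\FF^*)^{\langle 2\rangle} = \FF$ on $\CC^{\Right}$, using the correspondence $\SF{n}(M) = (\FF^*)^{\langle -n\rangle}(M^*)$ from the proof of Theorem~\ref{thm47}(2), where $\FF^*(X) := \FF(X^*)$ is itself a quasi-Fitting invariant satisfying $\FF^* = \FF$ on $\PP$ by Lemma~\ref{lem45}. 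Viewed this way, the computation above is exactly the shift recursion for $(\FF^*)^{\langle 2\rangle}$, so that the equality propagates from $\PP$ to all of $\CC$ by the uniqueness built into the definition of the shifts.
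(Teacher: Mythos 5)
Your reduction is correct as far as it goes: choosing $0 \to M \to P_2 \to P_1 \to N \to 0$ in $\CC$ with $P_1, P_2 \in \PP$, dualising, splitting at $K$, and using Lemma~\ref{lem45} does give
\[
\FF(M^*) = \FF(P_2)\FF(P_1)^{-1}\,\SF{-2}(N^*),
\]
and comparing with $\SF{-2}(M) = \FF(P_1)^{-1}\FF(P_2)\FF(N)$ shows that the proposition for $M$ is \emph{equivalent} to the proposition for $N^*$. But this is a circular reduction with no termination: $N$ lies in $\CC$ but in general not in $\PP$ (you yourself note $\pd_R(N) = \infty$ for $N \in \CC \setminus \PP$), so the descent never reaches your base case.

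The proposed workaround does not close this gap. You reformulate the claim as $(\FF^*)^{\langle 2\rangle} = \FF$ on $\CC^{\Right}$, observe that both sides are quasi-Fitting invariants agreeing on $\PP^{\Right}$, and then invoke ``the uniqueness built into the definition of the shifts.'' That uniqueness, however, is only the well-definedness of Theorem~\ref{thm47}: for a \emph{fixed} quasi-Fitting invariant, the value $\SF{n}(M)$ is independent of the chosen resolution. It is emphatically not the statement that a quasi-Fitting invariant on $\CC$ is determined by its restriction to $\PP$. The axioms relate $\FF(M)$ to $\FF(M')$ when one of the \emph{ends} of a short exact sequence lies in $\PP$; when the middle term (or neither end) is in $\PP$, as happens when one tries to descend from $\CC$ to $\PP$, they give no relation. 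So the step ``equal on $\PP$ $\Rightarrow$ equal on $\CC$'' is unproved, and I do not see a quick way to repair it without essentially importing the content of the proposition. This is precisely where the paper does something concrete: it chooses an annihilator $f \in S$ of $M$, takes a presentation $R^b \overset{h}{\to} R^a \to M^* \to 0$, builds the explicit sequence $0 \to M \to (R/f)^a \overset{\overline{h}^T}{\to} (R/f)^b \to N \to 0$, and computes $\Fitt_R(N)$ directly in terms of the matrix $h$ using the transpose invariance of minors, obtaining $\FF(N) = f^{b-a}\FF(M^*)$. That explicit presentation-matrix computation, available because we are in the commutative setting of Subsection~\ref{subsec41}, is the missing ingredient; the well-definedness of $\SF{-2}$ is then only used to say that one favourable choice of resolution suffices. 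Your abstract framing never produces a module $N$ whose Fitting ideal you can actually compute, and the ``propagation by uniqueness'' step is the unjustified leap.
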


\begin{rem}\label{rem4h}
By the definition of $\SF{-2}(M)$, Proposition \ref{prop4b} claims that, if $0 \to M \to P_1 \to P_2 \to N \to 0$ is an exact sequence in $\CC$ with $P_1, P_2 \in \PP$, then 
\[
\FF(P_1) \FF(N) = \FF(P_2) \FF(M^*).
\]
This assertion is a generalization of \cite[Lemma 5]{BG03} and \cite[Proposition 7.3]{GP15} (to compare with those results, we need the interpretation of the dual as the Pontryagin dual and the Iwasawa adjoint as in Example \ref{eg14}).
Our proof of Proposition \ref{prop4b} simplifies the original proof because we already know the well-definedness of $\SF{-2}$ and hence we can choose any exact sequence of the above form.
\end{rem}

\begin{proof}[Proof of Proposition \ref{prop4b}]
Take $f \in S$ which annihilates $M$ and an exact sequence $R^b \overset{h}{\to} R^a \to M^* \to 0$.
We can construct an exact sequence
\[
0 \to M \to (R/f)^a \overset{\overline{h}^T}{\to} (R/f)^b \to N \to 0
\]
in $\CC$.
Then $(h^T \mid f 1_b) \in M_{b, a+b}(R)$ is a presentation of $N$ over $R$, where $1_b$ denotes the identity matrix of size $b$.
On the other hand, since $\overline{h}$ is a presentation matrix of $M^*$ over $R/f$, $(h \mid f 1_a) \in M_{a, a+b}(R)$ is a presentation of $M^*$ over $R$.
Therefore
\[
\FF(N) = \Fitt_R(h^T \mid f 1_b) = f^{b-a} \Fitt(h \mid f 1_a) = f^{b-a}\FF(M^*),
\]
which proves $\SF{-2}(M) = \FF(M^*)$.
\end{proof}

\begin{cor}\label{cor47}
We have $\SF{n-2}(M) = \SF{-n}(M^*)$ for $M \in \CC$ and $n \in \Z$. 
\end{cor}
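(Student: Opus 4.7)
My plan is to prove the corollary by induction on $n$, using Proposition \ref{prop4b} as the base case $n=0$ and propagating in both directions via the recursive characterization of the shifts together with the exactness of the duality $(-)^*$.

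The key observations are as follows. First, by Proposition \ref{prop23} the functor $(-)^*$ is exact on $\CC$, by Lemma \ref{lem22}(3) it preserves $\PP$, by Proposition \ref{prop23} it satisfies $M^{**} \simeq M$, and by Lemma \ref{lem45} it preserves the value of $\FF$ on $\PP$, i.e., $\FF(P^*) = \FF(P)$. Second, the shifts $\SF{n}$ are characterized by the recursion that, whenever $0 \to N \to P \to M \to 0$ is exact in $\CC$ with $P \in \PP$, one has
\[
\SF{n}(M) = \FF(P)^{(-1)^n}\,\SF{n-1}(N)
\]
for every $n \in \Z$.

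For the induction step $A_n \Rightarrow A_{n+1}$, where $A_n$ denotes the identity $\SF{n-2}(M) = \SF{-n}(M^*)$ for all $M \in \CC$, I would choose an exact sequence $0 \to N \to P \to M \to 0$ in $\CC$ with $P \in \PP$ (existing by Lemma \ref{lem25}(1)). Dualizing yields an exact sequence $0 \to M^* \to P^* \to N^* \to 0$ in $\CC$ with $P^* \in \PP$. Applying the recursion to each sequence gives
\[
\SF{n-1}(M) = \FF(P)^{(-1)^{n-1}} \SF{n-2}(N), \qquad \SF{-n}(N^*) = \FF(P^*)^{(-1)^{-n}} \SF{-n-1}(M^*).
\]
Substituting $\SF{n-2}(N) = \SF{-n}(N^*)$ from the inductive hypothesis $A_n$ and using $\FF(P^*) = \FF(P)$, one obtains
\[
\SF{n-1}(M) = \FF(P)^{(-1)^{n-1}+(-1)^n}\, \SF{-n-1}(M^*) = \SF{-n-1}(M^*),
\]
since the exponents cancel. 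This gives $A_{n+1}$; the descending step $A_n \Rightarrow A_{n-1}$ is handled by the symmetric computation (or equivalently by applying the ascending step to $M^*$ and invoking $M^{**} \simeq M$).

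There is no serious obstacle: the whole content is the interplay between the exactness of $(-)^*$, the identity $\FF(P^*) = \FF(P)$ from Lemma \ref{lem45}, and the cancellation $(-1)^{n-1} + (-1)^n = 0$ in the exponent. The only point requiring care is the bookkeeping of signs in the two recursions and ensuring that the dualized sequence is run through the same recursion formula, which Proposition \ref{prop23} permits.
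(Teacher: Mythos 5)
Your proof is correct and essentially identical to the paper's: both start from Proposition \ref{prop4b} as the $n=0$ base case, dualize a short exact sequence $0 \to N \to P \to M \to 0$, apply the recursion characterizing $\SF{n}$ to both it and its dual, and use $\FF(P^*) = \FF(P)$ together with the sign cancellation $(-1)^{n-1}+(-1)^{n}=0$ to link consecutive values of $n$. The only cosmetic difference is that the paper directly states the equivalence of the assertion at $n$ and $n-1$, whereas you phrase it as an ascending step plus a dualization argument for descent.
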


\begin{proof}
The case $n = 0$ is Proposition \ref{prop4b}.
Let $0 \to N \to P \to M \to 0$ be an exact sequence in $\CC$ with $P \in \PP$.
We have an exact sequence $0 \to M^* \to P^* \to N^* \to 0$.
Then
\[
\SF{n-2}(M) = \FF(P)^{(-1)^{n-2}} \SF{n-3}(N) = \FF(P)^{(-1)^{n-2}} \SF{(n-1)-2}(N)
\]
and
\[
\SF{-n}(M^*) = \FF(P^*)^{(-1)^{-n}} \SF{-n+1}(N^*) = \FF(P)^{(-1)^{-n}} \SF{-(n-1)}(N^*),
\]
which prove that the assertion for $n$ is equivalent to that for $n-1$.
This completes the proof.
\end{proof}

\begin{rem}\label{rem4s}
 Consider $R = \Lambda[\Delta]$ as in Lemma \ref{lem4r} and $\FF = \Fitt_R$.
 Then we have 
 \[
 \FF(M) = \SF{-2}(M) = \FF(M^*)
 \]
  for $M \in \CC$ by Lemma \ref{lem4r} and Proposition \ref{prop4b}.
This gives an alternative proof of \cite[Theorem A.8]{GK08}, whose proof is bothering, in the rank one case.
However, our proof of $\FF(M) = \FF(M^*)$ does not depend essentially on the shifting theory.
In short, we constructed an exact sequence $0 \to M \to R \otimes_{\Lambda} M \to R \otimes_{\Lambda} M \to M \to 0$ in Lemma \ref{lem4r} and applied the known formula in Remark \ref{rem4h} to it.
\end{rem}

\subsubsection{Setup of Subsection \ref{subsec42}}
Consider the situation of Subsection \ref{subsec42}, under Assumption \ref{ass35}, and the quasi-Fitting invariant $\FF = \Fitt^{\max}_R: \CC \to \Omega$.

\begin{lem}\label{lem4k}
For $P \in \PP$ we have $\FF(P^*) = \FF(P)$.
\end{lem}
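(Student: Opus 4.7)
The plan is to adapt the argument of Lemma \ref{lem45} to the noncommutative setting, now exploiting the fact that quadratic presentations are available globally by Assumption \ref{ass35}. By that assumption, $P$ admits a quadratic presentation
\[
0 \to R^a \xrightarrow{h} R^a \to P \to 0,
\]
so $\FF(P) = (\nr(h))_{\II(R)}$ by the characterization of $\Fitt^{\max}_R$ on modules with quadratic presentations recalled in Subsection \ref{subsec42}.

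Next I apply $\Hom_R(-, R)$ to this sequence. Since $P$ is $S$-torsion and $R$ contains no $S$-torsion (any $\psi \in \Hom_R(P, R)$ satisfies $f\psi(p) = \psi(fp) = 0$ for some $f \in S$, forcing $\psi = 0$), we have $\Hom_R(P, R) = 0$. Combined with $\Ext^1_R(R^a, R) = 0$ from projectivity of $R^a$, we obtain an exact sequence
\[
0 \to R^a \xrightarrow{h'} R^a \to P^* \to 0
\]
of right $R$-modules, which is a quadratic presentation of $P^*$. Here $h' \in M_a(R)$ represents the dualized map $\Hom_R(h, R)$ under the canonical identification $\Hom_R(R^a, R) \simeq R^a$ of right $R$-modules. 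Hence $\FF(P^*) = (\nr(h'))_{\II(R)}$ by the same characterization.

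The key step is then to verify $\nr(h') = \nr(h)$ in $Z(R)$, which yields $\FF(P^*) = \FF(P)$. Under the standard identification, $h'$ is the transpose of $h$ in the sense intertwined by the duality between left and right $R$-modules; in the commutative case (Lemma \ref{lem45}) this reduces to the elementary identity $\det(h^T) = \det(h)$. In the present noncommutative setting, the corresponding identity for reduced norms follows from the invariance of $\nr$ under the appropriate ring anti-automorphism of $M_a(R)$ induced by the duality, making use of the compatibility of the reduced norm with the passage to the opposite separable algebra.

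The main obstacle is precisely this compatibility: in contrast to the commutative case, the naive entry-wise transpose of a matrix does not in general preserve $\nr_{M_a(R)}$, and one must carefully identify $h'$ via the duality on free modules and invoke the correct anti-automorphism for which $\nr$ is invariant. Once this identification is in place, the argument proceeds more directly than in the commutative case since we do not need to pass to localizations: the global quadratic presentation provided by Assumption \ref{ass35} already suffices.
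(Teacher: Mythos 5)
Your proposal takes essentially the same route as the paper's proof, which simply observes that $P$ has a quadratic presentation by Assumption \ref{ass35}, dualizes it to a quadratic presentation of $P^*$, and then appeals to the noncommutative analog of $\det(h^T)=\det(h)$ by referring to the first part of the proof of Lemma \ref{lem45} and to \cite[Propositions 5.3.1 and 6.3.1]{Nic10}. You correctly flag the subtlety that the entry-wise transpose is not an anti-automorphism of $M_a(A)$ when $A$ is noncommutative, so $\nr(h^T)=\nr(h)$ does not literally generalize; the right formulation goes through the ring isomorphism $M_a(A)^{op}\simeq M_a(A^{op})$ and the invariance of the reduced norm under passage to the opposite separable algebra, which is precisely the content outsourced to the cited Nickel propositions, so your ``main obstacle'' is pointing at the same ingredient the paper itself leaves to the reference rather than at a genuine gap. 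Your closing observation that the global quadratic presentation makes the localization step of Lemma \ref{lem45} unnecessary here is also accurate.
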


\begin{proof}
This is a generalization of \cite[Propositions 5.3.1 and 6.3.1]{Nic10}.
Since we are assuming that $P$ has a quadratic presentation, a similar argument as the first part of the proof of Lemma \ref{lem45} yields the assertion.
\end{proof}

\begin{prop}\label{prop4a}
For $M \in \CC$, we have $\SF{-2}(M) = \FF(M^*)$.
\end{prop}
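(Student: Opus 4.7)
The proof will mirror that of Proposition \ref{prop4b}, with Lemma \ref{lem4k} and the duality of Proposition \ref{prop23} playing the roles that Lemma \ref{lem45} and commutative transposition played there. Because $S$ is central in $R$, pick $f \in S$ annihilating $M$, so that $M$ is an $R/Rf$-module and $M^*$ is an $R/fR$-module. Choose a presentation $R^b \xrightarrow{h} R^a \to M^* \to 0$ of right $R$-modules realizing $\FF(M^*) = \Fitt_R(h)$. Dualizing the reduced sequence $(R/fR)^b \xrightarrow{\bar h} (R/fR)^a \to M^* \to 0$ through the isomorphism $((R/fR)^k)^* \simeq (R/Rf)^k$ of Lemma \ref{lem22}(1), together with the identification $M^{**} \simeq M$ from Proposition \ref{prop23}, produces an exact sequence
\[
0 \to M \to (R/Rf)^a \xrightarrow{h'} (R/Rf)^b \to N \to 0
\]
in $\CC^{\Left}$, where $h'$ is the map dual to $\bar h$ and $N$ is defined to be the cokernel.

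Since each $(R/Rf)^k$ admits the quadratic presentation $R^k \xrightarrow{f \cdot 1_k} R^k \to (R/Rf)^k \to 0$, Lemma \ref{lem4k} together with the definition of $\Fitt_R^{\max}$ on quadratic presentations gives $\FF((R/Rf)^k) = (\nr(f))^k$ in $\Omega$. Applying Theorem \ref{thm47}(2) to the above four-term sequence yields
\[
\SF{-2}(M) = \FF((R/Rf)^b)^{-1} \FF((R/Rf)^a) \FF(N) = (\nr(f))^{a-b} \FF(N),
\]
so the task reduces to establishing
\[
\FF(N) = (\nr(f))^{b-a} \FF(M^*).
\]

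For this reduction, lift $h'$ to a matrix $h^\vee \in M_{b,a}(R)$; then $R^{a+b} \xrightarrow{(h^\vee \mid f \cdot 1_b)} R^b \to N \to 0$ is a presentation of $N$, while $R^{a+b} \xrightarrow{(h \mid f \cdot 1_a)} R^a \to M^* \to 0$ presents $M^*$ (adjoining $f$-relations, which are valid since $M^*$ is $f$-annihilated). The identity thus reduces to the reduced-norm relation
\[
\Fitt_R(h^\vee \mid f \cdot 1_b) = (\nr(f))^{b-a} \Fitt_R(h \mid f \cdot 1_a),
\]
the noncommutative analog of the minor identity exploited in the commutative proof, combined with the equality of both sides with $\FF(N)$ and $(\nr(f))^{b-a}\FF(M^*)$, respectively.

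The main obstacle is the last point. The reduced-norm identity itself follows by extending scalars to a splitting field of $Q(R)$, where $\nr$ becomes an ordinary determinant and the commutative expansion of minors along the scaled-identity block $f \cdot 1_b$ (resp.\ $f \cdot 1_a$) applies, using the transpose invariance of reduced norm on square submatrices (implicit in Lemma \ref{lem4k}) to match $k \times k$ submatrix norms of $h^\vee$ with those of $h$. The more delicate point is to verify that the specific presentation $(h \mid f \cdot 1_a)$ actually realizes $\Fitt_R^{\max}(M^*)$ \emph{and} that the resulting $(h^\vee \mid f \cdot 1_b)$ simultaneously realizes $\Fitt_R^{\max}(N)$; this is handled by observing that the duality construction is symmetric---any presentation of $N$ yields, by the same dualization, a presentation of $M^* \simeq N^{*} \oplus (\text{quadratic})$ of the required shape---so that the supremum on both sides is attained at corresponding pairs $(h, h^\vee)$, allowing the identity on individual presentations to pass to $\Fitt_R^{\max}$.
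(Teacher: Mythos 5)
Your first half closely follows the paper: picking $f\in S$ annihilating $M$, taking a maximal presentation $R^b\xrightarrow{h}R^a\to M^*\to 0$, reducing mod $f$, dualizing to obtain $0\to M\to (R/Rf)^a\to (R/Rf)^b\to N\to 0$, and computing $\SF{-2}(M)=\nr(f)^{a-b}\FF(N)$. At that point both you and the paper need to compare $\FF(N)$ with $\nr(f)^{b-a}\FF(M^*)$, and this is where your argument has a genuine gap.

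Since $\Fitt_R^{\max}$ is the \emph{maximum} of $\Fitt_R(h')$ over all presentations $h'$, the explicit presentation $(h^\vee\mid f\cdot 1_b)$ that you construct for $N$ yields only the inequality $\FF(N)\supset \Fitt_R(h^\vee\mid f\cdot 1_b)=\nr(f)^{b-a}\FF(M^*)$; it does \emph{not} give equality unless you know this presentation is maximal, and that is precisely what is not evident. Your attempt to close this by "symmetry of the duality construction" does not work: the relation between $M^*$ and $N$ is a four-term exact sequence $0\to N^*\to (R/fR)^b\to (R/fR)^a\to M^*\to 0$, not a direct-sum decomposition, so the stated isomorphism $M^*\simeq N^*\oplus(\text{quadratic})$ is false. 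Moreover, an arbitrary presentation of $N$ does not, upon dualization, produce a presentation of $M^*$ --- it would produce one of $N^*$, which sits in a different position in the sequence --- so the claim that the suprema are simultaneously attained at "corresponding pairs" is not justified and essentially assumes what needs to be proved. The paper avoids this issue entirely: it proves \emph{only} the inclusion $\SF{-2}(M)\supset \FF(M^*)$ by the construction you describe (valid for every $M\in\CC$), and then gets the opposite inclusion by applying that same inclusion to the module $N^*$ together with the dual sequence $0\to N^*\to P_2^*\to P_1^*\to M^*\to 0$ and the well-definedness of $\SF{-2}$. You need a step of this kind --- exchanging the roles of $M$ and $N$ via duality on a module-by-module basis --- rather than trying to force both maximality claims for a single fixed presentation.
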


\begin{rem}\label{rem49}
This proposition can be written down similarly as in Remark \ref{rem4h}.
It is a generalization of \cite[Propositions 5.3.2 and 6.3.2]{Nic10}.
\end{rem}

\begin{proof}[Proof of Proposition \ref{prop4a}]
Take $f \in S$ which annihilates $M$.
Let $R^b \overset{h}{\to} R^a \to M^* \to 0$ with $b \geq a$ be a presentation such that $\Fitt(h) = \Fitt^{\max}(M^*)$.
Then $(R/f)^b \overset{\overline{h}}{\to} (R/f)^a \to M^* \to 0$ is exact.
If we define $N \in \CC$ as the dual of the kernel of $\overline{h}$, we have an exact sequence
\[
0 \to M \to (R/f)^a \overset{\overline{h}^T}{\to} (R/f)^b \to N \to 0.
\]
Put $h_1 = (h^T \mid 0_{b, b-a}) \in M_b(R)$.
Then $\overline{h_1}$ is a presentation matrix of $N$ over $R/f$ and thus $(h_1 \mid f 1_b)$ is a presentation matrix of $N$ over $R$.
Therefore
\[
\Fitt^{\max}(N) \supset \Fitt(h_1 \mid f 1_b) = \Fitt(h_1^T \mid f 1_b) = \nr(f)^{b-a} \Fitt(h \mid f 1_a) = \nr(f)^{b-a} \Fitt^{\max}(M^*),
\]
where the final equality comes from
\[
\Fitt^{\max}(M^*) = \Fitt(h) \subset \Fitt(h \mid f 1_a) \subset \Fitt^{\max}(M^*).
\]
This proves that $\SF{-2}(M) \supset \FF(M^*)$.

In order to prove the other inclusion, take an exact sequence $0 \to M \to P_1 \to P_2 \to N \to 0$ in $\CC$ with $P_1, P_2 \in \PP$.
Since $0 \to N^* \to P_2^* \to P_1^* \to M^* \to 0$ is exact, applying the above result to $N^*$ yields $\SF{-2}(N^*) \supset \FF(N)$, which is equivalent to 
\[
\FF(M^*) \supset \FF(P_1^*) \FF(P_2^*)^{-1} \FF(N) = \FF(P_1) \FF(P_2)^{-1} \FF(N) = \SF{-2}(M).
\]
This completes the proof.
\end{proof}

\begin{cor}\label{cor4l}
We have $\SF{n-2}(M) = \SF{-n}(M^*)$ for $M \in \CC$ and $n \in \Z$. 
\end{cor}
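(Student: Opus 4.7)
The plan is to mimic the proof of Corollary \ref{cor47} verbatim, which goes through essentially formally once we have the key ingredients: the base case (Proposition \ref{prop4a}) and the self-duality of $\FF$ on $\PP$ (Lemma \ref{lem4k}). The only check is that none of the ingredients used in that argument relied on commutativity of $R$ or on $\FF = \Fitt_R$ rather than $\Fitt_R^{\max}$.

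First I would take an exact sequence $0 \to N \to P \to M \to 0$ in $\CC$ with $P \in \PP$, whose existence is guaranteed by Lemma \ref{lem25}(1). Applying the duality functor $(-)^*$ and using Proposition \ref{prop23} together with Lemma \ref{lem22}(3), one obtains an exact sequence $0 \to M^* \to P^* \to N^* \to 0$ in $\CC$ with $P^* \in \PP$. Then the defining recursion for shifts of a quasi-Fitting invariant (Theorem \ref{thm47}) gives
\[
\SF{n-2}(M) = \FF(P)^{(-1)^{n-2}} \SF{n-3}(N) = \FF(P)^{(-1)^{n}} \SF{(n-1)-2}(N),
\]
while, on the dual side,
\[
\SF{-n}(M^*) = \FF(P^*)^{(-1)^{-n}} \SF{-n+1}(N^*) = \FF(P^*)^{(-1)^{n}} \SF{-(n-1)}(N^*).
\]
Invoking Lemma \ref{lem4k} to identify $\FF(P^*) = \FF(P)$, we see that the assertion for $n$ is equivalent to the assertion for $n-1$.

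Hence a single induction on $n$ in either direction, anchored at the base case $n = 0$ supplied by Proposition \ref{prop4a}, closes the argument. I do not anticipate any genuine obstacle, since every step parallels the commutative proof of Corollary \ref{cor47} and all substitutions of lemmas are straightforward: Lemma \ref{lem45} is replaced by Lemma \ref{lem4k}, Proposition \ref{prop4b} is replaced by Proposition \ref{prop4a}, and existence of the relevant resolutions in $\CC$ is supplied by Lemma \ref{lem25}. The only point meriting a sentence of justification in the written proof is that the induction can be run in both directions (i.e.\ from $n$ to $n-1$ and from $n-1$ to $n$), which is immediate from the recursion above since $\FF(P) \in \Omega^{\times}$ for $P \in \PP$.
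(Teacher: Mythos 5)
Your proposal is correct and is essentially the paper's proof: the paper's proof of this corollary literally just says ``The argument of Corollary \ref{cor47} works,'' and you have carried out exactly that argument, with Proposition \ref{prop4a} in place of Proposition \ref{prop4b} and Lemma \ref{lem4k} in place of Lemma \ref{lem45}.
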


\begin{proof}
The argument of Corollary \ref{cor47} works.
\end{proof}

\subsubsection{Setup of Subsection \ref{subsec43}}
Consider the situation of Subsection \ref{subsec43} and the quasi-Fitting invariant $\FF: \CC \to \Omega$ constructed there.

\begin{prop}
 We have $\SF{n-2}(M) = \SF{-n}(M^*)$ for $M \in \CC$ and $n \in \Z$.
\end{prop}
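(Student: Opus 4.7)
The plan is to reduce the identity to the one-dimensional case already handled in Corollary \ref{cor4l}, exploiting the inverse-limit description $\Omega = \varprojlim_{U \in \UU} \Omega_{G/U}$ from Proposition \ref{prop3j}. For each $U \in \UU$ let $\varpi_U \colon \Omega \to \Omega_{G/U}$ be the natural projection. By Proposition \ref{prop51} applied to the surjection $\pi \colon R \to \Z_p[[G/U]]$, together with the compatibility $\varpi_U(\FF(M)) = \FF_{G/U}(M_U)$ built into the definition of $\FF$ (whose hypotheses are supplied by Lemma \ref{lem4h}), one obtains $\varpi_U(\SF{n}(M)) = \SF{n}_{G/U}(M_U)$ for all $M \in \CC$ and $n \in \Z$. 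Since the family $\{\varpi_U\}_{U \in \UU}$ jointly separates points of $\Omega$, the target equality is equivalent to
\[
\SF{n-2}_{G/U}(M_U) = \SF{-n}_{G/U}((M^*)_U) \quad \text{in } \Omega_{G/U}
\]
for every $U \in \UU$.

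Since $H/U$ is finite, $G/U$ is a one-dimensional $p$-adic Lie group and $\FF_{G/U} = \Fitt^{\max}_{\Z_p[[G/U]]}$ is precisely the noncommutative Fitting invariant of Subsection \ref{subsec42}. Thus Corollary \ref{cor4l} applied to $\Z_p[[G/U]]$ yields
\[
\SF{n-2}_{G/U}(M_U) = \SF{-n}_{G/U}((M_U)^*),
\]
and the problem is reduced to the single duality compatibility $(M^*)_U \simeq (M_U)^*$ as right $\Z_p[[G/U]]$-modules.

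This last compatibility is the main obstacle; it is the Lie group analog of Proposition \ref{prop4s}. I would mimic the proof of that proposition: fix an open normal subgroup $G' \subset G$ that is pro-$p$ and $p$-torsion free (as in Proposition \ref{prop17}), noting $U \subset G'$ so that $\Lambda = \Z_p[[G']]$ satisfies the hypotheses of Proposition \ref{prop18}. Starting from a free resolution $\cdots \to F_2 \to F_1 \to F_0 \to M \to 0$ over $R$ and letting $N = \mathrm{image}(F_1 \to F_0)$, I would invoke the long exact sequence $0 \to N^+ \to F_1^+ \to F_2^+ \to \cdots$ produced in the proof of Proposition \ref{prop23}. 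By Lemma \ref{lem26} each intermediate syzygy in this sequence has $\rpd_\Lambda \leq 0$ and is therefore free over the noetherian local ring $\Lambda$, which makes $(-)_U$ exact on both the original resolution of $M$ and the dualized sequence. Combining this with the evident identification $F^+_U \simeq (F_U)^+$ valid for any finitely generated free $R$-module $F$ then produces the required $(M^*)_U \simeq (M_U)^*$, completing the proof.
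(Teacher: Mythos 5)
Your proposal is correct and follows essentially the same route as the paper: reduce via the projections $\varpi_U \colon \Omega \to \Omega_{G/U}$ to the one-dimensional case where Corollary \ref{cor4l} applies, invoke Proposition \ref{prop51} together with Lemma \ref{lem4h}(1), and establish the compatibility $(M^*)_U \simeq (M_U)^*$ by running the argument of Proposition \ref{prop4s}. The only difference is cosmetic — you spell out the syzygy-freeness argument behind $(M^*)_U \simeq (M_U)^*$, whereas the paper simply refers to a "similar discussion as in Proposition \ref{prop4s}."
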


\begin{proof}
 For any $U \in \UU$, let $\varpi: \Omega \to \Omega_{G/U}$ be the natural map.
 A similar discussion as in Proposition \ref{prop4s} gives a canonical isomorphism $(M^*)_U \simeq (M_U)^*$, namely, $E^1_R(M)_U \simeq E^1_{\Z_p[[G/U]]}(M_U)$.
 By Lemma \ref{lem4h}(1), the assumptions of Proposition \ref{prop51} hold.
 Then 
 \[
 \varpi(\SF{n-2}(M)) = \SF{n-2}(M_U) = \SF{-n}((M_U)^*) = \SF{-n}((M^*)_U) = \varpi(\SF{-n}(M^*)),
 \]
 where the first and forth equalities follows from Proposition \ref{prop51}, second from Corollary \ref{cor4l}, and third from $(M^*)_U \simeq (M_U)^*$.
 Varying $U$, we get the desired equality.
\end{proof}

\subsection{Computation}\label{subseca1}
Let $\Delta$ be a finite abelian group and put $R = \Z_p[[T_1, \dots, T_d]][\Delta], S = \Z_p[[T_1, \dots, T_d]] \setminus \{0\}$ with $d \geq 0$.
Consider the Fitting invariant $\FF = \Fitt_R$.
As explained in Section \ref{sec01}, it is important to compute $\varSF{2}(\Z_p)$ when $d =1$.
The method of computation is already indicated in Theorem \ref{thm01}.
Let us extend the method to compute $\varSF{n}(\Z_p)$ for general $n \geq 0$ when $d = 1$.
This method is also applicable for general $d \geq 1$: see Examples \ref{eg46} and \ref{eg4w}.

Put $R = \Z_p[[T]][\Delta]$.
Fix a decomposition
\[
\Delta = \prod_{i=1}^s \Delta^{(i)}
\]
where $\Delta^{(i)}$ is cyclic and choose a generator $\delta_i$ of $\Delta^{(i)}$. 
For each $i$, we obtain a complex
\[
C^{(i)}_{\bullet}: \cdots \to \Z_p[\Delta^{(i)}] \overset{\N_{\Delta^{(i)}}}{\to} \Z_p[\Delta^{(i)}] \overset{\delta_i - 1}{\to} \Z_p[\Delta^{(i)}] \to 0
\]
(with the final $0$ at degree $-1$), which is acyclic except for degree $0$ and $H_0(C^{(i)}_{\bullet}) \simeq \Z_p$.
Here $\N$ denotes the norm element in the group ring.
Hence the tensor product $C_{\bullet} = C^{(1)}_{\bullet} \otimes \dots \otimes C^{(s)}_{\bullet}$ is a complex which is acyclic except for degree $0$ and $H_0(C_{\bullet}) \simeq \Z_p$.
Moreover, for each $j \geq 0$, $C_j$ is a free $\Z_p[\Delta]$-module of rank $\frac{s (s+1) \dots (s+j-1)}{j!}$ and the presentation matrix of the boundary map $d_j: C_j \to C_{j-1}$ can be written down.
In this way, for each $n \geq 0$,  we obtain an exact sequence
\[
0 \to N_n \to C_{n-1} \to \dots \to C_0 \to \Z_p \to 0
\]
with $C_j \in \PP$.
Thus we obtain
\[
\varSF{n}(\Z_p) = (T)^{t} \FF(N_n) , \ t = \sum_{j = 0}^{n-1} (-1)^{n+j} \frac{s (s+1) \dots (s+j-1)}{j!}.
\]
Since the boundary map $d_{n+1}$ is a presentation of $N_n$ over $\Z_p[\Delta]$, we obtain a presentation of $N_n$ over $R$, and the Fitting ideal $\FF(N_n)$ of $N_n$ can be computed.
Consequently $\varSF{n}(\Z_p)$ can be computed.

Now we present several examples of explicit computations.

\begin{eg}\label{eg4m}
 Consider the case $s=1$, namely, let $R = \Z_p[[T]][\Delta]$ with $\Delta$ cyclic generated by an element $\delta$.
Then the above computation shows
\begin{eqnarray*}
\varSF{n}(\Z_p) &=& (\delta - 1, T) \quad (n \geq 0, \text{even}), \\
\varSF{n}(\Z_p) &=& \left(\frac{\N_{\Delta}}{T}, 1\right) \quad (n \geq 0, \text{odd}).
\end{eqnarray*}
Moreover, Lemma \ref{lem4r} implies
\begin{eqnarray*}
\SF{n}(\Z_p) &=& (\delta - 1, T) \quad (n \in \Z, \text{even}), \\
\SF{n}(\Z_p) &=& \left( \frac{\N_{\Delta}}{T}, 1 \right) \quad (n \in \Z, \text{odd}).
\end{eqnarray*}
Alternatively, we can use the observation in Example \ref{eg4q}.
\end{eg}

\begin{eg}\label{eg4n}
Consider the case $s = 2$, namely, let $R = \Z_p[[T]][\Delta]$ with $\Delta = \Delta^{(1)} \times \Delta^{(2)}$ and $\Delta^{(i)}$ cyclic.
 Choose a generator $\delta_i$ of $\Delta^{(i)}$ and put $\tau_i = \delta_i -1$ for short.
Then a computation shows
\begin{eqnarray*}
\varSF{0}(\Z_p) &=& (\tau_1, \tau_2, T), \\
\varSF{1}(\Z_p) &=& \left(\frac{\N_{\Delta}}{T}, \N_{\Delta^{(1)}}, \N_{\Delta^{(2)}}, \tau_1, \tau_2, T \right), \\
\varSF{2}(\Z_p) &=& \left( \tau_1^2, \tau_1\tau_2, \tau_2^2, \tau_1 \N_{\Delta^{(2)}}, \tau_2 \N_{\Delta^{(1)}}, (\tau_1, \tau_2, \N_{\Delta^{(1)}}, \N_{\Delta^{(2)}}) T, T^2 \right).
\end{eqnarray*}
Of course, the last formula of $\varSF{2}(\Z_p)$ coincides with the computation of \cite{GK15}.
The description of $\varSF{n}(\Z_p)$ for general $n$ seems difficult.
\end{eg}

\begin{eg}\label{eg4q}
 Let $R = \Z_p[[T]][\Delta]$ with $\Delta$ a finite abelian group.
 For $n \leq -2$, using $\Z_p^{*} \simeq \Z_p$, Corollary \ref{cor47} implies $\SF{n}(\Z_p) = \SF{-2-n}(\Z_p) = \varSF{-2-n}(\Z_p)$.
 Hence if we were able to compute $\varSF{n}(\Z_p)$ for all $n \geq 0$, we would also obtain $\SF{n}(\Z_p)$ for all $n \leq -2$.
 
 To compute $\SF{-1}(\Z_p)$, consider the homomorphism $\nu: \Z_p[\Delta] \to \Z_p[\Delta]$ defined as the multiplication by $\N_{\Delta}$.
 Then the image of $\nu$ is isomorphic to $\Z_p$ and thus we have an exact sequence $0 \to \Z_p \to \Z_p[\Delta] \to \Cok(\nu) \to 0$. 
Since $\Cok(\nu)$ has the presentation $\nu$ over $\Z_p[\Delta]$, it has the presentation matrix $(\N_{\Delta}, T)$ over $R$.
Consequently we obtain
\[
\SF{-1}(\Z_p) = T^{-1}(\N_{\Delta}, T) = \left( \frac{\N_{\Delta}}{T}, 1 \right).
 \]
\end{eg}

\begin{eg}\label{eg46}
Let $R = \Z_p[[T_1, T_2]][\Delta]$ with $\Delta$ cyclic.
Take a generator $\delta$ of $\Delta$.
We shall show that
\begin{eqnarray*}
\varSF{0}(\Z_p) &=& \FF(\Z_p) = (\delta - 1, T_1, T_2), \\
\varSF{n}(\Z_p) &=& (\delta - 1, \N_{\Delta}, T_1, T_2) \quad (n \geq 1), \\
\SF{n}(\Z_p) &=& (\delta - 1, \N_{\Delta}, T_1, T_2) \quad (n \in \Z).
\end{eqnarray*}

The first line is confirmed easily.
Next we compute $\varSF{1}(\Z_p)$ and $\varSF{2}(\Z_p)$.
Taking the tensor product of the complexes
\[
\cdots \to \Z_p[\Delta] \overset{\N_{\Delta}}{\to} \Z_p[\Delta] \overset{\delta - 1}{\to} \Z_p[\Delta] \to 0
\]
and
\[
0 \to \Z_p[[T_1]] \overset{T_1}{\to} \Z_p[[T_1]] \to 0,
\]
we obtain a complex
\[
C_{\bullet}: \cdots \to C_3 \overset{d_3}{\to} C_2 \overset{d_2}{\to} C_1 \overset{d_1}{\to} C_0 \to 0
\]
where $C_0 \simeq \Z_p[[T_1]][\Delta]$ and $C_j \simeq \Z_p[[T_1]][\Delta]^2$ for $j \geq 1$ and  
\[
d_1 = (\delta - 1, T_1), \
d_2 = \begin{pmatrix}
0 & \delta - 1 \\ \N_{\Delta} & -T_1
\end{pmatrix}, \ 
d_3 = \begin{pmatrix}
T_1 & \delta - 1 \\ \N_{\Delta} & 0
\end{pmatrix}.
\]
The complex $C_{\bullet}$ is acyclic outside degree $0$ and $H_0(C_{\bullet}) \simeq \Z_p$.
Therefore, for $n \geq 1$, we have an exact sequence
\[
0 \to N_n \to C_{n-1} \to \dots \to C_0 \to \Z_p \to 0
\]
and thus 
\[
\varSF{n}(\Z_p) = (T_2)^{-1} \FF(N_n).
\]
Since $N_1$ has a presentation $d_2$ over $\Z_p[[T_1]][\Delta]$, $N_1$ has a presentation
\[
\begin{pmatrix}
0 & \delta -1 & T_2 & 0 \\ \N_{\Delta} & -T_1 & 0 & T_2
\end{pmatrix}
\]
over $R$.
Thus
\[
\FF(N_1) = T_2(\delta - 1, \N_{\Delta}, T_1, T_2)
\]
and consequently
\[
\varSF{1}(\Z_p) = (\delta - 1, \N_{\Delta}, T_1, T_2).
\]
Similarly, $N_2$ has a presentation
\[
\begin{pmatrix}
T_1 & \delta -1 & T_2 & 0 \\ \N_{\Delta} & 0 & 0 & T_2
\end{pmatrix}
\]
over $R$ and we obtain $\varSF{2}(\Z_p) = (\delta - 1, \N_{\Delta}, T_1, T_2)$.
Finally, Remark \ref{rem4a} and Lemma \ref{lem4r} imply the other formulas.
\end{eg}

\begin{eg}\label{eg4w}
 Let $R = \Z_p[[T_1, T_2]][\Delta]$ with $\Delta = \Delta^{(1)} \times \Delta^{(2)}$ and $\Delta^{(i)}$ cyclic.
 Choose a generator $\delta_i$ of $\Delta^{(i)}$ and put $\tau_i = \delta_i - 1$.
 Then a long computation as in Example \ref{eg46} gives
 \[
 \SF{2}(\Z_p) = (\tau_1, \tau_2, T_1, T_2)(\tau_1, \tau_2, \N_{\Delta^{(1)}}, \N_{\Delta^{(2)}}, T_1, T_2)^2 + (\N_{\Delta})^2.
 \]
\end{eg}

\section{Applications to Iwasawa Theory}\label{sec5}

\subsection{Tate Sequences}\label{subsec51}
In this subsection, we recall a general construction of an arithmetic exact sequence which is suitable to our algebraic theory.
Let $K$ be a finite extension of $\Q$ and $p$ an odd prime number.
Let $\Sigma$ be a finite set of places of $K$ containing all primes above $p$ and all archimedean primes.
Let $K_{\Sigma}$ be the maximal extension of $K$ unramified outside $\Sigma$ and put $G_{K, \Sigma} = \Gal(K_{\Sigma}/K)$.
Let $\LLL$ be a $p$-adic Lie extension of $K$ contained in $K_{\Sigma}$ and put $G = \Gal(\LLL/K)$.
Let $D \simeq (\Q_p/\Z_p)^r$ be a representation of  $G_{K, \Sigma}$.
We denote by $(-)^{\dual}$ the Pontryagin dual.

\begin{thm}[{\cite[Lemma 4.5]{OV02}}]\label{thm61}
Suppose $H^2(K_{\Sigma}/\LLL, D) = 0$.
Then we have an exact sequence 
\[
0 \to H^1(K_{\Sigma}/\LLL, D)^{\dual} \to F_1 \to F_2 \to H^0(K_{\Sigma}/\LLL, D)^{\dual} \to 0
\] 
of finitely generated $\Z_p[[G]]$-modules with $\pd_{\Z_p[[G]]}(F_1) \leq 1$ and $F_2$ free.
\end{thm}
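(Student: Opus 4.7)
The plan is to represent the Pontryagin duals of the Galois cohomology of $D$ over $\LLL$ by a two-term complex of $\Z_p[[G]]$-modules, and then unfold that complex into the required four-term exact sequence.

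Setting $T = D^{\dual}$, a finitely generated free $\Z_p$-module with a continuous $G_{K, \Sigma}$-action, a compact form of Shapiro's lemma identifies the duals $H^i(K_{\Sigma}/\LLL, D)^{\dual}$ with the homology of the continuous cochain complex of $G_{K, \Sigma}$ with coefficients in the compactly induced module built from $T$ and $\Z_p[[G]]$. This cochain complex can be chosen to consist of finitely generated free $\Z_p[[G]]$-modules. Since $\cd_p(G_{K, \Sigma}) \leq 2$ under our hypotheses on $p$ and $\Sigma$, its cohomological amplitude is bounded by $2$.

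Next, the hypothesis $H^2(K_{\Sigma}/\LLL, D) = 0$ concentrates the homology of this complex in degrees $0$ and $1$. A standard truncation argument in the derived category of $\Z_p[[G]]$-modules produces a quasi-isomorphic two-term complex $[F_1 \overset{d}{\to} F_2]$ of finitely generated projective modules with $\Ker(d) = H^1(K_{\Sigma}/\LLL, D)^{\dual}$ and $\Cok(d) = H^0(K_{\Sigma}/\LLL, D)^{\dual}$. To arrange $F_2$ to be free, I first pick a finitely generated free $\Z_p[[G]]$-module $F_2$ together with a surjection onto $H^0(K_{\Sigma}/\LLL, D)^{\dual}$, and then lift this choice along the derived object; the corresponding $F_1$ then absorbs the discrepancy. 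The sequence in the statement is the unfolding of this two-term complex, with kernel and cokernel of $d$ giving, respectively, the leftmost and rightmost nonzero terms.

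The main obstacle is controlling $\pd_{\Z_p[[G]]}(F_1) \leq 1$. Since the underlying derived object is a perfect complex of amplitude $2$ (a consequence of the cohomological dimension bound and the vanishing of $H^{\geq 2}$), the abstract two-term model may be taken with both terms finitely generated projective; forcing $F_2$ to be free requires at worst adjustment by a projective summand, so the resulting $F_1$ fits into a short exact sequence $0 \to P' \to P'' \to F_1 \to 0$ with $P', P''$ finitely generated projective $\Z_p[[G]]$-modules, yielding $\pd(F_1) \leq 1$. The remainder of the argument is then a formal consequence of unfolding the two-term complex into its four-term long exact sequence of homology.
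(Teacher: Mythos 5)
The paper does not prove Theorem \ref{thm61}; it cites it to \cite[Lemma 4.5]{OV02}. So there is no internal proof to compare against, and your argument must be judged on its own.

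Your overall strategy is the right one: realize the Pontryagin duals $H^i(K_\Sigma/\LLL,D)^\dual$ as the cohomology of a perfect complex of $\Z_p[[G]]$-modules of amplitude $[0,2]$, use the hypothesis $H^2=0$ to drop to amplitude $[0,1]$, and then unfold a two-term model, absorbing a projective summand to force $F_2$ free, with $F_1$ picking up $\pd\le 1$ from the injection of the degree-two term. That back end of the argument is sound.

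The gap is in your first reduction. You assert that ``a compact form of Shapiro's lemma identifies the duals $H^i(K_\Sigma/\LLL,D)^\dual$ with the homology of the continuous cochain complex of $G_{K,\Sigma}$ with coefficients in the compactly induced module built from $T$ and $\Z_p[[G]]$.'' The cochain complex $C^\bullet_{\mathrm{cont}}\bigl(G_{K,\Sigma},\ \Z_p[[G]]\widehat\otimes_{\Z_p} T\bigr)$ computes the \emph{Iwasawa cohomology} $\varprojlim_n H^i(G_{K_n,\Sigma},T)$, whereas $H^i(K_\Sigma/\LLL,D)^\dual$ is $\varprojlim_n H^i(G_{K_n,\Sigma},D)^\dual$. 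These projective systems do not agree termwise: $H^i(G_{K_n,\Sigma},T)$ and $H^i(G_{K_n,\Sigma},D)^\dual$ differ in general (they are linked through the long exact sequence of $0\to T\to T\otimes\Q_p\to D\to 0$ and, at a deeper level, by Poitou--Tate duality, which also introduces a degree flip and local terms; $G_{K,\Sigma}$ is not a Poincar\'e duality group). A concrete check: for $\mathcal{G}=\Z_p^2\supset \mathcal{H}=\Z_p$ with $G=\mathcal{G}/\mathcal{H}$, $T=\Z_p$ trivial, one finds $H^0\bigl(\mathcal{G},\Z_p[[G]]\bigr)=0$, while $H^0(\mathcal{H},\Q_p/\Z_p)^\dual\cong\Z_p$, so the degree-preserving identification fails already here. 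What you actually want is the \emph{Pontryagin dual} of the cochain complex $C^\bullet_{\mathrm{cont}}\bigl(G_{K,\Sigma}, \mathrm{Ind}^{G_{K,\Sigma}}_{G_{\LLL,\Sigma}} D\bigr)$: dualizing the complex (not replacing the coefficient module by its dual) produces a chain complex of compact $\Z_p[[G]]$-modules whose $i$-th homology is exactly $H^i(K_\Sigma/\LLL,D)^\dual$. That dual complex has terms that are large compact modules, not finitely generated, so your follow-up assertion that ``this cochain complex can be chosen to consist of finitely generated free $\Z_p[[G]]$-modules'' also overstates things: the perfectness --- the existence of a quasi-isomorphism to a bounded complex of finitely generated projective $\Z_p[[G]]$-modules --- is a nontrivial theorem. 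It relies on the finite generation of $H^i(K_\Sigma/\LLL,D)^\dual$ as $\Z_p[[G]]$-modules and on flatness or countability properties of the terms, and this is precisely a key part of what \cite{OV02} establish. Once this perfectness and the correct identification are in place, your truncation and freeness bookkeeping give the statement.
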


\begin{rem}
The hypothesis $H^2(K_{\Sigma}/\LLL, D) = 0$ is conjectured to be true as long as $\LLL$ contains the cyclotomic $\Z_p$-extension of $K$ (known as the weak Leopoldt conjecture \cite[p.565]{OV02}, \cite[Conjecture 5.2.1]{Gree10}).
It is actually known to be true if moreover $\Gal(K_{\Sigma}/\LLL)$ acts on $D$ trivially (see \cite[Theorem (10.3.25)]{NSW08}).
\end{rem}

\subsection{Totally Real Fields}\label{subsec52}
In this subsection, we give a noncommutative extension of Theorem \ref{thm01}.
Let $K$ be a totally real number field and $p$ an odd prime number.
Let $\LLL$ be a totally real, Galois extension of $K$ which is a finite extension of the cyclotomic $\Z_p$-extension $K^{\cyc}$ of $K$.
Put $G = \Gal(L/K)$, which is a $p$-adic Lie group of dimension $1$.
Also take a finite extension $K'$ of $K$ contained in $L$ such that $L$ is the cyclotomic $\Z_p$-extension of $K'$ and put $G' = \Gal(L/K')$.
We can apply the algebraic theory to $R = \Z_p[[G]]$ and $S = \Z_p[[G']] \setminus \{0\}$, so put $\CC = \CC_{R, S}$ and $\PP = \PP_{R, S}$.
Let $\Sigma$ be a finite set of places of $K$ containing all primes above $p$, primes ramified in $\LLL/K$, and archimedean primes.
The following proposition is known as mentioned in Section \ref{sec01}, but we give a proof for completeness.

\begin{prop}\label{prop7a}
There is an exact sequence
\[
0 \to X_{\Sigma}(\LLL) \to P_1 \to P_2 \to \Z_p \to 0
\]
in $\CC$ with $P_1, P_2 \in \PP$.
\end{prop}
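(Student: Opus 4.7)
The plan is to apply Theorem \ref{thm61} to the trivial coefficient system $D = \Q_p/\Z_p$ and then refine the resulting four-term sequence so that both middle terms lie in $\PP$. First, I would verify the hypothesis $H^2(K_\Sigma/\LLL, \Q_p/\Z_p) = 0$: this is the weak Leopoldt conjecture for trivial Galois action, which holds unconditionally (as recorded in the remark following Theorem \ref{thm61}) because $\Gal(K_\Sigma/\LLL)$ acts trivially on $\Q_p/\Z_p$. Next I would identify the outer terms: $H^0(K_\Sigma/\LLL, \Q_p/\Z_p) = \Q_p/\Z_p$, so its Pontryagin dual is $\Z_p$; and $H^1(K_\Sigma/\LLL, \Q_p/\Z_p) \simeq \Hom_{\mathrm{cont}}(\Gal(K_\Sigma/\LLL)^{\mathrm{ab}}, \Q_p/\Z_p)$, whose Pontryagin dual is the maximal pro-$p$ abelian quotient of $\Gal(K_\Sigma/\LLL)$, namely $X_\Sigma(\LLL)$ by the very definition of the $\Sigma$-ramified Iwasawa module. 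Theorem \ref{thm61} therefore produces an exact sequence
\[
0 \to X_\Sigma(\LLL) \to F_1 \to F_2 \to \Z_p \to 0
\]
with $\pd_R(F_1) \leq 1$ and $F_2$ a free $R$-module. A standard diagram chase on this sequence also gives $\rpd_R(X_\Sigma(\LLL)) \leq 1$, that is, $X_\Sigma(\LLL) \in \CC$.

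Since $F_2$, being free of positive rank, is not $S$-torsion, it does not yet belong to $\PP$, and the sequence above needs to be adjusted. The key move is to quotient $F_2$ by a well-chosen central non-zero-divisor. Let $\gamma$ be a topological generator of $G' = \Gal(\LLL/K')$. Then $f := \gamma - 1$ lies in $S$, is central in $R$ (since $G'$ is central in $G$), and annihilates $\Z_p$. Set $P_2 := F_2 / f F_2$, which is isomorphic to $(R/fR)^{\rank F_2}$ and hence lies in $\PP$. The surjection $F_2 \twoheadrightarrow \Z_p$ factors through $P_2$, giving a surjection $P_2 \twoheadrightarrow \Z_p$ whose kernel I denote by $K$. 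A $3 \times 3$ diagram chase shows that $Y := \Image(F_1 \to F_2)$ fits into a short exact sequence $0 \to f F_2 \to Y \to K \to 0$; since $f F_2 \simeq F_2$ is free, projectivity allows me to lift the inclusion $f F_2 \hookrightarrow Y$ through the surjection $F_1 \twoheadrightarrow Y$ to an injection $F_2 \hookrightarrow F_1$. Now set $P_1 := F_1 / F_2$.

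Putting everything together yields the desired four-term exact sequence
\[
0 \to X_\Sigma(\LLL) \to P_1 \to P_2 \to \Z_p \to 0,
\]
where the inclusion $X_\Sigma(\LLL) \hookrightarrow P_1$ uses $X_\Sigma(\LLL) \cap F_2 = 0$ inside $F_1$, and the image of $P_1 \to P_2$ is identified with $K$. The remaining technical point, and the only place requiring care, is to verify $P_1 \in \PP$: the short exact sequence $0 \to F_2 \to F_1 \to P_1 \to 0$ together with $\pd_R(F_2) = 0$ and $\pd_R(F_1) \leq 1$ yields $\pd_R(P_1) \leq 1$, while a rank comparison over $Q(R) = S^{-1} R$ (where $X_\Sigma(\LLL)$ and $\Z_p$ vanish and so $F_1, F_2$ have equal $Q(R)$-dimensions) shows $P_1$ is $S$-torsion.
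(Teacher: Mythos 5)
Your proposal is correct and follows essentially the same route as the paper: apply Theorem~\ref{thm61} with $D = \Q_p/\Z_p$, then modify $F_1, F_2$ to land in $\PP$, and invoke the torsionness of $X_\Sigma(\LLL)$ (weak Leopoldt) to conclude. The only real difference is that you make the paper's terse phrase ``by taking some quotient of $F_1, F_2$ by a free module'' concrete --- quotienting $F_2$ by $(\gamma-1)F_2$ and lifting through $F_1 \twoheadrightarrow Y$ --- which is a valid and somewhat more explicit way to carry out the same reduction.
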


\begin{proof}
Applying Theorem \ref{thm61} to $D = \Q_p/\Z_p$ equipped with trivial Galois action yields an exact sequence
\[
0 \to X_{\Sigma}(\LLL) \to F_1 \to F_2 \to \Z_p \to 0
\]
over $R$ as stated in Theorem \ref{thm61}.
Note that $\Z_p \in \CC$ and in particular $\Z_p$ is $S$-torsion.
By taking some quotient of $F_1, F_2$ by a free module, we obtain an exact sequence
\[
0 \to X_{\Sigma}(\LLL) \to P_1 \to P_2 \to \Z_p \to 0
\]
of finitely generated $R$-modules such that $\pd_{R}(P_1) \leq 1$, $\pd_{R}(P_2) \leq 1$, and $P_2$ is $S$-torsion.
It is known that $X_{\Sigma}(\LLL)$ is an $S$-torsion module by the weak Leopoldt conjecture (see \cite[Theorem (10.3.25)]{NSW08} or \cite[Theorem 13.31]{Was97}).
Thus $P_1$ is also $S$-torsion and we have $P_1, P_2 \in \PP$.
Now $X_{\Sigma}(\LLL) \in \CC$ also follows.
\end{proof}

Now we review the equivariant main conjecture of Ritter and Weiss \cite{RW02}, reformulated by Nickel \cite[Conjecture 2.1]{Nic13}.
Using the sequence in Proposition \ref{prop7a}, put 
\[
\Phi_{L/K, \Sigma} = [P_1] - [P_2] \in K_0(\PP).
\]
Let $\Theta_{L/K, \Sigma} \in Z(Q(R))^{\times}$ be the equivariant $p$-adic $L$-function, which corresponds to $\Phi_S$ in \cite{Nic13}.
Consider the Fitting invariant $\FF = \Fitt^{\max}_R$ (Proposition \ref{prop4g}) and recall the diagram in Remark \ref{rem4j}.
Then the equivariant main conjecture (without uniqueness assertion) claims that $\FF(\Phi_{L/K, \Sigma})$ is generated by $\Theta_{L/K, \Sigma}$.
This conjecture is proved to be true in \cite{RW11}, under the vanishing of the $\mu$-invariant of $X_{\Sigma}(L)$.

Recall that $\FF = \Fitt^{\max}_R$ is also a quasi-Fitting invariant by Proposition \ref{prop35}.
Then Corollary \ref{cor4p} immediately yields the following, which is the noncommutative variant of Theorem \ref{thm01}.

\begin{thm}\label{thm74}
If the $\mu$-invariant of $X_{\Sigma}(L)$ vanishes, then for the quasi-Fitting invariant $\FF = \Fitt^{\max}_R$ and any $n \in \Z$, we have
\[
\SF{n}(X_{\Sigma}(\LLL)) = (\Theta_{L/K, \Sigma})^{(-1)^n} \SF{n+2}(\Z_p).
\]
In particular,
\[
\FF(X_{\Sigma}(\LLL)) = \Theta_{L/K, \Sigma} \SF{2}(\Z_p).
\]

\end{thm}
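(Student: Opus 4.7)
The plan is to feed the four-term exact sequence of Proposition \ref{prop7a} into the additivity formula for shifts, and then apply the equivariant main conjecture of Ritter and Weiss.

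Specifically, Proposition \ref{prop7a} supplies an exact sequence $0 \to X_{\Sigma}(\LLL) \to P_1 \to P_2 \to \Z_p \to 0$ in $\CC$ with $P_1, P_2 \in \PP$. Since $\PP \subset \QQ$, Corollary \ref{cor4p} (applied with $d = 2$, $N = X_{\Sigma}(\LLL)$, $M = \Z_p$, and after replacing $n$ by $n+2$) yields
\[
\SF{n+2}(\Z_p) = \SF{n+1}(P_1) \cdot \SF{n+2}(P_2) \cdot \SF{n}(X_{\Sigma}(\LLL))
\]
for every $n \in \Z$. For each $P_i \in \PP$ one has $\SF{k}(P_i) = \FF(P_i)^{(-1)^k}$, so the first two factors on the right combine to $\bigl(\FF(P_1)\FF(P_2)^{-1}\bigr)^{(-1)^{n+1}}$. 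Since $\FF|_{\PP}$ factors through the Grothendieck group $K_0(\PP)$ (Remark \ref{rem4j}), this product depends only on the class $\Phi_{L/K,\Sigma} = [P_1] - [P_2]$, and equals $\FF(\Phi_{L/K,\Sigma})$.

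The remaining ingredient is the equivariant main conjecture of Ritter and Weiss, which, under the vanishing of the $\mu$-invariant of $X_{\Sigma}(\LLL)$, asserts $\FF(\Phi_{L/K,\Sigma}) = (\Theta_{L/K,\Sigma})$ in $\Omega^{\times}$. Solving the displayed identity for $\SF{n}(X_{\Sigma}(\LLL))$ and using $-(-1)^{n+1} = (-1)^n$ then gives
\[
\SF{n}(X_{\Sigma}(\LLL)) = (\Theta_{L/K,\Sigma})^{(-1)^n} \SF{n+2}(\Z_p),
\]
with the second displayed formula of the theorem being the case $n = 0$. Apart from careful sign bookkeeping in the exponents, there is no serious obstacle: both genuine inputs (the four-term Tate-style resolution and the main conjecture) are supplied from outside the algebraic shift formalism, and that formalism translates them into the asserted identity uniformly in $n$.
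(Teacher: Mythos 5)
Your proof is correct and follows essentially the same route as the paper: feed the four-term sequence of Proposition \ref{prop7a} into Corollary \ref{cor4p}, reduce the $P_1$, $P_2$ contribution to $\FF(\Phi_{L/K,\Sigma})$ via $\FF|_{\PP}$ factoring through $K_0(\PP)$, and invoke the Ritter--Weiss main conjecture $\FF(\Phi_{L/K,\Sigma}) = (\Theta_{L/K,\Sigma})$ under vanishing $\mu$. (One minor wording slip: the object equal to $\FF(\Phi_{L/K,\Sigma})$ is $\FF(P_1)\FF(P_2)^{-1}$, not its $(-1)^{n+1}$-st power, but your final exponent computation lands on the correct formula.)
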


\begin{rem}
If $\LLL/K$ is commutative, instead of \cite{RW11}, one may use \cite[Theorem 5.6]{GP15} to obtain Theorem \ref{thm74}.
When $\LLL/K$ is noncommutative, \cite[Theorem 3.3]{Nic13} gives a generalization of \cite[Theorem 5.6]{GP15}, which also gives Theorem \ref{thm74}, but the proof of \cite[Theorem 3.3]{Nic13} relies on \cite{RW11}.
\end{rem}

\subsection{Result at Finite Level}\label{subsec57}
Using the results in Subsection \ref{subsec52} and reduction theory in Subsection \ref{subsec44}, we shall obtain a result for finite Galois extensions of number fields.

Let $K$ be a totally real number field and $p$ an odd prime number.
Let $K'$ be a CM-field which is a finite Galois extension of $K$ and put $\Delta = \Gal(K'/K)$.
Suppose that $K'$ contains $\mu_p$, the $p$-th roots of unity.
Let $\Sigma$ be a finite set of places of $K$ containing all primes above $p$ and primes ramified in $K'/K$.

We have the equivariant zeta function
\[
\theta_{K'/K, \Sigma}(s) = \sum_{\chi \in \Irr(\Delta)} L_{\Sigma}(s, \chi^{-1}) e_{\chi} \in Z(\C[\Delta]),
\]
where $\Irr(\Delta)$ denotes the set of irreducible characters of $\Delta$, $L_{\Sigma}(s, \chi^{-1})$ denotes the $\Sigma$-truncated Artin $L$-function, and $e_{\chi}$ denotes the idempotent.
For an integer $r \geq 2$, a result of Siegel implies that
\[
\theta_{K'/K, \Sigma}(1-r)  \in Z(\Q[\Delta])
\]
(see, for example, \cite[Subsection 1.3]{Nic13}).

Let $L$ be the cyclotomic $\Z_p$-extension of $K'$.
We denote by $j$ the complex conjugation and let $L^+ = L^{\langle j \rangle}$ be the maximal totally real subfield of $L$.
Put $G = \Gal(L/K), G^+ = \Gal(L^+/K) = G / \langle j \rangle, G' = \Gal(L/K') \simeq \Z_p, \Gamma = \Gal(K^{\cyc}/K) \simeq \Z_p$, so $\Delta = G/G'$.
Put $R = \Z_p[[G]], R^+ = \Z_p[[G^+]]$.
Then $R$ is naturally the direct product of $R^+ = \frac{1+j}{2} R$ and $\frac{1-j}{2} R$.

Since $L$ contains $\mu_{p^{\infty}}$, we have the cyclotomic character $\kappa: G \to \Z_p^{\times}$, which is decomposed as $\kappa = \kkappa \omega$ with $\kkappa: G \to 1+p\Z_p$ and $\omega: G \to \mu_{p-1}$, the Teichmuller character.
In general, if $\rho: \G \to \Z_p^{\times}$ is a character of a profinite group $\G$, then we define the continuous automorphism $\rho^{\sharp}$ of $\Z_p$-algebra $\Z_p[[\G]]$ by $\rho^{\sharp}(g) = \rho(g)g$ for $g \in \G$.
For an integer $r$, put $t_r = (\kappa^r)^{\sharp}: R \to R$ and, for a $G$-module $M$, we denote by $M(r)$ the $r$-th Tate twist of $M$.

As a result at infinite level, we have the following.

\begin{prop}\label{prop58}
Let $r$ be an integer.
If the $\mu$-invariant of $X_{\Sigma}(L^+)$ vanishes, then for the quasi-Fitting invariant $\FF_R = \Fitt_R^{\max}$ and any $n \in \Z$, we have
\[
\SF{n}_{R}(X_{\Sigma}(L^+)(-r)) = t_r(\Theta_{L^+/K, \Sigma})^{(-1)^n} \SF{n+2}_{R}(\Z_p(-r)).
\]
\end{prop}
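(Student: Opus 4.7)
The plan is to deduce Proposition~\ref{prop58} from Theorem~\ref{thm74} (applied to the totally real extension $L^+/K$) by combining an exact sequence of Tate-sequence type with the formalism of Tate twists.

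First, Proposition~\ref{prop7a} applied to $L^+/K$ yields an exact sequence
\[
0 \to X_\Sigma(L^+) \to P_1 \to P_2 \to \Z_p \to 0
\]
in $\CC_{R^+}$ with $P_1, P_2 \in \PP_{R^+}$. Since $p$ is odd, the idempotents $e^{\pm} = (1 \pm j)/2$ give a product decomposition $R = R^+ \times R^-$, so $R^+$ is a direct summand of $R$; every $R^+$-module (regarded as an $R$-module via $R \twoheadrightarrow R^+$) therefore lies in $\CC_R$, the subcategory $\PP_{R^+}$ embeds into $\PP_R$, and $\FF_R$ restricted to $R^+$-modules matches $\FF_{R^+}$ under the natural inclusion of monoids. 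In particular, the sequence above may be viewed in $\CC_R$ with the middle terms in $\PP_R$.

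Next, tensoring with $\Z_p(-r)$ is exact (as $\Z_p(-r)$ is $\Z_p$-free of rank one) and preserves $\PP_R$, because for any finitely generated free $F$ the map $F \xrightarrow{t_r} F(-r)$ is an $R$-linear isomorphism. Thus
\[
0 \to X_\Sigma(L^+)(-r) \to P_1(-r) \to P_2(-r) \to \Z_p(-r) \to 0
\]
is exact in $\CC_R$ with its middle terms in $\PP_R$, and iterating the defining property of $\SF{\bullet}$ in Theorem~\ref{thm47} (splitting the four-term sequence into two three-term sequences at the image of $P_1 \to P_2$) yields
\[
\SF{n}_R\bigl(X_\Sigma(L^+)(-r)\bigr) = \bigl( \FF_R(P_1(-r))/\FF_R(P_2(-r)) \bigr)^{(-1)^n} \SF{n+2}_R\bigl(\Z_p(-r)\bigr).
\]

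The remaining task is to identify the right-hand ratio with $(t_r(\Theta_{L^+/K,\Sigma}))$. For this I would establish the Tate-twist compatibility $\FF_R(M(-r)) = t_r(\FF_R(M))$ for $M \in \CC_R$: using the identification $M(-r) = t_{-r}^{\ast} M$ (the pull-back of the $R$-action along $t_{-r}$) and the fact that a presentation of $\phi^{\ast} M$ is obtained from one of $M$ by applying $\phi^{-1}$ entrywise (for any ring automorphism $\phi$ of $R$ preserving $S$), one deduces $\Fitt_R^{\max}(\phi^{\ast} M) = \phi^{-1}(\Fitt_R^{\max}(M))$. Combining this with the equivariant main conjecture of Ritter--Weiss, which under the assumed $\mu$-vanishing of $X_\Sigma(L^+)$ asserts $\FF_{R^+}(P_1)\FF_{R^+}(P_2)^{-1} = (\Theta_{L^+/K,\Sigma})$, and transporting to $R$ via the direct-summand inclusion, gives $\FF_R(P_1)\FF_R(P_2)^{-1} = (\Theta_{L^+/K,\Sigma})$, and hence after applying $t_r$ one obtains $\FF_R(P_1(-r))\FF_R(P_2(-r))^{-1} = (t_r(\Theta_{L^+/K,\Sigma}))$. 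The proposition follows.

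The principal obstacle is verifying the Tate-twist formula $\Fitt_R^{\max}(M(-r)) = t_r(\Fitt_R^{\max}(M))$ rigorously in the noncommutative setting: one must check that $t_r$ restricts to an automorphism of $\II(R)$ compatible with the reduced norm and with the image $\overline{S}$ of $S$, and track carefully the effect of $t_r$ on the central idempotents $e^{\pm}$, which are interchanged when $r$ is odd, in the identification between $\FF_R$ and $\FF_{R^+}$.
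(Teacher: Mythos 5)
Your proposal is correct, and the underlying ideas are the same as the paper's, but you take a more unfolded route. The paper proves the proposition in two short steps: it cites Theorem~\ref{thm74} (applied to the totally real extension $L^+/K$) to obtain the untwisted identity $\SF{n}_{R}(X_{\Sigma}(L^+)) = (\Theta_{L^+/K, \Sigma})^{(-1)^n} \SF{n+2}_{R}(\Z_p)$, and then applies the general functoriality statement Proposition~\ref{prop51} with the ring automorphism $\pi = t_{-r}$ and the induced monoid automorphism $\varpi = t_r$, citing \cite[Lemma~3.1~i]{Pop09} for the Tate-twist compatibility $t_r(\FF_R(M)) = \FF_R(M(-r))$. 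You instead re-derive the twisted version of Theorem~\ref{thm74} from scratch: you take the Tate sequence of Proposition~\ref{prop7a}, twist it by $\Z_p(-r)$, verify the twisted terms remain in $\PP_R$, run the shift formalism on the twisted sequence, and then identify the ratio $\FF_R(P_1(-r))\FF_R(P_2(-r))^{-1}$ via the main conjecture plus the same twist compatibility. This buys nothing essentially new, since your step ``twist the sequence, then apply the main conjecture and twist compatibility'' performs exactly the work packaged in the paper by Theorem~\ref{thm74} plus Proposition~\ref{prop51}. Your observation at the end, about tracking the effect of $t_r$ on the idempotents $e^{\pm}$, is indeed the subtle point one must check when verifying that $t_r$ descends to the quotient rings in the right way; the paper elides it by deferring to \cite{Pop09}. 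One minor imprecision: you write that ``every $R^+$-module \ldots lies in $\CC_R$,'' but of course one should say every object of $\CC_{R^+,S^+}$, since finite generation, $S$-torsion, and the $\rpd\le1$ condition are all required.
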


\begin{proof}
We have $\SF{n}_{R}(X_{\Sigma}(L^+)) = (\Theta_{L^+/K, \Sigma})^{(-1)^n} \SF{n+2}_{R}(\Z_p)$ by Theorem \ref{thm74}.
We apply Proposition \ref{prop51} to the ring homomorphism $\pi = t_{-r}: R \to R$, the monoid homomorphism $\varpi: \Omega_R \to \Omega_R$ induced by $t_r$, and quasi-Fitting invariant $\FF_R$.
As in \cite[Lemma 3.1 i]{Pop09}, the property $t_r(\FF_R(M)) = \FF_R(M(-r))$ can be checked.
This completes the proof.
\end{proof}

Next we want to deduce a result for the finite extension $K'/K$.
Put $R' = \Z_p[\Delta]$.
Recall that we have the natural map $\pi: \II(R) \to \II(R')$ in Example \ref{eg4o} defined by $\pi(x) = \sum_{\chi \in \Irr(\Delta)} \aug_{\Gamma} \circ j_{\chi}(x) e_{\chi}$.
We need the following.

\begin{prop}\label{propzw}
 For an integer $r \geq 2$, we have $\pi \circ t_r (\Theta_{L^+/K, \Sigma}) = \theta_{K'/K, \Sigma}(1-r)$.
\end{prop}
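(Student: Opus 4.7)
The strategy will be to verify the identity character by character. Both sides of the claimed equality lie in $Z(\Q[\Delta])$, which after scalar extension decomposes as a product over $\chi \in \Irr(\Delta)$. By the definition of $\pi$, the $\chi$-component of $\pi(y)$ is $\aug_\Gamma(j_\chi(y))$, while by the definition of the equivariant zeta function the $\chi$-component of $\theta_{K'/K,\Sigma}(1-r)$ equals $L_\Sigma(1-r, \chi^{-1})$. Thus it suffices to establish, for each $\chi \in \Irr(\Delta)$, the identity
\[
\aug_\Gamma\bigl(j_\chi(t_r(\Theta_{L^+/K,\Sigma}))\bigr) = L_\Sigma(1-r, \chi^{-1}).
\]

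First I would reinterpret the left-hand side as evaluation against an explicit character. The composition $\aug_\Gamma \circ j_\chi$ is the $\overline{\Q}_p$-algebra map corresponding to evaluation against the character $\tilde\chi$ of $G$ obtained by inflating $\chi$ from $\Delta = G/G'$ and extending trivially in the $\Gamma$-direction. Since $t_r$ multiplies each $g \in G$ by $\kappa^r(g)$, one has $\tilde\chi \circ t_r = \tilde\chi\, \kappa^r$ on group elements, and hence
\[
\aug_\Gamma\bigl(j_\chi(t_r(\Theta_{L^+/K,\Sigma}))\bigr) = (\tilde\chi\, \kappa^r)(\Theta_{L^+/K,\Sigma}).
\]

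Next, using the factorisation $\kappa = \kkappa\,\omega$ I would rewrite $\tilde\chi\, \kappa^r = (\tilde\chi\, \omega^r)\, \kkappa^r$ and set $\psi = \tilde\chi\, \omega^r$, a finite-order character of $G$. A parity check gives $\psi(j) = \chi(j)(-1)^r$, so $\psi$ factors through $G^+ = G/\langle j\rangle$ precisely when $\chi(j) = (-1)^r$. In that case I would invoke the interpolation property of $\Theta_{L^+/K,\Sigma}$, recalled equivariantly in \cite[Section 2]{Nic13}: for any finite-order character $\psi$ of $G^+$ and any integer $n \geq 1$,
\[
(\psi\, \kkappa^n)(\Theta_{L^+/K,\Sigma}) = L_\Sigma(1-n, \psi^{-1}\omega^n),
\]
with Euler factors at primes above $p$ absorbed by the $\Sigma$-truncation. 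Setting $n = r$ and $\psi = \tilde\chi\, \omega^r$, the $\omega^r$-factors cancel and the right-hand side becomes $L_\Sigma(1-r, \chi^{-1})$, as desired.

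The main point requiring care will be the Teichmüller--parity bookkeeping just described. For $\chi$ of the ``wrong'' parity (i.e.\ $\chi(j) \neq (-1)^r$) I would need to verify that both sides vanish independently: on the left because $\Theta_{L^+/K,\Sigma}$ lies in $e^+ Z(Q(R))$ with $e^+ = (1+j)/2$, while $t_r(e^+) = (1 + (-1)^r j)/2$, so $j_\chi$ of the twist is killed whenever the parity of $\chi$ disagrees with $(-1)^r$; on the right by the classical vanishing of $L_\Sigma(1-r, \chi^{-1})$ at non-positive integers for Artin characters of the wrong parity over a totally real base field. The hypothesis $r \geq 2$ keeps us safely in the non-critical range where the interpolation formula applies directly, without having to treat trivial zeros separately.
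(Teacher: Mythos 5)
Your proof is correct and takes essentially the same route as the paper's: both reduce the identity to a character-by-character statement, rewrite $\aug_\Gamma \circ j_\chi \circ t_r$ as evaluation against the twisted character $\chi\kappa^r$, split $\kappa = \kkappa\,\omega$ so the Teichm\"uller factors cancel, invoke the interpolation property of $\Theta_{L^+/K,\Sigma}$ with $\phi = \chi\omega^r$, and dispose of the opposite-parity case by noting that both sides vanish. Your write-up is a touch more explicit about \emph{why} the left side vanishes for wrong parity (via $t_r(e^+) = (1 + (-1)^r j)/2$), which the paper leaves implicit; the small $\chi$ versus $\chi^{-1}$ mismatch between your final $L$-value and the one displayed in the paper's proof traces back to the contragredient convention in $j_\chi$ and the interpolation formula, and your form is the one consistent with the paper's stated definition of $\theta_{K'/K,\Sigma}$.
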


\begin{proof}
 This is implicitly used in \cite[p.1245]{Nic13}.
For any character $\chi$ of $\Delta$, what we need is the equality \[
\aug_{\Gamma} \circ j_{\chi} \circ t_r(\Theta_{L^+/K, \Sigma}) = L_{\Sigma}(1-r, \chi).
\]
If the parities of $\chi$ and $r$ are different, then the both sides vanish and we have nothing to do.
Assume that the parities of $\chi$ and $r$ the same.
 By definition, $\Theta_{L^+/K, \Sigma}$ can be characterized by 
 \[
 \aug_{\Gamma} \circ (\kkappa^r)^{\sharp} \circ j_{\phi} (\Theta_{L^+/K, \Sigma}) = L_{p, \Sigma}(1-r, \phi) = L_{\Sigma}(1-r, \phi \omega^{-r})
 \]
 for an integer $r \geq 2$ and an even character $\phi$ of $\Delta$, where $L_{p, \Sigma}$ denotes the $p$-adic $L$-function.
Then letting $\phi = \chi \omega^r$ shows the assertion since 
\[
j_{\chi} \circ t_r = j_{\chi} \circ (\kkappa^r)^{\sharp} \circ (\omega^r)^{\sharp} = (\kkappa^r)^{\sharp} \circ j_{\chi \omega^r}.
\] 
\end{proof}

We obtain the following by applying Example \ref{eg4o} to Proposition \ref{prop58}, using Proposition \ref{propzw}.

\begin{thm}\label{thm5a}
Let $r \geq 2$ be an integer.
If the $\mu$-invariant of $X_{\Sigma}(L^+)$ vanishes, then for the quasi-Fitting invariant $\FF_{R'} = \Fitt^{\max}_{R'}$ and any $n \in \Z$, we have
\[
\SF{n}_{R'}(X_{\Sigma}(L^+)(-r)_{G'}) = \theta_{K'/K, \Sigma}(1-r)^{(-1)^n} \SF{n+2}_{R'}(\Z_p(-r)_{G'}).
\]
\end{thm}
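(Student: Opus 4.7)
The strategy is to apply the reduction functoriality of Fitting shifts under the natural surjection from the Iwasawa algebra to the finite group ring, starting from the infinite-level identity of Proposition \ref{prop58} and then identifying the leading factor via Proposition \ref{propzw}.

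Concretely, first I would invoke Proposition \ref{prop58}, which under the hypothesis on the $\mu$-invariant supplies, for every $n \in \Z$ and every integer $r$, the identity
\[
\SF{n}_R(X_\Sigma(L^+)(-r)) = t_r(\Theta_{L^+/K, \Sigma})^{(-1)^n}\,\SF{n+2}_R(\Z_p(-r))
\]
over the Iwasawa algebra $R = \Z_p[[G]]$. Both modules lie in $\CC_{R,S}$: for $X_\Sigma(L^+)$ this is Proposition \ref{prop7a}, and Tate twisting neither changes the underlying $\Z_p$-module structure nor the property $\rpd_R \leq 1$, so the twisted modules remain in $\CC_{R,S}$; similarly for $\Z_p(-r)$.

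Next I would apply the functoriality established in Example \ref{eg4o} with respect to the natural ring surjection $\pi: R \twoheadrightarrow R' = \Z_p[\Delta]$ obtained by taking $G'$-coinvariants. The hypotheses required by Proposition \ref{prop51} are the content of Proposition \ref{prop4n}, and the compatibility $\varpi \circ \FF_R = \FF_{R'} \circ (-)_{G'}$ on modules in $\CC_{R,S}$ is exactly the computation in Example \ref{eg4o}. Consequently, the induced monoid homomorphism $\varpi: \Omega_R \to \Omega_{R'}$ satisfies $\varpi(\SF{n}_R(M)) = \SF{n}_{R'}(M_{G'})$ for every $M \in \CC_{R,S}$ and every $n \in \Z$. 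Applying $\varpi$ to both sides of the displayed identity converts the two Fitting-shift terms into $\SF{n}_{R'}(X_\Sigma(L^+)(-r)_{G'})$ and $\SF{n+2}_{R'}(\Z_p(-r)_{G'})$, respectively.

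Finally, the leading scalar transforms to $\pi(t_r(\Theta_{L^+/K, \Sigma}))$, which by Proposition \ref{propzw} (where the hypothesis $r \geq 2$ is used through Siegel's theorem and the $p$-adic interpolation formula for $L$-values) coincides with $\theta_{K'/K, \Sigma}(1-r)$. Combining these three pieces gives the stated formula. The main obstacle, in my view, is essentially bookkeeping: one must handle cleanly the interplay between the totally real quotient $R^+ = \Z_p[[G^+]]$ implicit in Proposition \ref{prop58} (via Theorem \ref{thm74} applied to $L^+/K$) and the full group ring $R' = \Z_p[\Delta]$ appearing in the conclusion; since $p$ is odd and $j$ acts trivially on $X_\Sigma(L^+)$, the idempotent decomposition $R' = R'^+ \times R'^-$ reduces this to a formal matter, but the identifications at each step should be made explicit so that the $R'$-module structure on $X_\Sigma(L^+)(-r)_{G'}$ agrees on the nose with the one intended in the statement.
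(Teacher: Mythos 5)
Your proposal matches the paper's argument: the paper proves Theorem~\ref{thm5a} precisely by applying the reduction functoriality of Example~\ref{eg4o} (built on Propositions~\ref{prop51} and~\ref{prop4n}) to the infinite-level identity of Proposition~\ref{prop58}, and then invoking Proposition~\ref{propzw} to identify $\pi\circ t_r(\Theta_{L^+/K,\Sigma})$ with $\theta_{K'/K,\Sigma}(1-r)$. Your closing remark about the $R^+$ versus $R'$ bookkeeping correctly identifies a point the paper leaves tacit, and your resolution via the idempotent decomposition of $R'$ (using $p$ odd) is the right one.
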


\begin{rem}\label{rem:59a}
Put $e_r = \frac{1 + (-1)^r j}{2}$.
Note that in Proposition \ref{prop58} and Theorem \ref{thm5a}, we only see at $e_r$-parts.
Let $\OO_{K', \Sigma}$ be the ring of $\Sigma$-integers in $K'$ and $H^i_{et}(\OO_{K', \Sigma}, \Z_p(r))$ denote the etale cohomology groups.
Then as in \cite[Proposition 6.18]{GP15}, there are isomorphisms
 \[
 (X_{\Sigma}(L^+)(-r)_{G'})^{\dual} \simeq e_r H^2_{et}(\OO_{K', \Sigma}, \Z_p(r)),
 \]
 where $(-)^{\dual}$ is the Pontryagin dual, and
 \[
 \Z_p(r)_{G'} \simeq e_r H^1_{et}(\OO_{K', \Sigma}, \Z_p(r))_{\tor}.
 \]
Therefore the assertion of Theorem \ref{thm5a} can be restated as
\[
e_r\SF{n}_{R'}(H^2_{et}(\OO_{K', \Sigma}, \Z_p(r))^{\dual}) = \theta_{K'/K, \Sigma}(1-r)^{(-1)^n} \SF{n+2}_{R'}((H^1_{et}(\OO_{K', \Sigma}, \Z_p(r))_{\tor})^{\dual}).
\]
In particular, by putting $n = -2$ and using Proposition \ref{prop4a}, we obtain
\[
e_r\FF_{R'}(H^2_{et}(\OO_{K', \Sigma}, \Z_p(r))) = \theta_{K'/K, \Sigma}(1-r) \FF_{R'}((H^1_{et}(\OO_{K', \Sigma}, \Z_p(r))_{\tor})^{\dual}).
\]
This is nothing but \cite[Corollary 5.13]{Nic13}, which is a noncommutative version of \cite[Theorem 6.11]{GP15}, and they are refinements of the cohomological Coates-Sinnott conjecture.
In those articles, they used the concept of abstract $1$-motives, but our argument in this paper does not use it.
\end{rem}

\subsection{$\Sigma_p$-Ramified Iwasawa Modules}\label{subsec53}
Consider the situation in Subsection \ref{subsec52}, but we assume that $G$ is abelian.
Take a subset $\Sigma'$ of $\Sigma$ containing all primes above $p$, but not necessarily containing all primes which are ramified in $L/K$.
For example $\Sigma' = \Sigma_p$, the set of primes above $p$.
In this subsection, we consider $X_{\Sigma'}(\LLL)$ as an $R$-module.

For $v \in \Sigma \setminus \Sigma'$, put
\[
\UU_v(L) = \varprojlim_F (( \OO_F / v)^{\times} \otimes_{\Z} \Z_p),
\]
where $F$ runs through all intermediate number fields of $L/K$.
The structure of $\UU_v(L)$ is described as follows.
Fix a place $w$ of $L$ above $v$.
Let $F_{v}, E_{v}$ be the decomposition field, inertia field of $w$ in $L/K$, respectively, which are independent of the choice of $w$.
Put $G_v = \Gal(L/F_v)$ and $I_v = \Gal(L/E_v)$, which are the decomposition group and the inertia group, respectively.
Then $\Gal(E_v/F_v)$ is pro-cyclic generated by the (arithmetic) Frobenius element $\sigma_v$.
On the other hand, $I_v$ is a finite group contained in $\Gal(L/K^{\cyc})$.
Let $\NN(v), \NN(w)$ be the orders of the residue fields.
Then the following is easily shown.

\begin{lem}\label{lem5b}
If $p \nmid (\NN(w)-1)$, then $\UU_v(L) = 0$.
 If $p | (\NN(w) - 1)$, then 
 \[
 \UU_v(L) \simeq \Z_p[[G]] \otimes_{\Z_p[[G_v]]} \Z_p(\kappa_v),
 \]
 where $\kappa_v$ is the character of $G_v$ defined as $G_v \to \Gal(E_v/F_v) \to \Z_p^{\times}$ where the last map sends $\sigma_v$ to $\NN(v)$.
\end{lem}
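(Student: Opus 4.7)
The proof is a standard two-step computation: first the structure at each finite layer, then the passage to the inverse limit.

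First I would unwind the definition at a finite intermediate field $F$ of $L/K$. Chinese remainder gives $\OO_F/v = \prod_{w_F \mid v} \OO_F/w_F$, and tensoring the unit group with $\Z_p$ extracts the $p$-primary part, which in the finite field $k_{w_F} := \OO_F/w_F$ of residue characteristic $\ell \neq p$ is the group $\mu_{p^\infty}(k_{w_F})$ of $p$-power roots of unity. Hence
\[
(\OO_F/v)^{\times} \otimes_{\Z} \Z_p \;=\; \bigoplus_{w_F \mid v} \mu_{p^\infty}(k_{w_F}).
\]
The first assertion is now immediate: if $p \nmid \NN(w)-1$, i.e.\ $\mu_p \not\subset k_w$, then since every $k_{w_F}$ is a subfield of $k_w$, also $\mu_p \not\subset k_{w_F}$ for every finite $F$, so each term in the inverse system vanishes and $\UU_v(L) = 0$.

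Now suppose $p \mid \NN(w)-1$. The places of $F$ above $v$ form a single $G/G_F$-orbit, with the stabilizer of $w_F = w|_F$ being the image $G_vG_F/G_F$ of $G_v$ in $G/G_F$. The usual Shapiro-type identification rewrites the finite-layer formula as
\[
(\OO_F/v)^{\times} \otimes \Z_p \;\cong\; \Z_p[G/G_F] \otimes_{\Z_p[G_vG_F/G_F]} \mu_{p^\infty}(k_{w_F}).
\]
Passing to the inverse limit over $F$ under the (field-theoretic) norm maps, and using $\Z_p[[G]] = \varprojlim_F \Z_p[G/G_F]$ together with the compatibility of induction with inverse limits over the compact subgroup $G_v$, I obtain
\[
\UU_v(L) \;\cong\; \Z_p[[G]] \otimes_{\Z_p[[G_v]]} M, \qquad M := \varprojlim_F \mu_{p^\infty}(k_{w_F}),
\]
with the transition maps in $M$ being norms of residue-field extensions.

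It remains to identify $M \simeq \Z_p(\kappa_v)$ as a $G_v$-module. The inertia subgroup $I_v$ acts trivially on each $k_{w_F}$, so the action factors through $G_v/I_v \cong \Gal(E_v/F_v)$, topologically generated by Frobenius $\sigma_v$, which raises $p$-power roots of unity to the $\NN(v)$-th power. For a step $F \subset F'$ with $[k_{w_{F'}}:k_{w_F}]=d$, the norm restricted to $\mu_{p^\infty}(k_{w_{F'}})$ equals multiplication by $d$: indeed, $q_F := |k_{w_F}| \equiv 1 \bmod p^{n_F}$ whenever $\mu_{p^{n_F}} \subset k_{w_F}$, so the exponent $1+q_F+\dots+q_F^{d-1}$ becomes $d$ on the relevant $p$-part. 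A cofinality argument then identifies $M$ with the Tate module $T_p\,\mu(k_w)$ --- a free $\Z_p$-module of rank one on which $\sigma_v$ acts by $\NN(v)$ --- i.e.\ precisely $\Z_p(\kappa_v)$. The main point to verify carefully is this last cofinality step: one must check that the inverse system of finite cyclic $p$-groups under the various norm maps is cofinal with the standard tower defining $T_p\,\mu_{p^\infty}$, so that $M$ has the correct rank rather than collapsing to a proper finite quotient.
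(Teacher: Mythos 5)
The paper states this lemma with no proof beyond the remark that it ``is easily shown,'' so there is no argument of record to compare against; what you write out is the natural proof and is correct in substance. Two remarks. First, the hypothesis ``$p \mid \NN(w)-1$'' is not literally well-posed: since $\Gal(E_v/F_v)$ contains a copy of $\Z_p$ coming from $F_v^{\cyc}/F_v$, the residue field $k_w$ of $L$ at $w$ is an \emph{infinite} extension of that at $v$, so $\NN(w)$ is not a finite number. Your reading --- that the dichotomy is whether $\mu_p \subset k_w = \bigcup_F k_{w_F}$, equivalently whether $p \mid \NN(v)^{m_v}-1$ with $m_v = [E_v : F_v^{\cyc}]$ --- is the intended one and is what makes the lemma correct. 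Second, you flag the cofinality step as the main thing to verify but leave it open; it does close, and this is exactly where $p$ odd is used, so it is worth recording. Restrict to Galois $F$ with $\mu_p \subset k_{w_F}$, a cofinal family once $\mu_p \subset k_w$, and set $q_F = \NN(w_F)$, $n_F = \ord_p(q_F - 1)$. For $F \subset F'$ with residue degree $d$, the norm on $p$-primary parts is $x \mapsto x^{m}$ with $m = (q_F^d - 1)/(q_F - 1)$; lifting the exponent gives $n_{F'} = n_F + \ord_p(d)$ and $\ord_p(m) = \ord_p(d) = n_{F'} - n_F$, so the norm $\mu_{p^{n_{F'}}} \to \mu_{p^{n_F}}$ is surjective. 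As $n_F \to \infty$, the inverse limit is free of rank one over $\Z_p$, with $\sigma_v$ acting by $\NN(v)$ and $I_v$ acting trivially --- that is, it is $\Z_p(\kappa_v)$ as a $G_v$-module. (Two small caveats in your write-up: the norm is not literally exponentiation by $d$, but $\ord_p(m) = \ord_p(d)$ is all that matters; and you tacitly restrict to Galois $F$ and ignore the finitely many ramified layers, both of which are harmless by cofinality since $I_v$ is finite.)
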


Note that the module written down on the right is nothing but the induction of $\Z_p(\kappa_v)$ from $G_v$ to $G$, often denoted by $\Ind_{G_v}^{G} (\Z_p(\kappa_v))$.
In particular, $\UU_v(L)$ is free of finite rank $[F_v: K]$ as a module over $\Z_p$.
By class field theory and the weak Leopoldt conjecture, we have the following.

\begin{prop}\label{prop75}
There is an exact sequence
\[
0 \to \bigoplus_{v \in \Sigma \setminus \Sigma'} \UU_v(\LLL) \to X_{\Sigma}(\LLL) \to X_{\Sigma'}(\LLL)  \to 0
\]
in $\CC$.
\end{prop}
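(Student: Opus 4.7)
The plan is to construct the sequence from global class field theory at each finite layer of $L/K$ and then pass to the Iwasawa limit, with the weak Leopoldt conjecture handling injectivity on the left.

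First, for each finite intermediate field $F$ of $L/K$, write $\Sigma(F)$ and $\Sigma'(F)$ for the sets of places of $F$ above $\Sigma$ and $\Sigma'$, and denote by $\mathcal{E}_{\Sigma}(F)$ the group of $\Sigma$-units of $F$. Global and local class field theory together yield the standard four-term exact sequence of $\Z_p[\Gal(F/K)]$-modules
\begin{align*}
\mathcal{E}_{\Sigma}(F) \otimes_{\Z} \Z_p \longrightarrow \bigoplus_{w \in \Sigma(F)\setminus\Sigma'(F)} \OO_{F,w}^{\times} \otimes_{\Z} \Z_p \longrightarrow X_{\Sigma}(F) \longrightarrow X_{\Sigma'}(F) \longrightarrow 0.
\end{align*}
Since $\Sigma'$ contains $\Sigma_p$, every $w \in \Sigma(F)\setminus\Sigma'(F)$ has residue characteristic different from $p$, so the pro-$p$ completion reduces to $\OO_{F,w}^{\times}\otimes_{\Z}\Z_p \simeq (\OO_F/w)^{\times}\otimes_{\Z}\Z_p$; grouping the places $w$ above each $v$ matches the finite-level contribution to $\UU_v(L)$.

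Next, I would take the inverse limit over the tower of $F$. By definition of $\UU_v(L)$, the middle term limits to $\bigoplus_{v\in\Sigma\setminus\Sigma'}\UU_v(L)$, and the rightmost two terms limit to $X_{\Sigma}(L)$ and $X_{\Sigma'}(L)$. The step I expect to be the main obstacle is injectivity on the left in the limit, namely that the image of the global units becomes trivial in $\bigoplus_v\UU_v(L)$. This follows from the vanishing of $H^{2}(K_{\Sigma}/L,\Q_p/\Z_p)$ via Poitou--Tate duality; this vanishing is the weak Leopoldt conjecture, which holds unconditionally in our setting because $L \supset K^{\cyc}$ (see the remark after Theorem \ref{thm61}).

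Finally, I would verify that each term lies in $\CC$. The module $X_{\Sigma}(L)$ lies in $\CC$ by Proposition \ref{prop7a}. By Lemma \ref{lem5b}, each $\UU_v(L)$ is either zero or isomorphic to $\Ind_{G_v}^{G}\Z_p(\kappa_v)$, and in either case it is finitely generated and free as a $\Z_p$-module; it is therefore $S$-torsion and has no nonzero finite $\Lambda$-submodule, which by Example \ref{eg14} places it in $\CC$. Then Lemma \ref{lem21}(2) applied to the exact sequence shows $X_{\Sigma'}(L)\in\CC$.
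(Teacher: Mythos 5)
Your overall strategy matches what the paper implicitly intends (it merely says ``By class field theory and the weak Leopoldt conjecture''): build the sequence from the four‑term class field theory sequences at finite layers, pass to the inverse limit, and use weak Leopoldt for injectivity on the left. The verification that $\bigoplus_v\UU_v(L)$ and $X_{\Sigma}(L)$ lie in $\CC$ is also fine.

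However, the last step is a genuine gap. You invoke Lemma~\ref{lem21}(2) to deduce $X_{\Sigma'}(L)\in\CC$, but that lemma reads: ``Suppose that $M''$ is in $\CC$. Then $M$ is in $\CC$ if and only if $M'$ is in $\CC$.'' It \emph{presupposes} $M''\in\CC$ and so cannot be used to derive it. Indeed the implication ``$M',M\in\CC\Rightarrow M''\in\CC$'' is false in general: over $\Lambda=\Z_p[[T]]$ one has the exact sequence
\[
0\to \Lambda/(p)\xrightarrow{\;T\;}\Lambda/(p)\to \F_p\to 0,
\]
where both $\Lambda/(p)$ terms lie in $\PP\subset\CC$ but $\F_p\notin\CC$. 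Abstractly the problem is that the long exact sequence only gives a surjection $E^1_R(\bigoplus_v\UU_v(L))\twoheadrightarrow E^2_R(X_{\Sigma'}(L))$, so membership of $X_{\Sigma'}(L)$ in $\CC$ does not follow from membership of the other two terms. One has to prove $\rpd_R(X_{\Sigma'}(L))\le 1$ directly. By Lemma~\ref{lem12} and Example~\ref{eg14} (with $d=1$), this amounts to the classical arithmetic fact that the $\Sigma'$‑ramified Iwasawa module over the cyclotomic $\Z_p$‑extension contains no nonzero finite $\Lambda$‑submodule (this is due to Iwasawa/Greenberg; compare how the paper handles the analogous point in the proof of Proposition~\ref{prop71}, where it appeals to a result of Perrin-Riou or Greenberg rather than deducing it from the exact sequence). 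You should replace the Lemma~\ref{lem21}(2) step by this citation.

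A smaller point: the four‑term sequence you write uses $\mathcal{E}_{\Sigma}(F)$ ($\Sigma$‑units). These are inverted at places in $\Sigma$, so they do not map naturally to $\OO_{F,w}^{\times}$ for $w\in\Sigma\setminus\Sigma'$. You want the ordinary units $\OO_F^{\times}$ (as in the paper's notation $\EE(L)=\varprojlim_F(\OO_F^{\times}\otimes\Z_p)$, cf. the diagram in Subsection~\ref{subsec54}), and the short exact sequence should then be obtained by comparing the two four‑term class field theory sequences for $\Sigma$ and $\Sigma'$ via the snake lemma. With this correction and the replacement of the final step, the proof is sound.
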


There is a possibility that none of the three terms is contained in $\PP$, in which case we cannot apply the algebraic theory.
To avoid such an obstruction, we consider only nontrivial character parts.

Put $\Delta = \Gal(L/K^{\cyc})$, which is the torsion part of $G$.
Note that, since $G$ is commutative, $G$ can be decomposed into the product of $\Delta$ and a group isomorphic to $\Z_p$.
Let $\Delta_p$ (resp. $\Delta_{p'}$) be the subgroups of $\Delta$ consisting of elements of $p$-power order (resp. order prime to $p$).
Put $\overline{G} = G / \Delta_{p'}$, which is the pro-$p$ quotient of $G$.

Let $\psi$ be a nontrivial character of $\Delta_{p'}$ and put $\OO_{\psi} = \Z_p[\Image{\psi}]$.
Since the order of $\psi$ is prime to $p$, taking $\psi$-parts denoted by $(-)_{\psi}$ is an exact functor.
Thus we have an exact sequence
\[
0 \to \bigoplus_{v \in \Sigma \setminus \Sigma'} \UU_v(\LLL)_{\psi} \to X_{\Sigma}(\LLL)_{\psi} \to X_{\Sigma'}(\LLL)_{\psi}  \to 0
\]
over $\OO_{\psi}[[\overline{G}]]$, and hence over $R = \Z_p[[G]]$.
Moreover, by Proposition \ref{prop7a}, $X_{\Sigma}(\LLL)_{\psi} \in \PP$ and $[X_{\Sigma}(\LLL)_{\psi}] = (\Phi_{L/K, \Sigma})_{\psi}$ in $K_0(\PP)$, where $(\Phi_{L/K, \Sigma})_{\psi}$ has the obvious meaning.
Consequently, we obtain the following.

\begin{thm}\label{thm76}
For any quasi-Fitting invariant $\FF$, we have
\[
\FF(X_{\Sigma'}(\LLL)_{\psi}) = \FF((\Phi_{L/K, \Sigma})_{\psi}) \SF{-1} \left( \bigoplus_{v \in \Sigma \setminus \Sigma'} \UU_v(\LLL)_{\psi} \right).
\]
\end{thm}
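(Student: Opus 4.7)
The plan is to combine the short exact sequence of Proposition \ref{prop75} with the definition of the $(-1)$-st shift from Theorem \ref{thm47}(2). First, since $\psi$ has order prime to $p$, the functor $(-)_\psi$ is exact and carries $\CC$ to $\CC$ and $\PP$ to $\PP$: the $\psi$-part is cut out by an idempotent of $\OO_\psi[\Delta_{p'}]$, and restriction of scalars along $R \to \OO_\psi[[\overline{G}]]$ preserves projective dimension at most one. So I take $\psi$-parts of the sequence in Proposition \ref{prop75} to obtain an exact sequence
\[
0 \to U \to X_{\Sigma}(\LLL)_\psi \to X_{\Sigma'}(\LLL)_\psi \to 0
\]
in $\CC$, where I abbreviate $U = \bigoplus_{v \in \Sigma \setminus \Sigma'} \UU_v(\LLL)_\psi$.

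Next, as the paragraph preceding the theorem already records, the nontriviality of $\psi$ together with Proposition \ref{prop7a} gives $X_{\Sigma}(\LLL)_\psi \in \PP$ and $[X_{\Sigma}(\LLL)_\psi] = (\Phi_{L/K,\Sigma})_\psi$ in $K_0(\PP)$. Hence the displayed sequence is of exactly the shape $0 \to M \to P_1 \to N \to 0$ with $P_1 \in \PP$ that Theorem \ref{thm47}(2) requires in order to compute $\SF{-1}(U)$. Reading off the definition yields
\[
\SF{-1}(U) = \FF(X_{\Sigma}(\LLL)_\psi)^{-1} \FF(X_{\Sigma'}(\LLL)_\psi).
\]
Rearranging and invoking Remark \ref{rem2a}(3), which says that $\FF|_{\PP}$ factors through $K_0(\PP)$ and hence $\FF(X_{\Sigma}(\LLL)_\psi) = \FF((\Phi_{L/K,\Sigma})_\psi)$, produces the claimed identity.

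Once the shift formalism is in place, essentially no real difficulty remains: the whole statement amounts to the observation that Proposition \ref{prop75}, after passing to $\psi$-parts, provides a tautological length-one $\SF{-1}$-resolution of $U$ whose middle term coincides with $(\Phi_{L/K,\Sigma})_\psi$ in $K_0(\PP)$. The only point that needs genuine verification is that $(-)_\psi$ really does preserve $\PP$, so that the middle term qualifies as a legitimate choice of $P_1$ in Theorem \ref{thm47}(2); this is routine from exactness plus the fact that a projective resolution over $R$ descends to a projective resolution over the $\psi$-component ring, but it is the one place where one should be explicit in order to avoid confusing $\PP_{R,S}$ with its $\OO_\psi$-analogue.
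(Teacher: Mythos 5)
Your proposal is correct and follows essentially the same route as the paper: take $\psi$-parts of the sequence in Proposition \ref{prop75}, note that $X_{\Sigma}(\LLL)_\psi \in \PP$ with $[X_{\Sigma}(\LLL)_\psi]=(\Phi_{L/K,\Sigma})_\psi$ (coming from the $\psi$-part of Proposition \ref{prop7a} with $(\Z_p)_\psi=0$), and read off the defining formula for $\SF{-1}$ from Theorem \ref{thm47}(2) together with Remark \ref{rem2a}(3). The one small imprecision is in your closing paragraph: the fact that $(-)_\psi$ preserves $\PP$ is needed to know $(P_i)_\psi \in \PP$, but that alone does not give $X_{\Sigma}(\LLL)_\psi \in \PP$ --- for that you also need the vanishing $(\Z_p)_\psi = 0$ and Lemma \ref{lem24}, which you correctly used in the second paragraph.
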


In particular, consider the case $\FF = \Fitt_R$ and we compute the right hand side.
As seen in Subsection \ref{subsec52}, the equivariant main conjecture claims $\FF(\Phi_{L/K, \Sigma}) = (\Theta_{L/K, \Sigma})$.

\begin{lem}\label{lemzv}
Let $\FF = \Fitt_R$.
Take a lift $\widetilde{\sigma_v}$ of $\sigma_v$ to $\Gal(L/F_v)$.
Then we have
\[
\SF{-1} \left( \bigoplus_{v \in \Sigma \setminus \Sigma'}\UU_v(L) \right) = \prod_{v \in \Sigma \setminus \Sigma'}\left( \frac{\N_{I_v}}{\widetilde{\sigma_v}-\NN(v)}, 1 \right) R.
\]
\end{lem}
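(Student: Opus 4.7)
My plan is to reduce the computation to one prime at a time and then exhibit an explicit length-one $\PP$-resolution of $\UU_v(L)$ whose Fitting ideals produce the claimed factor. The reduction is painless: since $\Fitt_R$ is multiplicative under direct sums and a direct sum of length-one $\PP$-resolutions is again one, we get $\SF{-1}\bigl(\bigoplus_v \UU_v(L)\bigr) = \prod_v \SF{-1}(\UU_v(L))$, so it suffices to prove the per-prime identity $\SF{-1}(\UU_v(L)) = \bigl(\N_{I_v}/(\widetilde{\sigma_v}-\NN(v)),\,1\bigr)R$.

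For $v \in \Sigma \setminus \Sigma'$, tameness (since $v \nmid p$ and $K^{\cyc}/K$ is unramified outside $p$) forces $I_v \subset \Delta = \Gal(L/K^{\cyc})$ to be finite cyclic; fix a generator $\iota_v$. Combining Lemma \ref{lem5b} with the fact that $G_v$ is topologically generated by $I_v$ and $\widetilde{\sigma_v}$, with $\kappa_v|_{I_v}=1$ and $\kappa_v(\widetilde{\sigma_v}) = \NN(v)$, yields the cyclic presentation $\UU_v(L) \simeq R/(\iota_v - 1,\, \widetilde{\sigma_v} - \NN(v))$; in particular $\UU_v(L)=0$ corresponds exactly to $(\iota_v - 1,\widetilde{\sigma_v}-\NN(v))=R$. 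Exploiting the cyclic group ring identities $(\iota_v-1)\N_{I_v}=0$, $\Ann_{\Z_p[I_v]}(\iota_v-1) = \Z_p\N_{I_v}$, and $\Ann_{\Z_p[I_v]}(\N_{I_v})=(\iota_v-1)\Z_p[I_v]$, I construct the four-term exact sequence
\[
0 \to \UU_v(L) \xrightarrow{\cdot\N_{I_v}} R/(\widetilde{\sigma_v}-\NN(v)) \xrightarrow{\cdot(\iota_v-1)} R/(\widetilde{\sigma_v}-\NN(v)) \twoheadrightarrow \UU_v(L) \to 0,
\]
whose exactness at each node reduces to checking that the kernel of multiplication by $\iota_v - 1$ in $R/(\widetilde{\sigma_v}-\NN(v))$ coincides with $\N_{I_v}\cdot R/(\widetilde{\sigma_v}-\NN(v))$, and symmetrically for $\N_{I_v}$. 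Splitting this at the middle produces the short exact sequence $0 \to \UU_v(L) \to R/(\widetilde{\sigma_v}-\NN(v)) \to R/(\N_{I_v}, \widetilde{\sigma_v}-\NN(v)) \to 0$.

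Assuming the middle term $R/(\widetilde{\sigma_v}-\NN(v))$ lies in $\PP$, the definition of $\SF{-1}$ and direct Fitting ideal computation give
\[
\SF{-1}(\UU_v(L)) = \bigl(\widetilde{\sigma_v}-\NN(v)\bigr)^{-1}\bigl(\N_{I_v},\widetilde{\sigma_v}-\NN(v)\bigr)R = \bigl(\N_{I_v}/(\widetilde{\sigma_v}-\NN(v)),\,1\bigr)R,
\]
as required. The degenerate case $\UU_v(L)=0$ (i.e.\ $p\nmid \NN(w)-1$) is then automatic: writing $1 = a(\iota_v-1) + b(\widetilde{\sigma_v}-\NN(v))$ and multiplying by $\N_{I_v}$ yields $\N_{I_v} \in (\widetilde{\sigma_v}-\NN(v))R$, so the displayed fractional ideal collapses to $R$, matching $\SF{-1}(0)=R$. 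The main obstacle is verifying that $R/(\widetilde{\sigma_v}-\NN(v))$ genuinely lies in $\PP$ — i.e.\ that $\widetilde{\sigma_v}-\NN(v)$ is a non-zero-divisor in $R$ and that the quotient is $S$-torsion. This is fine generically (when the $\Gamma$-component of $\widetilde{\sigma_v}$ is nontrivial) but can fail on certain character components of $R$ where the projection of $\widetilde{\sigma_v}$ satisfies $\chi(\widetilde{\sigma_v})=\NN(v)$; on those components one checks the identity separately, exploiting that the corresponding Euler factor degenerates in a compatible way and that $\N_{I_v}$ vanishes in any component where the inertia character is nontrivial.
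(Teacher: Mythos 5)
Your proposal reaches the right conclusion but by a concretely different route from the paper's. The paper proves the per-prime identity through the abstract machinery set up earlier: Lemma \ref{lem5b} identifies $\UU_v(L)$ with the induced module $\Z_p[[G]] \otimes_{\Z_p[[G_v]]} \Z_p(\kappa_v)$, and then Proposition \ref{prop51} is invoked twice --- once for the ring inclusion $\Z_p[[G_v]]\hookrightarrow R$ and once for the twist automorphism $(\kappa_v^{-1})^{\sharp}$ --- which reduces everything to the value $\SF{-1}_{\Z_p[[G_v]]}(\Z_p)$ already computed in Example \ref{eg4q}, followed by a group-theoretic manipulation of the norm element. You bypass the functoriality propositions and the twist entirely, writing $\UU_v(L)$ as the explicit cyclic module $R/(\iota_v - 1,\, \widetilde{\sigma_v}-\NN(v))$ and producing a length-one $\PP$-resolution by hand. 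Your version is more elementary and self-contained; the paper's recycles the general formulas it has just established, which is presumably why that route was preferred there.

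The hedge at the end of your writeup is unnecessary and should be removed. Since $v$ is a finite place not dividing $p$, it cannot split completely in the cyclotomic $\Z_p$-extension, so the image of $\widetilde{\sigma_v}$ in $G/\Delta \simeq \Z_p$ is always nontrivial; consequently $\widetilde{\sigma_v}-\NN(v)$ has positive Weierstrass degree on every character component of $R$, hence is always a non-zero-divisor whose cokernel is $S$-torsion, and $R/(\widetilde{\sigma_v}-\NN(v))$ unconditionally lies in $\PP$. On the other hand, you pass over a verification that is actually required: the definition of $\SF{-1}$ in Theorem \ref{thm47}(2) demands the resolving sequence to live in $\CC$, so you must check that the quotient $R/(\N_{I_v},\, \widetilde{\sigma_v}-\NN(v))$ lies in $\CC$. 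This holds because $\overline{R} := R/(\widetilde{\sigma_v}-\NN(v))$ is free over $\Z_p[I_v]$ (as $\Z_p[[G_v]]/(\widetilde{\sigma_v}-\NN(v))$ is, using $G_v = I_v \times \langle\widetilde{\sigma_v}\rangle$), so $\overline{R}/\N_{I_v}\overline{R}$ is $\Z_p$-free and therefore has $\pd_{\Lambda} \le 1$. That same $\Z_p[I_v]$-freeness is also exactly what makes your exactness claim $\Ann_{\overline{R}}(\N_{I_v}) = (\iota_v - 1)\overline{R}$ go through, so it is worth spelling out rather than merely asserting the four-term exactness.
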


\begin{proof}
Observe that $\FF$ satisfies $\FF(M_1 \oplus M_2) = \FF(M_1)\FF(M_2)$ (this formula for the noncommutative case is unknown; see \cite[Remark 5.11]{Nic17}).
Then we clearly have
\[
\SF{-1} \left( \bigoplus_{v \in \Sigma \setminus \Sigma'} \UU_v(\LLL) \right) = \prod_{v \in \Sigma \setminus \Sigma'} \SF{-1} ( \UU_v(\LLL)).
\]
By Lemma \ref{lem5b}, 
\[
\SF{-1}(\UU_v(L)) = \SF{-1}_{\Z_p[[G_v]]}(\Z_p(\kappa_v)) R = (\kappa_v^{-1})^{\sharp} \left( \SF{-1}_{\Z_p[[G_v]]} (\Z_p)\right) R.
\]
Here we used Proposition \ref{prop51} twice, applied to the inclusion map $\Z_p[[G_v]] \to R$ and to the automorphism $(\kappa_v^{-1})^{\sharp}: \Z_p[[G_v]] \to \Z_p[[G_v]]$.

$\SF{-1}_{\Z_p[[G_v]]} (\Z_p)$ is computed in Example \ref{eg4q}.
Put $m_v = [E_v: F_v^{\cyc}]$, which is prime to $p$.
Then we have a decomposition $\Gal(L/F_v) = \Gal(L/F_v^{\cyc}) \times \langle \widetilde{\sigma_v}^{m_v} \rangle$.
By Example \ref{eg4q}, we have 
\[
\SF{-1}_{\Z_p[[G_v]]}(\Z_p) = \left( \frac{\N_{\Gal(L/F_v^{\cyc})}}{\widetilde{\sigma_v}^{m_v} - 1}, 1 \right).
\]
Note that each element of $\Gal(E_v/F_v^{\cyc})$ is congruent to $\sigma_v^j$ modulo $\sigma_v^{m_v} - 1$ for a unique $0 \leq j < {m_v}$.
Therefore
\[
\left( \frac{\N_{\Gal(L/F_v^{\cyc})}}{\widetilde{\sigma_v}^{m_v} - 1}, 1 \right) 
= \left( \frac{\N_{I_v}(1+\widetilde{\sigma_v} + \dots + \widetilde{\sigma_v}^{{m_v}-1})}{\widetilde{\sigma_v}^{m_v}-1}, 1 \right)
= \left( \frac{\N_{I_v}}{\widetilde{\sigma_v}-1}, 1 \right).
\]
Since $\kappa_v^{-1}(\widetilde{\sigma_v}) = \NN(v)^{-1}$, the assertion follows.
\end{proof}

\subsection{Imaginary Quadratic Fields}\label{subsec54}
We consider a two-variable analogue of Theorem \ref{thm01}.
Let $K$ be an imaginary quadratic field and $p$ be an odd prime number which splits into two primes $\pe, \overline{\pe}$ in $K$.
Let $\LLL$ be an abelian extension of $K$ which is a finite extension of $\widetilde{K}$, the $\Z_p^2$-extension of $K$.
Put $G = \Gal(\LLL/K)$.
Let $\Sigma$ be a finite set of places of $K$ containing all primes above $p$ and primes ramified in $\LLL/K$.
It is known that the $\Sigma$-ramified Iwasawa module $X_{\Sigma}(\LLL)$ is not a torsion module (\cite[Theorem (10.3.25)]{NSW08}).
Instead, put $\Sigma_0 = \Sigma \setminus \{\overline{\pe}\}$ and we shall study the $\Sigma_0$-ramified Iwasawa module $X_{\Sigma_0}(\LLL)$, which is known to be torsion.
Let $X(L)$ be the unramified Iwasawa module of $L$.

Take an open subgroup $G'$ of $G$ which is isomorphic to $\Z_p^2$.
We can apply the algebraic theory to $R = \Z_p[[G]]$, $\Lambda = \Z_p[[G']]$, and $S = \Lambda \setminus \{0\}$.
Put $\MM = \MM_{R, S}, \CC = \CC_{R, S}$, and $\PP = \PP_{R, S}$.

For a finite place $v$ of $K$, we define an $R$-module $\UU_v(L)$ as follows.
If $v \nmid p$, then it is defined as in Subsection \ref{subsec53}.
If $v | p$, then it is defined by
\[
\UU_{v}(L) = \varprojlim_F \UU_{v}(F)
\]
where $F$ runs over intermediate number fields of $L/K$ and $\UU_v(F)$ denotes the principal semi-local unit group of $F$ above $v$.
Put
\[
\EE(L) = \varprojlim_F (\OO_F^{\times} \otimes_{\Z} \Z_p).
\]

Consider the commutative diagram with exact rows
\[
\xymatrix{
0 \ar[r] & \EE(\LLL) \ar[r] \ar@{=}[d] & \UU_{\pe}(\LLL) \times \UU_{\overline{\pe}}(\LLL) \times \prod_{v \in \Sigma \setminus \Sigma_p} \UU_{v}(\LLL) \ar[r] \ar@{->>}[d] & X_{\Sigma}(\LLL) \ar[r]  \ar@{->>}[d] & X(\LLL) \ar[r] \ar@{=}[d] & 0 \\
0 \ar[r] & \EE(\LLL) \ar[r] & \UU_{\pe}(\LLL) \times \prod_{v \in \Sigma \setminus \Sigma_p} \UU_{v}(\LLL) \ar[r] & X_{\Sigma_{0}}(\LLL) \ar[r] & X(\LLL) \ar[r] & 0
}
\]
where the injectivity follows from the $\pe$-adic Leopoldt conjecture (\cite{Bru67} and \cite[Theorem 10.3.16]{NSW08}).
Then we obtain an exact sequence
\[
0 \to \UU_{\overline{\pe}}(\LLL) \to X_{\Sigma}(\LLL) \to X_{\Sigma_0}(\LLL) \to 0.
\]

\begin{lem}\label{lem72}
We have $\pd_R(\UU_{\overline{\pe}}(\LLL)) < \infty$.
\end{lem}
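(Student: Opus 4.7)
The plan is to show $\pd_R(\UU_{\overline{\pe}}(L))<\infty$ by rewriting $\UU_{\overline{\pe}}(L)$ as an induced local module from the decomposition subgroup at $\overline{\pe}$ and then invoking a local projective-dimension bound. Fix a prime $\overline{w}$ of $L$ above $\overline{\pe}$, let $D\subset G$ be its decomposition group, and set $U^{(1)}_\infty=\varprojlim_F U^{(1)}(F_{\overline{w}\cap F})$. The semilocal decomposition $\prod_{w\mid\overline{\pe}}U^{(1)}(F_w)\cong\Ind_{D_F}^{\Gal(F/K)}U^{(1)}(F_{\overline{w}\cap F})$ at each finite layer $F$, passed to the inverse limit, yields a canonical $R$-module isomorphism
\[
\UU_{\overline{\pe}}(L)\;\cong\;R\,\widehat{\otimes}_{R_D}\,U^{(1)}_\infty,\qquad R_D:=\Z_p[[D]].
\]
Since $R$ is topologically free, hence flat, as a right $R_D$-module on any set of coset representatives of $G/D$, and $U^{(1)}_\infty$ is finitely generated over $R_D$, a finite projective resolution of $U^{(1)}_\infty$ over $R_D$ transports, under $R\,\widehat{\otimes}_{R_D}-$, to a finite projective resolution of $\UU_{\overline{\pe}}(L)$ over $R$. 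It therefore suffices to prove $\pd_{R_D}(U^{(1)}_\infty)<\infty$.

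For the local statement, set $\mathcal{K}=K_{\overline{\pe}}$ and $\mathcal{L}=\bigcup_F F_{\overline{w}\cap F}$, so $D=\Gal(\mathcal{L}/\mathcal{K})$ is a $p$-adic Lie group. The local analog of Theorem~\ref{thm61} applied to the Galois module $\mu_{p^\infty}$ over $\mathcal{L}$, combined with Kummer theory and local Tate duality, produces a four-term exact sequence of finitely generated $R_D$-modules
\[
0\to H^1(\mathcal{L},\mu_{p^\infty})^{\vee}\to F_1\to F_2\to H^0(\mathcal{L},\mu_{p^\infty})^{\vee}\to 0
\]
with $\pd_{R_D}(F_1)\le 1$ and $F_2$ free. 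Kummer theory identifies $H^1(\mathcal{L},\mu_{p^\infty})^{\vee}$ with $U^{(1)}_\infty$ up to a rank-one $R_D$-free summand coming from the uniformizer, while the flanking term $H^0(\mathcal{L},\mu_{p^\infty})^{\vee}=T_p\mu(\mathcal{L})$ is $\Z_p$-free of rank at most one. Once one knows that $T_p\mu(\mathcal{L})$ has finite $R_D$-projective dimension, splitting the four-term sequence into two short exact sequences and applying the standard inequality $\pd\le 1+\pd$ along each delivers $\pd_{R_D}(U^{(1)}_\infty)<\infty$.

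The main obstacle is the projective-dimension bound for $T_p\mu(\mathcal{L})$ when $D$ has $p$-torsion inherited from $\Gal(L/\widetilde{K})$, since an arbitrary $\Z_p$-free $R_D$-module need not have finite $R_D$-projective dimension. The route is to exploit the specific structure of $T_p\mu(\mathcal{L})$ as the Tate twist of $\Z_p$ by the $\overline{\pe}$-adic cyclotomic character, which is tamely ramified on $D$, and realise it as a summand of an induced module from a $p$-torsion-free open subgroup $D'\subset D$. By Proposition~\ref{prop17}, $\Z_p[[D']]$ has finite global dimension, so every finitely generated $\Z_p[[D']]$-module has finite projective dimension; a Shapiro-type argument then transports this finite projective dimension to $R_D$ for the specific induced module at hand.
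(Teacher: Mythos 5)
Your reduction to the decomposition group $D=G_{\overline{\pe}}$ via the induced-module isomorphism $\UU_{\overline{\pe}}(L)\simeq R\otimes_{\Z_p[[D]]}\UU_{\overline{\Pe}}(L)$ is the right first move and matches the paper. But from there you go down a road that both overshoots and stalls: you invoke a local Tate sequence, Kummer theory and local Tate duality to control $\pd_{\Z_p[[D]]}$ of the local units, and then you get stuck on $T_p\mu(\mathcal{L})$ precisely because you are worried that $D$ ``has $p$-torsion inherited from $\Gal(L/\widetilde{K})$.'' That worry is misplaced, and noticing why is the whole content of the lemma.

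The key fact, which your proof never establishes, is that $D$ has \emph{no} $p$-torsion at all. Since $p$ splits in the imaginary quadratic field $K$, we have $K_{\overline{\pe}}\simeq\Q_p$, so $D$ is a quotient of $\Gal(\Q_p^{\ab}/\Q_p)$. Local class field theory (for $p$ odd) shows that the maximal abelian pro-$p$ extension of $\Q_p$ has Galois group $\Z_p^2$. Since $D$ is a two-dimensional abelian $p$-adic Lie group, its pro-$p$ quotient must be exactly $\Z_p^2$, and hence $D\simeq\Z_p^2\times(\text{finite of order prime to }p)$. Then $\Z_p[[D]]$ has finite global dimension (by Proposition~\ref{prop17}, or directly), so \emph{every} finitely generated $\Z_p[[D]]$-module — in particular $\UU_{\overline{\Pe}}(L)$, whatever its internal structure — has a finite projective resolution, and base change along the free extension $\Z_p[[D]]\subset R$ finishes the proof. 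No Tate sequence, Kummer theory, or analysis of $T_p\mu(\mathcal{L})$ is needed. Without the structure result on $D$, your final paragraph (realizing $T_p\mu(\mathcal{L})$ as a summand of a module induced from a $p$-torsion-free open subgroup $D'\subset D$, then transporting finite projective dimension ``by a Shapiro-type argument'') is not actually carried out, and it is not clear how to make it work; so as written the argument has a genuine gap exactly at the step that the simple observation about $D$ renders unnecessary.
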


\begin{proof}
Let $G_{\overline{\pe}}$ be the decomposition group of $\overline{\pe}$ in $G$.
It is crucial that $G_{\overline{\pe}}$ is isomorphic to the direct product of $\Z_p^2$ and a finite abelian group of order prime to $p$.
To show that, first observe that $G_{\overline{\pe}}$ can be regarded as the Galois group of an two-dimensional abelian $p$-adic Lie extension of $K_{\overline{\pe}} \simeq \Q_p$.
As $p$ is odd, the local class field theory shows that the Galois group of the maximal abelian pro-$p$-extension of $\Q_p$ is isomorphic to $\Z_p^2$.
Therefore the pro-$p$ quotient of $G_{\overline{\pe}}$ must be isomorphic to $\Z_p^2$.
This implies the claim.

Fix a prime $\overline{\Pe}$ of $L$ above $\overline{\pe}$.
Then we have $\UU_{\overline{\pe}}(L) \simeq \Z_p[[G]] \otimes_{\Z_p[[G_{\overline{\pe}}]]} \UU_{\overline{\Pe}}(L)$, where the definition of $\UU_{\overline{\Pe}}(L)$ is clear.
By the structure of $G_{\overline{\pe}}$ described above, the global dimension of $\Z_p[[G_{\overline{\pe}}]]$ is finite.
Therefore we have $\pd_R(\UU_{\overline{\pe}}(L)) < \infty$ (at most two actually), as desired.
\end{proof}

\begin{prop}\label{prop71}
There is an exact sequence
\[
0 \to X_{\Sigma_0}(\LLL) \to P_1 \to P_2 \to \Z_p \to 0
\]
in $\MM$ with $X_{\Sigma_0}(\LLL) \in \CC$ and $P_1, P_2 \in \PP$ (Note that $\Z_p$ is not contained in $\CC$ because $\pd_{\Lambda}(\Z_p) = 2$).
\end{prop}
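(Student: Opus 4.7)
The plan is to splice the four-term exact sequence from Theorem \ref{thm61} with the sequence $0 \to \UU_{\overline{\pe}}(\LLL) \to X_\Sigma(\LLL) \to X_{\Sigma_0}(\LLL) \to 0$ derived just before the proposition, and then to modify the resulting middle terms so as to make them $S$-torsion of projective dimension at most one.

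First, I would apply Theorem \ref{thm61} with $D = \Q_p/\Z_p$ carrying the trivial Galois action; the hypothesis $H^2(K_\Sigma/\LLL, \Q_p/\Z_p) = 0$ is then automatic by \cite[Theorem (10.3.25)]{NSW08}. Since $H^1(K_\Sigma/\LLL, \Q_p/\Z_p)^{\dual} = X_\Sigma(\LLL)$ and $H^0(K_\Sigma/\LLL, \Q_p/\Z_p)^{\dual} = \Z_p$, this yields
\[
0 \to X_\Sigma(\LLL) \to F_1 \to F_2 \to \Z_p \to 0
\]
with $\pd_R(F_1) \leq 1$ and $F_2$ free of finite rank. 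Viewing $\UU_{\overline{\pe}}(\LLL)$ as a submodule of $F_1$ via $X_\Sigma(\LLL) \hookrightarrow F_1$ and passing to quotients (equivalently, taking the pushout along $X_\Sigma(\LLL) \twoheadrightarrow X_{\Sigma_0}(\LLL)$), one obtains
\[
0 \to X_{\Sigma_0}(\LLL) \to F_1/\UU_{\overline{\pe}}(\LLL) \to F_2 \to \Z_p \to 0.
\]
Since $F_2$ is free and hence not $S$-torsion, I would next quotient both middle terms by a common free submodule, exactly as in the proof of Proposition \ref{prop7a}. Because $X_{\Sigma_0}(\LLL)$ and $\Z_p$ are $S$-torsion, $F_1/\UU_{\overline{\pe}}(\LLL)$ and $F_2$ share the same generic rank $a$ over $S^{-1}R$; choosing a free module $N \simeq R^a$ together with an $R$-linear map $N \to F_1/\UU_{\overline{\pe}}(\LLL)$ whose composition into $F_2$ is generically an isomorphism, both $N \to F_2$ and $N \to F_1/\UU_{\overline{\pe}}(\LLL)$ become injective, and the snake lemma applied to the quotients by $N$ produces the desired sequence
\[
0 \to X_{\Sigma_0}(\LLL) \to P_1 \to P_2 \to \Z_p \to 0
\]
with $P_1 := (F_1/\UU_{\overline{\pe}}(\LLL))/N$ and $P_2 := F_2/N$, both finitely generated and $S$-torsion.

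The main obstacle will be verifying $P_1, P_2 \in \PP$, i.e., $\pd_R(P_i) \leq 1$. For $P_2$, the presentation $0 \to N \to F_2 \to P_2 \to 0$ between free modules immediately gives $\pd_R(P_2) \leq 1$. For $P_1$, lifting $N \hookrightarrow F_1/\UU_{\overline{\pe}}(\LLL)$ to $\widetilde{N} \subset F_1$ produces a short exact sequence $0 \to \UU_{\overline{\pe}}(\LLL) \to \widetilde{N} \to N \to 0$ which splits by projectivity of $N$, giving $\widetilde{N} \simeq \UU_{\overline{\pe}}(\LLL) \oplus N$. The sequence $0 \to \widetilde{N} \to F_1 \to P_1 \to 0$ then reduces $\pd_R(P_1) \leq 1$ to the bound $\pd_R(\UU_{\overline{\pe}}(\LLL)) \leq 1$, a strengthening of Lemma \ref{lem72} that should follow from the splitting $G_{\overline{\pe}} \simeq \Z_p^2 \times \Delta_{\overline{\pe}}$ with $\Delta_{\overline{\pe}}$ of order prime to $p$, the identification $\UU_{\overline{\pe}}(\LLL) \simeq R \otimes_{\Z_p[[G_{\overline{\pe}}]]} \UU_{\overline{\Pe}}(\LLL)$, and local Iwasawa-theoretic analysis of $\UU_{\overline{\Pe}}(\LLL)$ over the two-dimensional local Iwasawa algebra $\Z_p[[G_{\overline{\pe}}]]$. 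Once $P_1, P_2 \in \PP$ is established, $X_{\Sigma_0}(\LLL) \in \CC$ follows by chasing the $\Ext_R^i(-, R)$ long exact sequences of the constructed four-term sequence, using $\rpd_R(P_i) \leq 1$ and $\rpd_R(\Z_p) \leq \pd_R(\Z_p) = 2$.
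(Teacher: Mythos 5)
Your construction of the four-term sequence (splicing, quotient by a free module) mirrors the paper's, but the logical order of the final two steps is inverted, and this produces a genuine gap. You observe that verifying $P_1 \in \PP$ reduces to showing $\pd_R(\UU_{\overline{\pe}}(\LLL)) \leq 1$, a strengthening of Lemma~\ref{lem72}, and you wave at ``local Iwasawa-theoretic analysis'' to supply it. But the paper deliberately proves only $\pd_R(\UU_{\overline{\pe}}(\LLL)) < \infty$ (and remarks it is ``at most two''); the sharper bound $\leq 1$ is neither established nor obviously true --- $\Z_p[[G_{\overline{\pe}}]]$ is a product of three-dimensional regular local rings, so the a priori bound from regularity is $3$, not $1$. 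Leaving this as ``should follow'' is where your argument breaks.

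The paper's route is precisely designed to avoid needing this bound. It first establishes $X_{\Sigma_0}(\LLL) \in \CC$ \emph{independently} of the four-term sequence, citing Perrin-Riou's result \cite[Theorem 23]{Per84} (or Greenberg \cite[Proposition 4.1.1]{Gree16}) for $\pd_{\Lambda}(X_{\Sigma_0}(\LLL)) \leq 1$. With that in hand, it takes the sequence $0 \to X_{\Sigma_0}(\LLL) \to P_1 \to P_2 \to \Z_p \to 0$ where $P_2 \in \PP$ and $P_1$ has merely $\pd_R(P_1) < \infty$, and then deduces $\pd_{\Lambda}(P_1) \leq 1$ by a $\pd_{\Lambda}$-chase: from $\pd_{\Lambda}(\Z_p) = 2$ and $\pd_{\Lambda}(P_2) \leq 1$ one gets $\pd_{\Lambda}(\Image(P_1 \to P_2)) \leq 1$, and combining with $\pd_{\Lambda}(X_{\Sigma_0}(\LLL)) \leq 1$ forces $\pd_{\Lambda}(P_1) \leq 1$; together with $\pd_R(P_1) < \infty$ this yields $P_1 \in \CC \cap \QQ = \PP$. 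Your proposal runs the implication in the opposite direction (first $P_1, P_2 \in \PP$, then $X_{\Sigma_0}(\LLL) \in \CC$ by $\Ext$-chasing), which is sound as an implication but leaves you stranded at the unproven $\pd_R(\UU_{\overline{\pe}}(\LLL)) \leq 1$. If you instead import the Perrin-Riou/Greenberg input for $X_{\Sigma_0}(\LLL) \in \CC$ up front, the weaker Lemma~\ref{lem72} suffices and your construction goes through.
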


\begin{proof}
First we show $X_{\Sigma_0}(\LLL) \in \CC$.
It is already known that $X_{\Sigma_0}(\LLL) \in \MM$, so it is enough to show that $\pd_{\Lambda}(X_{\Sigma_0}(\LLL)) \leq 1$.
This is essentially proved in \cite[Theorem 23]{Per84} and the preceding remarks there.
Alternatively, we can apply a result by Greenberg \cite[Proposition 4.1.1]{Gree16}.

Applying Theorem \ref{thm61} to $D = \Q_p/\Z_p$ yields
\[
0 \to X_{\Sigma}(\LLL) \to F_1 \to F_2 \to \Z_p \to 0
\]
of finitely generated $R$-modules, where $\pd_{R}(F_1) \leq 1$ and $F_2$ free.
Since $\Z_p$ is a torsion module, by taking the quotient of $F_1, F_2$ by a free module, we can construct an exact sequence
\[
0 \to X_{\Sigma}(\LLL) \to F_1' \to P_2 \to \Z_p \to 0
\]
of finitely generated $R$-modules, where $\pd_{R}(F_1') \leq 1$ and $P_2 \in \PP$.

Let $P_1$ be the cokernel of the composition map $\UU_{\overline{\pe}}(\LLL) \hookrightarrow X_{\Sigma}(\LLL) \hookrightarrow F_1'$, which satisfies $\pd_{R}(P_1) < \infty$ by Lemma \ref{lem72}.
Then we obtain an exact sequence
\[
0 \to X_{\Sigma_0}(\LLL) \to P_1 \to P_2 \to \Z_p \to 0
\]
in $\MM$.
Since $\pd_{\Lambda}(\Z_p) = 2$ and $X_{\Sigma_0}(L) \in \CC$, we have $\pd_{\Lambda}(P_1) \leq 1$.
Thus $P_1 \in \CC \cap \QQ = \PP$, which completes the proof. 
\end{proof}

Using the sequence in Proposition \ref{prop71}, put $\Phi_{L/K, \Sigma_0} = [P_1] - [P_2] \in K_0(\PP)$.

\begin{thm}\label{thmzx}
For any quasi-Fitting invariant $\FF$ and $n \in \Z$, we have
\[
\SF{n}(X_{\Sigma_0}(\LLL)) = \FF(\Phi_{L/K, \Sigma_0})^{(-1)^n} \SF{n+2}(\Z_p).
\]
In particular,
\[
\FF(X_{\Sigma_0}(\LLL)) = \FF(\Phi_{L/K, \Sigma_0}) \SF{2}(\Z_p).
\]
\end{thm}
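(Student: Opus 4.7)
The plan is to reduce the theorem to a direct application of Corollary \ref{cor4p} to the four-term exact sequence
\[
0 \to X_{\Sigma_0}(\LLL) \to P_1 \to P_2 \to \Z_p \to 0
\]
in $\MM$ furnished by Proposition \ref{prop71}. All of the genuine arithmetic input---the existence of a Tate-type resolution via Theorem \ref{thm61}, the $\pe$-adic Leopoldt conjecture that produces the key injectivity, and the control of $\UU_{\overline{\pe}}(\LLL)$ via Lemma \ref{lem72}---has already been absorbed into that proposition, so what remains is purely bookkeeping on the shift side.

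First I would verify the hypotheses of Corollary \ref{cor4p}. By Proposition \ref{prop71} and Lemma \ref{lem11}, the middle terms satisfy $P_1, P_2 \in \PP \subset \QQ$, while the outer terms $X_{\Sigma_0}(\LLL)$ and $\Z_p$ both lie in $\MM$; the sequence is exact in $\MM$ by Lemma \ref{lem2a}(1). Applying Corollary \ref{cor4p} with $d=2$ and the integer $n+2$ in the role of the corollary's $n$ then yields
\[
\SF{n+2}(\Z_p) = \SF{n+1}(P_1) \cdot \SF{n+2}(P_2) \cdot \SF{n}(X_{\Sigma_0}(\LLL))
\]
for every $n \in \Z$.

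Next, since $P_1, P_2 \in \QQ$, the characterization in Theorem \ref{thm41} gives $\SF{k}(P_i) = \FF(P_i)^{(-1)^k}$ for all $k \in \Z$. Substituting and solving for $\SF{n}(X_{\Sigma_0}(\LLL))$ produces
\[
\SF{n}(X_{\Sigma_0}(\LLL)) = \bigl(\FF(P_1)\FF(P_2)^{-1}\bigr)^{(-1)^n} \SF{n+2}(\Z_p).
\]
Because $\FF|_{\PP}$ factors through $K_0(\PP)$ (the quasi-Fitting-invariant analogue of Remark \ref{rem2a}(3), which follows at once from multiplicativity on short exact sequences inside $\PP$), the element $\Phi_{L/K,\Sigma_0} = [P_1] - [P_2]$ satisfies $\FF(\Phi_{L/K,\Sigma_0}) = \FF(P_1)\FF(P_2)^{-1}$, giving the first displayed identity; specializing to $n = 0$ yields the ``in particular'' statement.

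There is essentially no obstacle left at this level, because Proposition \ref{prop71} has already done the hard work of realizing $X_{\Sigma_0}(\LLL)$ inside a finite resolution whose non-$\PP$ terms are exactly $\Z_p$. The only bookkeeping remark to make is that $\Phi_{L/K,\Sigma_0} \in K_0(\PP)$ is independent of the chosen resolution: this is immediate from the same pull-back argument used in the proof of Theorem \ref{thm41new}, together with Lemma \ref{lem24} to keep all auxiliary modules inside $\PP$. The broader point is that the shift formalism is exactly designed to make the two-variable statement above a verbatim analogue of Theorem \ref{thm01}, once one has the four-term resolution in hand.
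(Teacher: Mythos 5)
Your proof is correct and is exactly the paper's (implicit) argument: the paper omits a written proof of Theorem \ref{thmzx} precisely because it is the direct application of Corollary \ref{cor4p} to the resolution from Proposition \ref{prop71}, and your sign bookkeeping and the identification $\FF(\Phi_{L/K,\Sigma_0}) = \FF(P_1)\FF(P_2)^{-1}$ are all in order. One small caution about your closing remark: the independence of $\Phi_{L/K,\Sigma_0}\in K_0(\PP)$ from the chosen resolution is not as ``immediate'' as you suggest, since here $\Z_p \notin \CC$ (as Proposition \ref{prop71} itself points out), so a pull-back over $\Z_p$ need not remain in $\CC$ and Lemma \ref{lem24}, which operates inside $\CC$, does not directly apply; fortunately this independence is not needed for the theorem as stated, and the shift machinery already guarantees that the value $\FF(\Phi_{L/K,\Sigma_0}) \in \Omega^\times$ is independent of the resolution.
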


Moreover, the argument in Subsection \ref{subsec53} can be applied.
Take a subset $\Sigma_0'$ of $\Sigma_0$ containing $\{\pe\}$.
As in Proposition \ref{prop75}, we have an exact sequence
\[
0 \to \bigoplus_{v \in \Sigma_0 \setminus \Sigma_0'} \UU_v(\LLL) \to X_{\Sigma_0}(\LLL) \to X_{\Sigma_0'}(\LLL)  \to 0
\]
in $\CC$ and the structure of $\UU_v(L)$ is described as in Lemma \ref{lem5b}.

Decompose $\Delta = \Gal(L/\widetilde{K}) = \Delta_p \times \Delta_{p'}$ and let $\psi$ be any nontrivial character of $\Delta_{p'}$.
Then as in Theorem \ref{thm76}, for any quasi-Fitting invariant $\FF$ for $R$, we have
\[
\FF(X_{\Sigma_0'}(\LLL)_{\psi}) = \FF((\Phi_{L/K, \Sigma_0})_{\psi}) \SF{-1} \left( \bigoplus_{v \in \Sigma_0 \setminus \Sigma_0'} \UU_v(\LLL)_{\psi} \right).
\]
If $\FF = \Fitt_R$, then as in Lemma \ref{lemzv}
\[
\SF{-1} \left( \bigoplus_{v \in \Sigma_0 \setminus \Sigma_0'} \UU_v(\LLL) \right) = \prod_{v \in \Sigma_0 \setminus \Sigma_0'} \left( \frac{\N_{I_v}}{\widetilde{\sigma_v} - \NN(v)}, 1 \right) R,
\]
where the notations are similar to those in Subsection \ref{subsec53}.

{\small
\bibliographystyle{abbrv}
\bibliography{fitting}
}

\end{document}